\newcommand{\dotminus}{\mathbin{\text{\@dotminus}}}
\newcommand{\@dotminus}{%
  \ooalign{\hidewidth\raise1ex\hbox{.}\hidewidth\cr$\m@th-$\cr}%
}
\def\showauthornotes{1}
\newcommand{\Authornote}[2]{{\sf\small\color{red}{[#1: #2]}}}
\newcommand{\Authornote}[2]{}
\newtheorem{theorem}{Theorem}[section]
\newtheorem{lemma}[theorem]{Lemma}
\newtheorem{corollary}[theorem]{Corollary}
\newtheorem{proposition}[theorem]{Proposition}
\newtheorem{question}[theorem]{Question}
\newtheorem{problem}[theorem]{Problem}
\newtheorem{observation}[theorem]{Observation}
\newtheorem{claim}[theorem]{Claim}
\newtheorem{fact}[theorem]{Fact}
\theoremstyle{definition}
\newtheorem{definition}[theorem]{Definition}
\def\e{\epsilon}
\def\bip{\text{Bip}}
\def\trip{\text{Trip}}
\newcommand{\VC}{\textnormal{VC}}
\newcommand{\disc}{\textnormal{disc}}
\newcommand{\dev}{\textnormal{dev}}
\newcommand{\calF}{\mathcal{F}}
\newcommand{\calH}{\mathcal{H}}
\newcommand{\calN}{\mathcal{N}}
\newcommand{\calP}{\mathcal{P}}
\newcommand{\calI}{\mathcal{I}}
\newcommand{\calQ}{\mathcal{Q}}
\newcommand{\calR}{\mathcal{R}}
\newcommand{\calS}{\mathcal{S}}
\newcommand{\calU}{\mathcal{U}}
\newcommand{\calV}{\mathcal{V}}
\newcommand{\calX}{\mathcal{X}}
\newcommand{\calZ}{\mathcal{Z}}
\def\triads{\operatorname{Triads}}
\def\SVC{\operatorname{SVC}}
\begin{document}
\title[]{Growth of regular partitions of $3$-uniform hypergraphs: strong regularity and the vertex partition}
\author{C. Terry}
\thanks{The author was partially supported by NSF CAREER Award DMS-2115518 and a Sloan Research Fellowship}
\address{Department of Mathematics, Statistics, and Computer Science, University of Illinois at Chicago, Chicago, IL 60607, USA}
\email{caterry@uic.edu}
\maketitle

\begin{abstract}
We consider here the strong regularity for $3$-uniform hypergraphs developed by Frankl, Gowers, Kohayakawa, Nagle,  R\"{o}dl, Skokan, and Schacht.  This type of regular decomposition comes with two components, a partition of the vertices, and a partition of the pairs of vertices. The data of a regular decomposition also includes two parameters measuring quasirandomness, a fixed constant $\e_1>0$, and a function $\e_2:\mathbb{N}\rightarrow (0,1]$.  We define two growth functions associated to a hereditary property $\calH$ of $3$-uniform hypergraphs: $T_{\calH}(\e_1,\e_2)$ which measures the size of the vertex component, and $L_{\calH}(\e_1,\e_2)$ which measures the size of the pairs component.  We introduce the following question.  What are the possible asymptotic growth rates of functions of the form $T_{\calH}$ and $L_{\calH}$?  In this paper, we consider this question for $T_{\calH}$, proving a separation into four classes: constant, polynomial, exponential, or at least wowzer.  The separations among the constant, polynomial and exponential ranges require only slow growing (namely polynomial) choices for $\e_2$. The jump to the wowzer range uses a very fast growing $\e_2$ and makes crucial use of a  lower bound construction for strong graph regularity due to Conlon and Fox. 
\end{abstract}

\section{Introduction}

 Szemer\'{e}di's regularity lemma is a tool for obtaining decompositions of finite graphs.  It says that for all $\e>0$, there is an integer $M=M(\e)$ such that any finite graph can be partitioned into at most $M$ parts, so that all but an $\e$ proportion of the pairs of parts are $\e$-regular.  First used to prove Szemer\'{e}di's Theorem in the integers, the regularity lemma eventually became a central tool in extremal combinatorics.  Beginning with the original proof of Szemer\'{e}di \cite{Szemeredi.1975, Szemeredi.1978}, all known upper bounds for $M(\e)$ took the form $Tw(\e^{-O(1)})$, where $Tw:\mathbb{N}\rightarrow \mathbb{N}$ is the \emph{tower function}, defined by setting $Tw(1)=1$ and $Tw(x)=2^{Tw(x-1)}$ for $x>1$.  Because $M(\e)$ is a limiting factor in applications, the following question became important.  

\begin{question}\label{q:1}
Can the bound $M(\e)$, as a function of $\e$, be improved? 
\end{question}

Question \ref{q:1} was famously answered in the negative by Gowers,  who proved $M(\e)\geq Tw(\e^{-O(1)})$ \cite{Gowers.1997}.  Tight upper and lower bounds were later obtained by Fox and Lov\'{a}sz \cite{Fox.2014} for one  formulation of the regularity lemma. Translating into the formulation used in this paper, their results yield bounds roughly of the form $Tw(\Omega(\e^{-2}))\leq M(\e)\leq Tw(\e^{-4})$  (see also \cite{Moshkovitz.2013, Conlon.2012}).   

Despite the negative answer to Question \ref{q:1} in general, researchers began showing there are special classes of graphs where the bound $M(\e)$ can be improved (for example \cite{Alon.2000, Lovasz.2010, Malliaris.2014, Alon.20058t, FPS2}). The most general results of this kind apply to graphs of bounded VC-dimension, where it was shown by Lov\'{a}sz-Szegedy \cite{Lovasz.2010} and Alon-Fisher-Newman \cite{Alon.2000} that the number of parts can be taken to be polynomial in $\e^{-1}$.  

It was first noted in a paper of Alon, Fox, and Zhao \cite{Alon.2018is} that the results cited above give rise to a dichotomy in the setting of hereditary graph properties. A \emph{hereditary graph property} is a class of finite graphs closed under isomorphism and induced subgraphs.  Using the regularity lemma, we can associate a growth function to every hereditary property.

\begin{definition}\label{def:M}
Given a hereditary graph property $\calH$, define $M_{\calH}:(0,1)\rightarrow \mathbb{N}$ as follows.  For all $\e\in (0,1)$, let $M_{\calH}(\e)$ be the smallest integer so that any sufficiently large  graph  in $\calH$ has an $\e$-regular partition with at most $M_{\calH}(\e)$ parts.  
\end{definition}

Note that $M(\e)$ corresponds to $M_{\calH_{0}}(\e)$, where $\calH_{0}$ is the class of all finite graphs. Alon, Fox, and Zhao \cite{Alon.2018is} observed a dichotomy  in the asymptotic behavior of functions of the form $M_{\calH}$. Here, by ``asymptotic behavior," we mean as $\e$ tends to $0$.  The dichotomy states that either $\calH$ contains graphs of arbitrarily large VC-dimension, in which case $M_{\calH}(\e)$ is bounded below by a tower function, or there is a uniform bound on the VC-dimension of any graph in $\calH$, in which case $M_{\calH}(\e)$ is bounded above by a polynomial.  In part 2 of this series \cite{Terry.2024b}, the author showed one more such dichotomy exists, leading to the following possibilities for the asymptotic behavior of a function of the form $M_{\calH}$. 

\begin{theorem}\label{thm:alljump}
Suppose $\calH$ is a hereditary graph property.  Then one of the following holds. 
\begin{enumerate}
\item (Tower) $Tw(\Omega(\e^{-2}))\leq M_{\calH}(\e)\leq O(Tw(\e^{-4}))$.
\item (Exponential) For some $C>0$, $\e^{-1+o(1)}\leq M_{\calH}(\e)\leq O(\e^{-C})$.
\item (Constant) For some $C\geq 1$, $M_{\calH}(\e)=C$.
\end{enumerate}
\end{theorem}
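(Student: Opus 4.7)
The plan is to assemble the stated trichotomy from two previously established dichotomies, together with the classical and bounded-VC upper bounds on $M_{\calH}$. For any hereditary property $\calH$, the Fox-Lov\'{a}sz form of the regularity lemma supplies the universal upper bound $M_{\calH}(\e) \leq O(Tw(\e^{-4}))$, which covers case (1) on the upper side.

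For the first dichotomy, following the observation of Alon-Fox-Zhao, I would split on whether $\calH$ contains graphs of arbitrarily large VC-dimension. If it does, then the Gowers tower lower bound construction, as sharpened by Fox-Lov\'{a}sz, can be realized inside $\calH$: since hereditary classes of unbounded VC-dimension contain, as induced subgraphs, arbitrarily large shattered configurations, one has enough flexible bipartite patterns to embed the Fox-Lov\'{a}sz witnesses. This yields $M_{\calH}(\e) \geq Tw(\Omega(\e^{-2}))$, completing case (1). Otherwise $\calH$ has uniformly bounded VC-dimension, and the Lov\'{a}sz-Szegedy and Alon-Fisher-Newman theorems yield $M_{\calH}(\e) \leq O(\e^{-C})$ for some $C$ depending only on the VC-dimension bound.

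Within the bounded-VC regime, I would then invoke the second dichotomy established in Part 2 of this series \cite{Terry.2024b}: either $\calH$ satisfies a structural richness condition allowing one to produce, for arbitrarily small $\e$, witnesses forcing $M_{\calH}(\e) \geq \e^{-1+o(1)}$ (placing us in case (2)), or $\calH$ is so restricted that every sufficiently large graph in $\calH$ admits an $\e$-regular partition with a bounded number of parts, giving case (3). Matching the upper and lower bounds in each regime produces the stated trichotomy, and the three regimes are exhaustive since the underlying dichotomies are.

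The main obstacle is the second dichotomy itself: identifying the precise structural criterion within bounded-VC classes that separates exponential from constant growth, and constructing lower-bound witnesses attaining $\e^{-1+o(1)}$. This is the core content of \cite{Terry.2024b}. The remaining ingredients---the Fox-Lov\'{a}sz tower upper bound, the polynomial upper bound under bounded VC-dimension, and the embedding of the Fox-Lov\'{a}sz lower-bound construction into an arbitrary hereditary class of unbounded VC-dimension---can be quoted from the literature, with only routine bookkeeping required to align the exponents at the regime boundaries.
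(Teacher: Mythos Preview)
Your proposal is correct and follows the same approach the paper describes: the paper does not prove Theorem~\ref{thm:alljump} here but explicitly states it ``combines results of Alon--Fischer--Newman, Lov\'{a}sz--Szegedy, Fox--Lov\'{a}sz, and the author [Part 2],'' with the full account deferred to \cite{Terry.2024b}. Your decomposition into the Alon--Fox--Zhao VC-dimension dichotomy (tower versus polynomial) followed by the Part~2 dichotomy within the bounded-VC regime (polynomial versus constant) is precisely the structure the paper indicates.
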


 Theorem \ref{thm:alljump} combines results of Alon--Fischer--Newman \cite{Alon.2000}, Lov\'{a}sz--Szegedy \cite{Lovasz.2010}, Fox--Lov\'{a}sz, \cite{Fox.2014} and the author \cite{Terry.2024b}.  For more details, we refer the reader to \cite{Terry.2024b}, where a full account of the proof appears.

The goal of the current series of papers is to explore analogues of Theorem \ref{thm:alljump} for hereditary properties of $3$-uniform hypergraphs.   Parts 1 and 2 consider a version of regularity for hypergraphs called \emph{weak regularity} (see Section 3 of \cite{Terry.2024b} for precise definitions).   This was the first type of hypergraph regularity defined in the literature, and was first developed independently by Chung \cite{Chung.1991} and Haviland and Thomason \cite{Haviland.1989}.  While weak hypergraph regularity generalizes graph regularity in several natural ways, it does not imply the full hypergraph analogue of the graph counting lemma (see \cite{Kohayakawa.2010}). 

Several years after the work of Chung and Haviland--Thomason, two stronger versions of hypergraph regularity were developed by Frankl, Kohayakawa, Nagle,  R\"{o}dl, Skokan, and Schacht \cite{Frankl.2002, Rodl.2005a, Rodl.2004,Nagle.2006} on the one hand, and Gowers \cite{Gowers.20063gk, Gowers.2007} on the other.  These types of regularity did produce general counting lemmas, which allowed for important applications, including Gowers' quantitative proof of the multidimensional Szemer\'{e}di's theorem \cite{Gowers.2007}, and the first proofs of hyergraph removal \cite{Gowers.2007, Nagle.2006, Rodl.2009,Rodl.2004,Rodl.2006}.  The notions developed in \cite{Gowers.20063gk, Gowers.2007} and  \cite{Frankl.2002,  Rodl.2005a, Rodl.2004, Nagle.2006} are formally distinct, however, they were shown to be roughly equivalent in the setting of $3$-uniform hypergraphs by Nagle, Poerschke, R\"{o}dl, and Schacht \cite{Nagle.2013} (see also \cite{NRS} for quantitative improvements). 

The goal of the current paper  is to consider the growth of regular decompositions of $3$-uniform hypergraphs for the version of regularity developed by Gowers in \cite{Gowers.2007,Gowers.20063gk}, which is roughly equivalent to that developed by Frankl and R\"{o}dl in \cite{Frankl.2002}. Perhaps surprisingly, we will show in this paper that the growth of these types of regular decompositions is closely related to graph regularity, weak hypergraph regularity, and strong graph regularity. 

We now give a brief description of the strong hypergraph regularity we study in this paper.  For a $3$-uniform hypergraph $H=(V,E)$, a regular decomposition comes with two error parameters $\e_1>0$ and $\e_2:\mathbb{N}\rightarrow (0,1]$, and two complexity parameters $t,\ell$.  The decomposition itself  consists of  a vertex partition $\calP_1=\{V_1,\ldots, V_t\}$, and a set of the form 
$$
\calP_2=\{P_{ij}^{\alpha}:1\leq i,j\leq t , 1\leq \alpha\leq \ell\},
$$
 where for each $1\leq i,j\leq t$, $P_{ij}^1\cup \ldots \cup P_{ij}^{\ell}$ is a partition of $V_i\times V_j$.  We will refer to the pair $(t,\ell)$ as the  \emph{complexity} of $\calP$. One can view $t$ as measuring the complexity of the ``unary" part of the decomposition, and $\ell$ as measuring the complexity of the ``binary" part of the decomposition.  Informally speaking, given a $3$-uniform hypergraph $H=(V,E)$, a decomposition $\calP=(\calP_1,\calP_2)$  is called \emph{$\dev_{2,3}(\e_1,\e_2(\ell))$-regular for  $H$} if the elements of $\calP_2$ are $\e_2(\ell)$-quasirandom as graph relations, and further, the ternary edges $H$ are $\e_1$-quasirandom relative to the underlying graph relations in $\calP_2$.   We give a formal statement of the regularity lemma here, and refer the reader to Section \ref{sec:regularity} for detailed definitions of the terms employed.  The version we state here is due to Gowers.
\begin{theorem}[Gowers \cite{Gowers.2007}]\label{thm:reg2intro} For all $\e_1>0$ and every function $\e_2:\mathbb{N}\rightarrow (0,1]$, there exist positive integers $T=T(\e_1,\e_2)$ and $L=L(\e_1,\e_2)$, such that for every sufficiently large $3$-graph $H=(V,E)$, there exists a $\dev_{2,3}(\e_1,\e_2(\ell))$-regular, $(t,\ell,\e_1,\e_2(\ell))$-decomposition $\calP$ for $H$ with $1\leq t\leq T$ and $1\leq \ell \leq L$.
\end{theorem}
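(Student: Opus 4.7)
The plan is to run a two-level energy increment argument, interleaving refinements of the vertex partition $\calP_1$ with refinements of the pair partition $\calP_2$. First I would define two mean-square-density functionals on decompositions: a graph-level energy $q_2(\calP_1, \calP_2) \in [0,1]$, essentially the average squared edge density of the bipartite pieces $P_{ij}^\alpha$, and a hypergraph-level energy $q_3(\calP_1, \calP_2, H) \in [0,1]$, the average squared density of the $3$-edges inside the triads built from cells of $\calP_2$. The driving dichotomy is that any decomposition failing the $\dev_{2,3}(\e_1, \e_2(\ell))$-condition should admit a refinement boosting one of these energies by a fixed amount depending on $\e_1$ or $\e_2(\ell)$.

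Next I would run an inner loop that, for a fixed $\calP_1$, iteratively refines $\calP_2$ until it is $\e_1$-quasirandom at the graph level. This is essentially Szemer\'{e}di's regularity lemma run simultaneously on the bipartite graphs between cells of $\calP_1$: each step where some $P_{ij}^\alpha$ fails $\e_1$-regularity admits, via Cauchy--Schwarz on the witnesses of non-regularity, a refinement increasing $q_2$ by $\Omega(\e_1^{O(1)})$. Since $q_2 \leq 1$, the inner loop terminates after $\e_1^{-O(1)}$ refinements, producing a binary complexity $\ell$ bounded by a tower of height $\e_1^{-O(1)}$ in the current value of $t$.

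Having stabilized the pair partition, I would then check whether the resulting decomposition is $\dev_{2,3}(\e_1, \e_2(\ell))$-regular for $H$. If not, there is a noticeable family of triads on which the $3$-uniform edge set deviates from its expected count, and a Gowers-style iterated Cauchy--Schwarz argument extracts either a refinement of $\calP_1$ or a refined pair decomposition on which $q_3$ (or possibly $q_2$) increases by at least $\mathrm{poly}(\e_2(\ell))$. After any such outer refinement, we loop back and rerun the inner loop at the new, larger vertex and pair complexities to restore graph-level regularity, and then re-test the triple-level condition.

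The main obstacle, and the source of the extreme bounds driving the rest of the paper, is the nested interaction of the two loops: each outer refinement enlarges $\ell$, which forces $\e_2(\ell)$ to shrink, which in turn forces the next inner loop to terminate only at a much larger $\ell$, which forces the next outer refinement to be witnessed with still smaller error, and so on. Termination of the outer loop follows from $0 \leq q_3 \leq 1$ together with the uniform energy increment at each step, so only $\mathrm{poly}(1/\e_2(\ell_{\mathrm{final}}))$ outer iterations occur; but writing down $T(\e_1, \e_2)$ and $L(\e_1, \e_2)$ explicitly requires recursion through this nested iteration, and this recursion is exactly what prevents the final bounds from being anything better than wowzer-type functions of $\e_1$ and $\e_2$ in general.
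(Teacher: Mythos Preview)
Your two-loop architecture is roughly right, but you have the roles of $\e_1$ and $\e_2$ reversed, and this swap breaks both the inner mechanism and the outer termination argument. In the definition of $\dev_{2,3}(\e_1,\e_2)$, the parameter $\e_2$ governs the \emph{binary} (graph-level) quasirandomness of the bipartite pieces $P_{ij}^\alpha$, while $\e_1$ governs the \emph{ternary} deviation of $H$ relative to the triads. Consequently the inner step should target $\dev_2(\e_2(\ell))$-regularity of the $P_{ij}^\alpha$, not $\e_1$-regularity; and the outer energy increment coming from a bad triad is $\Omega(\e_1^2)$, a fixed constant independent of $\ell$, not $\mathrm{poly}(\e_2(\ell))$. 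With your assignment the outer increment shrinks as $\ell$ grows, so ``$0\le q_3\le 1$'' does not bound the number of outer iterations, and the argument does not terminate.

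The inner mechanism is also inverted. Making the bipartite pieces $P_{ij}^\alpha$ satisfy $\dev_2(\e_2(\ell))$ is achieved by refining the \emph{vertex} partition $\calP_1$ (one invocation of the multi-colored Szemer\'edi lemma applied to the edge-coloring given by $\calP_2$), not by refining $\calP_2$ with $\calP_1$ held fixed; splitting an irregular $P_{ij}^\alpha$ into sub-edge-sets on the same $V_i\times V_j$ does not produce regularity in the Szemer\'edi sense. In the paper's proof (following Gowers), one outer step first refines $\calP_1$ via multi-colored Szemer\'edi so that every $P_{ij}^\alpha$ has $\dev_2(\e_2(\ell_i))$, and then, if some positive proportion of triads fail $\dev_{2,3}(\e_1,\e_2(\ell_i))$, invokes Gowers' increment lemma to refine $\calP_2$ and raise the ternary mean-square density by at least $2^{-10}\e_1^2$. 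This gives at most $2^{10}\e_1^{-2}$ outer iterations, and the wowzer-type bound arises because each inner Szemer\'edi step is run with error $\e_2(\ell_i)$ where $\ell_i$ already records the accumulated tower growth.
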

As we see from the statement of Theorem \ref{thm:reg2intro}, there are two size parameters to be considered, the bound on the vertex partition $T$, and the bound on the ``pairs" partition, $L$.  We now define two functions measuring the growth of these bounds in a hereditary property $\calH$ of $3$-uniform hypergraphs.  Defining the desired growth functions requires some additional notation due to the number of quantifiers appearing in Theorem \ref{thm:reg2intro}.  To this end, given, $\e_1>0$,  $\e_2:\mathbb{N}\rightarrow (0,1]$, integers $T,L\geq 1$, and a hereditary property $\calH$ of $3$-uniform hypergraphs, let $\psi(\e_1,\e_2,T,L,\calH)$ be the statement:
\begin{align*}
&\text{For every sufficiently large $H\in \calH$, there exists $1\leq t\leq T$ and $1\leq \ell\leq L$}\\ &\text{and a $\dev_{2,3}(\e_1,\e_2(\ell))$-regular $(t,\ell,\e_1,\e_2(\ell))$-decomposition $\calP$ for $H$}.
\end{align*}
We now define functions $T_{\calH}$ and $L_{\calH}$ with the intent of capturing the size of the most efficient vertex partitions and pairs partitions, respectively.

\begin{definition}\label{def:vertM}
Suppose $\calH$ is a hereditary property of $3$-uniform hypergraphs.  Given $\e_1>0$ and $\e_2:\mathbb{N}\rightarrow (0,1)$, let $T_{\calH}(\e_1,\e_2)$ be minimal so that $\psi(\e_1,\e_2,T_{\calH}(\e_1,\e_2),L,\calH)$ is true for some $L\geq 1$.
\end{definition}

\begin{definition}\label{def:pairsM}
Suppose $\calH$ is a hereditary property of $3$-uniform hypergraphs.  Given $\e_1>0$ and $\e_2:\mathbb{N}\rightarrow (0,1)$, let $L_{\calH}(\e_1,\e_2)$ be minimal so that $\psi(\e_1,\e_2,T,L_{\calH}(\e_1,\e_2),\calH)$ is true for some $T\geq 1$.
\end{definition}

Note that for any hereditary property $\calH$ of $3$-uniform hypergraphs, $T_{\calH}(\e_1,\e_2)\leq T(\e_1,\e_2)$ and $L_{\calH}(\e_1,\e_2)\leq L(\e_1,\e_2)$, where $T(\e_1,\e_2)$ and $L(\e_1,\e_2)$ are from Theorem \ref{thm:reg2intro}.  

The central topic of this paper and part 4 \cite{Terry.2024d} is the possible asymptotic behavior of $T_{\calH}$ and $L_{\calH}$, where by ``asymptotic behavior,'' we mean for $\e_1>0$ which are sufficiently small, and functions $\e_2:\mathbb{N}\rightarrow (0,1]$ where $\e_2(n)$ tends to $0$ sufficiently fast as $n$ goes to infinity.  

\begin{problem}\label{prob:main}
Investigate the asymptotic behavior of $T_{\calH}(\e_1,\e_2)$ and $L_{\calH}(\e_1,\e_2)$ for $\e_1>0$ sufficiently small, and  $ \e_2:\mathbb{N}\rightarrow (0,1]$ tending to $0$ sufficiently fast.  
\end{problem}

In applications of Theorem \ref{thm:reg2}, it is usually sufficient to work with $\e_2:\mathbb{N}\rightarrow (0,1]$ tending to zero at only a polynomial rate.  In light of this, it is natural to consider Problem \ref{prob:main} restricted to polynomial $\e_2$. This problem was brought to the attention of the author by Asaf Shapira after an initial draft  of this paper appeared on the arXiv. 

\begin{problem}[Shapira]\label{prob:poly}
  $\text{ }$
\begin{enumerate}
\item Investigate the asymptotic behavior of $T_{\calH}(\e_1,\e_2)$  for $\e_1>0$ sufficiently small, and {\bf polynomial} $ \e_2:\mathbb{N}\rightarrow (0,1]$ tending to $0$ sufficiently fast.  
\item Investigate the asymptotic behavior of $L_{\calH}(\e_1,\e_2)$  for $\e_1>0$ sufficiently small, and {\bf polynomial} $ \e_2:\mathbb{N}\rightarrow (0,1]$ tending to $0$ sufficiently fast.  
\end{enumerate}
\end{problem}

The $T_{\calH}$ part of Problem \ref{prob:main} is the main topic of this paper, and all but one of our results are also relevant to Problem \ref{prob:poly}(1). We have taken care in this updated version to make explicit how our results relate to Problem \ref{prob:poly1}.  We also refer the reader to part 4 \cite{Terry.2024d} for results about $L_{\calH}$.

We next discuss what was known previously about $T_{\calH}$ and $L_{\calH}$. For this, we will require reference to the so-called Ackerman hierarchy.  Following the notation in \cite{Moshkovitz.2019}, we define $Ack_1:\mathbb{N}\rightarrow \mathbb{N}$ to be the function $x\mapsto 2^x$, and for $k>1$, we let $Ack_{k}:\mathbb{N}\rightarrow \mathbb{N}$  be the function satisfying $A_{k}(1)=1$ and for all $x>1$, $Ack_{k}(x)=Ack_{k-1}(Ack_{k}(x-1))$.  The functions $Ack_2$ and $Ack_3$ will be particularly important in the following discussion.  Note that $Ack_2$ is simply the tower function, $Tw$.  The function $Ack_3$ is also referred to as the \emph{wowzer function}, which we will denote by $W:\mathbb{N}\rightarrow \mathbb{N}$. With this notation, the wowzer function $W$ is defined by setting $W(1)=1$ and for all $x>1$, setting $W(x)=Tw(x-1)$.  Given an arbitrary function $f:\mathbb{N}\rightarrow \mathbb{N}$, we will informally refer to $f$ as \emph{$Ack_k$-type} if $f=o(Ack_{k+1})$ and $f=\Omega(Ack_{k})$. 

The proof of Theorem 8.10 in \cite{Gowers.2007} produces bounds $T(\e_1,\e_2)$ and $L(\e_1,\e_2)$ for  Theorem \ref{thm:reg2intro} which are roughly $poly(\e_1^{-1})$ many iterations of a function $f$, where $f$ is of the form   $f(x)=Tw(p(\e_2(x)^{-1},\e_1^{-1}))$, for some polynomial $p(x,y)$. More specifically, 
\begin{align}\label{bound}
T(\e_1,\e_2),L(\e_1,\e_2)\leq f^{(m)}(1),
\end{align}
where $m=2^{10}\e_1^{-2}$.  Since bounds are not spelled out explicitly in \cite{Gowers.2007}, we provide a sketch of a proof of Theorem \ref{thm:reg2intro} including the above stated bounds, in Appendix \ref{app:bounds}.  When $\e_2$ is a function of tower-type or below, one can see the above bounds are wowzer-type.  If $\e_2$ is of $Ack_k$-type for some $k>2$, then the bound $f^{(m)}(\e^{-1})$ will be of $Ack_{k+1}$-type. For applications, $\e_2$ is typically a polynomial,  in which case the bound in (\ref{bound}) for $T(\e_1,\e_2)$ and $L(\e_1,\e_2)$ is wowzer-type.  

Several years after Theorem \ref{thm:reg2intro} was proved, Moshkovitz and Shapira \cite{Moshkovitz.2019,MoshkovitzSimple} showed wowzer-type bounds are necessary for the vertex partition.  In particular, their work implies that there exists some hereditary property $\calH$, and a polynomially growing $\e_2$, so that  $T_{\calH}(\e_1,\e_2)\geq W(\e_1^{-C})$, for some $C>0$.  Their lower bound is essentially tight, in the sense that it matches the form of the upper bound $T(\e_1,\e_2)$ in Theorem \ref{thm:reg2}, for their choice of slow growing $\e_2$.  The only other previous result about these bounds, due to the author, says that when $\calH$ has finite $\VC_2$-dimension, $L_{\calH}$ is bounded above by a polynomial in $\e_1^{-1}$ \cite{Terry.2022}.

 The main theorem of this paper is Theorem \ref{thm:strong1} below, which gives substantial information about the possible asymptotic behavior of $T_{\calH}(\e_1,\e_2)$, with regards to both Problem \ref{prob:main} and Problem \ref{prob:poly}(1).

\begin{theorem}\label{thm:strong1}
Suppose $\calH$ is a hereditary property of $3$-uniform hypergraphs.  Then one of the following holds.
\begin{enumerate}
\item (At Least Wowzer)\footnote{An upper bound for this range comes from the proof of Theorem \ref{thm:reg2intro}. The growth rate of said upper bound depends on the growth rate of $\e_2$, as one can see from the discussion around inequality (\ref{bound}).}  For all sufficiently small $\e_1>0$ there exists $\e_2:\mathbb{N}\rightarrow (0,1]$ so that $ W(\Omega(\e_1^{-1/7}))\leq T_{\calH}(\e_1,\e_2)$.
\item (Exponential) There exist $C,C'>0$ and a polynomial $p(x,y)$ so that for all sufficiently small $\e_1>0$, and all $\e_2:\mathbb{N}\rightarrow(0,1]$ satisfying $\e_2(x)\leq p(\e_1,x^{-1})$, 
$$
2^{\e_1^{-C}}\leq T_{\calH}(\e_1,\e_2)\leq 2^{\e_1^{-C'}}.
$$
\item (Polynomial) There exist $C,C'>0$ and a polynomial $p(x,y)$ so that for all sufficiently small $\e_1>0$, and all $\e_2:\mathbb{N}\rightarrow(0,1]$ satisfying $\e_2(x)\leq p(\e_1,x^{-1})$, 
 $$
 \e_1^{-C}\leq T_{\calH}(\e_1,\e_2)\leq \e_1^{-C'}.
 $$
 
\item (Constant) There exists $C>0$ and a polynomial $p(x,y)$ so that for all sufficiently small $\e_1>0$, and all $\e_2:\mathbb{N}\rightarrow(0,1]$ satisfying $\e_2(x)\leq p(\e_1,x^{-1})$, 
$$
T_{\calH}(\e_1,\e_2)=C.
$$
\end{enumerate}
\end{theorem}

We note here two updates we have made to Theorem \ref{thm:strong1} compared to the original version.  First, in  the original version of this theorem, the upper bound in range (2) of Theorem \ref{thm:strong1}  was a double exponential in $\e_1^{-1}$, a bound which came from part 1 of this series \cite{Terry.2024a}. The main result of part 1 \cite{Terry.2024a} was recently improved to a single exponential by Gishboliner, Shapira, and Widgerson, \cite{GSW}, which yields the tight upper bound for range (2) as stated in Theorem \ref{thm:strong1} above.   Second, we have updated Theorem \ref{thm:strong1} to reflect the fact that many of our proofs require only polynomial $\e_2$.  In particular, as Theorem \ref{thm:strong1} makes clear, the ``jumps" in ranges (2)-(4) apply even when one restricts to polynomial choices of $\e_2$.  On the other hand, our proof of the lower bound in range (1) makes use of a very fast growing $\e_2$, suggesting the fastest growth rate may differ if one restricts to the case of polynomial $\e_2$.  This narrows Problem \ref{prob:poly}(1) down to just those $\calH$ which fall into range (1) in Theorem \ref{thm:strong1}.  The proof of Theorem \ref{thm:strong1} gives combinatorial characterizations of each class, including range (1), allowing for a completely explicit statement of what remains open for Problem \ref{prob:poly}(1) (see Problem \ref{prob:poly1}).

The proof of Theorem \ref{thm:strong1} makes crucial use of two connections with other types of regularity, namely strong graph regularity on the one hand, and weak hypergraph regularity on the other.  We will now outline these connections and their roles in the proof of Theorem \ref{thm:strong1}.

The first main ingredient in the proof of Theorem \ref{thm:strong1} is a sufficient condition for $T_{\calH}$ to fall into range (1).   To describe this in more detail, we require some notation. For any graph $G=(V,E)$, we can build a $3$-uniform hypergraph from $G$ by adding on $n$-many ``dummy vertices." In particular, we let $C$ be a new set of vertices of size $n$, and define 
$$
n\otimes G:=(V\cup C, \{xyz: xy\in E, z\in C\}).
$$
A crucial example of this construction is when the starting graph $G$ is the \emph{$k$-power set graph},
$$
U(k):=(\{a_i: i\in [k]\}\cup \{b_S:S\subseteq [k]\},\{a_ib_S:i\in S\}).
$$

Given a graph $G=(V,E)$, one can naturally define a corresponding bipartite graph by doubling its vertex set.  In particular, we let $\bip(G)$ have vertex set $\{a_v,b_v: v\in V\}$, and edge set $\{a_vb_{v'}:vv'\in E\}$.  Similarly, given a $3$-uniform hypergraph $H=(V,E)$, we define $\trip(G)$ to be the $3$-uniform hypergraph with vertex set $\{a_v,b_v,c_v: v\in V\}$ and edge set $\{a_vb_{v'}c_{v''}:vv'v''\in E\}$. The growth of regular partitions in a hereditary property $\calH$ is closely related to $\trip(\calH)$, which we define to be the hereditary closure of $\{\trip(H): H\in \calH\}$.  An important division for us will be between properties which are \emph{close to finite slicewise VC-dimension} and those which are \emph{far from finite slicewise VC-dimension}, where the latter class consists of exactly those $\calH$ satisfying $k\otimes U(k)\in \trip(\calH)$ for all $k\geq 1$ (see Section \ref{ss:homsl} for more details).  

The first main ingredient for Theorem \ref{thm:strong1} is showing that when $k\otimes U(k)\in \trip(\calH)$ for all $k\geq 1$,  we have that $T_{\calH}$ is bounded below by a wowzer type function. 
 
 \begin{theorem}\label{thm:main1}
 Suppose $\calH$ is a hereditary property of $3$-uniform hypergraphs. If $k\otimes U(k)\in \trip(\calH)$ for all $k\geq 1$, then for all sufficiently small $\e_1>0$, there exists $\e_2:\mathbb{N}\rightarrow (0,1]$ so that $T_{\calH}(\e_1,\e_2)\geq W(\Omega(\e_1^{-1/7}))$. 
 \end{theorem}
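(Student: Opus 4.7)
The plan is to reduce the theorem to the Conlon--Fox lower bound for strong \emph{graph} regularity applied to the powerset graph $U(k)$, using the $n\otimes G$ construction as the bridge between graph regularity and $3$-uniform hypergraph regularity. The underlying heuristic is that in $k\otimes U(k)$ the dummy set $C$ uniformly witnesses every graph edge of $U(k)$ as a hypergraph edge, so any $\dev_{2,3}$-regular decomposition of $k\otimes U(k)$ is forced to encode a strong regular partition of $U(k)$ inside its pair partition $\calP_2$. The Conlon--Fox wowzer lower bound for strong regular partitions of $U(k)$ then yields $T_\calH(\e_1,\e_2)\geq W(\Omega(\e_1^{-1/7}))$.

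Concretely, I fix small $\e_1>0$, set $k:=\lceil c\,\e_1^{-1/7}\rceil$ with an absolute constant $c$ calibrated to the Conlon--Fox exponent, and choose $\e_2:\mathbb{N}\to(0,1]$ decaying fast enough that at every scale $\ell$ in the relevant range, the error propagated into the derived graph application remains below the Conlon--Fox threshold for $U(k)$. Using the hypothesis $k\otimes U(k)\in\trip(\calH)$, I locate $H_k\in\calH$ such that $\trip(H_k)$ contains arbitrarily large blow-ups of $k\otimes U(k)$ as induced sub-hypergraphs, which handles the ``sufficiently large'' clause in Definition~\ref{def:vertM} and allows me to lift any vertex partition of $H_k$ to one of these blow-ups via the natural tripartite map $v\mapsto\{a_v,b_v,c_v\}$.

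The core step is the following reduction. Suppose the relevant blow-up admits a $\dev_{2,3}(\e_1,\e_2(\ell))$-regular $(t,\ell,\e_1,\e_2(\ell))$-decomposition $\calP=(\calP_1,\calP_2)$ induced from a vertex partition of $H_k$ of size $t=T_\calH(\e_1,\e_2)$. After a cleaning step I may assume each $V_i\in\calP_1$ lies almost entirely inside one of the three natural type-parts $A$, $B$, $C$ of the blow-up. For an $A$-$B$ pair $(V_i,V_j)$ and $V_h\subseteq C$, the hypergraph density on a triad $(P^{\alpha}_{ij},P^{\beta}_{ih},P^{\gamma}_{jh})$ is either $0$ or $1$ depending only on whether $P^{\alpha}_{ij}$ sits on the edge-side or non-edge-side of $U(k)$, because the $C$-coordinate acts as a universal witness for edges. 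The $\dev_{2,3}(\e_1,\e_2(\ell))$-regularity then forces each $P^{\alpha}_{ij}$ to align, up to $\e_2(\ell)$-error, with either $E(U(k))\cap(V_i\times V_j)$ or its complement. Grouping the edge-classes within each $V_i\times V_j$ recovers a strong regular graph partition of $U(k)$ whose vertex-partition complexity is polynomial in $t$.

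Invoking the Conlon--Fox wowzer lower bound for strong regular partitions of $U(k)$ now gives $t^{O(1)}\geq W(\Omega(k))=W(\Omega(\e_1^{-1/7}))$, from which $t\geq W(\Omega(\e_1^{-1/7}))$ follows after absorbing the constant into the choice of $c$. The main technical obstacle is the alignment step: turning $\dev_{2,3}$-regularity over triads passing through $C$ into a co-density statement for each $P^{\alpha}_{ij}$ against $E(U(k))$ is a delicate counting argument, and the errors must be kept below the Conlon--Fox threshold uniformly across the whole range of $\ell$ up to the upper bound from Theorem~\ref{thm:reg2intro}. This is precisely where the fast decay of $\e_2$ in the hypothesis enters: $\e_2(\ell)$ must be small enough at \emph{every} scale $\ell$ for the extracted graph partition to be genuinely strong regular.
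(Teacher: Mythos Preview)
Your overall strategy --- extract from a $\dev_{2,3}$-regular decomposition of $n\otimes G$ a strong regular graph partition of $G$, then invoke Conlon--Fox --- matches the paper's approach exactly. However, there is a fatal gap in the choice of $G$. You apply this reduction to $G=U(k)$ with $k\approx\e_1^{-1/7}$ and then invoke ``the Conlon--Fox wowzer lower bound for strong regular partitions of $U(k)$.'' No such lower bound exists: $U(k)$ has only $k+2^k$ vertices, and any blow-up of $U(k)$ admits a perfect (zero-error) regular partition into $k+2^k$ parts by simply taking one class per blown-up vertex. Since every pair of classes is then complete or empty, this partition is also strongly regular with zero energy error. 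Hence the best lower bound your argument can produce is $t\leq 2^{O(\e_1^{-1/7})}$, which falls in range (2) of Theorem~\ref{thm:strong}, not range (1). The Conlon--Fox theorem (Theorem~\ref{thm:conlonfox}) applies to a specific, carefully engineered graph $G_\e(n)$, not to $U(k)$.

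The missing step is that the hypothesis ``$k\otimes U(k)\in\trip(\calH)$ for all $k$'' is not used to work with $U(k)$ directly, but rather as a universality statement: it implies (Observation~\ref{ob:universal3}) that $n\otimes G\in\trip(\calH)$ for \emph{every} bipartite graph $G$ and every $n$. In particular, $\trip(\calH)$ contains $n\otimes\bip(G_\e(n))$, where $G_\e(n)$ is the actual Conlon--Fox graph. The paper then runs exactly your reduction on this hypergraph (Theorem~\ref{thm:main0}), and the genuine Conlon--Fox lower bound for $G_\e(n)$ yields the wowzer bound. Your alignment step (showing each $P_{ij}^\alpha$ is nearly contained in $E$ or its complement) is correct in spirit and corresponds to Claim~\ref{cl:1} in the paper, but it must be applied with the Conlon--Fox graph in place of $U(k)$.
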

 
Theorem \ref{thm:main1} follows from Theorem \ref{thm:main0} below, which says there exist $3$-uniform hypergraphs which require $T_{\calH}$ to be large, and which have the form $N\otimes G$, for some graph $G$. Theorem \ref{thm:main0} is proved in Section \ref{sec:wowzer}.

\begin{theorem}\label{thm:main0}
There are constants $C,\e^*>0$ and $K\geq 1$ so that for all $0<\e_1<\e^*$, there exists $\e_2:\mathbb{N}\rightarrow (0,1]$ so that for any integers $\ell,N\geq 1$,  there exists a bipartite graph $G$ on at least $N$ vertices so that any $\dev_{2,3}(\e_1,\e_2(\ell))$-regular $(t,\ell,\e_1,\e_2(\ell))$-decomposition of $H:=N\otimes G$ has $t\geq W(C(\e_1^{-1/K}))$.
\end{theorem}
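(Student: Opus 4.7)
The plan is to exploit the rigid structure of the hypergraph $H=N\otimes G$, whose hyperedges $xyz$ satisfy $z\in C$ and $xy\in E(G)$, to reduce the lower bound for $t$ to a strong graph regularity lower bound for $G$. I would take $G$ to be a bipartite graph on at least $N$ vertices obtained from a Conlon--Fox-type construction for strong graph regularity: any decomposition of $G$ consisting of a vertex partition $A=U_1\sqcup\cdots\sqcup U_s$, $B=W_1\sqcup\cdots\sqcup W_s$ together with a refinement of each bipartite piece into at most $\ell$ cells, each of which is $\eta$-quasirandom with $\eta$ sufficiently small as a function of $s$ and $\ell$, must satisfy $s\geq W(C\e_1^{-1/K})$. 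The wowzer growth of $s$ arises from the Conlon--Fox construction, which nests many layers of refinement, each calibrated so that its regularity scale forces a tower-type jump in complexity at the next layer.

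Now consider an arbitrary $\dev_{2,3}(\e_1,\e_2(\ell))$-regular $(t,\ell,\e_1,\e_2(\ell))$-decomposition $\calP=(\calP_1,\calP_2)$ of $H$. First, by refining $\calP_1$ to respect the tripartition $V(H)=A\cup B\cup C$, at the cost of at most tripling $t$, we may assume each part of $\calP_1$ lies entirely in $A$, $B$, or $C$; this preserves the shape of the desired lower bound up to an absolute constant. For a triple of parts $V_i\subseteq A$, $V_j\subseteq B$, $V_k\subseteq C$, the hyperedges of $H$ inside $V_i\times V_j\times V_k$ are exactly $\{xyz:\ xy\in E(G)\cap(V_i\times V_j),\ z\in V_k\}$, so the ternary density equals $d_G(V_i,V_j)$, independent of the dummy block $V_k$. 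This rigidity is the engine that will collapse the ternary $\dev_{2,3}$ condition into a bipartite condition on $G$.

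Next I would unravel what $\dev_{2,3}(\e_1,\e_2(\ell))$-regularity says for such a triple. The condition demands that the hyperedges are quasirandom relative to the triangle system generated by the bipartite cells $P_{ij}^{\alpha}\subseteq V_i\times V_j$, $P_{ik}^{\beta}\subseteq V_i\times V_k$, and $P_{jk}^{\gamma}\subseteq V_j\times V_k$. Since the hyperedge relation depends only on $G$ on the $A\times B$ cell, averaging the $\dev_{2,3}$ count over the dummy slot $V_k$ collapses it, with loss polynomial in $\e_1$ and $\e_2(\ell)$, into the assertion that each restriction $G|_{P_{ij}^{\alpha}}$ is $\eta$-quasirandom as a bipartite graph between $V_i$ and $V_j$. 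Consequently, the restriction of $(\calP_1,\calP_2)$ to $V(G)$ is a strong graph regularity decomposition of $G$ with at most $t$ vertex parts and at most $\ell$ bipartite cells per pair, at regularity $\eta$. Choosing $\e_2$ so that $\e_2(\ell)$ decays fast enough in $\ell$ to push $\eta$ into the Conlon--Fox regime at every layer, the lower bound for $G$ forces $t\geq W(C\e_1^{-1/K})$, as required.

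The main obstacle is the quantitative translation step in the previous paragraph: one must carefully verify that the octahedral-type deviation counts defining $\dev_{2,3}$ really do collapse, for hypergraphs of the rigid form $N\otimes G$, to bipartite quasirandomness of $G$ on each cell $P_{ij}^{\alpha}$, with only polynomial parameter loss. The clean mechanism is that the ternary hyperedge indicator factors as a product of a graph indicator on $A\times B$ and the constant function on $C$, so averaging over the $V_k$ slot eliminates the purely ternary content of $\dev_{2,3}$. A secondary delicate point is the tail-recursive choice of $\e_2$: it must decrease in $\ell$ quickly enough that the translated regularity $\eta$ triggers the appropriate layer of the nested Conlon--Fox construction, so that the iteration genuinely produces a wowzer-type bound in $\e_1^{-1/K}$ rather than only a tower-type bound.
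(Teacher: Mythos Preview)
Your high-level plan---build $H=N\otimes G$ from a Conlon--Fox graph and reduce any efficient $\dev_{2,3}$-decomposition of $H$ to a too-efficient strong regularity witness for $G$---is the right one, and it is exactly what the paper does. But two of your intermediate claims are off, and together they constitute a real gap.

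First, your ``collapse'' claim misidentifies what $\dev_{2,3}$ actually forces. If you carry out the averaging you describe, the octahedral sum over a regular triad $(V_i,V_j,V_k;P_{ij}^{\alpha},P_{ik}^{\beta},P_{jk}^{\gamma})$ does \emph{not} reduce to a $\dev_2$ count for $E(G)\cap P_{ij}^{\alpha}$; it reduces to a sum of \emph{squares} of the balanced indicator, which is bounded below by $\min(d,1-d)^{8}$ times the $K_{2,2,2}$ count. So what the $\dev_{2,3}$ bound yields is not ``$G|_{P_{ij}^{\alpha}}$ is quasirandom'' but rather that $d=d_H(G_{\text{triad}})$ is close to $0$ or $1$, i.e.\ homogeneity. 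The paper obtains this same conclusion via Fact~\ref{fact:hvc2} ($\VC_2(H)\leq 1$) and Proposition~\ref{prop:suffvc2}, and then deduces (Claim 5.6 in the proof) that $|\overline{E}\cap P_{A_iB_j}^{\alpha}|/|P_{A_iB_j}^{\alpha}|$ is near $0$ or $1$ for every good cell.

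Second, and more seriously, the object you call a ``strong graph regularity decomposition'' (a vertex partition together with an $\ell$-cell pairs partition in which each cell is quasirandom) is \emph{not} the structure to which the Conlon--Fox lower bound applies. Conlon--Fox (Theorem~\ref{thm:conlonfox}) concerns two nested vertex equipartitions $\calV\preceq\calU$ with $q_G(\calU)\leq q_G(\calV)+\e$ and $\calU$ being $\e/|\calV|$-regular; there is no known wowzer lower bound for your (vertex\,+\,pairs) formulation, and indeed nothing in that formulation ties $\e_1$ to the number of vertex parts. The paper closes this gap as follows: from homogeneity it knows $E$ is $O(\e_1^{1/K})$-approximated by a union of cells $P_{A_iB_j}^{\alpha}$, each of which has $\dev_2(\e_2(\ell))$; it then builds the \emph{coarse} vertex partition $\calV$ of $V(G)$ from $\calP_1$, applies ordinary Szemer\'{e}di to obtain a \emph{fine} $\e_1/|\calV|$-regular refinement $\calU$, and uses the extreme quasirandomness of the cells (this is where the freedom in $\e_2$ is spent) to show that $\calU$ is an $\e$-conservative refinement of $\calV$, hence $q_G(\calU)\leq q_G(\calV)+\e$ by Fact~\ref{fact:index}. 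Only then does Theorem~\ref{thm:conlonfox} bite and force $|\calV|\geq W(C\e^{-1/7})$, giving the lower bound on $t$. Your outline skips this entire conservative-refinement bridge, and without it there is no way to invoke Conlon--Fox.
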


Theorem \ref{thm:main0} will be used to deduce Theorem \ref{thm:main1}.     Theorem \ref{thm:main0} also answers a question from \cite{Terry.2022}.  In particular, it shows there exist $3$-uniform hypergraphs of very small $\VC_2$-dimension (in this case, $\VC_2$-dimension $1$), which require wowzer-type bounds on the vertex partition (see Section \ref{sec:wowzer} for details on this).  We note here that the proof of Theorem \ref{thm:main0} is the only place in the paper which makes use of superpolynomial $\e_2$. Indeed, it uses a wowzer choice of $\e_2$.

The proof of Theorem \ref{thm:main0} leverages an interesting connection to strong graph regularity. First proved in \cite{Alon.2000}, the \emph{strong graph regularity lemma} says roughly the following: for any function $f:\mathbb{N}\rightarrow (0,1)$ and any $\e>0$, there exists a bound $M=M(f,\e)$ so that for any sufficiently large graph $G$, there are partitions $\calV$ and $\calU$ of $V(H)$ so that $\calV\preceq \calU$, so that $|\calV|\leq M$, so that $\calV$ is $f(|\calU|)$-regular, and so that for most pairs $(V_i,V_j)\in \calV^2$ the density of $G$ on $(V_i,V_j)$ is approximately the same as the density of $G$ on the $\calU$-pair containing $(V_i,V_j)$.  The proof produces a wowzer type bound for the sizes of $\calU,\calV$, which was later shown to be necessary by Conlon and Fox \cite{Conlon.2012} and independently, Kalyanasundaram and Shapira  \cite{KaSh}.    

The graph $G$ appearing in Theorem \ref{thm:main0} is $\bip(G')$, where $G'$ is a graph constructed by Conlon and Fox \cite{Conlon.2012} to prove a lower bound for the strong graph regularity lemma.  Crucially, Conlon and Fox proved this $G'$ requires wowzer bounds for a weakening of the usual strong graph regularity lemma (see Section \ref{ss:CF} for details).  To prove Theorem \ref{thm:main0}, we show that if $H=N\otimes \bip(G')$ had regular partitions with sub-wowzer-many vertex parts, it would produce regular partitions for the \emph{graph} $G'$, violating the theorem of Conlon and Fox.  The  proof of Theorem \ref{thm:main0} shows that strong graph regularity is  the main limiting factor with regards to the bound $T$ in Theorem \ref{thm:reg2intro}.

The second main ingredient in the proof of Theorem \ref{thm:strong1} is showing that if the hypotheses of Theorem \ref{thm:main1} fail for a property $\calH$, then $T_{\calH}$  falls into ranges (2), (3), or (4).  These results require only polynomial $\e_2$, which we make explicit in the statement below.

\begin{theorem}\label{thm:main2intro}
Suppose $\calH$ is a hereditary property of $3$-uniform hypergraphs, and assume that for some $k\geq 1$, $k\otimes U(k)\notin \trip(\calH)$.  Then one of the following holds.
\begin{enumerate} 
\item (Exponential) There exist $C,C'>0$ and a polynomial $p(x,y)$ so that for all $\e_2:\mathbb{N}\rightarrow (0,1]$ satisfying $\e_2(x)\leq p(\e_1,x^{-1})$, 
$$
2^{\e_1^{-C}}\leq T_{\calH}(\e_1,\e_2)\leq 2^{\e_1^{-C'}}.
$$
\item (Polynomial) There exist $C,C'>0$ and a polynomial $p(x,y)$ so that for all $\e_2:\mathbb{N}\rightarrow (0,1]$ satisfying $\e_2(x)\leq p(\e_1,x^{-1})$, 
$$
\e_1^{-C}\leq T_{\calH}(\e_1,\e_2)\leq \e_1^{-C'}.
$$
\item (Constant) There exist  $C\geq 1$ and a polynomial $p(x,y)$ so that for all $\e_2:\mathbb{N}\rightarrow (0,1]$ satisfying $\e_2(x)\leq p(\e_1,x^{-1})$, $T_{\calH}(\e_1,\e_2)=C$.
\end{enumerate}
\end{theorem}

Combining Theorems \ref{thm:main2intro} and \ref{thm:main0} immediately yields Theorem \ref{thm:strong1}.  While the proof of Theorem \ref{thm:main0} is connected to strong graph regularity, the proof of Theorem \ref{thm:main2intro} is connected to weak hypergraph regularity, as we now explain. The precise definition of weak regularity is not needed in this paper, but the interested reader can find it stated in Section 3 of \cite{Terry.2024b}. 

 It was shown by Chung \cite{Chung.1991} that all $3$-uniform hypergraphs admit weak $\e$-regular partitions, leading to the definition in \cite{Terry.2024b} of a corresponding growth function $M_{\calH}^{weak}:(0,1)\rightarrow \mathbb{N}$ in analogy to of Definition \ref{def:M}\footnote{In \cite{Terry.2024b}, this function is simply referred to as $M_{\calH}$ as there is only one type of hypergraph regularity appearing in that paper.}

 The subject of part 2 of this series is the asymptotic behavior of $M_{\calH}^{weak}$.  It was shown there that the behavior of $M^{weak}_{\calH}$ is closely related to the function $M_{\calH}^{hom}$, which measures the size of so-called \emph{homogeneous} partitions, when they exist.  Roughly speaking, an $\e$-homogeneous partition of a $3$-uniform hypergraph is a vertex partition in which almost all triples of parts have density within $\e$ of $0$ or $1$ (for precise definitions, see Section \ref{sec:hom}).  The main ingredient for Theorem \ref{thm:main2intro}  is Theorem \ref{thm:ingredient1} below, which says that when the hypotheses of Theorem \ref{thm:main2intro} hold, $T_{\calH}$ is roughly controlled by $M^{weak}_{\calH}$ and $M_{\calH}^{hom}$.

\begin{theorem}\label{thm:ingredient1}
For all $k\geq 1$, there is $K\geq 1$ so that the following holds.  Suppose $\calH$ is a hereditary property of $3$-uniform hypergraphs and assume $k\otimes U(k)\notin \trip(\calH)$.  Then there is a polynomial $p(x,y)$ so that for all sufficiently small $\e_1>0$ and all $\e_2:\mathbb{N}\rightarrow (0,1]$ satisfying $\e_2(x)\leq p(\e_1,x^{-1})$, 
$$
M^{weak}_{\calH}(\e_1^{1/K})\leq T_{\calH}(\e_1,\e_2)\leq M_{\calH}^{hom}(\e_1^{K}).
$$
\end{theorem}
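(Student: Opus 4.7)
The statement asserts two inequalities, which I would prove separately. The upper bound, $T_{\calH}(\e_1,\e_2) \leq M_{\calH}^{hom}(\e_1^K)$, says that a sufficiently strong homogeneous partition can be promoted to a strong regular decomposition by taking the trivial pairs partition. The lower bound, $M^{weak}_{\calH}(\e_1^{1/K}) \leq T_{\calH}(\e_1,\e_2)$, says the vertex partition of any strong regular decomposition is already weakly regular, with a polynomial loss in the parameter. The hypothesis $k \otimes U(k) \notin \trip(\calH)$ is what makes the upper bound substantive: by the structural analysis of $3$-uniform hypergraph properties developed in part 2 of this series, this assumption should guarantee that $\calH$ admits $\delta$-homogeneous partitions of controlled size, so that $M_{\calH}^{hom}(\e_1^K)$ is finite and the inequality is non-vacuous. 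The dependence $K=K(k)$ would enter through this structural reduction.

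For the upper bound, fix a sufficiently large $H \in \calH$ and an $\e_1^K$-homogeneous partition $\calP_1 = \{V_1, \ldots, V_t\}$ with $t = M_{\calH}^{hom}(\e_1^K)$. I would take the trivial pairs partition, with $\ell = 1$ and $P_{ij}^1 = V_i \times V_j$. Each $P_{ij}^1$ is a complete bipartite graph, which is perfectly $\e_1$-quasirandom as a bipartite relation, so the $(\ell,\e_1)$-regularity of $\calP_2$ is automatic. The remaining task is to check the $\dev_{2,3}(\e_1, \e_2(1))$-regularity of $H$ relative to the trivial triads, which are just the vertex triples $(V_i, V_j, V_k)$. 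Homogeneity guarantees that all but an $\e_1^K$-fraction of such triples have $H$-density within $\e_1^K$ of $0$ or $1$, and a routine octahedral-count estimate bounds the $\dev_{2,3}$ deviation on such a triple by a fixed polynomial in $\e_1^K$. Choosing $K$ large enough (depending on that exponent), this falls below both $\e_1$ and $\e_2(1)$ for all sufficiently small $\e_1$, giving the desired decomposition.

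For the lower bound, take a $\dev_{2,3}(\e_1, \e_2(\ell))$-regular $(t, \ell, \e_1, \e_2(\ell))$-decomposition $(\calP_1, \calP_2)$ of $H$ with $t = T_{\calH}(\e_1, \e_2)$, and show $\calP_1$ is itself a weakly $\e_1^{1/K}$-regular partition. First, I would aggregate $\dev_{2,3}$-regularity from triads to vertex triples: since all but an $\e_1$-fraction of triads $(P_{ij}^\alpha, P_{jk}^\beta, P_{ik}^\gamma)$ are $\dev_{2,3}(\e_2(\ell))$-regular, and each $P_{ij}^\alpha$ is an $\e_1$-quasirandom bipartite graph, a convexity-style estimate summed over triads bounds the octahedral deviation of $H$ restricted to most vertex triples $(V_i, V_j, V_k)$, treated now as ordinary $3$-uniform hypergraphs with no auxiliary graph structure. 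Second, I would invoke the standard implication from $\dev_{2,3}$-regularity of a vertex triple to weak regularity of that triple (a Cauchy--Schwarz/box-norm argument), converting the deviation bound into weak $\e_1^{1/K}$-regularity, with $K$ chosen to absorb the polynomial losses from both steps. This yields $M^{weak}_{\calH}(\e_1^{1/K}) \leq t$, as required.

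The main obstacle is the aggregation step in the lower bound: careful bookkeeping is required to control how the $\e_1$-quasirandomness of the pair pieces and the $\dev_{2,3}(\e_2(\ell))$-regularity of most triads combine into a bound on the vertex-triple octahedral norm, while keeping the loss in the regularity parameter polynomial rather than super-polynomial. The counting lemmas associated with strong hypergraph regularity, as developed in \cite{Gowers.2007, Nagle.2006}, contain analogous manipulations, but these typically lose more than a polynomial factor, so some care is needed to extract the dependence $\e_1 \mapsto \e_1^{1/K}$. A secondary challenge is verifying, from the hypothesis $k \otimes U(k) \notin \trip(\calH)$, that $M_{\calH}^{hom}(\e_1^K)$ is finite and that the constant $K$ can be chosen uniformly in $k$; this is where the structural results of part 2 of the series are invoked, and the dependence of $K$ on $k$ ultimately comes from those results.
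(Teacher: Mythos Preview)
Your upper bound argument matches the paper's (Lemma~\ref{lem:vd} plus Proposition~\ref{prop:homimpliesrandome}). The lower bound, however, has a genuine gap in the aggregation step. You propose to sum the triad-level $\dev_{2,3}$-regularity into a bound on the octahedral deviation of $H$ on each vertex triple $(V_i,V_j,V_k)$, but this fails because the triad densities $d_H(G^{\alpha\beta\gamma})$ may differ. Writing the balanced function on $V_i\times V_j\times V_k$ as $\sum_{\alpha\beta\gamma} (1_H-d_H(G^{\alpha\beta\gamma}))1_{K_3(G^{\alpha\beta\gamma})} + \sum_{\alpha\beta\gamma} (d_H(G^{\alpha\beta\gamma})-d)1_{K_3(G^{\alpha\beta\gamma})}$, the first sum has small octahedral norm by hypothesis, but the second is a structured function whose octahedral norm need not be small. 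Concretely, take $\ell=2$ with each $P_{ij}^\alpha$ quasirandom of density $1/2$, and set $\overline{E(H)}\cap(V_i\times V_j\times V_k)=K_3(G^{111})$. Every triad then has density $0$ or $1$ and is perfectly $\dev_{2,3}$-regular, yet the $K_{2,2,2}$-count of $H$ on this vertex triple is $\approx 2^{-12}|V_i|^2|V_j|^2|V_k|^2$, whereas quasirandomness at density $1/8$ would force it near $2^{-24}|V_i|^2|V_j|^2|V_k|^2$. So the vertex triple is far from having small octahedral deviation, and your Step~2 never gets off the ground.

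The paper takes a different route and, contrary to your reading, uses the hypothesis $k\otimes U(k)\notin\trip(\calH)$ essentially in the \emph{lower} bound. Since the hypothesis implies $\VC_2(H)<k$, the counting lemma forces every non-trivial regular triad to have density in $[0,\e_1^{1/K})\cup(1-\e_1^{1/K},1]$ (Proposition~\ref{prop:suffvc2}). The full hypothesis then shows, again via counting, that any two regular triads on the same vertex triple sharing two of their three bigraph components must lie on the same side of $1/2$ (Lemma~\ref{lem:otherway}); a short connectivity argument inside the proof of Proposition~\ref{prop:dev} propagates this to all regular triads on the triple. Hence $\calP_1$ is itself $2\e_1^{1/K}$-homogeneous, which is strictly stronger than weakly regular, and the lower bound follows via $M^{weak}_\calH\le M^{hom}_\calH$. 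The dependence $K=K(k)$ enters through applying Corollary~\ref{cor:countingcor} to targets on $O(2^{k^2})$ vertices, not through Part~2.
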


We now state the characterization of the possible growth rates of functions of the form $M_{\calH}^{weak}$ and $M_{\calH}^{hom}$.  This is obtained from the main result of \cite{Terry.2024b}, updated to reflect the improvement of part 1 \cite{Terry.2024a} which was recently obtained by Gishboliner, Shapira, and Widgerson in \cite{GSW}.

\begin{theorem}[Theorem 1.11 of \cite{Terry.2024b} + Theorem 1.3 \cite{GSW}]\label{thm:weak}
Suppose $\calH$ is a hereditary property of $3$-uniform hypergraphs.  Then one of the following holds.
\begin{enumerate}
\item  (Tower) For some $C,C'>0$, $ Tw(\e^{-C})\leq M^{weak}_{\calH}(\e)\leq Tw(\e^{-C'})$.
\item (Exponential) For some $C,C'>0$ and $K\geq 1$, 
$$
2^{\e^{-C}}\leq M^{weak}_{\calH}(\e)\leq M_{\calH}^{hom}(\e^K)\leq 2^{\e^{-C'}}.
$$
\item (Polynomial) For some $C,C'>0$ and $K\geq 1$, $\e^{-C}\leq M^{weak}_{\calH}(\e)\leq M_{\calH}^{hom}(\e^K)\leq \e^{-C'}$.
\item (Constant) For some $C\geq 1$, $M^{weak}_{\calH}(\e)=M_{\calH}^{hom}(\e)=C$.
\end{enumerate}
\end{theorem}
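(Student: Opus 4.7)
The plan is to establish the four-way dichotomy by identifying a hierarchy of combinatorial ``witness'' configurations whose presence or absence in $\calH$ pins down the growth regime of $M_{\calH}^{weak}$, and then match each regime with a corresponding bound on $M_{\calH}^{hom}$. Roughly, I expect three distinguishing conditions $(A) \supsetneq (B) \supsetneq (C)$: $(A)$ asks that $\calH$ contains arbitrarily complex ``higher-arity VC'' patterns (for example, slicewise analogues of $U(k)$ lifted to 3-graphs); $(B)$ asks for a weaker but still unbounded family of patterns; and $(C)$ asks for any nontrivial complexity at all. Case (1) of the theorem corresponds to $(A)$ holding, case (2) to $(A)$ failing but $(B)$ holding, case (3) to $(B)$ failing but $(C)$ holding, and case (4) to $(C)$ failing (so $\calH$ is essentially finite up to a fixed structure).

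First I would set up the matching upper bounds. For case (1) the tower upper bound is Chung's weak hypergraph regularity lemma applied directly. For cases (2) and (3) one needs a refinement of Chung's argument exploiting failure of $(A)$ (respectively $(B)$): the key idea is that when the relevant VC-type patterns are absent, a suitable ``jump'' occurs inside the energy increment argument, allowing one to either collapse tower levels down to a double exponential or a polynomial. Throughout, I would track the homogeneity of triples of parts: in cases (2)--(4) the same argument will also produce an $\e$-homogeneous partition, giving the bound $M_\calH^{hom}(\e^K) \le \text{(same bound)}$, because absence of the pattern forces densities to cluster near $0$ or $1$ rather than being merely quasirandom.

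For the lower bounds I would build explicit witnesses inside $\calH$ when the corresponding combinatorial condition holds. The tower bound in case (1) would come from adapting Gowers's graph construction (or its cleaner Fox--Lov\'asz variant) to 3-graphs via a doubling or lifting operation analogous to $\trip(\cdot)$ used elsewhere in the paper; the goal is to show the tower-iterated density increments cannot be avoided. The single-exponential lower bound in case (2) would likely come from a construction in the spirit of Alon--Fox--Zhao reweighted to fit weak hypergraph regularity, and the polynomial lower bound in case (3) is a counting argument: any $\e$-regular partition must distinguish enough fixed sub-patterns forced by $(C)$.

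The step I expect to be the main obstacle is the upper bound in case (2), that is, showing that failure of the top-level combinatorial condition $(A)$ really forces $M_\calH^{weak}$ down to double-exponential and simultaneously forces homogeneity. This is the hypergraph analogue of the Alon--Fischer--Newman / Lov\'asz--Szegedy step, but in the higher-arity setting weak regularity does not enjoy a clean counting lemma, so the reduction has to be carried out directly via an increment argument that detects the missing pattern and converts its absence into a structural collapse. A secondary obstacle is the bookkeeping needed to keep the parameters $C,C',K$ for $M_\calH^{weak}$ and $M_\calH^{hom}$ aligned; this requires checking that the homogeneity statement produced by the argument degrades only polynomially in $\e$.
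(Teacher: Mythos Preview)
This theorem is not proved in the present paper: it is quoted verbatim as Theorem~1.11 of Part~2 of the series (\cite{Terry.2024b}), and the current paper only uses it as a black box (via Theorem~\ref{thm:weakhom} and Theorem~\ref{thm:comparison}) to handle the sub-wowzer regimes for $T_{\calH}$. There is therefore no proof here to compare your proposal against.

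That said, the paper does reveal part of the architecture of the Part~2 argument, and your sketch is only loosely aligned with it. The dividing line between case~(1) and cases~(2)--(4) is stated explicitly just after Theorem~\ref{thm:weak}: it is whether $k\otimes U(k)\in\trip(\calH)$ for all $k$, which by Theorem~\ref{thm:vdischom} is equivalent to $\calH$ failing to be close to a property of bounded slicewise VC-dimension, and equivalent to the nonexistence of homogeneous partitions. Your condition~$(A)$ should be this, not a vague ``higher-arity VC'' notion; note in particular that the relevant object is \emph{slicewise} VC-dimension (the ordinary VC-dimension of the graphs $H_x$), not $\VC_2$-dimension, and the witness is the specific 3-graph $k\otimes U(k)$ sitting inside $\trip(\calH)$. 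Your proposal does not identify the further combinatorial invariants separating cases~(2), (3), and~(4), and the paper gives no hint what those are, so that part of your plan remains entirely speculative. Finally, your suggestion that the case~(2) upper bound comes from a modified energy-increment argument is plausible in spirit, but the paper's framing (Theorem~\ref{thm:vdischom} and the role of $M_{\calH}^{hom}$) strongly suggests that once $(A)$ fails the argument pivots to controlling homogeneous partitions directly rather than rerunning weak regularity with a shortcut; the inequalities $M_{\calH}^{weak}\le M_{\calH}^{hom}$ in cases~(2)--(4) then follow because homogeneous partitions are automatically weakly regular.
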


It was shown in part 2 \cite{Terry.2024b} that a property $\calH$ falls into ranges (2), (3), or (4) in Theorem \ref{thm:weak} if and only if for some $k\geq 1$, $k\otimes U(k)\notin \trip(\calH)$.  Combining this with Theorem \ref{thm:ingredient1} yields Theorem \ref{thm:main2intro}.   

We observe here that the possibilities for $M^{weak}_{\calH}$ in Theorem \ref{thm:weak} are almost the same as those for $T_{\calH}$  in Theorem \ref{thm:strong1}, with the exception of the fastest growth rate,  where Theorem \ref{thm:strong1} contains a lower bound which is wowzer, while Theorem \ref{thm:weak} contains tower type upper and lower bounds.  The similarity of the growth rates in Theorem \ref{thm:strong1} and Theorem \ref{thm:weak} is no coincidence.  In particular, we will show that  a hereditary property of $3$-uniform hypergraphs $\calH$ falls into range (1), (2), (3), (4) (respectively) in Theorem \ref{thm:weak} if and only if it falls into range (1), (2), (3), (4)  (respectively) in Theorem \ref{thm:strong1}.

We now discuss open problems around $T_{\calH}$.  Our proof of Theorem \ref{thm:strong1} will show  the properties falling into range (1) are exactly those $\calH$ satisfying $k\otimes U(k)\in \trip(\calH)$ for all $k\geq 1$.   Thus, the remaining open portion of Problem \ref{prob:poly}(1) is as follows.

\begin{problem}[Open part of Problem \ref{prob:poly}(1)]\label{prob:poly1}
Answer Problem \ref{prob:poly}(1) for $\calH$ satisfying $k\otimes U(k)\in \trip(\calH)$ for all $k\geq 1$.
\end{problem}

Significant progress on Problem \ref{prob:poly1} by Gishboliner, Shapira, and Widgerson was recently communicated to the author \cite{GSW2}. Their work implies that if $k\otimes U(k)\in \trip(\calH)$ for all $k\geq 1$, \emph{ and $\calH$ has finite $\VC_2$-dimension}, then in the regime where $\e_2$ is polynomial, $T_{\calH}(\e_1,\e_2)$ is bounded above and below by tower type functions in $\e_1^{-1}$.  On the other hand, the work of Moshkovitz and Shapira \cite{Moshkovitz.2019} implies that when $\calH$ has infinite $\VC_2$-dimension, there are polynomial $\e_2$ for which $T_{\calH}(\e_1,\e_2)$ is bounded below by a wowzer type function.  Interestingly, this means that when restricting to polynomial $\e_2$,  there is an additional ``jump" in the growth rates for $T_{\calH}$, characterized by $\VC_2$-dimension, which does not exist when arbitrary $\e_2$ are allowed (as Theorem \ref{thm:strong1} shows).  

There also remain many open problems about range (1) with regards to Problem \ref{prob:main}, i.e., where arbitrary $\e_2$ are allowed. It seems likely that in this range $T_{\calH}(\e_1,\e_2)$ will depend more fundamentally on the growth rate of $\e_2$. 

\subsection{Relationship to Moshkovitz-Shapira}

As mentioned early in the introduction, the first lower bound construction for $T(\e_1,\e_2)$ of Theorem \ref{thm:reg2} is due to Moshkovitz and Shapira \cite{Moshkovitz.2019,MoshkovitzSimple}. The hypergraph $N\otimes\bip(G')$ used in Theorem \ref{thm:main0}  is simpler than the one constructed by Moshkovitz and Shapira in \cite{Moshkovitz.2019,MoshkovitzSimple}.  However, the Moshkovitz-Shapira example was shown in \cite{Moshkovitz.2019,MoshkovitzSimple} to require wowzer type bounds for a version of hypergraph (called $\langle \delta \rangle$-regularity) which is weaker than the notion of regularity that that used in Theorem \ref{thm:main0}.  Specifically they proved the following (see Section \ref{sec:weakreg} for definitions).

\begin{theorem}[Moshkovitz-Shapria \cite{MoshkovitzSimple}]\label{thm:MS}
For all $s\geq 1$ there exists a $3$-partite $3$-uniform hypergraph $H$ with edge density at least $2^{-s-3}$ and a partition $\calV_0$ of $V(H)$ of size at most $2^{300}$ so that if $\calP$ is a $\langle 2^{-73}\rangle$-regular partition with vertex partition $\calP_1\preceq \calV_0$, then $|\calP_1|\geq W(s)$.
\end{theorem}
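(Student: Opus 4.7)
The plan is to build $H = H_s$ by a recursive Gowers-type iteration on $s$, where each iteration forces one additional tower-height refinement of any $\langle 2^{-73}\rangle$-regular vertex partition. Since $W(s)=Tw(W(s-1))$, a single such iteration at level $s$, starting from a structure requiring $W(s-1)$ parts, should produce one requiring $W(s)=Tw(W(s-1))$ parts. The bounded size of $\calV_0$ and the explicit slack $2^{-73}$ will come out of the recursion being carefully balanced so that no constants blow up with $s$.

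At the base case $s=1$, take $H_1$ to be any bounded-complexity $3$-partite $3$-uniform hypergraph of density at least $2^{-4}$, with $\calV_0^{(1)}$ its tripartition; the lower bound $|\calP_1|\geq W(1)=1$ is trivial. For the inductive step, assume $H_{s-1}$ with $|\calV_0^{(s-1)}|\leq 2^{300}$ has been constructed so that any $\langle 2^{-73}\rangle$-regular partition $\calP_1\preceq \calV_0^{(s-1)}$ has $|\calP_1|\geq W(s-1)$. I would then construct $H_s$ by what amounts to a ``$3$-uniform Gowers amplification'': set $t:=W(s-1)$, take a $t$-blowup of $H_{s-1}$, and add an extra layer of hyperedges encoding a dense weighted gadget in the spirit of Gowers's lower bound for graph regularity, chosen so that any $\langle 2^{-73}\rangle$-regular refinement of the coarse partition must split each blown-up class into a tower of $t$ many sub-classes. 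The fresh coarse partition $\calV_0^{(s)}$ re-uses the same tripartition plus a bounded amount of extra structure at each level, so its size can be kept $\leq 2^{300}$ independently of $s$.

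The main induction step is to verify that the amplification genuinely forces a tower on the refinement. I would do this by adapting Gowers's discrepancy argument for graph regularity to the $3$-uniform, $\langle\delta\rangle$-regular setting: if $\calP_1$ has fewer than $Tw(t)=Tw(W(s-1))$ classes, then the amplification gadget, combined with the blowup, forces a positive-density pair of vertex classes whose induced edge distribution has discrepancy violating the $\langle 2^{-73}\rangle$-regularity condition. Iterating, the inner blown-up copy of $H_{s-1}$ contributes the $W(s-1)$ parts guaranteed by the inductive hypothesis, while the amplification gadget forces the further tower exponentiation, so any $\langle 2^{-73}\rangle$-regular $\calP_1\preceq \calV_0^{(s)}$ satisfies $|\calP_1|\geq Tw(W(s-1))=W(s)$. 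The density of $H_s$ degrades by at most a factor of $1/2$ per level, giving final density $\geq 2^{-s-3}$.

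The main obstacle is the discrepancy step. Since $\langle\delta\rangle$-regularity is a strictly weaker hypothesis than the $\dev_{2,3}$-regularity of Theorem~\ref{thm:reg2intro}, one has less information to exploit when transferring irregularity from the amplification gadget back to the partition $\calP_1$. The delicate point is to set up the gadget so that the Gowers-style discrepancy inequality still applies, and to budget the constants so that both the slack $2^{-73}$ in the regularity parameter and the bound $2^{300}$ on $|\calV_0|$ remain uniform in $s$; the specific numerical values appearing in the statement are artifacts of that budget.
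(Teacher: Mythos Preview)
The paper does not contain a proof of this theorem. Theorem~\ref{thm:MS} is stated as a result of Moshkovitz and Shapira, attributed to \cite{MoshkovitzSimple}, and is quoted only as background (it is restated verbatim as Theorem~\ref{thm:MSrestate} in Section~\ref{sec:weakreg}). The paper's own contribution in this circle of ideas is the complementary Theorem~\ref{thm:weakregUB} (proved as Theorem~\ref{thm:weakregthm}), which shows that the \emph{simpler} hypergraph $N\otimes G$ used for Theorem~\ref{thm:main0} does \emph{not} require wowzer bounds for $\langle\delta\rangle$-regularity. So there is no ``paper's own proof'' to compare against.

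On the merits of your sketch: the recursive architecture you outline---each level of the construction forcing one additional tower exponentiation, so that $s$ levels give $W(s)$---does match the overall shape of the Moshkovitz--Shapira argument. But your proposal explicitly flags its own central gap and does not close it. The phrase ``add an extra layer of hyperedges encoding a dense weighted gadget in the spirit of Gowers's lower bound'' is not a construction, and the ``discrepancy step'' you identify as the main obstacle is precisely the technical heart of \cite{MoshkovitzSimple}: one must design the gadget so that $\langle\delta\rangle$-regularity (a one-sided, density-halving condition, strictly weaker than $\dev_{2,3}$) still forces the tower refinement, and one must do this while keeping $|\calV_0|$ and the regularity threshold uniform in $s$. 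This does not follow from a routine adaptation of Gowers's graph-regularity lower bound; indeed, the point of Theorem~\ref{thm:weakregUB} in the present paper is that a naive lift of a graph lower bound (namely $N\otimes \bip(G')$) is \emph{insufficient} for $\langle\delta\rangle$-regularity. Your proposal is therefore a plausible plan of attack but not a proof.
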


The additional complexity in the construction of Moshkovitz and Shapira is necessary for the conclusion they proved in Theorem \ref{thm:MS}.  In particular,  we show the hypergraph from Theorem \ref{thm:main0} permits $\langle \delta \rangle$-regular partitions of much smaller size.

\begin{theorem}\label{thm:weakregUB}
Let $0<\delta<\rho<1/8$. Assume $H=(V_1\cup V_2\cup V_3,F)$ is a $3$-partite $3$-uniform hypergraph satisfying  $\rho=|E(H)|/|V_1||V_2||V_3|$, and so that $H=N\otimes G$ for some bipartite graph $G$.  Then for any partition $\calV_0$ of $V(H)$ of size at most $2^{300}$, there exists a $\langle \delta\rangle$-regular partition of $H$ with vertex partition $\calP_1\preceq \calV_0$ and with $|\calP_1|\leq O(Tw(\delta^{-1000}))$.
\end{theorem}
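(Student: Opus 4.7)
The plan is to reduce the question to an instance of Szemer\'{e}di's regularity lemma for the bipartite graph $G$, exploiting the fact that in $H = N\otimes G$ the third coordinate is ``trivial.''  Write $G = (A \cup B, E)$ with $V_1 = A$, $V_2 = B$, and $V_3 = C$ the set of $N$ dummy vertices, so that a triple $\{x,y,z\}$ is a hyperedge of $H$ exactly when $xy \in E$ and $z \in C$.  The starting observation is that for any subsets $U_1 \subseteq A$, $U_2 \subseteq B$, $U_3 \subseteq C$, the number of hyperedges of $H$ in $U_1 \times U_2 \times U_3$ equals $e_G(U_1,U_2) \cdot |U_3|$, so the triple density satisfies
\[
d_H(U_1,U_2,U_3) \;=\; \frac{e_G(U_1,U_2)}{|U_1||U_2|} \;=\; d_G(U_1,U_2).
\]
In particular $d_H(U_1,U_2,U_3)$ does not depend on $U_3$ at all.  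Consequently, a triple of parts $(V_i,V_j,V_k)$ in any refinement of $\calV_0$ is $\delta$-regular in the hypergraph sense for $H$ if and only if the pair $(V_i,V_j)$ is $\delta$-regular in the ordinary graph sense for $G$, and the total weight of hypergraph-irregular triples equals the weight of graph-irregular pairs summed over all choices of $V_k$.

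Given this reduction, I would produce $\calP_1$ as follows.  Let $\calV_0^{AB} = \{V \cap (A\cup B) : V \in \calV_0\}$ and $\calV_0^C = \{V \cap C : V \in \calV_0\}$; these are partitions of $A \cup B$ and $C$ respectively, each of size at most $2^{300}$.  Apply Szemer\'{e}di's regularity lemma to the bipartite graph $G$ with error parameter $\delta' := \delta^{K}$ (for some modest $K$) and with initial partition $\calV_0^{AB}$, obtaining a $\delta'$-regular partition $\calQ$ of $A \cup B$ refining $\calV_0^{AB}$ with
\[
|\calQ| \;\leq\; Tw((\delta')^{-O(1)}) \;=\; O(Tw(\delta^{-O(K)})).
\]
Take $\calP_1 := \calQ \cup \calV_0^C$, which refines $\calV_0$ and has size at most $|\calQ| + 2^{300} = O(Tw(\delta^{-1000}))$ once $K$ is chosen appropriately.

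The verification that $\calP_1$ is $\langle \delta \rangle$-regular for $H$ follows directly from the density identity above: an irregular hypergraph triple $(V_i,V_j,V_k)$ must have $V_i \subseteq A$, $V_j \subseteq B$, $V_k \subseteq C$ (other configurations contain no hyperedges at all and are automatically regular), and such a triple is irregular precisely when the graph pair $(V_i,V_j)$ is irregular in $G$; summing $|V_i||V_j||V_k|$ over all such triples gives $|C|$ times the total weight of graph-irregular pairs, which is at most $\delta' \cdot |A||B|\cdot|C| \leq \delta |V(H)|^3$ by choice of $\delta'$.  The main thing to be careful about is matching constants between the definition of $\langle \delta \rangle$-regularity used by Moshkovitz-Shapira (which concerns both an $\varepsilon$-fraction of irregular triples and a deviation parameter on density) and the definition of $\varepsilon$-regularity used in the graph Szemer\'{e}di lemma; choosing $K$ large enough (a small constant suffices) absorbs these losses and yields the stated bound.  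No obstacles beyond this bookkeeping are expected, since the reduction to graph regularity is exact.
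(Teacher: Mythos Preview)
Your proposal rests on a misreading of the Moshkovitz--Shapira notion of $\langle\delta\rangle$-regularity. It is \emph{not} a condition on irregular triples $(V_i,V_j,V_k)$; rather, a $\langle\delta\rangle$-regular partition of $H$ is a $(t,\ell)$-decomposition $\calP=(\calP_1,\calP_2)$ which is $\langle\delta\rangle$-good (every element of $\calP_2$ is a $\langle\delta\rangle$-regular bigraph) and for which, for each $i\in\{1,2,3\}$, the induced partition $\calP_1^{i}$ of the auxiliary bigraph $G_H^{i}=(V_i,\,V_j\times V_k;\{(u,(v,w)):uvw\in F\})$ is $\langle\delta\rangle$-regular. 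The parts of $\calP_1^{i}$ on the right-hand side are the elements of $\calP_2$ lying in $V_j\times V_k$. Your construction produces only $\calP_1$; with the implicit choice $\ell=1$ (so $\calP_2$ is trivial) the required condition already fails for $G_H^{3}$. Indeed, edges of $G_H^{3}$ are pairs $(w,(u,v))$ with $uv\in E(G)$, so for a part pair $(V_{3k},\,V_{1i}\times V_{2j})$ the density equals $d_G(V_{1i},V_{2j})$, yet taking $P\subseteq V_{1i}\times V_{2j}$ to consist entirely of non-edges of $G$ (which has size $(1-d_G(V_{1i},V_{2j}))|V_{1i}||V_{2j}|\geq\delta|V_{1i}||V_{2j}|$ whenever $d_G<1-\delta$) gives density $0$ on $(W,P)$ for every $W$. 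No $\delta$-small edit of $G_H^{3}$ repairs this across all pairs of intermediate density, so your reduction ``triple regularity in $H$ $\Leftrightarrow$ pair regularity in $G$'' does not address what has to be proved.

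The paper's argument also starts from a Szemer\'edi partition of the underlying graph, but then takes $\ell=2$: on each good pair $(V_{i_1j_1u_1},V_{i_2j_2u_2})$ it sets $P^{1}=E\cap(V_{i_1j_1u_1}\times V_{i_2j_2u_2})$ and $P^{0}$ its complement. With this $\calP_2$, the parts of $\calP_1^{i}$ on the pairs side are either edge sets or non-edge sets of $G$, and the $\langle\delta\rangle$-regularity of $(V_{ij},P^{\alpha})$ in $G_H^{i}$ becomes a statement about triangles through $P^{\alpha}$, which is controlled by the triangle-counting Lemma~\ref{lem:standreg}. That step, together with the edge-deletion allowance in Definition~\ref{def:deltareg} to handle low-density and irregular pairs, is the substantive content missing from your outline.
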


This suggests the question of characterizing the possible growth of $\langle \delta \rangle$-regular partitions in hereditary properties is interesting, as it would likely give a different stratification than Theorem \ref{thm:strong1}.

\subsection{Outline} We give here an outline of the rest of the paper.   In Section \ref{ss:notation} we set out basic notation used throughout the paper. In Section \ref{sec:regularity} we define the type of strong regularity for $3$-uniform hypergraphs which is the main topic of this paper.  In this section we also state the relevant regularity lemma and corresponding counting lemma, both due to Gowers.  In Section \ref{sec:hom}, we introduce preliminaries related to homogeneous decompositions, homogeneous partitions, and generalizations of VC-dimension to $3$-uniform hypergraphs.  In Section \ref{sec:strongvert}, we prove Theorem \ref{thm:ingredient1}, which reduces the ``slow" growth rates in Theorem \ref{thm:strong1} to   Theorem \ref{thm:weak}.  In Section \ref{sec:wowzer}, we prove Theorem \ref{thm:main0}, which allows us to finish the proof of Theorem \ref{thm:strong1}.    In Section \ref{sec:weakreg}, we prove Theorem \ref{thm:weakregUB} which shows the example constructed to prove Theorem \ref{thm:main0} is not complicated enough to produce wowzer lower bounds for the $\langle \delta\rangle$-regularity of \cite{MoshkovitzSimple}.

\subsection{Acknowledgements} The connection between the growth of $T_{\calH}$ and strong graph regularity became apparent to the author as a result of joint work with Julia Wolf. The author would like to thank Asaf Shapira for pointing out subtleties the author previously overlooked regarding the bounds in Theorem \ref{thm:reg2}, and  Jacob Fox for pointing out a difference between the notion of regularity used here and that appearing in \cite{Fox.2014}. Finally, the author thanks the anonymous referee who's comments prompted the more careful presentation here regarding polynomial vs. superpolynomial $\e_2$.

\section{Notation}\label{ss:notation}
Given a natural number $n\geq 1$, $[n]=\{1,\ldots, n\}$.  For real numbers $r_1,r_2$ and $\e>0$, we write $r_1=r_2\pm \e$ or $r_1\approx_{\e}r_2$ to mean $|r_1-r_2|\leq \e$.  An \emph{equipartition} of a set $V$ is a partition $V=V_1\cup \ldots \cup V_t$ with the property that for each $1\leq i,j\leq t$, $||V_i|-|V_j||\leq 1$. 

Given a set $V$ and $k\geq 1$, let ${V\choose k}=\{X\subseteq V: |X|=k\}$.  A \emph{$k$-uniform hypergraph} is a pair $(V,E)$ where $E\subseteq {V\choose k}$.  For a $k$-uniform hypergraph $G$, we let $V(G)$ denote the vertex set of $V$ and $E(G)$ denote the edge set of $G$. For $k\geq 3$, we will refer to $k$-uniform hypergraphs as \emph{$k$-graphs}. We refer to $2$-uniform hypergraphs as \emph{graphs}.  

Given a $k$-graph $G=(V,E)$ and a set $V'\subseteq V$, $G[V']:=(V', E\cap {V'\choose 2})$ is the \emph{induced sub-$k$-graph of $G$ with vertex set $V'$}. A \emph{hereditary $k$-graph property} is a class of finite $k$-graphs closed under induced sub-$k$-graphs and isomorphism.

Given integers $\ell,k\geq 1$, a $k$-graph $G=(V,E)$ is \emph{$\ell$-partite} if there is a partition $V=V_1\cup \ldots \cup V_{\ell}$ so that for all $e\in E$ and $1\leq i\leq \ell$, $|e\cap V_i|\leq 1$.  In this case, we write $G=(V_1\cup \ldots \cup V_{\ell},E)$ to denote that $G$ is $\ell$-partite with vertex partition given by $V_1\cup \ldots \cup V_{\ell}$.

Given distinct elements $x,y$, we will write $xy$ for the set $\{x,y\}$.   Given sets $X,Y$, we set 
\begin{align*}
K_2[X,Y]&=\{xy: x\in X, y\in Y, x\neq y\}.
\end{align*}
Suppose $G=(V,E)$ is a graph.   Given disjoint sets $X,Y\subseteq V$, we define
$$
G[X,Y]=(X\cup Y,E\cap K_2[X,Y]).
$$
In other words, $G[X,Y]$ is the bipartite graph with parts $X,Y$ induced by $G$.  For a graph $G=(V,E)$, we will  frequently refer to the set of ordered pairs coming from edges of $G$.  For this reason we define the following notation.
$$
\overline{E}=\{(x,y)\in V^2: xy\in E\}.
$$
Given $X,Y\subseteq V$, the \emph{density of $(X,Y)$ in $G$} is
$$
d_G(X,Y)=|\overline{E}\cap (X\times Y)|/|X||Y|.
$$ 
Note that if $X$ and $Y$ are disjoint, then $d_G(X,Y)=|E\cap K_2[X,Y]|/|X||Y|$.  Given $x\in V$, the \emph{neighborhood of $x$ in $G$} is $N_G(x)=\{y\in V: xy\in E\}$.  

We will use similar neighborhood notation in the following more general contexts.  Suppose $X$ is a set and $F\subseteq {X\choose 2}$. Given $x\in X$, we write $N_F(x)=\{y\in X: xy\in F\}$. Note that with the  notation we have defined, we could write the neighborhood of a vertex in a graph $G$ as either $N_G(x)$ or $N_E(x)$ where $E$ is the edge set of $G$.  Similarly, for $F\subseteq X\times Y$ and $x\in X$, write $N_F(x)=\{y\in V: (x,y)\in F\}$.

We now set similar notation for $3$-graphs.  To begin with, for three distinct elements $x,y,z$, we will write $xyz$ for the set $\{x,y,z\}$. Given sets $X,Y,Z$, we set 
\begin{align*}
K_3[X,Y,Z]&=\{xyz: x\in X, y\in Y, z\in Z, x\neq y, y\neq z, x\neq z\}.
\end{align*}
Suppose $G=(V,E)$ is a $3$-graph.  We define
$$
\overline{E}=\{(x,y,z)\in V^3: xy\in E\}.
$$
For sets $X,Y,Z\subseteq V$, the \emph{density of $(X,Y,Z)$ in $G$} is 
$$
d_G(X,Y,Z)=|\overline{E}\cap (X\times Y\times Z)|/|X||Y||Z|.
$$ 
For disjoint subsets $X,Y,Z\subseteq V$, we let $G[X,Y,Z]$ be the tripartite $3$-graph 
$$
(X\cup Y\cup Z, E\cap K_3[X,Y,Z]).
$$
 Given $x,y\in V$, we define neighborhoods 
 $$
 N_G(x)=\{uv\in V: xuv\in E\}\text{ and }N_G(xy)=\{v\in V: xyv\in E\}.
 $$

We will use similar notation in the following more general contexts.  Suppose $X,Y,Z$ are sets.  Given a set $F\subseteq {X\choose 3}$ and $x,y\in X$, we write $N_F(x)=\{yz\in X: xyz\in F\}$ and $N_F(xy)=\{z\in V: xyz\in F\}$.  With the notation we have defined, we could write neighborhoods in a $3$-graph $G$ as either $N_G(x)$, $N_G(xy)$ or $N_E(x)$, $N_E(xy)$, where $E$ is the edge set of $G$.  Similarly, for $F\subseteq X\times Y\times Z$ and $x\in X$, $y\in Y$, write 
\begin{align*}
N_F(x)=\{(y,z)\in V: (x,y,z)\in F\}\text{ and }N_F(x,y)=\{z\in Z: (x,y,z)\in F\}.
\end{align*}
  
  We use the following definitions to understand the growth of certain bounds.

\begin{definition}[Tower Function]\label{def:tower}
Given $m\in \mathbb{R}^{>0}$, define $T_m:\mathbb{N}\rightarrow \mathbb{N}$ by setting $T_m(1)=m$ and for all $x\geq 1$, $T_m(x+1)=T_m(x)2^{T_m(x)}$.
\end{definition}

\begin{definition}[Wowzer Function]\label{def:wowzer}
Define $W:\mathbb{N}\rightarrow \mathbb{N}$ by setting $W(1)=1$ and for all $x\geq 1$, $W(x+1)=T_2(W(x))$.  
\end{definition}

\section{Regularity for $3$-graphs}\label{sec:regularity}

In this section we give background on the type of regularity for $3$-graphs which is the topic of this paper.  The notion we consider was developed by Gowers \cite{Gowers.2007,Gowers.20063gk}.  It is equivalent to several other notions of hypergraph quasirandomness.  We refer the reader to \cite{Nagle.2013} for more history on this topic. 

Our particular presentation here is an adaptation of \cite{Terry.2022}, which in turn was based on \cite{Gowers.2007,Gowers.20063gk, Frankl.2002,Nagle.2013}.  We will deviate slightly from \cite{Terry.2022}  for the following reason.  For the purposes of our theorems, it is important that our definitions be meaningful for ``regular decompositions" with only one vertex part.  For example, if $\calH$ is the hereditary $3$-graph property consisting of all finite $3$-graphs with no edges, the ``correct" value for $T_{\calH}(\e_1,\e_2)$ should always be $1$.  In the set-up used in \cite{Terry.2022}, regular decompositions with one vertex part do not impart any information about a given hypergraph.  For this reason, we work with what we will refer to as ``bigraphs" (see the next subsection) whereas in \cite{Terry.2022} we worked with bipartite graphs.  This changes slightly the other notions involved, but the differences are largely cosmetic. 

In Subsection \ref{ss:bigraphs} we introduce the building blocks of regular decompositions, namely bigraphs, triads, and trigraphs.  In Subsection \ref{ss:qr}, we give the definition of a form of quasirandomness for $3$-graphs and a corresponding counting lemma, both due to Gowers. In Subsection \ref{ss:dec} we define $\dev_{2,3}$-regular decompositions and state the corresponding regularity lemma.

\subsection{Bigraphs,  Trigraphs, and Triads}\label{ss:bigraphs}

We begin with the definition of what we will call a bigraph, which is an ordered analogue of a bipartite graph.

\begin{definition}\label{def:bigraph}
A \emph{bigraph} is a tuple $G=(V_1,V_2;E)$ where $V_1,V_2$ are vertex sets and $E\subseteq V_1\times V_2$. 
\end{definition}

Note $V_1$ and $V_2$ in Definition \ref{def:bigraph} need not be disjoint.  Given a bigraph $G=(V_1,V_2; E)$, we call $(V_1,V_2)$ the \emph{vertex sets} of $G$ and we call $E$ the \emph{edge set} of $G$, denoted $E(G)$.  Every graph naturally corresponds to a bigraph. 

\begin{definition}\label{def:gbar}
Suppose $G=(V,E)$ is a graph.  Define $\overline{G}$ to be the bigraph $(V,V; \overline{E})$.
\end{definition}

Note that if $(V_1,V_2; E)$ is a bigraph where $V_1,V_2$ are disjoint, we naturally obtain a bipartite graph $(V_1\cup V_2, \{xy: (x,y)\in E\})$.   In this way, there is a correspondence between bipartite graphs and bigraphs with disjoint vertex sets.  In the next subsection, will use bigraphs in our definitions of regular decompositions instead of bipartite graphs (which are used in \cite{Terry.2022}).  We do this because bigraphs allow the vertex sets to possibly overlap.  We now define the density of a bigraph.

\begin{definition}\label{def:dens2}
Given a bigraph $G=(V_1,V_2; E)$ and $X\subseteq V_1$ and $Y\subseteq V_2$, define 
$$
d_G(X,Y)=\frac{|E\cap (X\times Y)|}{|X||Y|}.
$$
The \emph{density of $G$} is $d_G(V_1,V_2)$.
\end{definition}

Note that if $G=(V_1\cup V_2, E)$ is a bipartite graph and $X\subseteq V_1$ and $Y\subseteq V_2$, then $d_G(X,Y)$ (as defined in Subsection \ref{ss:notation}), agrees with the density $d_{\overline{G}}(X,Y)$ in the bigraph $\overline{G}$. 

In analogy to Definition \ref{def:bigraph}, we define a trigraph to be an analogue of a tripartite $3$-graph.

\begin{definition}
An \emph{trigraph} is a tuple $(X,Y,Z; E)$ where $E\subseteq X\times Y\times Z$.
\end{definition}

Given a trigraph $G=(V_1,V_2,V_3; E)$, we call $(V_1,V_2,V_3)$ the \emph{vertex sets} of $G$ and we call $E$ the \emph{edge set} of $G$, denoted $E(G)$. In analogy to Definition \ref{def:gbar}, $3$-graphs naturally give rise to trigraphs.  

\begin{definition}\label{def:hbar}
Suppose $G=(V,E)$ is a $3$-graph.  Define $\overline{G}$ to be the trigraph $(V,V,V; \overline{E})$.
\end{definition}

It is not hard to see there is a correspondence between tripartite $3$-graphs and trigraphs with disjoint vertex sets.  In the next subsection, trigraphs will appear in place of tripartite $3$-graphs in contexts where we want to allow vertex sets to overlap.  We now give an analogue of Definition \ref{def:dens2}.

\begin{definition}\label{def:dens3}
Given a trigraph $H=(V_1,V_2,V_3; E)$ and $X\subseteq V_1$, $Y\subseteq V_2$, and $Z\subseteq V_3$, define 
$$
d_H(X,Y,Z)=\frac{|E\cap (X\times Y\times Z)|}{|X||Y||Z|}.
$$
\end{definition}
Note that for a tripartite $3$-graph $H$, the density $d_H(X,Y,Z)$ defined in Subsection \ref{ss:notation} aggrees with $d_{\overline{H}}(X,Y,Z)$ from Definition \ref{def:dens3}.  Definition \ref{def:dens3} will be used heavily in Sections \ref{sec:hom} and \ref{sec:strongvert}, which are related to weak hypergraph regularity.  For the stronger regularity which is the main topic of the paper, we will be more interested in the density of a trigraph relative to the triangles formed from the edges of distinguished bigraphs.  This leads us to the notion of a \emph{triad}, which the reader should think of as an ordered analogue of a \emph{tripartite graph}.

\begin{definition}\label{def:triad}
A \emph{triad} is a tuple $G=(X,Y,Z; E_{XY},E_{YZ}, E_{XZ})$ where $(X,Y;E_{XY})$, $(X,Z; E_{XZ})$, and $(Y,Z;E_{YZ})$ are bigraphs.
\end{definition}

We refer to the tuple $(X,Y,Z)$ as the \emph{vertex sets} of $G$.  There is clearly a correspondence between tripartite graphs and triads with pairwise disjoint vertex sets. In the next section, we use the triads of Definition \ref{def:triad} instead of tripartite graphs (which are used in \cite{Terry.2022}) to allow vertex sets to overlap.  

We now give some definitions related to triads.  Suppose $G=(X,Y,Z; E_{XY},E_{YZ}, E_{XZ})$ is a triad.  The \emph{set of ordered triangles in $G$} is 
$$
K_3(G):=\{(x,y,z)\in X\times Y\times Z: xy\in E_{XY}, yz\in E_{YZ},xz\in E_{XZ}\}.
$$
We also define the \emph{component bigraphs of $G$} to be
$$
G[X,Y]:=(X,Y;E_{XY}),\text{ }G[X,Z]:=(X,Z;E_{XZ}),\text{ and }G[Y,Z]:=(Y,Z;E_{YZ}).
$$ 

The version of regularity we are interested in requires us to consider edges of trigraphs  relative to triangles of  triads.  For this reason, we now define what it means for a triad to underly a trigraph.  

\begin{definition}
Suppose $H=(X,Y,Z;R)$ is a trigraph and $G=(X,Y,Z; E_{XY},E_{YZ}, E_{XZ})$ is a triad. We say $G$ \emph{underlies $H$} if $R\subseteq K_3(G)$. 
\end{definition}

In other words, $G$ underlies $H$ if all ternary edges of $H$ form ordered triangles in $G$.    We will also use the following notation, which restricts a trigraph to the ordered triangles of a triad.

\begin{definition}\label{def:1}
Suppose $H=(V_1,V_2,V_3;R)$ is a trigraph, $X_1\subseteq V_1$, $X_2\subseteq V_2$, $X_3\subseteq V_3$,  and $G$ is a triad with vertex sets $(X_1,X_2,X_3)$.  Define $H|G$ to be the following trigraph.
$$
H|G:=(X_1,X_2,X_3;R\cap K_3(G)).
$$
\end{definition}

Note that in the notation of Definition \ref{def:1},  $G$ always underlies $H|G$.   We now define a notion of density for a trigraph relative to a triad.
 
 \begin{definition}\label{def:dens4}
 Suppose $H=(V_1,V_2,V_3;R)$ is a trigraph, $X_1\subseteq V_1$, $X_2\subseteq V_2$, $X_3\subseteq V_3$,  and $G$ is a triad with vertex sets $(X_1,X_2,X_3)$.  Define
 $$
 d_H(G)=|R\cap K_3(G)|/|K_3(G)|.
 $$
 \end{definition}
 
We will frequently need to apply the definitions above in the context of a fixed $3$-graph $H$. We do this by simply considering these notions applied to the trigraph $\overline{H}$ associated to $H$ (see Definition \ref{def:hbar}).  For example, we define the following notion of density for a $3$-graph relative to a triad.
 
  \begin{definition}\label{def:dens5}
 Suppose $H=(V,E)$ is a  $3$-graph, $X_1,X_2,X_3\subseteq V$, and $G$ is a triad with vertex sets $(X_1,X_2,X_3)$.  We then define $d_H(G)$ to be $d_{\overline{H}}(G)$ (from Defintion \ref{def:dens3} applied to $\overline{H}$).
 \end{definition}
 
We now set some notation for restricting the vertex sets of triads and trigraphs.  Given a triad $G=(A,B,C; E_{AB},E_{BC},E_{AC})$ and $A'\subseteq A$, $B'\subseteq B$, and $C'\subseteq C$, we define a triad
$$
G[A',B',C']=(A',B',C'; E'_{AB},E'_{BC},E'_{AC}),
$$
 where $E_{AB}'=E_{AB}\cap (A\times B)$, $E_{AC}'=E_{AC}\cap (A\times C)$, and $E_{BC}'=E_{BC}\cap (B\times C)$.  Similarly, given a trigraph $H=(A,B,C;F)$, we define 
 $$
 H[A',B',C']=(A',B',C'; F\cap (A'\times B'\times C')).
 $$
 Observe that if $H$ is underlied by $G$, then $H[A',B',C']$ is underlied by $G[A',B',C']$.

\subsection{$\dev_{2,3}$-quasirandomness}\label{ss:qr}

This section defines the notions of quasirandomness which appear in the type of strong regularity lemma we are interested in.  We begin with a definition of  quasirandomenss for bigraphs, adapted from \cite{Gowers.20063gk}.

\begin{definition}\label{def:dev2}
Suppose $B=(U,W; E)$ is a bigraph and $|E|=d_B|U||W|$.  We say $B$ \emph{has $\dev_2(\e,d)$} if $d_B=d\pm \e$ and 
$$
\sum_{u_0,u_1\in U}\sum_{w_0,w_1\in W}\prod_{i\in \{0,1\}}\prod_{j\in \{0,1\}}g_B(u_i,v_j)\leq \e |U|^2|V|^2,
$$
where $g_B(u,v)=1-d_B$ if $uv\in E$ and $g_B(u,v)=-d_B$ if $uv\notin E$. 

We say $B$ simply \emph{has $\dev_2(\e)$} if it has $\dev_2(\e, d_B)$.
\end{definition}

This definition is roughly equivalent to that of graph regularity (see Section \ref{ss:CF} for more details).    A crucial fact about the quasi-randomness of Definition \ref{def:dev2} is the counting lemma. The version we state below counts the number of ordered triangles in a triad with quasirandom component bigraphs. This statement follows immediately from Lemma 3.4 of  \cite{Gowers.20063gk}.

\begin{proposition}[Counting Lemma]\label{prop:counting}
Let $\e,d_{AB},d_{AC},d_{BC}>0$. Suppose we have a triad $G=(A,B,C; E_{AB},E_{AC}, E_{BC})$ such that $G[A,B]$, $G[B,C]$ and $G[A,C]$ have $\dev_2(\e, d_{AB})$, $\dev_2(\e, d_{BC})$, and $\dev_2(\e, d_{AC})$, respectively. Then 
$$
\Big| |K_3(G)|- d_{AB}d_{BC}d_{AC}|A||B||C||\Big|\leq 4\e^{1/4}|A||B||C|.
$$
\end{proposition}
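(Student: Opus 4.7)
My plan is the standard ``balanced function expansion + Cauchy--Schwarz'' argument, which is the reason the bound $\dev_2$ was designed the way it is.

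First, I write the triangle count as an inner product of indicator functions. Let $d'_{XY}$ denote the \emph{actual} density of $(X,Y; E_{XY})$, so that by hypothesis $d'_{XY} = d_{XY} \pm \e$. Define the balanced function
$$
g_{XY}(x,y) = \mathbf{1}_{(x,y) \in E_{XY}} - d'_{XY},
$$
for each of the three pairs $(X,Y) \in \{(A,B),(A,C),(B,C)\}$, so that $\sum_{x,y} g_{XY}(x,y) = 0$ by construction. Then
$$
|K_3(G)| \;=\; \sum_{a,b,c} \bigl(d'_{AB}+g_{AB}(a,b)\bigr)\bigl(d'_{AC}+g_{AC}(a,c)\bigr)\bigl(d'_{BC}+g_{BC}(b,c)\bigr),
$$
and I expand this into $2^3 = 8$ terms according to which factor contributes $d'$ and which contributes $g$.

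The main term $d'_{AB}d'_{AC}d'_{BC}|A||B||C|$ differs from $d_{AB}d_{AC}d_{BC}|A||B||C|$ by at most $3\e|A||B||C|$ since each $d'_{XY} = d_{XY}\pm\e$ and all quantities lie in $[0,1]$. The three ``one-$g$'' terms vanish identically because, e.g., $\sum_{a,b,c} g_{AB}(a,b) = |C|\sum_{a,b} g_{AB}(a,b) = 0$. For the three ``two-$g$'' terms, I handle each the same way: after factoring the bounded density out, a typical one is
$$
S \;=\; \sum_c \Bigl(\sum_a g_{AC}(a,c)\Bigr)\Bigl(\sum_b g_{BC}(b,c)\Bigr),
$$
and one Cauchy--Schwarz in $c$ followed by another Cauchy--Schwarz in $(a,a')$ (respectively $(b,b')$) reduces the problem to bounding the octahedron-like sum $\sum_{a,a',c,c'} g_{AC}(a,c)g_{AC}(a',c)g_{AC}(a,c')g_{AC}(a',c')$, which is exactly the quantity controlled by $\dev_2(\e)$. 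This yields $|S| \leq \e^{1/2}|A||B||C|$.

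The only real step is the ``three-$g$'' term $T = \sum_{a,b,c} g_{AB}(a,b) g_{AC}(a,c) g_{BC}(b,c)$, which I expect to be the main obstacle. I would fix $(a,b)$, Cauchy--Schwarz using $|g_{AB}|\le 1$, and rewrite
$$
T^2 \;\leq\; |A||B|\sum_{c,c'} \Bigl(\sum_a g_{AC}(a,c)g_{AC}(a,c')\Bigr)\Bigl(\sum_b g_{BC}(b,c)g_{BC}(b,c')\Bigr).
$$
A second Cauchy--Schwarz (now in $(c,c')$) splits this into a product of two $\dev_2$-type sums, one over $AC$ and one over $BC$, each bounded by $\e|A|^2|C|^2$ and $\e|B|^2|C|^2$ respectively. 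This yields $|T|\leq \e^{1/2}|A||B||C|$. Finally, summing the errors gives a total bound of at most $3\e|A||B||C| + 4\e^{1/2}|A||B||C| \leq 4\e^{1/4}|A||B||C|$ for $\e$ in the relevant range, as claimed. (In fact the argument gives the stronger bound $4\e^{1/2}|A||B||C|$; the author likely writes $\e^{1/4}$ for uniformity with other applications, or simply cites Lemma 3.4 of \cite{Gowers.20063gk}.)
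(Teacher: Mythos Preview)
Your argument is correct and is precisely the standard balanced-function plus Cauchy--Schwarz proof; the paper does not give its own argument but simply cites Lemma~3.4 of \cite{Gowers.20063gk}, whose proof is exactly what you have written out. Your final remark is apt: the argument in fact yields a bound of order $\e^{1/2}$, and the $\e^{1/4}$ in the statement is just a convenient weakening (for $\e$ large enough that $3\e+4\e^{1/2}>4\e^{1/4}$, the inequality is trivial since both $|K_3(G)|$ and $d_{AB}d_{AC}d_{BC}|A||B||C|$ are at most $(1+\e)^3|A||B||C|$).
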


We now define a notion of quasirandomness for a trigraph relative to an underlying triad. This definition is  due to Gowers \cite{Gowers.20063gk}. 

 \begin{definition}\label{def:regtriad}
Let $\e_1,\e_2>0$.  Assume $H=(X,Y,Z;E)$ is a trigraph and $G$ is a triad underlying $H$.  

We say that \emph{$(H,G)$ has $\dev_{2,3}(\e_1,\e_2)$} if there are $d_{XY},d_{YZ},d_{XZ}>0$ such that $G[X,Y]$, $G[X,Z]$, and $G[Y,Z]$ have $\dev_2(\e_2,d_{XY})$,  $\dev_2(\e_2,d_{XZ})$, and  $\dev_2(\e_2,d_{YZ})$ respectively, and
$$
\sum_{u_0,u_1\in X}\sum_{w_0,w_1\in Y}\sum_{z_0,z_1\in Z}\prod_{(i,j,k)\in \{0,1\}^3}h_{H,G}(u_i,w_j,z_k)\leq \e_1 d_{XY}^4d_{YZ}^4d_{XZ}^4|X|^2|Y|^2|Z|^2,
$$
where $h_{H,G}(x,y,z)=1-d_H(G)$ if $(x,y,z)\in E\cap K_3(G)$, $h_{H,G}(x,y,z)=-d_H(G)$ if $(x,y,z)\in K_3(G)\setminus E$, and $h_{H,G}(x,y,z)=0$ if $(x,y,z)\notin K_3(G)$.
\end{definition}

We now state a version of the counting lemma for $\dev_{2,3}$-quasirandomenss, due to Gowers (see Theorem 6.8 of \cite{Gowers.20063gk}). This theorem allows us to find configurations in $3$-graphs, given a sufficient amount of $\dev_{2,3}$-regularity.
  
\begin{proposition}\label{prop:countinggowers} 
For all $t\in \mathbb{N}$ there exist $K,C,L\geq 1$ so that for all $0<\e_2,d_2,\e_1, d_1<1$ satisfying $0<\e_2<\e_1^Kd_2^{t\choose 2}/C$,  the following holds.  

Let $F=([t],R_F)$ and $H=(V,R)$ be $3$-graphs. Suppose $V_1,\ldots, V_t$ are subsets of $V$, and for each $1\leq i,j\leq t$, assume $G_{ij}=(V_i,V_j;E_{ij})$ is a bigraph with density $d_{ij}\geq d_2$.  For each $1\leq i,j,k\leq t$, let $G_{ijk}=(V_i,V_j,V_k; E_{ij},E_{jk},E_{ik})$, let $H^{ijk}=\overline{H}|G_{ijk}$, and let $d_{ijk}=d_{H^{ijk}}(G^{ijk})$.  Suppose the following hold.
\begin{enumerate}
\item For each $1\leq i,j\leq t$, $G_{ij}$ has $\dev_2(\e_2)$,
\item For each $ijk\in R_F$, $d_{ijk}\geq d_1$, and for each  $ijk\in {[t]\choose 3}\setminus R_F$, $d_{ijk}\leq 1-d_1$.
\item For each $1\leq i,j,k\leq t$, $(H^{ijk},G^{ijk})$ satisfies $\dev_{2,3}(\e_1,\e_2)$,
\end{enumerate}
Let $m_F$ be the number of tuples $(v_1,\ldots, v_t)\in \prod_{i=1}^tV_i$ such that $(v_i,v_j,v_k)\in R$ if and only if $ijk\in R_F$.  Then  
$$
\Big|m_F - (\prod_{ijk\in {[t]\choose 3}}d_{ijk})(\prod_{(i,j)\in {[t]\choose 2}}d_{ij})\prod_{i=1}^t|V_i|\Big|\leq \e_1^{1/L}(\prod_{(i,j)\in {[t]\choose 2}}d_{ij})\prod_{i=1}^t|V_i|.
$$
\end{proposition}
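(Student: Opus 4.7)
The plan is to proceed via the standard ``replacement'' telescoping argument underlying Gowers' hypergraph counting lemmas. Interpreting $m_F$ as enumerating tuples $(v_1,\ldots,v_t)\in\prod V_i$ with $(v_i,v_j)\in E_{ij}$ for every pair as well as $(v_i,v_j,v_k)\in R$ iff $ijk\in R_F$ (the presence of the factor $\prod d_{ij}$ in the main term indicates this is the relevant normalization), I would write
$$
m_F=\sum_{(v_1,\ldots,v_t)\in\prod V_i}\prod_{ij}\mathbf{1}_{E_{ij}}(v_i,v_j)\prod_{ijk\in R_F}\mathbf{1}_R(v_i,v_j,v_k)\prod_{ijk\notin R_F}\bigl(1-\mathbf{1}_R(v_i,v_j,v_k)\bigr).
$$
On the support of $\prod_{ij}\mathbf{1}_{E_{ij}}$, every triple lies in $K_3(G^{ijk})$, so each triple factor decomposes as $d_{ijk}+h_{H^{ijk},G^{ijk}}$ or $(1-d_{ijk})-h_{H^{ijk},G^{ijk}}$ respectively, where $h_{H^{ijk},G^{ijk}}$ is the mean-zero deviation function of Definition~\ref{def:regtriad}.

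Expanding the product over the $\binom{t}{3}$ triples then produces $2^{\binom{t}{3}}$ terms, indexed by subsets $S\subseteq\binom{[t]}{3}$ marking the triples on which the $h$-factor was chosen. The $S=\emptyset$ contribution is $\bigl(\prod_{ijk}d^{*}_{ijk}\bigr)\sum_{v_1,\ldots,v_t}\prod_{ij}\mathbf{1}_{E_{ij}}(v_i,v_j)$, with $d^{*}_{ijk}=d_{ijk}$ if $ijk\in R_F$ and $1-d_{ijk}$ otherwise. The remaining sum is a $t$-clique count in the bigraph system $\{G_{ij}\}$, which I would estimate by iterating Proposition~\ref{prop:counting} in a standard induction on $t$, obtaining $(1\pm O_t(\e_2^{1/4}/d_2^{O(t^2)}))\prod d_{ij}\prod|V_i|$. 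This yields the main term of the conclusion.

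For each nonempty $S$, the corresponding error term carries at least one $h$-factor per triple in $S$. These I would dispose of through a sequence of Cauchy-Schwarz inequalities, one per vertex variable, doubling each $v_i$ into $v_i^0,v_i^1$ and eventually reducing the normalized error to an octahedral sum of the form controlled by $\dev_{2,3}(\e_1,\e_2)$ for some triad indexed by $S$. Each Cauchy-Schwarz step costs a square root, so after $O(t)$ steps the normalized error is bounded by $\e_1^{1/2^{O(t)}}$, which sets $L=2^{O(t)}$. The $\dev_2(\e_2)$-quasirandomness of the $G_{ij}$ together with the lower bounds $d_{ij}\geq d_2$ are used to absorb bigraph factors not paired with an $h$-term; this is exactly where the hypothesis $\e_2<\e_1^Kd_2^{\binom{t}{2}}/C$ is consumed. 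The main obstacle is the bookkeeping of the Cauchy-Schwarz iteration: at each step one must track which $h$-factors become paired, verify that the resulting configuration matches either a $\dev_{2,3}$ or a $\dev_2$ expression, and confirm that the $d_2$-slack absorbs the accumulated sparsity losses from factors $d_{ij}$ raised to negative powers. Summing over all $2^{\binom{t}{3}}$ terms (a harmless $t$-dependent constant) completes the estimate.
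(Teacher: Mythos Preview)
The paper does not give its own proof of this proposition: it is stated as a version of Gowers' counting lemma and attributed to Theorem~6.8 of \cite{Gowers.20063gk}, with no argument supplied. Your outline is the standard telescoping-plus-Cauchy--Schwarz route that Gowers himself uses, so there is nothing to compare against here; your sketch is consistent with the cited source.
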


We will frequently use the following embedding lemma, which is a corollary of Proposition \ref{prop:countinggowers}.

\begin{corollary}\label{cor:countingcor}
For all $t\geq 1$, there are $D\geq 1$, and a polynomial  $p(x,y)$ so that for all $0<\e_1,d_1,d_2,\e_2<1$ satisfying  $\e_2<p(\e_1,d_2)$ and $\e_1<d_1^D$, there is $n_0$ so that the following holds. 

Let $F=([t],R_F)$ and $H=(V,R)$ be $3$-graphs. Suppose $V_1,\ldots, V_t$ are subsets of $V$, each of size at least $n_0$ and for each $1\leq i,j\leq t$, assume $G_{ij}=(V_i,V_j;E_{ij})$ is a bigraph with density $d_{ij}\geq d_2$.  For each $1\leq i,j,k\leq t$, let $G_{ijk}=(V_i,V_j,V_k; E_{ij},E_{jk},E_{ik})$, let $H^{ijk}=\overline{H}|G_{ijk}$, and let $d_{ijk}=d_{H^{ijk}}(G^{ijk})$.  Suppose the following hold.
\begin{enumerate}
\item For each $1\leq i,j\leq t$, $G_{ij}$ has $\dev_2(\e_2)$,
\item For each $ijk\in R_F$, $d_{ijk}\geq d_1$, and for each  $ijk\in {[t]\choose 3}\setminus R_F$, $d_{ijk}\leq 1-d_1$.
\item For each $1\leq i,j,k\leq t$, $(H^{ijk},G^{ijk})$ satisfies $\dev_{2,3}(\e_1,\e_2)$,
\end{enumerate}
Then there exists a tuple $(v_1,\ldots, v_t)\in \prod_{i=1}^tV_i$ such that $(v_i,v_j,v_k)\in R$ if and only if $ijk\in R_F$.
\end{corollary}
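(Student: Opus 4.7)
The plan is to derive Corollary \ref{cor:countingcor} as a quantitative consequence of Proposition \ref{prop:countinggowers}: feed the proposition with the data $(H, F, G_{ij})$, lower bound the predicted count $m_F$, and show that under the stated relations the error term cannot swallow the main term, forcing $m_F \geq 1$.

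First, I would fix the constants. Let $K, C, L$ be those produced by Proposition \ref{prop:countinggowers} for the given $t$, and set $D := L{t\choose 3} + 1$ together with $p(x,y) := x^K y^{{t\choose 2}}/C$. Then $\e_2 < p(\e_1, d_2) \leq \e_1^K d_2^{{t\choose 2}}/C$ supplies the quantitative requirement of the proposition, and the structural hypotheses (1)--(3) of the corollary are identical to (1)--(3) of the proposition, so Proposition \ref{prop:countinggowers} can be applied directly.

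Next, I would lower bound the main term in the resulting count. Interpreting $\prod_{ijk\in{[t]\choose 3}} d_{ijk}$ as the density of $R_F$-matching triples (that is, $d_{ijk}$ when $ijk\in R_F$ and $1-d_{ijk}$ when $ijk\notin R_F$), hypothesis (2) forces each such factor to be at least $d_1$, while $d_{ij}\geq d_2$ gives $\prod_{(i,j)\in{[t]\choose 2}} d_{ij} \geq d_2^{{t\choose 2}}$. Combining these with the error bound from the proposition yields
$$
m_F \;\geq\; \bigl(d_1^{{t\choose 3}} - \e_1^{1/L}\bigr)\, d_2^{{t\choose 2}}\, \prod_{i=1}^t |V_i|.
$$
The assumption $\e_1 < d_1^D$ with $D > L{t\choose 3}$ forces $\e_1^{1/L} < d_1^{{t\choose 3}}$, so the bracketed difference is a strictly positive constant $c = c(d_1, t) > 0$.

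Finally, I would choose $n_0$ large enough that $c\, d_2^{{t\choose 2}}\, n_0^t \geq 1$ (above whatever additional threshold Proposition \ref{prop:countinggowers} may itself require), so that $m_F \geq 1$, producing the desired tuple $(v_1,\ldots, v_t)$. The only nontrivial bookkeeping is the convention used for the product $\prod d_{ijk}$ in Proposition \ref{prop:countinggowers}: if that statement is read literally rather than with the $R_F$-matching convention above, the same conclusion can be recovered by inclusion--exclusion, summing the count formula over the $2^{{t\choose 3}}$ refinements $F' \supseteq R_F$ and cancelling appropriately. This is the main obstacle, but it is essentially routine; no deeper ingredient is needed beyond the counting lemma itself.
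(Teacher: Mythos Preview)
Your proposal is correct and matches the paper's own proof essentially line for line: the paper sets $p(x,y)=x^Ky^{\binom{t}{2}}/C$ and $D=\binom{t}{3}L$, applies Proposition~\ref{prop:countinggowers}, bounds $m_F\geq (d_1^{\binom{t}{3}}-\e_1^{1/L})d_2^{\binom{t}{2}}n_0^t$, and uses $\e_1<d_1^D$ to make the bracket positive before taking $n_0$ large. Your additional remark about the $R_F$-matching convention for $\prod d_{ijk}$ is a fair observation---the paper silently adopts exactly that reading.
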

\begin{proof}
Let $K,C,L$ be as in  Proposition \ref{prop:countinggowers} for $t$.  Let $p(x,y)=\frac{x^Ky^{t\choose 2}}{C}$ and set $D={t\choose 3}L$.   Assume $0<\e_1,d_1,d_2,\e_2<1$ satisfy  $\e_2<p(\e_1,d_2)$ and $\e_1<d_1^D$.  Let $n_0$ be sufficiently large.  

Suppose we have $F$, $H$, $V_1,\ldots, V_t$, and $G_{ij}$, $d_{ij}$, $H_{ijk}$, $G_{ijk}$, and $d_{ijk}$ as in the hypotheses.  Let $m_F$ be the number of tuples $(v_1,\ldots, v_t)\in \prod_{i=1}^tV_i$ such that $(v_i,v_j,v_k)\in R$ if and only if $ijk\in R_F$.  By Proposition \ref{prop:countinggowers} 
$$
m_F \geq \Big((\prod_{ijk\in {[t]\choose 3}}d_{ijk})-\e_1^{1/L}\Big)(\prod_{(i,j)\in {[t]\choose 2}}d_{ij})\prod_{i=1}^t|V_i|\geq (d_1^{t\choose 3}-\e_1^{1/L})d_2^{t\choose 2}n_0^t.
$$
Since $\e_1<d_1^D$, $d_1^{t\choose 3}-\e_1^{1/L}>0$. Thus, since $n_0$ is sufficiently large, the right hand side of the inequality above is at least $1$. 
\end{proof}

\subsection{Regular Decompositions}\label{ss:dec}

In this subsection, we define regular decompositions and state a regularity lemma for $\dev_{2,3}$-quasirandomness.  We begin with the definition of a $(t,\ell)$-decomposition of a vertex set $V$, which will be the analogue of the vertex partition in the graph regularity lemma.  A $(t,\ell)$-decomposition will roughly speaking partition not only the vertex set, but also pairs of vertices.

\begin{definition}\label{def:decomp}
Let $V$ be a vertex set and $t,\ell \in \mathbb{N}$. A \emph{$(t,\ell)$-decomposition} $\calP$ for $V$ consists of a pair $(\calP_1,\calP_2)$, where $\calP_1=\{V_1\cup \ldots \cup V_t\}$ is a partition of $V$, and where 
$$
\calP_2=\{P_{ij}^{\alpha}: 1\leq i,j\leq t, 1\leq \alpha\leq \ell \}
$$
satisfies the following. For each $1\leq i, j\leq t$, $P_{ij}^1,\ldots, P_{ij}^{\ell}$ are disjoint sets such that $V_i\times V_j=P^1_{ij}\cup \ldots \cup P^{\ell}_{ij}$.  
\end{definition}
We note that Definition \ref{def:decomp} differs slightly from the definition of a $(t,\ell)$-decomposition used in \cite{Terry.2022}. One difference is that, in the notation of Definition \ref{def:decomp}, some of the sets $P_{ij}^{\alpha}$ may be empty.  We do this mainly for notational convenience.  A less trivial difference is that here, $\calP_2$ contains sets $P_{ij}^{\alpha}$ where $i=j$.

The fundamental unit of a $(t,\ell)$-decomposition $\calP$ is a \emph{triad of $\calP$}, meaning a triad of the form 
$$
G_{ijk}^{\alpha\beta\gamma}:=(V_i,V_j,V_k; P_{ij}^\alpha,P_{ik}^\beta, P_{jk}^\gamma),
$$
 where $1\leq i,j,k\leq t$, and $\alpha,\beta,\gamma\in [\ell]$.  We say a triad $G_{ijk}^{\alpha\beta\gamma}$ is \emph{non-empty} if $K_3(G_{ijk}^{\alpha\beta\gamma})\neq \emptyset$.  We then let $\triads(\calP)$ denote the set of all non-empty triads of $\calP$.  Observe that if $\calP$ is a $(t,\ell)$-decomposition of a set $V$, then we have a partition 
 $$
V^3=\bigcup_{G\in \triads(\calP)}K_3(G).
 $$

We now define the notion of regular triad, relative to a $3$-graph, by combining Definition \ref{def:regtriad} with Definition \ref{def:hbar}.

\begin{definition}\label{def:regtriads}
Given a $3$-graph $H=(V,R)$, a $(t,\ell)$-decomposition $\calP$ of $V$, and $G\in \triads(\calP)$, we say $G$ \emph{has $\dev_{2,3}(\e_1,\e_2)$ with respect to $H$} if $(\overline{H}|G,G)$ has $\dev_{2,3}(\e_1,\e_2)$. 
\end{definition}

  To define a regular decomposition for a $3$-graph, we need one more definition.

\begin{definition}
Suppose $\calP=(\calP_1,\calP_2)$ is a $(t,\ell)$-decomposition with $\calP_1=\{V_1,\ldots, V_t\}$ and $\calP_2=\{P_{ij}^{\alpha}: 1\leq i,j\leq t, 1\leq \alpha\leq \ell\}$.  We say $\calP$ is a \emph{$(t,\ell, \e_1,\e_2)$-decomposition of $V$} if 
$$
|\bigcup_{P\in \Sigma}P|\geq (1-\e_1)|V|^2,
$$
where $\Sigma$ is the set of $P_{ij}^{\alpha}\in \calP_2$ such that $(V_i,V_j; P_{ij}^{\alpha})$ has $\dev_{2}(\e_2)$.
\end{definition}

We now define regular decompositions of $3$-graphs.

\begin{definition}\label{def:regdec}
Suppose $H=(V,E)$ is a $3$-graph and $\calP$ is an $(t,\ell, \e_1,\e_2)$-decomposition of $V$.  We say that $\calP$ is \emph{$\dev_{2,3}(\e_1,\e_2)$-regular} with respect to $H$ if 
$$
|\bigcup_{G\in \Omega}K_3(G)|\geq (1-\e_1)|V|^3,
$$
where $\Omega$ is the set of  $G\in \triads(\calP)$ satisfying $\dev_{2,3}(\e_1,\e_2)$ with respect to $H$.   
\end{definition}

We emphasize that in this paper we make no insistence on equitability conditions for our regular decompositions. We can now state a version of  the regularity lemma for $\dev_{2,3}$-quasirandomness, which was first proved by Gowers in \cite{Gowers.20063gk}.

\begin{theorem}\label{thm:reg2} For all $\e_1>0$, every function $\e_2:\mathbb{N}\rightarrow (0,1]$,   there exist positive integers $T=T(\e_1,\e_2)$ and $L=L(\e_1,\e_2)$, such that for every sufficiently large $3$-graph $H=(V,E)$, there exists a  $\dev_{2,3}(\e_1,\e_2(\ell))$-regular, $(t,\ell,\e_1,\e_2(\ell))$-decomposition $\calP$ for $H$ with $1\leq t\leq T$ and $1\leq \ell \leq L$.
\end{theorem}

For the sake of completeness, we have included a proof of Theorem \ref{thm:reg2} with explicit bounds in the appendix. Since our decompositions may not be equitable in this paper, we frequently need to restrict our attention to only those parts of a decomposition which are sufficiently large.  This is the motivation for the following terminology.

\begin{definition}\label{def:nontrivialtriad}
Suppose $\calP$ is a $(t,\ell, \e_1,\e_2)$-decomposition of $V$.  We say a triad $G=(V_i,V_j,V_k; P_{ij}^{\alpha},P_{ik}^{\beta},P_{jk}^{\gamma})$ of $\calP$ is \emph{$\mu$-non-trivial} if the following hold.
\begin{enumerate}
\item $\min\{|V_i|,|V_j|,|V_k|\}\geq \mu |V|/t$,
\item $|P_{ij}^{\alpha}|\geq \mu|V_i||V_j|/\ell$, $|P_{ik}^{\beta}|\geq \mu|V_i||V_k|/\ell$, and $|P_{jk}^{\gamma}|\geq \mu|V_j||V_k|/\ell$.
\end{enumerate}
\end{definition}

It is not difficult to show that for any decomposition $\calP$ of a set $V$, most triples from $V^3$ come from a non-trivial triad of $\calP$.

\begin{lemma}\label{lem:nontrivialtriad}
Suppose $\calP$ is a $(t,\ell, \e_1,\e_2)$-decomposition of $V$, and $\Omega$ is the set of $\mu$-non-trivial triads of $\calP$.  Then $|\bigcup_{G\in \Omega}K_3(G)|\geq (1-2\mu)|V|^3$.
\end{lemma}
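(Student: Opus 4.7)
The plan is to estimate directly the number of triples in $V^3$ lying in trivial (i.e., not $\mu$-non-trivial) triads of $\calP$, and show it is at most a small multiple of $\mu|V|^3$. The starting observation is that $\{K_3(G):G\in\triads(\calP)\}$ partitions $V^3$: each $(x,y,z)\in V^3$ lies in a unique triad, since $i,j,k$ are determined by the parts of $\calP_1$ containing $x,y,z$, and then $\alpha,\beta,\gamma$ are determined by the parts of $\calP_2$ containing $(x,y), (x,z), (y,z)$. Hence it suffices to bound the number of triples whose triad fails condition (1) or condition (2) of Definition \ref{def:nontrivialtriad}.

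For failures of condition (1), let $I=\{i\in[t]:|V_i|<\mu|V|/t\}$. Since $|I|\leq t$, we have $|\bigcup_{i\in I}V_i|<\mu|V|$. A triad failing (1) has at least one of its three vertex parts indexed by $I$, so a union bound over the three coordinates of $(x,y,z)$ gives at most $3\mu|V|^3$ such triples.

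For failures of condition (2), fix a pair $(i,j)\in[t]^2$ and put $J_{ij}=\{\alpha\in[\ell]:|P_{ij}^\alpha|<\mu|V_i||V_j|/\ell\}$. Then $Q_{ij}:=\bigcup_{\alpha\in J_{ij}}P_{ij}^\alpha$ satisfies $|Q_{ij}|\leq |J_{ij}|\cdot\mu|V_i||V_j|/\ell\leq\mu|V_i||V_j|$. Summing over $(i,j)$, the set $Q:=\bigcup_{i,j}Q_{ij}\subseteq V\times V$ has $|Q|\leq\mu|V|^2$. A triad satisfying (1) but failing (2) must have at least one of $(x,y),(x,z),(y,z)\in Q$, so a union bound over the three pairs yields at most $3\mu|V|^3$ further triples.

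Adding the two contributions gives at most $6\mu|V|^3$ triples in trivial triads, so that $\bigl|\bigcup_{G\in\Omega}K_3(G)\bigr|\geq(1-6\mu)|V|^3$; the stated bound $(1-2\mu)|V|^3$ is then obtained after absorbing the constant into $\mu$ (for example by replacing $\mu$ by $\mu/3$ in Definition \ref{def:nontrivialtriad}, or by observing that in every application $\mu$ is taken small enough that the precise constant is irrelevant). The only substantive point is the uniqueness of the triad containing each triple; after that the argument is elementary double counting and I do not expect any genuine obstacle.
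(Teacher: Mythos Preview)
Your approach is essentially identical to the paper's: bound the triples lying in a small vertex part and the triples lying over a small element of $\calP_2$, then subtract from $|V|^3$. Your arithmetic is in fact more careful---the union bound over the three coordinates (resp.\ three pairs) gives $3\mu|V|^3$ for each contribution, so $(1-6\mu)|V|^3$ is what the argument actually delivers; the paper's displayed computation writes only one of the three symmetric terms in each case and so records $2\mu$ where a careful count gives $6\mu$. As you note, the constant is immaterial for every application in the paper, so this discrepancy is harmless; your handling of it is fine, though ``absorbing the constant into $\mu$'' is not literally a proof of the stated inequality but rather of the (equally serviceable) variant with $6\mu$.
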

\begin{proof}
Say $\calP_1=\{V_1,\ldots, V_t\}$ and $\calP_2=\{P_{ij}^{\alpha}: 1\leq i,j\leq t, 1\leq \alpha\leq \ell\}$.  Set
\begin{align*}
\calR_1&=\{X\in \calP_1: |X|<\mu|V|/t\}\text{ and }\\
\calR_2&=\{P_{ij}^{\alpha}\in \calP_2: |P_{ij}^{\alpha}|<\mu|V_i||V_j|/t\}.
\end{align*}  
Then 
\begin{align*}
\Big|\bigcup_{G\in \Omega}K_3(G)\Big|&\leq \sum_{X\in \calR_1}|X||V^2|+\sum_{V_i,V_j\in \calP_1}\Big(\sum_{\{\alpha\in [\ell]: P_{ij}^{\alpha}\in \calR_2\}}|P_{ij}^{\alpha}||V| \Big)\\
&\leq |\calR_1|\frac{\mu}{t} |V|^3+\sum_{V_i,V_j\in \calP_1}|\{\alpha\in [\ell]: P_{ij}^{\alpha}\in \calR_2\}|\frac{\mu}{\ell}|V_i||V_j||V|\\
&\leq \mu|V|^3+\sum_{V_i,V_j\in \calP_1}\mu|V_i||V_j||V|\\
&=2\mu|V|^3.
\end{align*}
\end{proof}

\section{Homogeneity and analogues of $\VC$-dimension in $3$-graphs}\label{sec:hom}

In this section we discuss homogeneous partitions of various kinds, and their relationship to generalizations of VC-dimension in the hypergraph setting.  In Subsection \ref{ss:hom}, we define homogeneous decompositions and explain their connection to $\VC_2$-dimension.  In Subsection \ref{ss:homsl}, we define homogeneous partitions and explain their connection to slicewise VC-dimension.

\subsection{Homogeneous Decompositions}\label{ss:hom}

In this subsection we discuss homogeneous $(t,\ell)$-decompositions and $\VC_2$-dimension.  We begin by observing that if a trigraph $H$ is underlied by a triad $G$, and $d_H(G)$ is close to $0$ or $1$, then $(H,G)$ will be regular in the sense of Definition \ref{def:regtriad}.

\begin{proposition}\label{prop:homimpliesrandome}
For all $0<\e<1/2$, $d_2>0$, and $0<\delta\leq (d_2/2)^{48}$, there is $N$ such that the following holds.  Suppose $H=(V_1,V_2,V_3;R)$ is a trigraph underlied  by a bigraph $G=(V_1,V_2,V_3;E_{12},E_{13},E_{23})$ so that for each $1\leq i<j\leq 3$, $G[V_i,V_j]$ has $\dev_2(\delta)$ and density at least $d_2$.  If $d_H(G)\in [0,\e)\cup (1-\e,1]$, then $(H,G)$ has $\dev_{2,3}(\delta,6\e)$.
\end{proposition}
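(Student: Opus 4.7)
The plan is to prove the proposition in three steps: first reduce to the case where $d := d_H(G) < \e$, then verify the bigraph $\dev_2$ condition, and finally bound the octahedral sum via a cancellation argument.

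For the first step, note that if $d_H(G) \in (1-\e, 1]$, we may pass to the complementary trigraph $\bar{H} := (V_1, V_2, V_3; K_3(G) \setminus R)$, which satisfies $d_{\bar{H}}(G) = 1 - d_H(G) < \e$. A direct computation from Definition \ref{def:regtriad} shows that $h_{\bar{H}, G} = -h_{H, G}$, and since the octahedral sum involves a product of eight (an even number of) factors of $h$, the octahedral sum is invariant under this substitution. The bigraphs of $G$ are also unchanged. Hence it suffices to treat the case $d < \e$.

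For the second step, we take $d_{XY}, d_{YZ}, d_{XZ}$ in Definition \ref{def:regtriad} to be the actual densities of the component bigraphs $G[V_i, V_j]$. The hypothesis $\dev_2(\delta)$ then gives $\dev_2(\delta, d_{ij})$, and in the natural parameter regime $\delta \leq 6\e$ (which is the regime where $\delta$ is meant to be a small ``fine'' parameter), this implies $\dev_2(6\e, d_{ij})$ as required.

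The bulk of the work is the octahedral bound. Writing $r = \mathbf{1}_R$ and $g = \mathbf{1}_{K_3(G)}$, so that $h_{H, G} = r - d g$, we expand the product in the octahedral sum as
$$
S = \sum_{T \subseteq \{0,1\}^3} (-d)^{8 - |T|} N_T,
$$
where $N_T$ counts 6-tuples $(u_0, u_1, w_0, w_1, z_0, z_1)$ such that for each position $t \in T$ the corresponding triple lies in $R$, and for each $t \notin T$ it lies in $K_3(G)$. Using a multilinear extension of Proposition \ref{prop:countinggowers} (or the Cauchy–Schwarz chain in the style of Corollary \ref{cor:countingcor}) applied to 6-vertex configurations drawn with multiplicity from $V_1, V_2, V_3$, together with the $\dev_2(\delta)$-regularity of the three bigraphs, we show
$$
N_T \;=\; d^{|T|} N_{\mathrm{oct}} \;\pm\; O(\delta^{1/L})\, d_{12}^4 d_{13}^4 d_{23}^4\, |V_1|^2 |V_2|^2 |V_3|^2
$$
for some absolute constant $L$, where $N_{\mathrm{oct}} := N_\emptyset \approx d_{12}^4 d_{13}^4 d_{23}^4 |V_1|^2 |V_2|^2 |V_3|^2$ from Proposition \ref{prop:counting}. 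Substituting into the expansion, the leading-order contributions cancel by the binomial identity $\sum_T (-d)^{8-|T|} d^{|T|} = (d-d)^8 = 0$, and the remaining error is at most $2^8 \cdot O(\delta^{1/L}) \cdot d_{12}^4 d_{13}^4 d_{23}^4 |V_1|^2 |V_2|^2 |V_3|^2$. The hypothesis $\delta \leq (d_2/2)^{48}$ is calibrated precisely so that $O(\delta^{1/L}) \leq \delta$ after this compounding (with $L$ on the order of $48$), yielding the required octahedral bound.

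The main obstacle will be establishing the estimate for $N_T$ with sufficient accuracy: Proposition \ref{prop:countinggowers} is stated for tuples drawn from $t$ distinct vertex sets, whereas our 6-tuples live in $V_1^2 \times V_2^2 \times V_3^2$. This necessitates reproving (or adapting) the counting lemma directly by tracking how the $\dev_2(\delta)$-quasirandomness of the bigraphs propagates through the compound constraints defining an octahedron with $|T|$ distinguished edges. The high exponent $48$ in the hypothesis accommodates this error accumulation.
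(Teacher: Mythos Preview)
Your reduction to $d := d_H(G) < \e$ via complementation is fine (this is Lemma~\ref{lem:compliment}), and the expansion $h_{H,G} = r - dg$ with $S = \sum_T (-d)^{8-|T|} N_T$ is correct. The gap is the estimate
\[
N_T = d^{|T|} N_{\mathrm{oct}} \pm O(\delta^{1/L})\, d_{12}^4 d_{13}^4 d_{23}^4\, |V_1|^2 |V_2|^2 |V_3|^2 .
\]
This cannot be extracted from the $\dev_2(\delta)$-regularity of the bigraphs alone: bigraph regularity of $G$ controls the total octahedron count $N_\emptyset = |K_{2,2,2}^G|$, but says nothing about how an \emph{arbitrary} set $R\subseteq K_3(G)$ is distributed among those octahedra. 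Invoking Proposition~\ref{prop:countinggowers} or Corollary~\ref{cor:countingcor} here is circular, since both take $\dev_{2,3}$-regularity of the ternary relation as a hypothesis, which is exactly what you are trying to establish. (Your final remark that $O(\delta^{1/L})\leq\delta$ is also impossible for $\delta<1$ and $L>1$, which is a symptom of the same issue.)

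For a concrete counterexample to the intermediate claim, take $G$ to be the complete triad on $V_1,V_2,V_3$ (so each $d_{ij}=1$ and $\dev_2(\delta)$ holds trivially) and let $R = V_1 \times \{w^*\}\times\{z^*\}$ for fixed $w^*\in V_2$, $z^*\in V_3$, so that $d = (|V_2|\,|V_3|)^{-1}$. For $T=\{(0,0,0),(1,0,0)\}$ one computes $N_T = |V_1|^2|V_2|\,|V_3|$, while $d^{|T|}N_\emptyset = |V_1|^2$; the ratio is $|V_2|\,|V_3|$, not bounded by any function of $\delta$, so the binomial cancellation never gets started. The proposition's conclusion does hold in this example, but not via your route. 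The paper itself does not argue this proposition; it simply cites Proposition~2.24 of \cite{Terry.2022}.
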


For a proof, see Proposition 2.24 in  \cite{Terry.2022}.  Proposition \ref{prop:homimpliesrandome} tells us that having density close to $0$ or $1$ is a special way to ensure regularity in the sense of Definition \ref{def:regtriad}.  This leads us to the definition of a homogeneous decomposition, which means a decomposition where most triples are in a homogeneous triad. 

\begin{definition}\label{def:homdec}
Suppose $H=(V,E)$ is a $3$-graph with $|V|=n$ and $\mu>0$. Suppose $t,\ell\geq 1$ and $\calP$ is a $(t,\ell)$-decomposition of $V$.  
\begin{enumerate}
\item Given a triad $G\in \triads(\calP)$, we say $G$ is \emph{$\mu$-homogeneous for $H$} if 
$$
d_H(G)\in [0,\e)\cup (1-\e,1].
$$ 
\item We say that $\calP$ is \emph{$\mu$-homogeneous with respect to $H$} if 
$$
\Big|\bigcup_{G\in \Omega_{hom}}K_3(G)\Big|\geq (1-\mu)|V|^3,
$$
where $\Omega_{hom}=\{G\in \triads(\calP): G$ is $\mu$-homogeneous with respect to $H\}$.  
\end{enumerate}
\end{definition}

Proposition \ref{prop:homimpliesrandome} implies that for any $3$-graph $H$, any $(t,\ell,\e_1,\e_2(\ell))$-decomposition of $V(H)$ which is $\e_1$-homogeneous with respect to $H$ is automatically $\dev_{2,3}(\e_2(\ell),6\e_1)$-regular with respect to $H$.  Of course, the converse of this is false as there are certainly $3$-graphs $H$ which do not admit homogeneous decompositions.  On the level of hereditary properties, there is a combinatorial characterization of when this happens, and it has to do with the following analogue of VC-dimension for $3$-graphs.

\begin{definition}\label{def:vc2}
Suppose $H=(V,E)$ is a $3$-graph.  The \emph{$\VC_2$-dimension of $H$}, $\VC_2(H)$, is the largest integer $k$ so that there exist vertices $a_1,\ldots, a_k,b_1,\ldots, b_k\in V$ and $c_S\in V$ for each $S\subseteq [k]^2$, such that $a_ib_jc_S\in E$ if and only if $(i,j)\in S$.
\end{definition}

This definition is due to Shelah \cite{Shelah.2007, ShelahSD}, and has been studied in several papers in model theory and its connections to combinatorics \cite{Chernikov.2020,Hempel.2016,Chernikov.2019b, Terry.2018,Terry.2021a,Terry.2021b,Terry.2022, Aldaim.2023}.  Definition \ref{def:vc2} is closely related to homogenous decompositions.  First, non-trivial regular triads in $3$-graphs of small $\VC_2$-dimension have density close to $0$ or close to $1$ (see Proposition \ref{prop:suffvc2} below). This is a straightforward corollary of Gower's counting lemma, and has appeared in various forms in \cite{Terry.2021b, Terry.2022}.  We provide a sketch of the proof here for the sake of completeness, and because we will use similar ideas in other parts of the paper.

\begin{proposition}\label{prop:suffvc2}
For all $k\geq 1$, there is a polynomial $p(x,y)$ and $D\geq 1$  so that for all $0<\e_1<2^{-D}$, $\mu>0$, and $\e_2:\mathbb{N}\rightarrow (0,1]$ satisfying $\e_2(x)\leq p(\e_1,\mu x^{-1})$  the following holds.  Let $t,\ell\geq 1$, and suppose $H=(V,E)$ is a sufficiently large $3$-graph with $\VC_2$-dimension less than $k$.  Suppose $\calP$ is a $\dev_{2,3}(\e_1,\e_2(\ell))$-regular $(t,\ell, \e_1,\e_2(\ell))$-decomposition of $V$. Every $\mu$-non-trivial $G\in \triads(\calP)$ satisfying $\dev_{2,3}(\e_1,\e_2(\ell))$ with respect to $H$  is $\e_1^{1/D}$-homogeneous with respect to $H$.
\end{proposition}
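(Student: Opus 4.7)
The plan is to prove the contrapositive: assuming some $\mu$-non-trivial triad $G = (X, Y, Z; E_{XY}, E_{XZ}, E_{YZ}) \in \triads(\calP)$ with the stated $\dev_{2,3}$-regularity has $d_H(G) \in [\e_1^{1/D}, 1-\e_1^{1/D}]$, I will build a $\VC_2$-configuration of size $k$ inside $H$ via the counting lemma, contradicting $\VC_2(H) < k$. The target configuration is the $3$-graph $F$ on $t' := 2k + 2^{k^2}$ vertices $\{a_i\}_{i=1}^k \cup \{b_j\}_{j=1}^k \cup \{c_S\}_{S\subseteq [k]^2}$ with edge set $R_F = \{a_i b_j c_S : (i,j) \in S\}$. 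Letting $D_0$ and $p(x,y)$ be the constant and polynomial produced by Corollary \ref{cor:countingcor} for this $t'$, I will set $D := 2 D_0$ (so $\e_1 < d_1^{D_0}$ for $d_1 := \e_1^{1/D}$) and pick $\e_2$ satisfying $\e_2(\ell) < p(\e_1, \mu/(10\ell))$ for each $\ell$.

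The argument has two main steps. The first is a slicing step: uniformly at random, split $X$ into equal-sized parts $A_1, \ldots, A_k$, split $Y$ into $B_1, \ldots, B_k$, and split $Z$ into $(C_S)_{S \subseteq [k]^2}$. Standard slicing arguments for $\dev_2$- and $\dev_{2,3}$-quasirandomness, combined with the $\mu$-non-triviality of $G$, show that with positive probability every cross-bigraph (e.g., the restriction of $E_{XY}$ to $A_i \times B_j$) is $\dev_2$-regular at density at least $\mu/(2\ell)$, and every cross-triad $(A_i, B_j, C_S)$ is $\dev_{2,3}$-regular with respect to $H$ at relative density in $[d_1, 1-d_1]$ (since this density lies within $o(1)$ of $d_H(G)$). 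Fix such a partition. The second step is to apply Corollary \ref{cor:countingcor} (in its tripartite form) to embed $F$ into the refined structure, with the parts $A_i, B_j, C_S$ as the $V_s$'s and the induced cross-bigraphs. Because each cross-triad's relative density already lies in $[d_1, 1-d_1]$, each cross-triple of parts may be freely matched to or excluded from $R_F$ in accordance with the $\VC_2$-pattern. The counting lemma then produces vertices $a_i \in A_i$, $b_j \in B_j$, $c_S \in C_S$ with $\{a_i, b_j, c_S\} \in E(H)$ if and only if $(i,j) \in S$, yielding the desired $\VC_2$-configuration of size $k$.

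The principal obstacle is the tripartite adaptation of Corollary \ref{cor:countingcor}: the statement as written requires bigraphs on every pair of subsets $V_s, V_{s'}$ (including same-type pairs like $A_i \times A_{i'}$) and a density dichotomy on every triple, but the $\VC_2$-configuration only constrains cross-type triples. The standard resolution is to appeal directly to Gowers' counting lemma for tripartite $3$-graphs, in which only cross-type pairs and triples enter the hypotheses. Alternatively, one may place random bigraphs on same-type pairs and extend $R_F$ to same-type triples based on each resulting triad's actual relative $H$-density --- always permissible since any $d \in [0,1]$ satisfies $d \geq d_1$ or $d \leq 1 - d_1$ --- though this route additionally requires a random-sampling argument to verify $\dev_{2,3}$-regularity of the same-type triads. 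A secondary technical point is the parameter bookkeeping in the slicing step, which dictates the polynomial dependence of $\e_2(\ell)$ on $\e_1$ and $\mu/\ell$.
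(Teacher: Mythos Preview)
Your approach is correct and shares the paper's overall strategy --- assume a $\mu$-non-trivial regular triad $G$ has intermediate density and use the counting lemma to embed the $\VC_2$-witness $3$-graph on $2k+2^{k^2}$ vertices --- but it differs in one key mechanical step. Where you \emph{partition} $X$, $Y$, $Z$ randomly into pieces $A_i$, $B_j$, $C_S$ and invoke slicing lemmas to transfer $\dev_2$- and $\dev_{2,3}$-regularity to the sub-triads, the paper instead takes $k$ full \emph{copies} of $V_i$ (labeled $A_1,\ldots,A_k$), $k$ copies of $V_j$, and $2^{k^2}$ copies of $V_s$, and builds an auxiliary $3$-graph $H'$ whose restriction to each cross-triad $(A_u,B_v,C_S)$ is a literal copy of $(\overline{H}|G,G)$. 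This sidesteps slicing entirely: every cross-triad inherits the exact regularity parameters and the exact density $d_H(G)$, with no degradation. Your route works but costs extra bookkeeping (quantitative $\dev_{2,3}$-slicing, tracking the $\mu/\ell$ dependence in $\e_2$); the paper's duplication trick is shorter and cleaner. The tripartite-adaptation issue you flag for Corollary~\ref{cor:countingcor} applies equally to both arguments; the paper simply declares the same-type bigraphs empty and implicitly appeals to the multipartite form of Gowers' counting lemma.
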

\begin{proof}
Let $D$ and $p(x,y)$ be as in Corollary \ref{cor:countingcor} for $T=2k+2^{k^2}$. Fix $0<\e_1<2^{-D}$ and $\mu>0$, and suppose $\e_2:\mathbb{N}\rightarrow (0,1]$ satisfies $\e_2(x)\leq p(\e_1,\mu x^{-1})$.  

 Fix $t,\ell\geq 1$ and suppose $H=(V,E)$ is a sufficiently large $3$-uniform $3$-graph with $\VC_2$-dimension less than $k$. Let $\calP$ be a $\dev_{2,3}(\e_1,\e_2(\ell))$-regular $(t,\ell, \e_1,\e_2(\ell))$-decomposition for $H$.  Suppose $G=(V_i,V_j,V_s; P_{ij}^{\alpha},P_{ik}^{\beta},P_{jk}^{\gamma})$ is $\mu$-non-trivial $\dev_{2,3}(\e_1,\e_2(\ell))$-regular triad with respect to $H$, and suppose towards a contradiction $\e_1^{1/D}<d_H(G)<\e_1^{1/D}$.  Our next goal is to define an auxiliary bigraph $G'$ by duplicating vertices and edges from $G$. First, we define a new vertex set $V'$ as follows.  Set
$$
V'=A_1\cup \ldots A_k\cup B_1\cup \ldots \cup B_k\cup \bigcup_{S\subseteq [k]^2}C_S,
$$
 where for each $1\leq u\leq k$, $A_u$ is a copy of $V_i$ and $B_u$ is a copy of $V_j$, and for each $S\subseteq [k]^2$, $C_S$ is a copy of $V_s$.  We then define $G'$ to have vertex sets $(V',V')$ and so that the following hold. For each $1\leq u,v\leq k$, let $G'[A_u,B_v]$ be a copy of $G[V_i,V_j]$. For each $1\leq u\leq k$ and $S\subseteq [k]^2$, let $G'[A_u,C_S]$ be a copy of $G[V_i,V_S]$ and let $G'[B_u,C_S]$ be a copy of $G[V_j,V_S]$.  We finish the definition of $G'$ by declaring it has no other edges.  Let $G'_{uvS}=G'[A_u,B_v,C_S]$ (this notation is defined at the end of Subsection \ref{ss:bigraphs}).

We next define an auxiliary $3$-graph $H'$ by duplicating the behavior of $H$.  In particular, we let $H'$ be the $T$-partite $3$-graph with $V(H')=V'$ and edge set $E(H')$ satisfying the following.  For each $1\leq u,v\leq k$ and each $S\subseteq [k]^2$, $\overline{E(H')}\cap K_3(G_{uvS})$ is a copy of $\overline{E(H)}\cap K_3(G)$.  

For each $1\leq u,v\leq k$ and each $S\subseteq [k]^2$, let $H'_{uvS}=H'[A_u,B_v,C_S]$ (again this notation is defined at the end of Subsection \ref{ss:bigraphs}).  We then have that for all $1\leq u,v\leq k$ and $S\subseteq[k]^2$,  the pair $(H'_{uvS}, G'_{uvS})$ has $\dev_{2,3}(\e_1,\e_2(\ell))$, since it is just a copy of $(\overline{H}|G, G)$. Moreover, if $(u,v)\in S$, its density is at least $\e_1^{1/D}$, and if $(u,v)\notin S$, its density is at most $1-\e_1^{1/D}$.  Consequently, since $V_i,V_j,V_s$ are sufficiently large, Corollary \ref{cor:countingcor} implies there exists $(a_1,\ldots, a_k, b_1,\ldots, b_k, (c_S)_{S\subseteq [k]^2})\in \prod_{u\in [k]}|A_u|\prod_{v\in [k]}|B_v|\prod_{S\subseteq [k]^2}C_S$ with the property that $(a_u,b_v,c_S)\in E(H')$ for $(u,v)\in S$ and $(a_u,b_v,c_S)\notin E(H')$ for $(u,v)\notin S$.  By construction of $H'$, this implies  $H$ has $\VC_2$-dimension at least $k$, a contradiction.
\end{proof}

We extend the definition of $\VC_2$-dimension to hereditary properties as follows. 

\begin{definition}
Given a hereditary $3$-graph property, define $\VC_2(\calH)\in \mathbb{N}\cup \{\infty\}$ as follows.
$$
\VC_2(\calH)=\sup\{\VC_2(H): H\in \calH\}.
$$
\end{definition}
We will use the following observation later in the paper (for a proof, see Section 2.3 in \cite{Terry.2021b}).

\begin{observation}\label{ob:universal2}
Suppose $\calH$ is a hereditary $3$-graph property and $\VC_2(\calH)=\infty$.  Then for all tripartite $3$-graphs $H=(A\cup B\cup C, E)$, there is a $3$-graph $H'\in \calH$ with vertex set $A\cup B\cup C$ and edge set $E(H')$ satisfying $E(H')\cap K_3[A,B,C]=E$.  
\end{observation}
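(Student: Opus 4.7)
The plan is to directly use the fact that $\VC_2(\calH)=\infty$ to extract a sufficiently large shattering witness from some element of $\calH$, and then encode the arbitrary tripartite pattern $E$ inside that witness. Enumerate $A=\{a_1,\ldots,a_a\}$, $B=\{b_1,\ldots,b_b\}$, $C=\{c_1,\ldots,c_c\}$, where $a=|A|$, $b=|B|$, $c=|C|$. For each $r\in[c]$, let $T_r=\{(i,j)\in[a]\times[b]:a_ib_jc_r\in E\}\subseteq[a]\times[b]$; the triple of data $(a,b,(T_r)_{r\in[c]})$ completely determines $E$ up to labeling.

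Next, I would choose $k$ large enough that $k\geq\max(a,b)$ and $2^{k^2-ab}\geq c$; the latter bound ensures that each $T_r\subseteq[a]\times[b]$ admits at least $c$ distinct extensions to a subset of $[k]^2$. Since $\VC_2(\calH)=\infty$, there exists $H^*\in\calH$ with $\VC_2(H^*)\geq k$, so we can fix witnesses $a^*_1,\ldots,a^*_k,b^*_1,\ldots,b^*_k\in V(H^*)$ together with $c^*_S\in V(H^*)$ for each $S\subseteq[k]^2$, all distinct, satisfying $a^*_ib^*_jc^*_S\in E(H^*)\iff (i,j)\in S$.

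Now I would extend the partial patterns to distinguishable full patterns: for each $r\in[c]$, choose $S_r\subseteq[k]^2$ with $S_r\cap([a]\times[b])=T_r$, in such a way that the $S_r$ are pairwise distinct as $r$ ranges over $[c]$. This is possible by the choice of $k$, since each pre-image $T_r$ has $2^{k^2-ab}\geq c$ extensions into $[k]^2$, enough freedom to assign distinct extensions to the (at most $c$) indices $r$ sharing any given $T_r$. Let $H''$ be the sub-3-graph of $H^*$ induced on $\{a^*_1,\ldots,a^*_a\}\cup\{b^*_1,\ldots,b^*_b\}\cup\{c^*_{S_1},\ldots,c^*_{S_c}\}$, which lies in $\calH$ because $\calH$ is hereditary. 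Identifying $a_i$ with $a^*_i$, $b_j$ with $b^*_j$, and $c_r$ with $c^*_{S_r}$ gives a 3-graph $H'$ on vertex set $A\cup B\cup C$, and by the shattering property, for every $(i,j,r)\in[a]\times[b]\times[c]$ we have $a_ib_jc_r\in E(H')\iff(i,j)\in S_r\iff(i,j)\in T_r\iff a_ib_jc_r\in E$. Thus $E(H')\cap K_3[A,B,C]=E$, as required.

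The only nontrivial step is the extension of the $T_r$ to pairwise distinct $S_r$, which is a book-keeping argument controlled entirely by the inequality $2^{k^2-ab}\geq c$. Every other component is a direct application of the definition of $\VC_2$ and the heredity of $\calH$, so I would expect no serious obstacle.
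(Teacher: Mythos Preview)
Your argument is essentially correct and follows the natural approach; the paper itself does not give a proof here, deferring to \cite{Terry.2021b}. There is one small point you gloss over: the assertion that the witnesses $a^*_i, b^*_j, c^*_S$ can be taken ``all distinct'' does not follow immediately from Definition~\ref{def:vc2} as stated. One can check that the $a^*_i$ are pairwise distinct, the $b^*_j$ are pairwise distinct, the $c^*_S$ are pairwise distinct, and that $a^*_i\neq b^*_j$ for all $i,j$ (each of these is forced by choosing an appropriate $S$ and using that a $3$-edge must have three distinct elements). But nothing rules out $c^*_S=a^*_i$ for some pair $(S,i)$: this only forces $(\{i\}\times[k])\cap S=\emptyset$, which is no contradiction when $S$ is, say, empty. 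However, since the $c^*_S$ are pairwise distinct, at most $2k$ sets $S$ can satisfy $c^*_S\in\{a^*_1,\ldots,a^*_k,b^*_1,\ldots,b^*_k\}$. So by enlarging $k$ slightly (say to ensure $2^{k^2-ab}\geq c+2k$) you can choose each $S_r$ to avoid these bad sets in addition to being pairwise distinct. With this adjustment your proof goes through.
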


On the level of hereditary properties, $\VC_2$-dimension characterizes when homogeneous decompositions exist (see Theorem \ref{thm:vc2hom} below). The version of this characterization stated below was proved in \cite{Terry.2021b}.  A non-quantitative analogue was proved independently under a different formalism  in \cite{Chernikov.2020}.

\begin{theorem}\label{thm:vc2hom}
Suppose $\calH$ is a hereditary $3$-graph property.  The following are equivalent.
\begin{enumerate}
\item $\calH$ has finite $\VC_2$-dimension,
\item $\calH$ admits homogeneous decompositions in the following sense: for all $\e_1>0$ and $\e_2:\mathbb{N}\rightarrow (0,1)$, there are $T, L\geq 1$ so that all sufficiently large $H\in \calH$, there exists $1\leq t\leq T$ and $1\leq \ell\leq L$, and an  $\e_1$-homogeneous $(t,\ell,\e_1,\e_2(\ell))$-decomposition of $H$.  
\end{enumerate}
\end{theorem}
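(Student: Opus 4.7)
\emph{Plan.} For the forward direction (1)$\Rightarrow$(2), the approach is to combine Gowers' regularity lemma (Theorem \ref{thm:reg2}) with Lemma \ref{lem:nontrivialtriad} and Proposition \ref{prop:suffvc2}. Assume $\VC_2(\calH)<k$, fix $\e_1>0$ and $\e_2:\mathbb{N}\to(0,1)$, and let $D$ be the exponent produced by Proposition \ref{prop:suffvc2} for $k$. Set $\mu=\e_1/4$, pick $\e_1'\le\min\{\e_1/2,\e_1^D,2^{-D}\}$, and choose $\e_2'\le\e_2$ pointwise small enough to meet the hypotheses of Proposition \ref{prop:suffvc2} with parameters $\e_1',\mu$. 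Applying Theorem \ref{thm:reg2} with $\e_1',\e_2'$ yields, for every sufficiently large $H\in\calH$, an equitable $\dev_{2,3}(\e_1',\e_2'(\ell))$-regular $(t,\ell,\e_1',\e_2'(\ell))$-decomposition $\calP$ with $(t,\ell)$ bounded in terms of $\e_1,\e_2$. By regularity, at most $\e_1'|V|^3$ triples lie in non-regular triads; by Lemma \ref{lem:nontrivialtriad}, at most $2\mu|V|^3$ lie outside $\mu$-non-trivial triads; and by Proposition \ref{prop:suffvc2}, every triad which is both $\mu$-non-trivial and regular is $(\e_1')^{1/D}\le\e_1$-homogeneous. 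Summing the two error terms shows $\calP$ is $\e_1$-homogeneous, and the monotonicity $\e_1'\le\e_1$, $\e_2'\le\e_2$ makes $\calP$ a $(t,\ell,\e_1,\e_2(\ell))$-decomposition of the form required in (2).

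For the reverse direction (2)$\Rightarrow$(1) the plan is to argue contrapositively from $\VC_2(\calH)=\infty$. Suppose for contradiction that uniform bounds $T,L$ work for some small fixed $\e_1$ and some $\e_2$. By Observation \ref{ob:universal2}, for every $k$ there exist arbitrarily large $H\in\calH$ containing a fully $\VC_2$-shattered witness $(a_1,\ldots,a_k,b_1,\ldots,b_k,(c_S)_{S\subseteq[k]^2})$, with $a_ib_jc_S\in E(H)$ iff $(i,j)\in S$. In any equitable $\e_1$-homogeneous $(t,\ell,\e_1,\e_2(\ell))$-decomposition $\calP$ of $H$ with $t\le T$ and $\ell\le L$, each $c_S$ has a ``type'' given by its $\calP_1$-part together with the $\calP_2$-parts of the $2k$ pairs $(a_i,c_S),(b_j,c_S)$; the type space has size at most $TL^{2k}$, which is less than $2^{k^2}$ for $k$ large. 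Pigeonhole extracts a large family $\calC$ of $c_S$'s sharing a common type, so for each $(i,j)$ the triples $\{a_ib_jc_S:c_S\in\calC\}$ all lie in one fixed triad $G_{ij}$ of $\calP$. Using $\e_1$-homogeneity, upgraded to $\dev_{2,3}$-regularity via Proposition \ref{prop:homimpliesrandome} and exploited through the embedding lemma Corollary \ref{cor:countingcor}, one forces the fraction of $S\in\calC$ with $(i,j)\in S$ to cluster near $0$ or $1$ for almost every $(i,j)$. A volume/ball counting argument on the family $\{S:c_S\in\calC\}\subseteq 2^{[k]^2}$ then shows this clustering caps $|\calC|$ well below the pigeonhole lower bound $2^{k^2}/TL^{2k}$ once $k$ is large, yielding the desired contradiction.

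The main obstacle is the last step of the reverse direction: cleanly converting the pigeonhole bound on types into a quantitative density contradiction that is robust against the approximate nature of $\e_1$-homogeneity, the $\e_1|V|^3$ slack in the decomposition definition, and the fact that the triad densities control behaviour only in aggregate, not pointwise on $\calC$. The cleanest route is to first pass from homogeneity to $\dev_{2,3}$-regularity via Proposition \ref{prop:homimpliesrandome}, so that Corollary \ref{cor:countingcor} can be used to transfer bounds on the reduced decomposition to bounds on $\VC_2$-shattering realizable in $\calH$. Everything else in the proof is parameter bookkeeping on top of Proposition \ref{prop:suffvc2} and Theorem \ref{thm:reg2}.
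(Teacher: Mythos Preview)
The paper does not actually prove Theorem \ref{thm:vc2hom}; it is quoted from \cite{Terry.2021b} (see the sentence immediately preceding the statement). So there is no in-paper argument to compare against, and I evaluate your proposal on its own merits.

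Your forward direction (1)$\Rightarrow$(2) is correct and is the standard route: it is essentially the same mechanism that drives Proposition \ref{prop:dev} later in the paper (regularity lemma, then Proposition \ref{prop:suffvc2} on the non-trivial regular triads, then Lemma \ref{lem:nontrivialtriad} to control the rest). The parameter bookkeeping you sketch is fine.

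Your reverse direction has a genuine gap, at exactly the step you flag as the ``main obstacle''. You locate a single $\VC_2$-witness $\{a_i,b_j,c_S\}$ of size $2k+2^{k^2}$ inside an arbitrarily large $H\in\calH$, pigeonhole the $c_S$'s into a common ``type'', and then want to conclude that for each $(i,j)$ the fraction of $S\in\calC$ with $(i,j)\in S$ is forced near $0$ or $1$. This does not follow from $\e_1$-homogeneity of $G_{ij}$, from $\dev_{2,3}$-regularity via Proposition \ref{prop:homimpliesrandome}, or from Corollary \ref{cor:countingcor}. The density $d_H(G_{ij})$ is an average over all of $K_3(G_{ij})$, a set of size roughly $|V|^3/(t\ell)^3$; the finitely many specific triples $\{(a_i,b_j,c_S):c_S\in\calC\}$ have measure zero there, and any error term in $\disc_{2,3}$ or in the definition of homogeneity swamps them. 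Corollary \ref{cor:countingcor} is an embedding lemma (it \emph{produces} tuples with prescribed edge pattern), not a constraint on a fixed finite set of tuples. Finally, nothing guarantees that the particular triads $G_{ij}$ singled out by your pigeonhole are homogeneous rather than in the $\e_1|V|^3$ exceptional set.

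The correct use of Observation \ref{ob:universal2} is not to plant a small witness inside a big $H$, but to \emph{choose} $H\in\calH$ so that $E(H)\cap K_3[A,B,C]$ is itself a large anti-homogeneous tripartite $3$-graph (for instance a large blow-up of the $\VC_2$-witness, or a quasirandom tripartite $3$-graph of density $1/2$). One then refines $\calP_1$ by $\{A,B,C\}$ (as in Corollary \ref{cor:sldev23} or the proof of Corollary \ref{cor:wowzer}) and argues by a direct density computation that a positive proportion of $V(H)^3$ lies in crossing triads whose $H$-density is bounded away from $\{0,1\}$, contradicting $\e_1$-homogeneity for small $\e_1$. The point is that the contradiction must be a density statement on $V(H)^3$, not a statement about finitely many named vertices.
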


We note Theorem \ref{thm:vc2hom} as proved in \cite{Terry.2021b} used equitable decompositions.  However it is not difficult to see that the proof can be adjusted to work without the equitability assumption (see the proof of Theorem 2.26 in \cite{Terry.2021b}).

\subsection{Homogeneous partitions and slicewise VC-dimension}\label{ss:homsl}

In this section we discuss homogeneous partitions and slicewise VC-dimension.  We begin with the definition of a homogeneous partition. 

\begin{definition}\label{def:hom}
Suppose $H=(V,E)$ is an $3$-graph. 
\begin{enumerate}
\item Given $X,Y,Z\subseteq V$, the tuple $(X,Y,Z)$ is \emph{$\e$-homogeneous for $H$} if 
$$
d_H(X,Y,Z)\in [0,\e)\cup (1-\e,1].
$$
\item A partition $\calP$ of $V(G)$ is \emph{$\e$-homogeneous for $H$} if 
$$
\Big|\bigcup_{(X,Y,Z)\in \Sigma_{hom}}X\times Y\times Z\Big|\geq (1-\e)|V|^3,
$$
where $\Sigma_{hom}=\{(X,Y,Z)\in \calP^3: (X,Y,Z)\text{ is $\e$-homogeneous for }H\}$. 
\end{enumerate}
\end{definition}

It is an exercise to show homogeneous partitions are also regular partitions in the sense the ``weak regularity" of \cite{Chung.1991,Haviland.1989}.  As such, they play a crucial role in Parts 1 and 2 of this series, which consider the growth of weak regular partitions.  Homogeneous partitions are closely related to \emph{slicewise VC-dimension}, which is another analogue of VC-dimension for $3$-graphs.\footnote{Slicewise VC-dimension is called \emph{weak VC-dimension} in \cite{Terry.2021b}.}  This notion is defined in terms of ``slice graphs," also called ``link graphs." 

\begin{definition}
Suppose $H=(V,E)$ is a $3$-graph and $x\in V$.  The \emph{slicegraph of $H$ at $x$} is the graph 
$$
H_x=(V, \{yz\in V: xyz\in E\}).
$$
\end{definition} 

The slicewise VC-dimension of a $3$-graph is then defined to be the maximum VC-dimension of its slice graphs.

\begin{definition}
Suppose $H=(V,E)$ is a $3$-graph.  The \emph{slicewise VC-dimension of $H$} is 
$$
\SVC(H):=\max\{\VC(H_x): x\in V\},
$$
\end{definition}

We extend this definition to hereditary properties as follows.

\begin{definition}
For a hereditary $3$-graph property $\calH$, let $\SVC(\calH)\in \mathbb{N}\cup \{\infty\}$ be 
 $$
 \SVC(\calH)=\sup\{\SVC(H):H\in \calH\}.
 $$
\end{definition}

We will use the following observation about slicewise VC-dimension.  We leave the proof as an exercise to the reader.

\begin{observation}\label{ob:universal3}
Suppose $\calH$ is a hereditary $3$-graph property and assume that for all $k\geq 1$, $k\otimes U(k)\in \trip(\calH)$.  Then for all bipartite graphs $G$ and all $n\geq 1$, $n\otimes G\in \trip(\calH)$.
\end{observation}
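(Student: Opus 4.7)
The plan is to reduce the statement to the universality of the power set graph $U(k)$ among bipartite graphs, combined with the fact that $\trip(\calH)$ is closed under induced sub-$3$-graphs. I would show that for any bipartite graph $G$ and any $n\geq 1$, for $k$ sufficiently large, $n\otimes G$ arises as an induced sub-$3$-graph of $k\otimes U(k)$; since $k\otimes U(k)\in\trip(\calH)$ by hypothesis, this gives $n\otimes G\in\trip(\calH)$.

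The key step is to realize $G$ as an induced subgraph of $U(k)$. Write $G=(A\cup B,E_G)$ with $|A|=p$ and $|B|=q$, enumerate $A=\{\alpha_1,\ldots,\alpha_p\}$, and fix any $k$ satisfying $k\geq p$ and $2^{k-p}\geq q$. Send $\alpha_i\mapsto a_i$ for each $i\leq p$, and for each $\beta\in B$, identify $N_G(\beta)\subseteq A$ with a subset of $[p]$ and extend it to a subset $S_\beta\subseteq[k]$ with $S_\beta\cap[p]=N_G(\beta)$. The $2^{k-p}\geq q$ possible choices for the piece $S_\beta\cap(p,k]$ allow the $S_\beta$'s to be chosen distinct across all $\beta\in B$, even when several vertices of $B$ share the same neighborhood in $A$. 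The map $\phi$ defined by $\beta\mapsto b_{S_\beta}$ then gives an induced embedding $\phi:G\hookrightarrow U(k)$, since for $i\leq p$ the pair $a_ib_{S_\beta}$ is an edge of $U(k)$ iff $i\in S_\beta$ iff $i\in N_G(\beta)$ iff $\alpha_i\beta\in E_G$.

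To finish, choose $k=\max\bigl(n,\,p+\lceil\log_2 q\rceil\bigr)$ and let $C'$ be any subset of size $n$ of the $k$ dummy vertices of $k\otimes U(k)$. The induced sub-$3$-graph of $k\otimes U(k)$ on $\phi(V(G))\cup C'$ has as its edges exactly the triples $xyz$ with $xy$ an edge of $U(k)$ whose endpoints both lie in $\phi(V(G))$ and with $z\in C'$; because $\phi$ is an induced embedding, such pairs $xy$ correspond precisely to edges of $G$, and the induced sub-$3$-graph is thus isomorphic to $n\otimes G$. Since $\trip(\calH)$ is hereditary and contains $k\otimes U(k)$ by hypothesis, it contains $n\otimes G$. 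There is no substantive obstacle; the only step requiring care is ensuring the $S_\beta$'s are pairwise distinct, which is exactly why one takes $k\geq p+\lceil\log_2 q\rceil$ rather than just $k\geq p$.
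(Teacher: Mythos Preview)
Your proof is correct and follows the natural approach. The paper itself leaves this observation as an exercise to the reader, so there is no proof to compare against; your argument---embedding $G$ as an induced subgraph of $U(k)$ by padding neighborhoods with distinguishing bits in $[k]\setminus[p]$, then restricting $k\otimes U(k)$ to $\phi(V(G))$ together with $n$ of the dummy vertices---is exactly the intended one.
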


The following theorem characterizes when a hereditary $3$-graph property admits homogeneous partitions.  The version stated below is due to the author and Wolf  \cite{Terry.2021b}. A non-quantitative version follows from  independent work of Chernikov and Towsner  \cite{Chernikov.2020}. 

\begin{theorem}\label{thm:vdischom}
Suppose $\calH$ is a hereditary $3$-graph property.  The following are equivalent.
\begin{enumerate}
\item $\trip(\calH)$ omits $k\otimes U(k)$ for some $k$,
\item $\calH$ is close\footnote{See Definition 2.6 in \cite{Terry.2024b} for a definition of closeness.} to some $\calH'$ with finite slice-wise VC-dimension,
\item $\calH$ admits homogeneous partitions in the following sense: for all $\e>0$ there is $M(\e)$ so that all sufficiently large elements in $\calH$ have $\e$-homogneous equipartitions with at most $M(\e)$ parts. 
\item $\calH$ admits homogeneous partitions in the following sense: for all $\e>0$ there is $M(\e)$ so that all sufficiently large elements in $\calH$ have $\e$-homogneous partitions with at most $M(\e)$ parts. 
\end{enumerate}
\end{theorem}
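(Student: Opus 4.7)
The plan is to prove the cyclic chain $(3) \Rightarrow (1) \Rightarrow (2) \Rightarrow (3)$, with the bulk of the work concentrated in $(2) \Rightarrow (3)$.

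For $(3) \Rightarrow (1)$ I would argue by contrapositive: assume $k \otimes U(k) \in \trip(\calH)$ for every $k \geq 1$, and use Observation \ref{ob:universal3} to upgrade this to $n \otimes G \in \trip(\calH)$ for every bipartite graph $G$ and every $n \geq 1$. Since a homogeneous partition of $\trip(H)$ refines into a homogeneous partition of $H$ (up to a bounded blow-up in the number of parts coming from the tripling), it suffices to exhibit bipartite graphs $G$ for which the 3-graph $n \otimes G$ requires arbitrarily many parts to achieve homogeneity in the sense of Definition~\ref{def:hom}. A random-like (or Paley-type) bipartite graph $G$ with density close to $1/2$ works: any triple $(X,Y,Z)$ in a partition of $V(n \otimes G)$ whose third part $Z$ lies mostly in the dummy set $C$ inherits its density from $d_G(X,Y)$, so a homogeneous partition of $n\otimes G$ forces a homogeneous partition of $G$ itself, and one picks $G$ so the latter requires $\omega(1)$ parts as $\e \to 0$.

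For $(1) \Rightarrow (2)$ the approach is again contrapositive. Suppose no $\calH' \subseteq \calH$ that is close to $\calH$ has bounded slicewise VC-dimension. Then for every $k$ and every tolerance $\eta>0$, there is $H \in \calH$ and $x \in V(H)$ whose slice graph $H_x$ shatters some $k$-element set $\{y_1,\ldots,y_k\}$ via vertices $z_S$, even after any $\eta$-modification. Passing to $\trip(H)$, the three copies $a_{y_i}$, $b_{y_j}$, $c_{z_S}$ together with $x$'s lifts produce an embedded copy of $k \otimes U(k)$ inside $\trip(H)$, contradicting (1). This direction is essentially a Shelah-style equivalence between non-shattering in slices and the $k\otimes U(k)$-omission, and is the simpler of the two structural implications.

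For $(2) \Rightarrow (3)$, fix $H \in \calH'$ with $\SVC(H) \leq d$. For each $x \in V$, the slice graph $H_x$ has $\VC(H_x) \leq d$, so by the polynomial VC-regularity lemma of Alon--Fischer--Newman and Lov\'asz--Szegedy, $H_x$ admits a vertex partition of $\poly(\e^{-1})$-many parts in which almost every pair is $\e$-homogeneous. The main obstacle is that one needs a \emph{single} partition $\calP$ of $V$ that witnesses near-homogeneity in Definition~\ref{def:hom}(2), not one partition per slice. To overcome this, I would apply VC-regularity to the combined set system $\calF := \{N_H(xy) : x,y \in V\}$ rather than to individual slices; the fact that $\trip(\calH)$ omits $k \otimes U(k)$ (via (1)) together with a Sauer-Shelah calculation bounds $\VC(\calF)$ by a function of $d$ and $k$, since any $k$-shattering of $\calF$ would encode the forbidden bipartite $U(k)$-pattern in some slice. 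Feeding $\calF$ into VC-regularity yields a single partition $\calP$ of $V$ of polynomial size such that almost every triple $(X,Y,Z) \in \calP^3$ satisfies $d_H(X,Y,Z) \approx_{\e} 0$ or $1$. Finally, the closeness of $\calH$ to $\calH'$ (from (2)) absorbs an $o(1)$ error in the homogeneity, completing the implication.

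The main obstacle is the $\VC(\calF)$-bound inside $(2) \Rightarrow (3)$: finite slicewise VC-dimension alone does not bound the VC-dimension of the full neighborhood family $\{N_H(xy): x,y\in V\}$, and one must genuinely use the hypothesis $k \otimes U(k) \notin \trip(\calH)$ to rule out the possibility that different slices cooperate to shatter large sets. This is the place where conditions (1) and (2) merge to produce a single uniform regularity-type partition.
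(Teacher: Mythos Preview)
The paper does not prove this theorem; it is quoted from \cite{Terry.2021b} (with a non-quantitative version attributed to \cite{Chernikov.2020}), so there is no in-paper argument to compare your sketch against. That said, your outline has real gaps that would need to be repaired before it could stand on its own.

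In your $(1)\Rightarrow(2)$ step, the contrapositive produces a single vertex $x$ whose slice $H_x$ shatters $y_1,\ldots,y_k$ via witnesses $z_S$. Passing to $\trip(H)$ you then have exactly three lifts $a_x,b_x,c_x$ of $x$, so at best you embed $1\otimes U(k)$ (one dummy vertex), not $k\otimes U(k)$ (which needs $k$ vertices all playing the identical ``dummy'' role). Getting $k$ such vertices requires a genuine blow-up or Ramsey/counting argument that your sketch does not supply; this is precisely the place where \cite{Terry.2021b} invokes removal-type machinery (cf.\ Theorem~\ref{thm:blowupthm3graphs} in the appendix here).

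Your $(2)\Rightarrow(3)$ step is circular: you explicitly invoke condition~(1) to bound $\VC(\calF)$ for $\calF=\{N_H(xy):x,y\in V\}$, but in the cycle $(3)\Rightarrow(1)\Rightarrow(2)\Rightarrow(3)$ you have not yet established $(2)\Rightarrow(1)$, so (1) is not available at this point. Moreover, the claimed bound on $\VC(\calF)$ is itself suspect: finite slicewise VC-dimension controls each family $\{N_H(xy):y\in V\}$ for fixed $x$, but letting both coordinates vary gives a much larger system, and it is not clear that omitting $k\otimes U(k)$ bounds its VC-dimension in the way you assert. The published proofs do not go through this family; they argue more directly with the slice structure and a random-partition or ultra-regularity argument.
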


We note that  \cite{Terry.2021b} in fact proves the equivalence of (1)-(3).  The equivalence of (3) and (4) is an easy exercise using Lemma \ref{lem:averaging}.   We will also essentially reprove (1) implies (4) in the next section.

Theorem \ref{thm:vdischom} leads us to the following definition.

\begin{definition}\label{def:Mhom}
Suppose $\calH$ is a hereditary $3$-graph property and assume that for some $k\geq 1$, $k\otimes U(k)\notin \trip(\calH)$.  Define $M_{\calH}^{hom}:(0,1)\rightarrow \mathbb{N}$ by setting $M_{\calH}^{hom}(\e)$ to be the smallest integer so that any sufficiently large element in $\calH$ has an $\e$-homogeneous partition with at most $M_{\calH}^{hom}(\e)$ parts.
\end{definition}

Note Definition \ref{def:Mhom} makes sense because of Theorem \ref{thm:vdischom}.  A corollary of the work in part 2, and the improvement of part 1 \cite{Terry.2024a} recently proved in \cite{GSW}, is the following theorem, where $M_{\calH}^{weak}$ denotes the minimal size of weakly $\e$-regular partitions in $\calH$ (see part 2 \cite{Terry.2024b} for a detailed definition of this). 

\begin{theorem}\label{thm:weakhom}
Suppose $\calH$ is a hereditary $3$-graph property and assume that for some $k\geq 1$, $k\otimes U(k)\notin \trip(\calH)$.  Then one of the following hold.
\begin{enumerate}
\item (Exponential) For some $C,C'>0$, $2^{\e^{-C}}\leq M^{weak}_{\calH}(\e^{1/8})\leq M^{hom}_{\calH}(\e)\leq 2^{\e^{-C'}}$,
\item (Polynomial) For some $C,C'>0$, $\e^{-C}\leq M^{weak}_{\calH}(\e^{1/8})\leq M^{hom}_{\calH}(\e)\leq \e^{-C'}$,
\item (Constant) There is some $C>0$, $M^{weak}_{\calH}(\e)=M^{hom}_{\calH}(\e)=C$.
\end{enumerate}
\end{theorem}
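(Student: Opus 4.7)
The plan is to derive Theorem \ref{thm:weakhom} as a direct corollary of three prior results: Theorem \ref{thm:vdischom}, Theorem \ref{thm:weak}, and the dichotomy recorded in the paragraph just below Theorem \ref{thm:ingredient1}. The substantive mathematical content lies entirely in part 2; what remains here is essentially a bookkeeping argument identifying Theorem \ref{thm:weakhom} with the three lower ranges of Theorem \ref{thm:weak}.

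First, I would apply Theorem \ref{thm:vdischom}: the hypothesis that $k\otimes U(k)\notin\trip(\calH)$ for some $k\geq 1$ is exactly condition (1) of that theorem, hence equivalent to condition (3). Consequently $\calH$ admits homogeneous partitions, so $M^{hom}_{\calH}(\e)$ is finite for every $\e\in(0,1)$ and Definition \ref{def:Mhom} is well-posed. This ensures the right-hand expressions in each conclusion of Theorem \ref{thm:weakhom} are meaningful. Next, I would invoke the dichotomy noted just below Theorem \ref{thm:ingredient1}: $\calH$ omits $k\otimes U(k)$ from $\trip(\calH)$ for some $k\geq 1$ if and only if $\calH$ falls into range (2), (3), or (4) of Theorem \ref{thm:weak}. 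Under our standing assumption, the Tower range is therefore excluded, placing $\calH$ in exactly one of the remaining three ranges. In each, Theorem \ref{thm:weak} supplies bounds of the exact form required: a lower bound on $M^{weak}_{\calH}$ of type $2^{\e^{-C}}$, $\e^{-C}$, or a constant, together with the chain $M^{weak}_{\calH}(\e)\leq M^{hom}_{\calH}(\e^K)$ and an upper bound on $M^{hom}_{\calH}(\e^K)$ of matching type.

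Finally, I would reconcile the constant $K$ of Theorem \ref{thm:weak} with the explicit exponent $1/8$ in the present statement. Substituting $\e\mapsto\e^{1/K}$ in the bound $M^{weak}_{\calH}(\e)\leq M^{hom}_{\calH}(\e^K)$ gives $M^{weak}_{\calH}(\e^{1/K})\leq M^{hom}_{\calH}(\e)$; using monotonicity of $M^{weak}_{\calH}$ in $\e$ (smaller $\e$ forces more parts) together with a rescaling of $C,C'$, one obtains $M^{weak}_{\calH}(\e^{1/8})\leq M^{hom}_{\calH}(\e)$, where the value $8$ is merely a convenient normalization absorbing $K$. The lower bounds $2^{\e^{-C}}$ and $\e^{-C}$ and the upper bound $2^{2^{\e^{-C'}}}$ all retain their functional form under $\e\mapsto\e^{1/K}$, with the constants $C,C'$ replaced by $C/K,C'/K$. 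The only point requiring care is the direction of monotonicity in the substitution (resolved by choosing $K$ appropriately in Theorem \ref{thm:weak}, or equivalently, by reading the exponent $1/8$ as ``some fixed exponent coming from part 2''). Since the argument is purely combinatorial bookkeeping once the three cited results are in hand, I do not anticipate a genuine mathematical obstacle.
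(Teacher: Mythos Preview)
Your proposal is correct and matches the paper's own treatment: the paper does not give an independent proof of this theorem but introduces it as ``a corollary of the work in part 2'', i.e., precisely as a restatement of the non-tower ranges of Theorem~\ref{thm:weak} once the hypothesis $k\otimes U(k)\notin\trip(\calH)$ excludes the tower range (via the dichotomy recorded after Theorem~\ref{thm:ingredient1}). Your bookkeeping is the intended derivation; the specific exponent $1/8$ is, as you suspected, simply the value that emerges from the constants in part 2 (cf.\ the proof of Theorem~\ref{thm:comparison}, which cites Proposition~4.8 of \cite{Terry.2024b} for the inequality $M_{\calH}^{weak}(\e^{1/4K})\leq M_{\calH}^{hom}(\e^{1/K})$), not something to be recovered from monotonicity here.
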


Theorem \ref{thm:weakhom} is a crucial ingredient in Theorem \ref{thm:strong1}.  We will see in the next section that for the properties $\calH$ where Theorem \ref{thm:weakhom} applies, $T_{\calH}$ is basically controlled by $M_{\calH}^{hom}$ (and thus $M_{\calH}$).

\section{The Slow Growth Rates for $T_{\calH}$}\label{sec:strongvert}

This section contains the main results about properties $\calH$ for which $T_{\calH}$ grows at a sub-wowzer rate, including a proof of Theorem \ref{thm:ingredient1}.  We begin by showing that when $\calH$ satisfies the hypotheses of Theorem \ref{thm:weakhom},  $T_{\calH}$ is roughly the same as $M^{hom}_{\calH}$ from Definition \ref{def:Mhom}.  We first note that homogeneous partitions can be made into $\dev_{2,3}$-regular decompositions. 

\begin{lemma}\label{lem:vd}
For all $\e_1>0$, $\e_2:\mathbb{N}\rightarrow (0,1]$, and $t\geq 1$, the following holds.  Suppose $H=(V,E)$ is a sufficiently large $3$-graph, and $\calQ$ is a partition of $V$ of size $t$ which is $\e_1$-homogeneous with respect to $H$ (in the sense of Definition \ref{def:hom}).  

Then there exists a  $(t,1,\e_1,\e_2(1))$-decomposition $\calP$ of $V$ so that $\calP_1=\calQ$ and so that $\calP$ is $\e_1$-homogeenous with respect to $H$ (in the sense of Definition \ref{def:homdec}). 
\end{lemma}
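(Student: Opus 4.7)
The plan is to take $\calP$ to be the ``trivial extension'' of $\calQ$ to a decomposition: set $\calP_1 = \calQ$, and for each pair $1 \le i, j \le t$, let the partition of $V_i \times V_j$ consist of the single part $P_{ij}^1 = V_i \times V_j$. This automatically produces a $(t,1)$-decomposition, and it suffices to verify the two required properties.

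First, I would check that $\calP$ is a $(t, 1, \e_1, \e_2(1))$-decomposition. Since $\calP_1 = \calQ$ is a partition, what remains is to verify that for at least $(1-\e_1)|V|^2$ pairs $(x,y) \in V^2$, the unique $P_{ij}^1 \in \calP_2$ containing $(x,y)$ has the property that $(V_i, V_j; P_{ij}^1)$ has $\dev_2(\e_2(1))$. But every such bigraph is complete (density $1$), so the function $g$ in Definition \ref{def:dev2} is identically $0$ and the displayed sum vanishes. Hence the $\dev_2(\e_2(1))$ condition holds for every pair in $V^2$, which is more than enough.

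Next, I would verify that $\calP$ is $\e_1$-homogeneous for $H$ in the sense of Definition \ref{def:homdec}. The triads of $\calP$ are precisely the $G^{ijk}_{1,1,1} = (V_i, V_j, V_k; V_i \times V_j, V_i \times V_k, V_j \times V_k)$ for $(i,j,k) \in [t]^3$, and because all three component bigraphs are complete, $K_3(G^{ijk}_{1,1,1}) = V_i \times V_j \times V_k$. Consequently $d_H(G^{ijk}_{1,1,1})$ as in Definition \ref{def:dens5} reduces to the ordinary $3$-graph density $d_H(V_i, V_j, V_k)$, so the triad $G^{ijk}_{1,1,1}$ is $\e_1$-homogeneous for $H$ (Definition \ref{def:homdec}) if and only if the triple $(V_i, V_j, V_k)$ is $\e_1$-homogeneous for $H$ (Definition \ref{def:hom}). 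Since the sets $V_i \times V_j \times V_k$ partition $V^3$, the hypothesis that $\calQ$ is $\e_1$-homogeneous in the sense of Definition \ref{def:hom} translates directly into the statement that at least $(1 - \e_1)|V|^3$ triples $(x,y,z) \in V^3$ lie in an $\e_1$-homogeneous triad of $\calP$, which is the desired conclusion.

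There is no real obstacle here: the whole argument is definition-chasing. The one point that deserves explicit verification is the identity $d_H(G^{ijk}_{1,1,1}) = d_H(V_i, V_j, V_k)$, which is immediate from $K_3(G^{ijk}_{1,1,1}) = V_i \times V_j \times V_k$ and the definitions in Section \ref{ss:bigraphs}. The ``sufficiently large'' hypothesis in the statement is not actually used by the construction.
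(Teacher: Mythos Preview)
Your proof is correct and follows essentially the same approach as the paper: both construct the trivial $(t,1)$-decomposition with $\calP_1=\calQ$ and $\calP_2=\{V_i\times V_j\}$, observe that the complete bigraphs trivially satisfy $\dev_2(\e_2(1))$, and note that $K_3(G^{ijk}_{1,1,1})=V_i\times V_j\times V_k$ so that triad homogeneity coincides with triple homogeneity. Your presentation is slightly more explicit about why the $\dev_2$ condition holds (via $g\equiv 0$), but the argument is the same.
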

\begin{proof}
Fix $\e_1>0$, $\e_2:\mathbb{N}\rightarrow (0,1]$, and $t\geq 1$.  Suppose $H=(V,E)$ is a sufficiently large $3$-graph, and $\calQ$ is an $\e_1$-homogeneous partition of $V$ of size $t$. Let $\Sigma_{hom}$ be the set of $\e_1$-homogeneous triples from $\calQ^3$, and let 
 $$
 \calS_{hom}=\bigcup_{(X,Y,Z)\in \Sigma_{hom}}X\times Y\times Z.
 $$
 By assumption, $|\calS_{hom}|\geq (1-\e_1)|V|^3$.  Let $\calP$ be the $(t,1)$-decomposition of $V$ with $\calP_1=\calQ$ and $\calP_2=\{X\times Y: (X,Y)\in \calQ^2\}$.  Clearly each bigraph from $\calP_2$ has $\dev_{2}(\e_2(1),1)$.  Note that any $G\in \triads(\calP)$ has the form $(X,Y,Z; X\times Y, X\times Z, Y\times Z)$ for some $X,Y,Z\in \calP$, and moreover $K_3(G)=X\times Y\times Z$.  To ease notation, we will denote the triad $(X,Y,Z; X\times Y, X\times Z, Y\times Z)$ by $G_{XYZ}$. 

Let $\Omega=\{G_{XYZ}:(X,Y,Z)\in \Sigma_{hom}\}$.  Clearly $G\in \Omega$ is $\e$-homogeneous with respect to $H$ in the sense of Definition \ref{def:homdec}, since
$$
\overline{E}\cap K_3(G)=\overline{E}\cap (X\times Y\times Z).
$$  
Further, we have $|\bigcup_{G\in \Omega}K_3(G)|=|\calS_{hom}|\geq (1-\e_1)|V|^3$.  Thus $\calP$ is $\e_1$-homogeneous in the sense of Definition \ref{def:homdec}.
\end{proof}

Lemma \ref{lem:vd} shows that homogeneous partitions in the sense of Definition \ref{def:hom} give rise to homogeneous decompositions in the sense of Definition \ref{def:homdec} with $\ell=1$.  We next prove a  lemma which says that when a tripartite $3$-graph $H$ omits $k\otimes U(k)$, the densities on certain regular triads in $H$ must be roughly the same.  This was previously shown in \cite{Terry.2021b} (see Section 4.2 there).  We sketch a slightly shorter proof here, which is similar to the proof of Corollary \ref{cor:countingcor}.   

\begin{lemma}\label{lem:otherway}
For all $k\geq 1$ there is a constant $D\geq 1$ and a polynomial $p(x,y)$ so that for all $0<d_2<1/2$, $0<\e_1<2^{-D}$ and $0<\e_2<p(\e_1, d_2)$ the following holds.  

Suppose $G=(U, V, W; E_{UV},E_{UW},E_{VW})$ is a triad where $|U|$, $|V|$, and $|W|$ are sufficiently large, and where each of $G[U,V]$, $G[U,W]$, $G[V,W]$ have $\dev_2(\e_2)$ and density at least $d_2$.  Suppose $G'=(U, V,W; E_{UW},E_{VW}, E_{UV}')$ where $G'[U,V]$ also has $\dev_2(\e_2)$ and density at least $d_2$.

Suppose $H=(U\cup V\cup W,F)$ is a $3$-graph so that $k\otimes U(k)$ is not an induced sub-$3$-graph of $\trip(H)$.  Assume $d_{H}(G),d_{H}(G')\in [0,\e)\cup (1-\e,1]$.  Then $d_{H}(G),d_{H}(G')$ are either both in $[0,\e)$ or both in $(1-\e,1]$.  
\end{lemma}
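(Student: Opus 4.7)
The plan is to prove the contrapositive by contradiction: supposing that (without loss of generality) $d_H(G) < \e_1$ and $d_H(G') > 1-\e_1$, we will locate an induced copy of $k\otimes U(k)$ inside $\trip(H)$. The strategy mirrors the duplication argument used in the proof of Proposition \ref{prop:suffvc2}: we copy the vertex sets $U$, $V$, $W$ many times to build an auxiliary structure, then invoke the Counting Lemma (Corollary \ref{cor:countingcor}) to find the forbidden configuration.

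First I would apply Proposition \ref{prop:homimpliesrandome} to both $(H,G)$ and $(H,G')$: since their densities lie in $[0,\e_1)\cup(1-\e_1,1]$ and the bigraphs have $\dev_2(\e_2)$-regularity of density at least $d_2$, both pairs inherit a strong $\dev_{2,3}$-regularity (with parameters polynomially related to $\e_1,\e_2,d_2$). Next, set $t=2k+2^k$ and construct pairwise disjoint vertex sets $A_1,\ldots,A_k$ (each a copy of $U$), $\{B_S\}_{S\subseteq[k]}$ (each a copy of $V$), and $C_1,\ldots,C_k$ (each a copy of $W$). Between $A_i$ and $B_S$ put a copy of $(U,V;E_{UV}')$ when $i\in S$ and a copy of $(U,V;E_{UV})$ when $i\notin S$; between $A_i$ and $C_j$ put a copy of $(U,W;E_{UW})$; between $B_S$ and $C_j$ put a copy of $(V,W;E_{VW})$; between any two sets of the same type (two $A$'s, two $B$'s, or two $C$'s) use the complete bigraph, which trivially has $\dev_2(\alpha)$ for every $\alpha>0$.

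Define an auxiliary $3$-graph $H^\ast$ on the disjoint union of these sets whose edge set, restricted to each triad of the form $(A_i,B_S,C_j)$, is a faithful copy of $\overline{E(H)}$ restricted to $K_3(G')$ if $i\in S$ and to $K_3(G)$ if $i\notin S$, with no edges on any triple of sets containing two sets of the same type. Take as pattern the $3$-graph $F$ on $[t]$ with vertex labels $\{a_i,b_S,c_j\}$ and edge set $\{\{a_i,b_S,c_j\}:i\in S\}$, which is precisely $k\otimes U(k)$. Now apply Corollary \ref{cor:countingcor}: for relevant triples with $i\in S$, $d_{H^\ast}(\text{triad})=d_H(G')>1-\e_1$; for those with $i\notin S$, $d_{H^\ast}(\text{triad})=d_H(G)<\e_1$; for irrelevant triples, $H^\ast$ is empty and the trigraph is trivially $\dev_{2,3}$-regular of density $0$. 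Each relevant triad inherits its $\dev_{2,3}$-regularity from the first step, and the density gap around $\e_1$ is the parameter $d_1$ needed by the corollary.

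Corollary \ref{cor:countingcor} then produces a tuple $(x_1,\ldots,x_k,(y_S)_S,z_1,\ldots,z_k)$ with $x_i\in A_i$, $y_S\in B_S$, $z_j\in C_j$, satisfying $(x_i,y_S,z_j)\in\overline{E(H^\ast)}$ iff $i\in S$. Transferring back via the identifications with $U,V,W$, and using the quantitative form (Proposition \ref{prop:countinggowers}) together with a union bound to ensure the underlying vertices in $U$, $V$, $W$ are pairwise distinct inside their respective sets, we obtain vertices realizing $\{x_i,y_S,z_j\}\in E(H)$ iff $i\in S$. The corresponding vertices $\{a_{x_i}\}\cup\{b_{y_S}\}\cup\{c_{z_j}\}$ in $\trip(H)$ then induce a copy of $k\otimes U(k)$, contradicting the hypothesis. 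The main technical obstacle is the parameter alignment: the constants $D$ and $p(x,y)$ must be chosen so that the $\dev_{2,3}$-output of Proposition \ref{prop:homimpliesrandome} meets the input conditions of Corollary \ref{cor:countingcor} with $d_1=\e_1$, which is a routine bookkeeping exercise once the hierarchy $\e_2\ll\e_1\ll d_2$ is fixed.
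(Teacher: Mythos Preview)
Your proposal is correct and follows essentially the same duplication-then-counting approach as the paper: assume the two densities disagree, build a blown-up structure indexed by $[k]$, $2^{[k]}$, and $[k]$, use Proposition \ref{prop:homimpliesrandome} to certify $\dev_{2,3}$-regularity on every triad, and invoke Corollary \ref{cor:countingcor} to locate $k\otimes U(k)$ in $\trip(H)$. One small bookkeeping correction: take $d_1=1/2$ (as the paper does) rather than $d_1\approx\e_1$, since Corollary \ref{cor:countingcor} requires its ternary error parameter (here $\approx 6\e_1$ coming out of Proposition \ref{prop:homimpliesrandome}) to be smaller than $d_1^D$; with $d_1=1/2$ this reduces to the hypothesis $\e_1<2^{-D}$, whereas $d_1=\e_1$ would make the condition $6\e_1<\e_1^D$ unsatisfiable.
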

\begin{proof}
Let $D$ and $p(x,y)$ be as in Corollary \ref{cor:countingcor} for $T=2k+2^{k^2}$. Fix $0<\e_1<2^{-D}$ and $0<\e_2<p(\e_1,d_2)$.  

Suppose $H$, $G$, and $G'$ are as in the hypotheses of Lemma \ref{lem:otherway}.  Suppose towards a contradiction $d_{H}(G)\geq 1-\e$ and $d_{H}(G')\leq \e$.  We next define auxiliary bigraphs by duplicating vertices and edges from $G$.  We begin by defining new vertex sets 
$$
U'=U_1\cup \ldots \cup U_k,\text{ }W'=W_1\cup \ldots \cup W_k,\text{ and }V'=\bigcup_{S\subseteq [k]}V_S,
$$
 where for each $1\leq i\leq k$, $U_i$ is a copy of $U$ and $W_i$ is a copy of $W$, and for each $S\subseteq [k]$, $V_S$ is a copy of $V$.  For each $1\leq i,j\leq k$, and $S\subseteq [k]$, define $G_{U_iW_j}$ to the bigraph with vertex sets $(U_i,W_j)$, and edge set a copy of $E_{UW}$. Similarly, given $1\leq j\leq k$ and $S\subseteq [k]$ define $G_{V_SW_j}$ to be the bigraph with vertex sets $(V_S,W_j)$ and and edge set a copy of $E_{VW}$. Finally, given $1\leq i\leq k$ and $S\subseteq [k]$, define $G_{U_iV_S}$ to be the bigraph with vertex sets $(U_i,V_S)$ and edge set $E_{U_iV_S}$ satisfying
\[
E_{U_iV_S}=\begin{cases}\text{ a copy of }E_{UV}&\text{ if }i\in S\text{ and }\\
\text{ a copy of }E_{UV}'&\text{ if }i\notin S.
\end{cases}\]
For $1\leq a,b\leq k$ and $S\subseteq [k]$, let $G_{U_iW_jV_S}$ be the triad whose component bigraphs are $G_{U_iW_j}, G_{W_jV_S}, G_{U_iV_S}$.  

We now define an auxiliary $3$-graph $H'=(U'\cup V'\cup W', E(H'))$, where we $E(H')$ is determined by the following definition for $\overline{E(H')}$.  For triads of the form $G_{U_iV_SW_j}$ with $i\in S$, we define $\overline{E(H')}\cap K_3(G_{U_iV_SW_j})$ to be a copy of $\overline{E}\cap K_3(G)$.  For triads of the form $G_{U_iV_SW_j}$ with $i\notin S$, we define $\overline{E(H')}\cap K_3(G_{U_iV_SW_j})$ to be a copy of $K_3(G')\setminus \overline{E}$.  We finish the definition of $\overline{E(H')}$ by declaring it contains no other edges. This finishes our definition of $H'$.  For each $1\leq i,j\leq k$ and $S\subseteq [k]$, let 
$$
d_{iSj}=d_{H'}(G_{U_iV_SW_j}).
$$
By construction, each $d_{iSj}\in [0,\e)\cup (1-\e,1]$. Consequently, Proposition \ref{prop:homimpliesrandome},  implies each $(\overline{H'}|G_{U_{i}V_{S}W_j},G_{U_{i}V_{S}W_j})$ has $\dev_{2,3}(\e_2,6\e)$.  But now Corollary \ref{cor:countingcor} (applied with parameters $T$, $\e_1$, $\e_2$, $d_2$ and $d_1:=1/2$) implies $\trip(H)$ contains an induced copy of $k\otimes U(k)$, a contradiction.
\end{proof}

The following averaging lemma is an easy exercise (see part 1 \cite{Terry.2024a} for a proof).  It  shows up in the proof of Proposition \ref{prop:dev} below, and throughout the rest of the paper.

\begin{lemma}\label{lem:averaging}
Let $a,b,\e\in (0,1)$ satisfy $ab=\e$. Suppose $A\subseteq X$ and $|A|\geq (1-\e)|X|$.  For any partition $\calP$ of $X$, if we let $\Sigma=\{Y\in \calP: |A\cap Y|\geq (1-a)|Y|\}$, then $|\bigcup_{Y\in \Sigma}Y|\geq (1-b)|X|$.
\end{lemma}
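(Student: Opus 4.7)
The plan is to run a standard Markov-style averaging argument on the parts of $\calP$ where $A$ has small relative density. Let $\Sigma^c = \calP \setminus \Sigma$, the collection of parts $Y$ for which $|A \cap Y| < (1-a)|Y|$, equivalently $|Y \setminus A| > a|Y|$. The total mass of $X \setminus A$ is at most $\e|X|$ by hypothesis, and this mass is split among the parts of $\calP$, so in particular
\[
\sum_{Y \in \Sigma^c} |Y \setminus A| \leq |X \setminus A| \leq \e|X|.
\]

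For each $Y \in \Sigma^c$ we have $|Y \setminus A| > a|Y|$, so summing gives $a \sum_{Y \in \Sigma^c} |Y| < \e|X|$. Dividing by $a$ and using $\e/a = b$ yields
\[
\sum_{Y \in \Sigma^c} |Y| < b|X|.
\]
Since $\calP$ partitions $X$, the complementary sum is $|\bigcup_{Y \in \Sigma} Y| = |X| - \sum_{Y \in \Sigma^c} |Y| \geq (1-b)|X|$, as required.

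There is no real obstacle here; the only things to watch are the direction of the strict inequality in the definition of $\Sigma$ (which determines whether $Y \setminus A$ exceeds or merely matches $a|Y|$ on the bad side, and is consistent either way up to the strict/non-strict convention) and the harmless use of $ab = \e$ at the final step. The argument is essentially Markov's inequality applied to the function $Y \mapsto |Y \setminus A|/|Y|$ weighted by $|Y|$.
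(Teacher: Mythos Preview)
Your proof is correct. The paper does not actually prove this lemma, treating it as an easy exercise and deferring to Part~1 of the series; your Markov-style averaging argument is precisely the standard proof one would expect.
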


Our next result, Proposition \ref{prop:dev} below, shows that for a $3$-graph $H$, if $\trip(H)$ omits $k\otimes U(k)$, the $H$ has a homogeneous decompositions in the sense of Definition \ref{def:hom}.  This proof is similar to the proof of Proposition 4.1 in \cite{Terry.2021b}, but the proof is slightly more complicated due to the fact we start with a possibly non-equitable decomposition.

\begin{proposition}\label{prop:dev}
For all $k\geq 1$ there are $K\geq 1$ and a polynomial $q(x,y)$ so that for all $0<\e_1<2^{-K}$, all $\e_2:\mathbb{N}\rightarrow (0,1]$ satisfying $\e_2(x)<q(\e_1,x^{-1})$, and all $T,L\geq 1$, the following holds.

Suppose $H=(V,E)$ is a sufficiently large $3$-graph such that $k\otimes U(k)$ is not an induced sub-$3$-graph of $\trip(H)$.  Suppose $1\leq t\leq T$, $1\leq \ell\leq L$, and $\calP$ is a $(t,\ell,\e_1,\e_2(\ell))$-decomposition of $V$ which is  $\dev_{2,3}(\e_1,\e_2(\ell))$-regular with respect to $H$.  Then $\calP_1$ is a $2\e_1^{1/K}$-homogeneous partition with respect to $H$ (in the sense of Definition \ref{def:hom}).
\end{proposition}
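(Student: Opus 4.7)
The plan is to show that for most $(V_i, V_j, V_k) \in \calP_1^3$ (weighted by $|V_i||V_j||V_k|$) the density $d_H(V_i, V_j, V_k)$ lies in $[0, 2\e_1^{1/K}) \cup (1 - 2\e_1^{1/K}, 1]$. First, combining Lemma \ref{lem:nontrivialtriad} (with $\mu = \e_1$) and the $\dev_{2,3}$-regularity of $\calP$, at least $(1 - 4\e_1)|V|^3$ triples $(x,y,z) \in V^3$ lie in some $\e_1$-non-trivial triad $G \in \triads(\calP)$ that satisfies $\dev_{2,3}(\e_1, \e_2(\ell))$ with respect to $H$; call such a triad \emph{good}. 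Averaging via Lemma \ref{lem:averaging} (with $a = b = \sqrt{4\e_1}$) produces a family $\Sigma \subseteq \calP_1^3$ whose union covers a $(1 - \sqrt{4\e_1})$-fraction of $V^3$, such that each $(V_i, V_j, V_k) \in \Sigma$ has a $(1 - \sqrt{4\e_1})$-fraction of its mass inside good triads.

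The key step is to show every good triad $G$ on $(V_i, V_j, V_k)$ has $d_H(G) \in [0, \e_1^{1/K'}) \cup (1 - \e_1^{1/K'}, 1]$ for an appropriate $K' = K'(k)$. Suppose for contradiction $d_H(G) \in [\e_1^{1/K'}, 1 - \e_1^{1/K'}]$. Mirroring the construction in Proposition \ref{prop:suffvc2}, duplicate $V_i$ into disjoint copies $A_1, \ldots, A_k$, $V_j$ into copies $B_S$ for $S \subseteq [k]$, and $V_k$ into $C_1, \ldots, C_k$; install copies of the component bigraphs of $G$ between the relevant pairs; and build an auxiliary $3$-graph $H'$ on the union whose edges inside each derived triad $G_{uSv}$ form a copy of $\overline{E} \cap K_3(G)$ when $u \in S$ (giving density $d_H(G)$) and a copy of $K_3(G) \setminus \overline{E}$ when $u \notin S$ (giving density $1 - d_H(G)$). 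Because $h_{H^c, G} = -h_{H,G}$ and the defining sum in Definition \ref{def:regtriad} is a product of eight terms, complementing edges within $K_3(G)$ preserves $\dev_{2,3}(\e_1, \e_2(\ell))$, so every $G_{uSv}$ is $\dev_{2,3}$-regular in $H'$ with density in $[\e_1^{1/K'}, 1 - \e_1^{1/K'}]$. Taking $T = 2k + 2^k$ and $d_1 = \e_1^{1/K'}$, Corollary \ref{cor:countingcor} applied to the pattern $F = k \otimes U(k)$ yields a tuple witnessing an induced copy of $k \otimes U(k)$ in $\trip(H)$ (using the quantitative count of Proposition \ref{prop:countinggowers} to guarantee that the projections back to $V$ can be made pairwise distinct once $|V|$ is sufficiently large), contradicting the hypothesis.

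For the conclusion, fix $(V_i, V_j, V_k) \in \Sigma$. Chaining Lemma \ref{lem:otherway} across the three bigraph coordinates (using the symmetry of the lemma to vary any single coordinate at a time) forces all good triads on $(V_i, V_j, V_k)$ to have $d_H(G)$ on a common side $s \in \{0, 1\}$. The mass outside the good-triad union contributes at most $\sqrt{4\e_1}|V_i||V_j||V_k|$ to $|\overline{E} \cap (V_i \times V_j \times V_k)|$, and the mass inside contributes at most $\e_1^{1/K'}|V_i||V_j||V_k|$ if $s = 0$ or at least $(1 - \e_1^{1/K'} - \sqrt{4\e_1})|V_i||V_j||V_k|$ if $s = 1$. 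Choosing $K$ larger than both $K'$ and the constant from Lemma \ref{lem:otherway}, and $q(x, y)$ the minimum of the polynomials arising in the key step and in Lemma \ref{lem:otherway}, delivers $d_H(V_i, V_j, V_k) \in [0, 2\e_1^{1/K}) \cup (1 - 2\e_1^{1/K}, 1]$ for all $(V_i, V_j, V_k) \in \Sigma$, establishing $2\e_1^{1/K}$-homogeneity.

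The main obstacle is the key step. A single medium-density regular triad cannot by itself supply both the ``edge'' bound $d \geq d_1$ and the ``non-edge'' bound $d \leq 1 - d_1$ required by Corollary \ref{cor:countingcor}; but its complement inside $K_3(G)$ also has medium density, and $\dev_{2,3}$-regularity is preserved under this complementation, so the two pieces together realize the asymmetric pattern of $k \otimes U(k)$.
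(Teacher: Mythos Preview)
Your overall strategy matches the paper's: show every non-trivial regular triad has density close to $0$ or $1$, then show that on each $(V_i,V_j,V_k)\in\Sigma$ all good triads fall on the same side, and conclude.  Two points need correction.

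First, your ``main obstacle'' is not an obstacle, and the complementation it leads you to is both unnecessary and wrong.  If $d_H(G)\in[\e_1^{1/K'},1-\e_1^{1/K'}]$, then this single density already satisfies \emph{both} requirements of Corollary~\ref{cor:countingcor}: $d\ge d_1$ for triples that are edges of $F$, and $d\le 1-d_1$ for triples that are non-edges.  So you may install a copy of $\overline{E}\cap K_3(G)$ on \emph{every} derived triad $G_{uSv}$ and apply the corollary directly; this is exactly the mechanism behind Proposition~\ref{prop:suffvc2}, which the paper simply cites after observing that $k\otimes U(k)\notin\trip(H)$ forces $\VC_2(H)<k$.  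Your complemented $H'$, by contrast, does not translate back: when $u\notin S$, a non-edge of $H'$ on $G_{uSv}$ only tells you that the corresponding $H$-triple lies outside $K_3(G)\setminus\overline{E}$, which permits it to be an edge of $H$.  The tuple you extract from $H'$ therefore need not witness $k\otimes U(k)$ in $\trip(H)$.

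Second, the one-sentence ``chaining'' of Lemma~\ref{lem:otherway} hides real work.  To pass from $G_{ijk}^{\alpha,\beta,\gamma}$ to $G_{ijk}^{\alpha',\beta',\gamma'}$ one coordinate at a time, the intermediate triads $G_{ijk}^{\alpha',\beta,\gamma}$ and $G_{ijk}^{\alpha',\beta',\gamma}$ must themselves be non-trivial and $\dev_{2,3}$-regular; for arbitrary indices this can fail.  The paper resolves this with a weighted nested-averaging argument (the sets $\calI_1$, $\calI_2^{\alpha}$ and Claims~\ref{cl:agood} and~\ref{cl:bgood}): for a density-$(1-O(\e_1^{1/4}))$ set of $\alpha$'s, a density-$(1-O(\e_1^{1/8}))$ set of $\beta$'s has enough good $\gamma$'s, and these density lower bounds are then used to manufacture the common intermediates that the chain requires.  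Without this, two good triads on the same triple may not be connectable at all.
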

\begin{proof}
Let $D$ and $p(x,y)$ be as in Proposition \ref{prop:suffvc2} for $k$, and set $K=\max\{D,2\}$.  Define $q(x,y)=p(x,xy)(x/4)^8$.  Assume $0<\e_1<2^{-K}$ and $\e_2:\mathbb{N}\rightarrow (0,1]$ satisfies 
$$
\e_2(x)<q(\e_1,x^{-1})=\e_1^8p(\e_1,\e_1x^{-1}).
$$

  Fix $T,L\geq 1$ and suppose $H=(V,E)$ is a sufficiently large $3$-graph such that $k\otimes U(k)$ is not an induced sub-$3$-graph of $\trip(H)$.  Note this implies $H$ has $\VC_2$-dimension less than $k$.  Suppose $1\leq t\leq T$, $1\leq \ell\leq L$, and $\calP$ is a $(t,\ell,\e_1,\e_2(\ell))$-decomposition of $V$ which is  $\dev_{2,3}(\e_1,\e_2(\ell))$-regular with respect to $H$.  We will show $\calP_1$ is $2\e_1^{1/K}$-homogeneous with respect to $H$, in the sense of Definition \ref{def:hom}.

We begin by setting up a  bit of notation.  First, we fix some enumerations 
$$
\calP_1=\{V_1,\ldots, V_t\}\text{ and }\calP_2=\{P_{ij}^{\alpha}: 1\leq i,j\leq t, 1\leq \alpha\leq \ell\}.
$$
Given $1\leq i,j,s\leq t$ and $1\leq \alpha,\beta,\gamma\leq \ell$, we let $G_{ijs}^{\alpha\beta\gamma}$ denote the triad $(V_i,V_j,V_s;P_{ij}^{\alpha},P_{ik}^{\beta},P_{js}^{\gamma})$.  For each $1\leq i,j\leq t$ and $1\leq \alpha\leq \ell$, let $d_{ij}^{\alpha}$ denote the density of the bigraph $(V_i,V_j; P_{ij}^{\alpha})$.

Let $\Omega$ be the set of $\e_1$-nontrivial triads of $\calP$ which are $\dev_{2,3}(\e_1,\e_2(\ell))$-regular with respect to $H$, and set $\calS=\bigcup_{G\in \Omega}K_3(G)$.   Using the assumption that $\calP$ is $\dev_{2,3}(\e_1,\e_2(\ell))$-regular and Lemma \ref{lem:nontrivialtriad}, we see that $|\calS|\geq (1-3\e_1)|V|^3$. By Proposition \ref{prop:suffvc2}, we know that every $G\in \Omega$ is $\e_1^{1/K}$-homogeneous with respect to $H$.  Consequently, we can write $\Omega=\Omega^0\cup \Omega^1$, where 
\begin{align*}
\Omega^0=\{G\in \Omega: d_H(G)\leq \e_1^{1/K}\}\text{ and }\Omega^1=\{G\in \Omega: d_H(G)\geq 1-\e_1^{1/K}\}.
\end{align*}
Later in the proof we will use that, by Proposition \ref{prop:counting}, each $G_{ijs}^{\alpha\beta\gamma}\in \Omega$ satisfies
\begin{align}\label{al:tricount}
|K_3(G_{ijs}^{\alpha\beta\gamma})|=(d_{ij}^{\alpha}d_{is}^{\beta}d_{js}^{\gamma}\pm 4\e_2(\ell)^{1/4})|V_i||V_j||V_s|.
\end{align}
We now define the set of triples from $\calP$ which are almost covered by the set $\calS$.  In particular, we define
\begin{align*}
\Sigma&=\{(X,Y,Z)\in \calP_1^3: |\calS\cap (X\times Y\times Z)|\geq (1-2\sqrt{\e_1})|X||Y||Z|\}.
\end{align*}
Since $|\calS|\geq (1-3\e_1)|V|^2$, Lemma \ref{lem:averaging} implies $|\bigcup_{(X,Y,Z)\in \Sigma}X\times Y\times Z|\geq (1-2\sqrt{\e_1})|V|^3$.  Thus, it suffices to show each triple in $\Sigma$ is $2\e_1^{1/K}$-homogeneous with respect to $H$.  

Fix $(V_i,V_j,V_s)\in \Sigma$. We show $(V_i,V_j,V_s)$ is $2\e_1^{1/K}$-homogeneous with respect to $H$.  Since $(V_i,V_j,V_s)\in \Sigma$ and (\ref{al:tricount}), we have
\begin{align*}
(1-2\sqrt{\e_1})|V_i||V_j||V_s|\leq |\calS\cap (V_i\times V_j\times V_s)|=|V_i||V_j||V_s|\sum_{\{(\alpha,\beta,\gamma)\in [\ell]^3: G_{ijs}^{\alpha\beta\gamma}\in \Omega\}}(d_{ij}^{\alpha}d_{is}^{\beta}d_{js}^{\gamma}\pm 4\e_2(\ell)^{1/4}).
\end{align*}
Consequently, $1-2\sqrt{\e_1}\leq \sum_{\{(\alpha,\beta,\gamma)\in [\ell]^3: G_{ijs}^{\alpha\beta\gamma}\in \Omega\}}d_{ij}^{\alpha}d_{is}^{\beta}d_{js}^{\gamma}+ 4\e_2(\ell)^{1/4}$.  Since $\e_2(\ell)<q(\e_1,\ell^{-1})$,  this implies 
\begin{align}\label{al:lbtri}
1-3\sqrt{\e_1}\leq \sum_{\{(\alpha,\beta,\gamma)\in [\ell]^3: G_{ijs}^{\alpha\beta\gamma}\in \Omega\}}d_{ij}^{\alpha}d_{is}^{\beta}d_{js}^{\gamma}.
\end{align}
We will now mimic the argument appearing in the proof of Proposition 4.1 of \cite{Terry.2021b}, with alterations to account for the fact our decomposition is not necessarily equitable. We begin by defining an auxiliary trigraph $\Gamma=([\ell],[\ell],[\ell]; {\bf E})$, where 
$$
{\bf E}=\{(\alpha,\beta,\gamma)\in [\ell]^3: G_{ijs}^{\alpha\beta\gamma}\in \Omega\}
$$
Given $1\leq \alpha\leq \ell$, we define the following quantity, which acts as a weighted measure of a neighborhood in $\Gamma$.
$$
\calN(\alpha):=\sum_{(\beta,\gamma)\in N_{{\bf E}}(\alpha)}d_{is}^{\beta}d_{js}^{\gamma}.
$$
Similarly, given $1\leq \alpha,\beta\leq \ell$, we define a weighted measure of the neighborhood of $(\alpha,\beta)$ in $\Gamma$ as follows.
$$
\calN(\alpha,\beta):=\sum_{\gamma\in N_{{\bf E}}(\alpha,\beta)}d_{js}^{\gamma}.
$$
Let $\calI_1=\{\alpha: 1\leq \alpha\leq \ell\text{ and }\calN(\alpha)\geq 1-2\e_1^{1/4}\}$.  We claim 
\begin{align}\label{al:ibd}
\sum_{\alpha\in \calI_1}d_{ij}^{\alpha}\geq 1-2\e_1^{1/4}.
\end{align}
  Indeed, suppose this is not the case.  Then 
\begin{align*}
\sum_{(\alpha,\beta,\gamma)\in {\bf E}}d_{ij}^{\alpha}d_{is}^{\beta}d_{js}^{\gamma}=\sum_{\alpha\in [\ell]\setminus \calI_1}\calN(\alpha)+\sum_{\alpha\in \calI_1}\calN(\alpha)&< \sum_{\alpha\in [\ell]\setminus \calI_1}d_{ij}^{\alpha}(1-\e_1^{1/4})+\sum_{\alpha\in \calI_1}d_{ij}^{\alpha}\\
&=(1-2\e_1^{1/4})\sum_{\alpha\in [\ell]}d_{ij}^{\alpha}+2\e_1^{1/4}\sum_{\alpha\in \calI_1}d_{ij}^{\alpha}\\
&=1-2\e_1^{1/4}+2\e_1^{1/4}\sum_{\alpha\in \calI_1}d_{ij}^{\alpha}\\
&<1-2\e_1^{1/4}+2\e_1^{1/4}(1-2\e_1^{1/4})\\
&=1-4\e_1^{1/2},
\end{align*}
contradicting (\ref{al:lbtri}).  This concludes our proof of (\ref{al:ibd}).  Now, for each $\alpha\in \calI_1$, define 
$$
\calI_2^{\alpha}=\{\beta\in [\ell]: 1\leq \beta\leq \ell\text{ and }\calN(\alpha,\beta)\geq 1-2\e_1^{1/8}\}.
$$
We claim that for all $\alpha\in \calI_1$,
\begin{align}\label{al:ibd2}
\sum_{\beta\in \calI_2^{\alpha}}d_{is}^{\beta}\geq 1-2\e_1^{1/8}.
\end{align}
  Indeed, suppose this is not the case.  Then 
\begin{align*}
\calN(\alpha)=\sum_{\beta\in [\ell]\setminus \calI_2^{\alpha}}d_{is}^{\beta}\calN(\alpha,\beta)+\sum_{b_{\beta}\in \calI_2^{\alpha}}d_{is}^{\beta}\calN(\alpha,\beta)&< \sum_{\beta\in [\ell]\setminus \calI_2^{\alpha}}(1-2\e_1^{1/8})d_{is}^{\beta}+\sum_{\beta\in \calI_2^{\alpha}}d_{is}^{\beta}\\
&=(1-2\e_1^{1/8})\sum_{\beta\in [\ell]}d_{is}^{\beta}+2\e_1^{1/8}\sum_{\beta\in  \calI_2^{\alpha}}d_{is}^{\beta}\\
&=1-2\e_1^{1/8}+2\e_1^{1/8}\sum_{\beta\in  \calI_2^{\alpha}}d_{is}^{\beta}\\
&<1-2\e_1^{1/4}+2\e_1^{1/8}(1-2\e_1^{1/8})\\
&=1-4\e_1^{1/8},
\end{align*}
contradicting the assumption that $\alpha\in \calI_1$.  This finishes our verification of (\ref{al:ibd2}).  We now observe that  for all $\alpha\in \calI_1$ and $\beta\in \calI_2^{\alpha}$, by Lemma \ref{lem:otherway}, there is a single value $\sigma(\alpha,\beta)\in \{0,1\}$ so that for all $\gamma\in N_{{\bf E}}(\alpha,\beta)$, $G_{ijs}^{\alpha\beta\gamma}\in \Omega^{\sigma(\alpha,\beta)}$.   In our next claim, we show that for a fixed $\alpha\in \calI_1$, these values must all agree.

\begin{claim}\label{cl:agood}
For all $\alpha\in \calI_1$, there is $\sigma(\alpha)\in \{0,1\}$ so that $\sigma(\alpha)=\sigma(\alpha,\beta)$ for all $\beta\in \calI_2^{\alpha}$.
\end{claim}
\begin{proof}
Fix $\alpha\in \calI_1$, and suppose $\beta,\beta'\in \calI_2^{\alpha}$.  Since $\beta,\beta'\in \calI_2^{\alpha}$, know that $\calN(\alpha,\beta)$ and $\calN(\alpha,\beta')$ are both at least $1-2\e_1^{1/8}$.  Translating the definitions of these quantities, this implies that 
$$
\min\Big\{\Big|\bigcup_{\{P_{js}^{\gamma}:G_{ijk}^{\alpha\beta\gamma}\in \Omega\}}P_{js}^{\gamma}\Big|,\Big|\bigcup_{\{P_{js}^{\gamma}:G_{ijs}^{\alpha\beta'\gamma}\in \Omega\}}P_{js}^{\gamma}\Big|\Big\}\geq (1-2\e_1^{1/8})|V_j||V_s|.
$$
Consequently, there is some $\gamma\in [\ell]$ so that $G_{ijs}^{\alpha\beta\gamma}$ and $G_{ijs}^{\alpha\beta'\gamma}$ are both in $\Omega$.  By Lemma \ref{lem:otherway}, we must have $\sigma(\alpha,\beta)=\sigma(\alpha,\beta')$. 
\end{proof}

Our next claim shows that all the values $\sigma(\alpha)$ from Claim \ref{cl:agood} must all agree.

\begin{claim}\label{cl:bgood}
There is some $\sigma\in \{0,1\}$ so that for all $\alpha\in \calI_1$, $\sigma(\alpha)=\sigma$.
\end{claim}
\begin{proof}
Fix some $\alpha,\alpha'\in \calI_1$.  By (\ref{al:ibd2}), we know that $\sum_{\beta\in \calI_2^{\alpha}}d_{is}^{\beta}$ and $\sum_{\beta\in \calI_2^{\alpha'}}d_{is}^{\beta}$ are both at least $1-2\e_1^{1/4}$.  Translating the definitions, this implies 
$$
\min\Big\{\Big|\bigcup_{\{P_{is}^{\beta}:\beta\in \calI_2^{\alpha}\}}P_{is}^{\beta}\Big|,\Big|\bigcup_{\{P_{is}^{\beta}:\beta\in \calI_2^{\alpha'}\}}P_{is}^{\beta}\Big|\Big\}\geq (1-2\e_1^{1/4})|V_i||V_k|.
$$
Consequently, there is some $P_{is}^{\beta}\in \calI_2^{\alpha}\cap  \calI_2^{\alpha'}$.  By definition, this implies that both $\calN(\alpha,\beta)$ and $\calN(\alpha',\beta)$ are at least $1-2\e_1^{1/8}$.  Translating, this means 
$$
\min\Big\{\Big|\bigcup_{\{P_{js}^{\gamma}:G_{ijs}^{\alpha\beta\gamma}\in \Omega\}}P_{js}^{\gamma}\Big|,\Big|\bigcup_{\{P_{js}^{\gamma}:G_{ijs}^{\alpha'\beta\gamma}\in \Omega\}}P_{js}^{\gamma}\Big|\Big\}\geq (1-2\e_1^{1/8})|V_j||V_s|.
$$
Thus there is some $\gamma\in [\ell]$ so that $G_{ijs}^{\alpha\beta\gamma}$ and $G_{ijs}^{\alpha'\beta\gamma}$ are in $\Omega$.  Lemma \ref{lem:otherway}, we must have $\sigma(\alpha)=\sigma(\alpha,\beta)=\sigma(\alpha',\beta)=\sigma(\alpha')$. 
\end{proof}

Let $\sigma\in \{0,1\}$ be as in Claim \ref{cl:bgood}.  We are now ready to show $(V_i,V_j,V_s)$ is homogeneous.  Indeed, we have the following, where $\overline{E(H)}^1=\overline{E(H)}^1$ and $\overline{E(H)}^0=V(H)^3\setminus \overline{E(H)}$.
\begin{align*}
|\overline{E(H)}^{\sigma}\cap (V_i\times V_j\times V_s)|&\geq \sum_{G_{ijk}^{\alpha\beta\gamma}\in \Omega}(1-\e_1^{1/K})|K_3(G_{ijk}^{\alpha\beta\gamma})|+|(V_i\times V_j\times V_s)\setminus \calS|\\
&=(1-\e_1^{1/K})|(V_i\times V_j\times V_s)\cap\calS|+|(V_i\times V_j\times V_s)\setminus \calS|\\
&\geq (1-2\e_1^{1/K})|V_i||V_j||V_s|,
\end{align*}
where the last inequality uses that $(V_i,V_j,V_s)\in \Sigma$, and the fact that $K\geq 2$.
\end{proof}

We can now prove that when $k\otimes U(k)\notin \trip(\calH)$ for some $k\geq 1$, then $T_{\calH}$ essentially reduces to $M_{\calH}^{hom}$. In particular, Theorem \ref{thm:comparison} below  implies Theorem \ref{thm:ingredient1} from the introduction.

\begin{theorem}\label{thm:comparison}
For all $k\geq 1$ there is a $K\geq 1$ and a polynomial $p(x,y)$ so that the following holds.  Suppose $\calH$ is a hereditary $3$-graph property so that   $k\otimes U(k)\notin \trip(\calH)$.  For all sufficiently small $\e_1>0$ and all $\e_2:\mathbb{N}\rightarrow (0,1]$ satisfying $\e_2(x)\leq p(\e_1,x^{-1})$, 
$$
M^{weak}_{\calH}(2\e^{1/4K})\leq M_{\calH}^{hom}(2\e_1^{1/K})\leq T_{\calH}(\e_1,\e_2)\leq M^{hom}_{\calH}(\e_1/6).
$$
\end{theorem}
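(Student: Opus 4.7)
The statement packages three separate inequalities, which I would establish in the order right-to-left.

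For the upper bound $T_{\calH}(\e_1,\e_2)\leq M_{\calH}^{hom}(\e_1/6)$, the plan is to start from a sufficiently large $H\in\calH$ and an $(\e_1/6)$-homogeneous partition $\calQ$ of $V(H)$ of size $t\leq M_{\calH}^{hom}(\e_1/6)$, then apply Lemma \ref{lem:vd} to lift $\calQ$ to a $(t,1,\e_1,\e_2(1))$-decomposition $\calP$ whose vertex partition is $\calQ$ and whose bigraph partition is the trivial one $\calP_2 = \{V_i\times V_j\}$. By Lemma \ref{lem:vd}, $\calP$ is $(\e_1/6)$-homogeneous in the sense of Definition \ref{def:homdec}; each such homogeneous triad has component bigraphs of density $1$, which trivially satisfy $\dev_2$ with any parameter, so Proposition \ref{prop:homimpliesrandome} (applied with $\e = \e_1/6$ and $d_2=1$) promotes each into a $\dev_{2,3}$-regular triad with the required parameters. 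The upshot is that $\calP$ is a $\dev_{2,3}(\e_1,\e_2(1))$-regular $(t,1,\e_1,\e_2(1))$-decomposition of $H$, so $T_{\calH}(\e_1,\e_2)\leq t$.

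For the middle inequality $M_{\calH}^{hom}(2\e_1^{1/K})\leq T_{\calH}(\e_1,\e_2)$, I would let $K$ be the constant produced by Proposition \ref{prop:dev} applied to $k$, and arrange $\e_2$ to satisfy the polynomial decay hypothesis of that proposition. Given any sufficiently large $H\in\calH$ and any $\dev_{2,3}(\e_1,\e_2(\ell))$-regular $(t,\ell,\e_1,\e_2(\ell))$-decomposition $\calP$ of $V(H)$ with $t\leq T_{\calH}(\e_1,\e_2)$, Proposition \ref{prop:dev} (which is where the assumption $k\otimes U(k)\notin\trip(\calH)$ is used crucially) guarantees that $\calP_1$ is a $2\e_1^{1/K}$-homogeneous partition of $V(H)$. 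Since $|\calP_1|=t$, this forces $M_{\calH}^{hom}(2\e_1^{1/K})\leq t$.

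For the leftmost inequality $M^{weak}_{\calH}(2\e_1^{1/4K})\leq M_{\calH}^{hom}(2\e_1^{1/K})$, I would invoke the standard fact (noted as an exercise in Section \ref{ss:homsl}) that any $\e$-homogeneous partition of a $3$-graph is automatically a weak $\e^{1/4}$-regular partition (up to absolute constants); applying this to the homogeneous partition witnessing the right-hand side with $\e = 2\e_1^{1/K}$ yields the desired bound on $M^{weak}_{\calH}$ with the same number of parts.

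The main obstacle is the middle step, which rests entirely on Proposition \ref{prop:dev}; its proof is delicate, combining the counting corollary (Corollary \ref{cor:countingcor}) with Lemma \ref{lem:otherway} in a weighted-neighborhood argument to propagate density-close-to-$0$-or-$1$ behavior from individual triads to entire triples of vertex parts. The outer inequalities, by contrast, are bookkeeping once Lemma \ref{lem:vd}, Proposition \ref{prop:homimpliesrandome}, and the homogeneity-implies-weak-regularity exercise are in hand.
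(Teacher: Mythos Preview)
Your proposal is correct and follows essentially the same route as the paper's proof: the upper bound via Lemma~\ref{lem:vd} and Proposition~\ref{prop:homimpliesrandome}, the middle inequality via Proposition~\ref{prop:dev} (with $K$ and the polynomial constraint on $\e_2$ coming from there), and the leftmost inequality via the standard fact that homogeneous partitions are weakly regular. The paper handles the three steps in exactly this order with the same ingredients.
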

\begin{proof}
The fact that  $M^{weak}_{\calH}(2\e_1^{1/4K})\leq M_{\calH}^{hom}(2\e_1^{1/K})$ holds for sufficiently small $\e_1>0$  follows from standard arguments (see e.g. Proposition 4.8 in Part 2 \cite{Terry.2024b}).

We now show  the rightmost inequality  holds for all sufficiently small $\e_1>0$ and all $\e_2:\mathbb{N}\rightarrow (0,1]$.  Suppose $H=(V,E)$ is a sufficiently large element of $\calH$.  Fix $\e_1>0$ sufficiently small and any $\e_2:\mathbb{N}\rightarrow (0,1]$.  Let $\calQ=\{V_1,\ldots, V_t\}$ be an $\e_1/6$-homogeneous partition of $H$ (which exists by Theorem \ref{thm:vdischom}).  By Lemma \ref{lem:vd} there is a  $(t,1,\e_1,\e_2(1))$-decomposition $\calP$ which is $\e_1/6$-homogeneous for $H$ in the sense of Definition \ref{def:homdec} and with $\calP_1=\calQ$.  By Proposition \ref{prop:homimpliesrandome}, $\calP$ is $\dev_{2,3}(\e_1,\e_2(1))$-regular.  This shows $T_{\calH}(\e_1,\e_2)\leq M_{\calH}^{hom}(\e_1/6)$. 

We now show the remaining inequality.  Let $K$ and $q(x,y)$ be as in Proposition \ref{prop:dev} for $k$.  Assume $0<\e_1<2^{-K}$ and $\e_2:\mathbb{N}\rightarrow (0,1)$ satisfies $\e_2(x)<q(\e_1,x^{-1})$. Let $L$ be such that $\psi(\e_1,\e_2,L,T_{\calH}(\e_1,\e_2),\calH)$ holds (recall the definition of this notation from the introduction).   

Assume $H$ is a sufficiently large element of $\calH$.  By assumption, there exists a $\dev_{2,3}(\e_1,\e_2(\ell))$-regular $(t,\ell,\e_1,\e_2(\ell))$-decomposition $\calP$ for $H$ for some $1\leq t\leq T_{\calH}(\e_1,\e_2(\ell))$ and $1\leq \ell \leq L$.  By Proposition \ref{prop:dev}, $\calP_1$ is $2\e^{1/K}_1$-homogeneous in the sense of Definition \ref{def:hom}.  This shows $M_{\calH}^{hom}(2\e_1^{1/K})\leq T_{\calH}(\e_1,\e_2)$ holds for all sufficiently small $\e_1>0$ and $\e_2:\mathbb{N}\rightarrow (0,1]$ satisfying $\e_2(x)\leq q(\e_1,x^{-1})$. 
\end{proof}

Combining Theorem \ref{thm:comparison} with Theorem \ref{thm:weakhom}, we immediately obtain Theorem \ref{thm:main2intro}.  As another corollary of Proposition \ref{prop:homimpliesrandome}, we see that under the same hypotheses at Theorem \ref{thm:main2intro}, $L_{\calH}$ is the constant function $1$.
 
\begin{corollary}\label{cor:Lequals1}
Suppose $\calH$ is a hereditary $3$-graph property and assume that for some $k\geq 1$, $k\otimes U(k)\notin \trip(\calH)$.  There is a polynomial $p(x,y)$ so that for all $\e_1>0$ sufficiently small, and all $\e_2:\mathbb{N}\rightarrow (0,1]$ satisfying $\e_2(x)\leq p(\e_1,x^{-1})$, $L_{\calH}(\e_1,\e_2)=1$.
\end{corollary}
\begin{proof}
Let $\e_1>0$ be sufficiently small, and assume $\e_2:\mathbb{N}\rightarrow (0,1)$ is such that $\e_2(x)< (1/2)^{48}$.  Suppose $H=(V,E)$ is a sufficiently large element of $\calH$.  By Theorem \ref{thm:weakhom}, there is some $1\leq t\leq M_{\calH}^{hom}(\e_1/6)$ and an $\e_1/6$-homogeneous partition $\calQ=\{V_1,\ldots, V_t\}$ for $H$.  By Lemma \ref{lem:vd} there is a  $(t,1,\e_1,\e_2(1))$-decomposition $\calP$ which is $\e_1/6$-homogeneous for $H$ in the sense of Definition \ref{def:homdec} and with $\calP_1=\calQ$.  By Proposition \ref{prop:homimpliesrandome}, $\calP$ is $\dev_{2,3}(\e_1,\e_2(1))$-regular.  This shows $L_{\calH}(\e_1,\e_2)\leq 1$.  By definition, $L_{\calH}(\e_1,\e_2)\geq 1$, so we have $L_{\calH}(\e_1,\e_2)=1$. 
\end{proof}

We will show in part 3 \cite{Terry.2024d}  that the converse of Corollary \ref{cor:Lequals1} also holds (see Corollary 7.2 there).

\section{The jump to wowzer}\label{sec:wowzer}

This section contains the proof of Theorem \ref{thm:strong1}.  In Subsection \ref{ss:CF} we cover needed results due to Conlon and Fox regarding strong graph regularity. In Subsection \ref{ss:std}, we present a series of standard lemmas about regular partitions.  Finally, in Subsection \ref{ss:1.9}, we prove Theorems \ref{thm:main1}, \ref{thm:main0}, and finally, Theorem \ref{thm:strong1}.

\subsection{A theorem of Conlon and Fox.}\label{ss:CF}

This section contains necessary results due to Conlon and Fox about strong regular partitions of graphs.  As these results are stated in terms of regular pairs (rather than $\dev_2$), we begin with some definitions.

\begin{definition}
Suppose $G=(V,E)$ is a graph and $X,Y\subseteq V$.  We say $(X,Y)$ is \emph{$\e$-regular with respect to $G$} if for all $X'\subseteq X$ and $Y'\subseteq Y$ satisfying $|X'|\geq \e|X|$ and $|Y'|\geq \e |Y|$, 
$$
|d_G(X,Y)-d_G(X',Y')|\leq \e.
$$
\end{definition}

\begin{definition}
Suppose $G=(V,E)$ is a graph.  A partition $V=V_1\cup \ldots \cup V_t$ is an \emph{$\e$-regular partition} if for at least $(1-\e)|V|^2$ many pairs $(x,y)\in V^2$, there is an $\e$-regular pair $(V_i,V_j)$ with $(x,y)\in V_i\times V_j$.   
\end{definition}

We will use the following quantitative equivalence between regularity and $\dev_2$-quasirandomenss (see  \cite{Gowers.20063gk} for a proof). 

\begin{theorem}\label{thm:equiv}
Suppose $B=(U\cup W, E)$ is a bipartite graph and $|E|=d|U||W|$.  
\begin{enumerate}
\item If $B$ is $\e$-regular then $(U,W;\overline{E})$ has $\dev_2(\e,d)$.
\item If $(U,W;\overline{E})$ has $\dev_2(\e,d)$, then $(U\cup W, E)$ is $\e^{1/12}$-regular. 
\end{enumerate}
\end{theorem}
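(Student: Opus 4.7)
The plan is to reformulate the $\dev_2$ condition analytically. Define the centered indicator $f:U\times W\to\mathbb{R}$ by $f(u,w)=\mathbf{1}_{uw\in E}-d$, and set
\[
S\;:=\;\sum_{u_0,u_1\in U}\sum_{w_0,w_1\in W}\prod_{i,j\in\{0,1\}}f(u_i,w_j).
\]
Rearranging gives $S=\sum_{w_0,w_1}\bigl(\sum_u f(u,w_0)f(u,w_1)\bigr)^2\geq 0$, so in the case $d=d_B$ (which makes the density constraint $d_B=d\pm\e$ automatic) the $\dev_2(\e,d)$ condition is equivalent to $S\leq \e|U|^2|W|^2$.

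For part (1), I would expand $S$ into an alternating sum counting (labelled) $C_4$'s, $P_3$'s, edges, and the constant term, weighted by appropriate powers of $d$. The standard counting lemma for regular pairs says that in an $\e$-regular pair of density $d$ the number of $C_4$'s is $d^4|U|^2|W|^2\pm O(\e)|U|^2|W|^2$, and analogous asymptotic expressions hold for the lower-order subgraph counts.  Summing the resulting cancellations gives $S\leq O(\e)|U|^2|W|^2$, and absorbing the constant yields $\dev_2(\e,d)$.

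For part (2), I would prove the contrapositive by two rounds of Cauchy-Schwarz. For arbitrary subsets $X\subseteq U$, $Y\subseteq W$, Cauchy-Schwarz first in $u$ (against $\mathbf{1}_X$) and then in $(w_0,w_1)$ (against $\mathbf{1}_Y\otimes\mathbf{1}_Y$) gives
\[
\Bigl(\sum_{u\in X}\sum_{w\in Y} f(u,w)\Bigr)^2\;\leq\;|X|\cdot|Y|\cdot S^{1/2}.
\]
The left-hand side equals $(d_B(X,Y)-d_B)^2|X|^2|Y|^2$. If some $X,Y$ with $|X|/|U|,|Y|/|W|\geq\delta$ and $|d_B(X,Y)-d_B|>\delta$ witnessed failure of $\delta$-regularity, this would force $\delta^4|U||W|< S^{1/2}\leq \e^{1/2}|U||W|$, hence $\delta<\e^{1/8}$. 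Taking $\delta=\e^{1/12}$ then produces a contradiction (since $\e^{1/12}>\e^{1/8}$ for $\e<1$), so the pair is $\e^{1/12}$-regular.

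The main obstacle is the bookkeeping in part (1): carefully expanding $S$ into subgraph counts and verifying that each term inherits an $O(\e)|U|^2|W|^2$ error from $\e$-regularity requires splitting the vertex pairs into typical vs.\ atypical according to their degrees and codegrees, and handling the atypical contribution with the trivial bound. The exponent $1/12$ in part (2) is not tight (the argument above gives $\e^{1/8}$); the stated bound simply leaves slack, which is convenient for accommodating small mismatches between $d$ and $d_B$ when the density hypothesis in $\dev_2(\e,d)$ is used nontrivially.
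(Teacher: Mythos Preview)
Your approach is correct and is the standard one; the paper itself gives no proof but simply cites Gowers \cite{Gowers.20063gk}, and what you outline is precisely the argument found there (Cauchy--Schwarz twice for (2), expansion into labelled $C_4$/$P_3$/edge counts for (1)). Your observation that the Cauchy--Schwarz argument actually yields $\e^{1/8}$-regularity rather than $\e^{1/12}$ is also correct; the weaker exponent in the statement just leaves room.

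One caution on part (1): your phrase ``absorbing the constant'' glosses over the fact that the expansion-and-count approach naturally produces $S\le C\e\,|U|^2|W|^2$ for some absolute $C>1$, not $S\le \e\,|U|^2|W|^2$ on the nose. To recover the exact constant $1$ as stated you would need either a sharper codegree argument or to check that this is how the bound is recorded in \cite{Gowers.20063gk}. For every downstream use in the paper a constant-factor loss is harmless, so this is a presentational rather than mathematical issue, but be aware that the ``bookkeeping'' you flag is not only about error terms summing to $O(\e)$ but about whether the implicit constant is really $1$.
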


We now define the so-called energy of a partition.

\begin{definition}\label{def:energy}
Suppose $G=(V,E)$ is a graph and $\calP=\{V_1,\ldots, V_m\}$ is a partition of $V$.  We define
$$
q_G(\calP)=\sum_{1\leq i<j\leq k}d_G^2(V_i,V_j)\frac{|V_i||V_j|}{|V|^2}.
$$
\end{definition}

A standard energy increment argument using Definition \ref{def:energy} yields the following version of Szemer\'{e}di's regularity lemma (recall also Definition \ref{def:tower} from Subsection \ref{ss:notation}).  We will use the following notation. Given $m\in \mathbb{N}$, we define a function $Tw_m:\mathbb{N}\rightarrow \mathbb{N}$ by setting $Tw_m(1)=m$ and for $x>1$, setting $Tw_m(x+1)=Tw_m(x)2^{Tw_m(x)}$.

\begin{theorem}\label{thm:reg}
For all $m\geq 1$ and $\e>0$, the following holds.  For all finite graphs $G=(V,E)$ and equipartitions $\calP=\{V_1,\ldots, V_m\}$ of $V$, there is an $\e$-regular equipartition $U_1,\ldots, U_M$ of $G$ which refines $\calP$, for some $M\leq Tw_m(\e^{-5})$. 
\end{theorem}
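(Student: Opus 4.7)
The plan is to prove this by the standard energy-increment argument, adapted so that the initial partition $\calP$ plays the role of the seed and every refinement is an equipartition refining $\calP$. Recall the energy $q_G(\calQ)$ from Definition \ref{def:energy} lies in $[0,1]$ and is monotone non-decreasing under refinement. The starting observation I would use is the usual defect form of the Cauchy--Schwarz inequality: if $(X,Y)$ fails to be $\e$-regular, witnessed by $X'\subseteq X,Y'\subseteq Y$ with $|X'|\geq \e |X|, |Y'|\geq \e|Y|$ and $|d_G(X',Y')-d_G(X,Y)|>\e$, then replacing $(X,Y)$ by the refinement $(X',X\setminus X',Y',Y\setminus Y')$ increases the contribution to $q_G$ from this pair by at least $\e^4 |X||Y|/|V|^2$.

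First I would define the iteration. Set $\calP_0=\calP$, which has exactly $m$ parts. Given $\calP_i$, if $\calP_i$ is already $\e$-regular, stop. Otherwise, at least $\e|V|^2$ of the pairs $(x,y)\in V^2$ lie in non-$\e$-regular pairs $(V_a,V_b)$ of $\calP_i$; for each such pair, pick witnessing subsets and take the common refinement of $\calP_i$ using all these subsets simultaneously. This yields $\calP_{i+1}$, still refining $\calP$, with
\begin{equation*}
q_G(\calP_{i+1})\geq q_G(\calP_i)+\e^{5},
\end{equation*}
by summing the defect Cauchy--Schwarz bound over the non-regular pairs. Since $q_G\leq 1$, this iteration must halt after at most $\e^{-5}$ steps. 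The size bookkeeping is where the tower function enters: each refinement subdivides each part of $\calP_i$ into at most $2^{|\calP_i|}$ pieces (one binary choice per other part), so
\begin{equation*}
|\calP_{i+1}|\leq |\calP_i|\cdot 2^{|\calP_i|}.
\end{equation*}
Starting from $|\calP_0|=m$, this exactly matches the recursion $Tw_m(x+1)=Tw_m(x)2^{Tw_m(x)}$, giving $|\calP_i|\leq Tw_m(i+1)$ and hence $|\calP_{\e^{-5}}|\leq Tw_m(\e^{-5})$ at termination.

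The last step is to convert the resulting $\e$-regular partition $\calQ$ refining $\calP$ into an \emph{equipartition} refining $\calP$, without losing regularity or blowing up the bound. I would use the standard trick: inside each part of $\calP$ (which has size $|V|/m$ by the equipartition hypothesis on $\calP$), cut each cell of $\calQ$ lying in that part into blocks of a common size $s=\lfloor |V|/(m\cdot M')\rfloor$ for an $M'$ chosen so that $m\cdot M'$ is the desired number of equipartition pieces per original $V_j$, throwing the leftover vertices into a single ``remainder'' block per $V_j$ and then redistributing these remainder vertices arbitrarily among the equitable pieces. The size of the resulting equipartition is bounded by a constant multiple of $|\calQ|$, still $\leq Tw_m(\e^{-5})$ up to adjusting constants in the exponent, and since only an $O(m/|V|)$ fraction of vertices get redistributed, $\e$-regularity is preserved (for $|V|$ large; for small $|V|$ the statement is trivial).

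The main obstacle I expect is the equitability step interacting with the initial equipartition $\calP$: one needs the equitable refinement to still refine $\calP$, which forces one to equitize inside each $V_j$ separately rather than globally. This is a routine but slightly fiddly adjustment, and it is the reason the exponent in $Tw_m(\e^{-5})$ is $5$ rather than a smaller number --- the extra slack absorbs both the defect-Cauchy--Schwarz loss and the equitability rounding. Everything else (the energy increment, the iteration count, and the tower recursion) is exactly the classical argument with $m$ in place of $1$ at the bottom of the tower.
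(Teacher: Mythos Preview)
The paper does not actually prove Theorem \ref{thm:reg}; it introduces the statement with the remark ``A standard energy increment argument using Definition \ref{def:energy} yields the following version of Szemer\'{e}di's regularity lemma'' and proceeds without further justification. Your sketch is exactly that standard argument, so there is no alternative proof in the paper to compare against --- you have supplied what the paper simply takes for granted, and your size recursion $|\calP_{i+1}|\leq |\calP_i|2^{|\calP_i|}$ matches the paper's nonstandard definition of $Tw_m$ (Definition \ref{def:tower}) on the nose.

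One small comment on presentation rather than correctness: most textbook versions of the argument equitize at \emph{every} step of the iteration (losing a negligible amount of energy each time) rather than deferring equitization to the very end as you do. Both routes are valid, but equitizing along the way spares you the slightly delicate claim that equitizing a possibly very unbalanced $\e$-regular partition still yields an $\e$-regular equipartition without degrading the regularity parameter; your remark that ``only an $O(m/|V|)$ fraction of vertices get redistributed'' is not quite the right estimate here, since the leftover from chopping each of the $|\calQ|$ cells can total a fraction comparable to $|\calQ|\cdot s/|V|$, not $m/|V|$. This is easily fixed (choose $M'$ large enough relative to $|\calQ|$), but it is the one place your sketch is loose.
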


It is well known (see Lemma 3.7 of \cite{Alon.2000}) that if $G=(V,E)$ is a graph and $\calU=\{U_1,\ldots, U_s\}$ and $\calV=\{V_1,\ldots, V_t\}$ are partitions of $V$ so that  $\calU\preceq \calV$ and $q_G(\calV)<q_G(\calU)+\e$, then for almost all pairs $(V_i,V_j)$, $d_G(V_i,V_j)$ is close to $d_G(U_s,U_r)$ where $U_s\supseteq V_i$ and $U_r\supseteq V_j$.  It was pointed out in \cite{Conlon.2012} that a converse to this statement is also true.  Since this is important to our proof, we include a precise statement of this below (Fact \ref{fact:index}), with a proof in the appendix.  We will use the following definition to state Fact \ref{fact:index}.

\begin{definition}\label{def:cons}
Suppose $G=(V,E)$ is a graph and $\calU=\{U_1,\ldots, U_s\}$ and $\calV=\{V_1,\ldots,V_t\}$ are partitions of $V$.  We say $\calV$ is an \emph{$\e$-conservative refinement of $\calU$} if $\calV$ refines $\calU$ and for at least $(1-\e)t^2$ many pairs $1\leq i\neq j\leq t$ it holds that the $d_G(V_i,V_j)$ is within $\e$ of $d_G(U_{i'},U_{j'})$, where $V_i\subseteq U_{i'}$ and $V_j\subseteq U_{j'}$. 
\end{definition}

\begin{fact}\label{fact:index}
Suppose $G=(V,E)$ is a graph and  $\calU=\{U_1,\ldots, U_s\}$ and $\calV=\{V_1,\ldots, V_t\}$ are equipartitions of $V$.  If $\calV$ is a  $\e$-conservative refinement of $\calU$, then $q_G(\calU)<q_G(\calZ)+101\e$.
\end{fact}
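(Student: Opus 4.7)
The intended statement, I believe, is $q_G(\calV) < q_G(\calU) + 101\e$; I read the ``$\calZ$'' in the display as $\calV$ and interpret the inequality in its meaningful direction, since $q_G(\calV) \geq q_G(\calU)$ is automatic for refinements. The claim is then the converse of the implication recalled just above the fact: namely that a conservative refinement can increase the energy only by $O(\e)$.

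The plan is the standard defect Cauchy--Schwarz argument. For each pair of distinct $\calU$-parts $(U_{i'}, U_{j'})$, set $I_{i'} = \{i : V_i \subseteq U_{i'}\}$, and observe that $d_G(U_{i'}, U_{j'})$ is the weighted average of the $d_G(V_i, V_j)$ over $(i,j) \in I_{i'} \times I_{j'}$ with weights $|V_i||V_j|/(|U_{i'}||U_{j'}|)$. The variance identity yields
$$
\sum_{i \in I_{i'},\, j \in I_{j'}} d_G(V_i, V_j)^2 \frac{|V_i||V_j|}{|V|^2} \ -\ d_G(U_{i'}, U_{j'})^2 \frac{|U_{i'}||U_{j'}|}{|V|^2} \ =\ \sum_{i \in I_{i'},\, j \in I_{j'}} \frac{|V_i||V_j|}{|V|^2}\bigl( d_G(V_i, V_j) - d_G(U_{i'}, U_{j'}) \bigr)^2.
$$
Summing over unordered pairs $\{i', j'\}$ with $i' \neq j'$, and handling within-part pairs via the analogous identity, expresses $q_G(\calV) - q_G(\calU)$ as a weighted sum of squared deviations of $d_G(V_i, V_j)$ from its $\calU$-target.

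Then I would use the $\e$-conservative hypothesis to bound this sum. Call a pair $\{i, j\}$ with $i \neq j$ \emph{good} if its deviation is at most $\e$, and \emph{bad} otherwise; by hypothesis there are at most $\e t^2$ bad pairs. Good pairs contribute in total at most $\e^2 \sum_{\{i,j\}} |V_i||V_j|/|V|^2 \leq \e^2 \leq \e$. For the bad pairs, the equipartition bound $|V_i|, |V_j| \leq 2|V|/t$ gives weight at most $4/t^2$ per pair and squared deviation at most $1$, so their total contribution is at most $4\e$. Combining yields $q_G(\calV) - q_G(\calU) = O(\e)$, well within the $101\e$ slack.

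The main obstacle I expect is the careful bookkeeping for within-part pairs (those $\{i, j\}$ with $V_i, V_j$ inside a common $U_{i'}$), which contribute to $q_G(\calV)$ but have no corresponding term in $q_G(\calU)$ since the latter sums only over distinct parts. The conservative hypothesis handles these by forcing $d_G(V_i, V_j) \approx d_G(U_{i'}, U_{i'})$ for most such pairs, so an analogous variance bound applies; the generous constant $101$ is chosen precisely to absorb this, together with the cross-part bookkeeping and the rounding arising from the equipartition sizes.
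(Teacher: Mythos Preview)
Your proposal is correct and follows essentially the same approach as the paper's proof: both split the pairs of the finer partition into good and bad, use the equipartition to bound each pair's weight by $O(1/t^2)$, and use the $\e$-conservative hypothesis to bound the number of bad pairs by $\e t^2$. The paper organizes the computation as a direct upper bound on $q_G$ of the finer partition (bounding $d_G^2(V_i,V_j)\le d_G^2(U_{i'},U_{j'})+O(\e)$ on good pairs and $\le 1$ on bad and within-part pairs) rather than via the variance identity, but the two framings are equivalent and handle the within-part bookkeeping with the same imprecision absorbed by the constant $101$.
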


We now state the theorem of Conlon and Fox which we will use (see Corollary 1.2 of \cite{Conlon.2012}).

\begin{theorem}[Conlon-Fox \cite{Conlon.2012}]\label{thm:conlonfox}
There is a constant $C>0$ so that for all $0<\e<2^{-700}$ and arbitrarily large $n$, there is a graph $G_n(\e)$ on $n$ vertices so that if $\calU,\calV$ are equitable partition of $G$ satisfying that $|\calU|\geq |\calV|$, that $q(\calU)\leq q(\calV)+\e$,  and that $\calU$ is $\e/|\calV|$-regular, then $|\calU|, |\calV|$ are both at least $W(C\e^{-1/7})$.
\end{theorem}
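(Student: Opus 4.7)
The plan is to construct $G_n(\e)$ by an iterated blow-up construction designed so that any conservative, highly-regular refinement chain must ``unfold'' one tower at a time through each blow-up level, and then to iterate over roughly $\e^{-1/7}$ levels to accumulate a wowzer-type lower bound. Concretely, I would fix $k=\lceil C\e^{-1/7}\rceil$ for a constant $C$ to be chosen at the end, and build a sequence of graphs $G^{(0)}, G^{(1)}, \ldots, G^{(k)}$ with canonical equipartitions $\calQ_0\succeq \calQ_1\succeq \cdots \succeq \calQ_k$. The base $G^{(0)}$ would be a pseudorandom balanced bipartite graph (or a bilinear Paley-type graph), and $G^{(i+1)}$ would be obtained from $G^{(i)}$ by replacing every class of $\calQ_i$ with a large cluster and inserting, between each pair of clusters, an independent pseudorandom bipartite graph whose own internal structure mimics another scaled copy of $G^{(0)}$. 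The final $G_n(\e)$ would be $G^{(k)}$ (padded to $n$ vertices). The key design feature is that the blow-up densities at successive levels are chosen to differ by a carefully tuned amount so that failing to resolve any level leaves a measurable energy deficit.

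Next I would prove a single-level rigidity lemma: if $\calU\succeq\calV$ are equitable partitions of $G^{(k)}$ with $\calU$ being $\e/|\calV|$-regular and $q(\calU)\leq q(\calV)+\e$, and $\calV$ is roughly aligned with $\calQ_i$, then $\calU$ must be essentially aligned with $\calQ_{i+1}$ in the sense that a $(1-O(\sqrt{\e}))$-fraction of $\calU$-classes lie inside classes of $\calQ_{i+1}$. The idea is to run the contrapositive: if $\calU$ fails to split a positive fraction of $\calQ_i$-classes along the $\calQ_{i+1}$-structure, then the pseudorandom bipartite graphs inserted at level $i{+}1$ are being ``averaged out'' by $\calU$, so they contribute a bounded-below increment to the energy of any conservative refinement. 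Using Fact~\ref{fact:index} in the reverse direction (the converse pointed out in~\cite{Conlon.2012}), we translate ``conservative refinement'' into a genuine upper bound on $q(\calU)-q(\calV)$, and the two bounds are incompatible for the chosen density gap.

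The tower jump is the inductive engine. Starting from $\calV$ that resolves only up to $\calQ_i$, any $\calU$ that resolves $\calQ_{i+1}$ and is $\e/|\calV|$-regular must, by Theorem~\ref{thm:reg} applied within each $\calQ_i$-class, have size at least $Tw_{|\calV|}(\e^{-5})$: the embedded pseudorandom graph at level $i{+}1$ is itself an instance forcing tower growth in the regularity lemma (via Gowers' lower bound, which underpins~\cite{Conlon.2012}), so resolving it requires a tower over $|\calV|$. Iterating this jump $k$ times produces nested towers of height $k$, which is precisely the wowzer $W(k)=W(\Omega(\e^{-1/7}))$.

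The main obstacle is the rigidity lemma: the blow-up densities and scales have to be tuned so simultaneously (a)~each level individually forces tower growth of a Gowers-type lower bound instance, (b)~the energy increments between successive levels sum to less than $\e$ along the correct refinement chain so that conservative refinements are genuinely achievable, and (c)~skipping a level forces an energy jump at least $\Omega(\e)$, so conservative refinements are forced to pass through the canonical chain. Balancing these three requirements is what determines the exponent $1/7$ and forces the constant $C$ to be small; everything else is bookkeeping with Theorem~\ref{thm:equiv} to translate between $\dev_2$ and $\e$-regularity, and with the energy accounting of Definition~\ref{def:energy}.
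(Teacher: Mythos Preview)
The paper does not prove Theorem~\ref{thm:conlonfox}; it is quoted as an external result (``see Corollary~1.2 of~\cite{Conlon.2012}'') and used as a black box in the proof of Theorem~\ref{thm:main2}. So there is no ``paper's own proof'' to compare against, and you were not expected to reprove it here.

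That said, your sketch is broadly in the spirit of the Conlon--Fox argument: an iterated construction with a canonical chain of partitions, a rigidity statement forcing any conservative highly-regular pair $(\calU,\calV)$ to track that chain, and a tower jump at each level which compounds to wowzer over $\Theta(\e^{-1/7})$ levels. One slip worth flagging: you invoke Theorem~\ref{thm:reg} ``applied within each $\calQ_i$-class'' to produce the tower jump, but Theorem~\ref{thm:reg} is the \emph{upper} bound in the regularity lemma; what you actually need at that step is Gowers' \emph{lower} bound construction (which you do mention parenthetically). More substantively, the hardest part of~\cite{Conlon.2012} is exactly the point you label ``the main obstacle'': arranging the density profile so that (b) the correct chain has total energy increment below $\e$ while (c) any shortcut incurs increment $\Omega(\e)$. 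Your sketch names this tension but does not indicate how to resolve it; in the actual proof this is where the bulk of the work and the specific exponent $1/7$ come from, and it is not just bookkeeping.
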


\subsection{Standard Lemmas}\label{ss:std}

This section contains several standard lemmas about graph and hypergraph regularity.  Throughout we have opted for brevity over optimization of bounds.  Our first lemma says that sub-pairs of regular pairs are still somewhat regular. For a proof, see Lemma 3.1 in \cite{Alon.2000}.
 
\begin{proposition}[Sub-pairs lemma v.1]\label{lem:slreg} Suppose $G=(A\cup B, E)$ is a bipartite graph and $|E|=d|A||B|$.  Suppose $A'\subseteq A$ and $B'\subseteq B$ satisfy $ |A'| \geq \gamma |A|$ and $|B'| \geq \gamma|B|$ for some $\gamma \geq \e$, and $G$ is $\e$-regular. Then $G':=(A'\cup B', E\cap K_2[A',B'])$ is $\e'$-regular with density $d'$, where  $\epsilon'=2\gamma^{-1}\e $ and $d'\in(d -\e,d + \e)$.
\end{proposition}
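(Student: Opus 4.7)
The plan is to use the $\e$-regularity of $G$ twice: once to control the density on the restricted pair $(A',B')$ itself, and once to control the density on an arbitrary ``test" sub-pair $(X,Y)\subseteq A'\times B'$. The key observation is that the choice $\e'=2\gamma^{-1}\e$ is engineered precisely so that any test set which is $\e'$-large inside $A'$ (resp.\ $B'$) is automatically $\e$-large inside $A$ (resp.\ $B$), allowing us to invoke the regularity of the original pair.

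More concretely, I would first note that since $|A'|\geq \gamma|A|\geq \e|A|$ and $|B'|\geq \gamma|B|\geq \e|B|$, the $\e$-regularity of $G$ applied to $(A',B')$ gives $d':=d_G(A',B')\in (d-\e,d+\e)$, which yields the density conclusion. Next, given any $X\subseteq A'$ and $Y\subseteq B'$ with $|X|\geq \e'|A'|$ and $|Y|\geq \e'|B'|$, I would compute
\[
|X|\;\geq\;\e'|A'|\;\geq\;(2\gamma^{-1}\e)(\gamma|A|)\;=\;2\e|A|\;\geq\;\e|A|,
\]
and symmetrically $|Y|\geq \e|B|$. Applying $\e$-regularity of $G$ to $(X,Y)$ then gives $|d_G(X,Y)-d|\leq\e$. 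Since $E(G')=E\cap K_2[A',B']$ implies $d_{G'}(X,Y)=d_G(X,Y)$ and $d_{G'}(A',B')=d_G(A',B')=d'$, the triangle inequality yields
\[
|d_{G'}(X,Y)-d_{G'}(A',B')|\;\leq\;|d_G(X,Y)-d|+|d-d'|\;\leq\;2\e\;\leq\;\e',
\]
where the last inequality uses $\gamma\leq 1$. This establishes $\e'$-regularity of $G'$.

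There is no real obstacle here: the statement is a standard ``sub-pair inheritance" fact, and the only substantive step is the bookkeeping verifying that the choice $\e'=2\gamma^{-1}\e$ is large enough to absorb both the drop in the denominator (when passing from $|A|$ to $|A'|$) and the factor of $2$ coming from two applications of the triangle inequality. I would present the argument in the order: (i) bound $d'$, (ii) verify test-set size conversion, (iii) apply regularity on $(X,Y)$, (iv) combine via triangle inequality.
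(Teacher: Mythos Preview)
Your argument is correct and is exactly the standard proof of this sub-pair inheritance fact; the paper itself does not give a proof but simply cites Lemma~3.1 of \cite{Alon.2000}, where the same computation appears. There is nothing to add.
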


The following version for $\dev_2$ then follows from Theorem \ref{thm:equiv}.

\begin{proposition}[Sub-pairs lemma v.2]\label{lem:sldev} Suppose $G=(A, B; E)$ is a bigraph with density $d$. Suppose $A'\subseteq A$ and $B'\subseteq B$ satisfy $ |A'| \geq \gamma |A|$ and $|B'| \geq \gamma|B|$ for some $\gamma \geq \e$, and $G$ satisfies $\dev_2(\epsilon, d)$.  Then $G':=(A', B'; E\cap (A'\times B'))$ satisfies $\dev_2(\epsilon',d)$ where $\epsilon'=2\gamma^{-1}\e^{1/12}$.
\end{proposition}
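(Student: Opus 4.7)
The plan is to derive Proposition \ref{lem:sldev} mechanically from Proposition \ref{lem:slreg} (the analogous statement for $\e$-regular pairs) by sandwiching it between the two directions of the equivalence in Theorem \ref{thm:equiv}. Morally, $\dev_2(\e,d)$ and $\e^{1/12}$-regularity are the same thing up to polynomial loss, and the sub-pairs lemma for regular pairs is exactly the content being transported across this equivalence.

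The steps in order would be as follows. First, apply Theorem \ref{thm:equiv}(2) to the bigraph $G=(A,B;E)$: since $G$ satisfies $\dev_2(\e,d)$, the associated bipartite graph on $A\cup B$ with edge set $\{ab : (a,b)\in E\}$ is $\e^{1/12}$-regular. Second, invoke Proposition \ref{lem:slreg} with regularity parameter $\e^{1/12}$ and size parameter $\gamma$ (noting that $\gamma\geq \e\geq \e^{1/12}$ holds once $\e$ is sufficiently small, which is the regime of interest; otherwise the conclusion is vacuous). This yields that the induced bipartite graph on $A'\cup B'$ is $\e''$-regular with density $d'$, where $\e''=2\gamma^{-1}\e^{1/12}$ and $|d'-d|\leq \e^{1/12}$. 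Third, apply Theorem \ref{thm:equiv}(1) to conclude that $G'$ satisfies $\dev_2(\e'',d')$. Fourth, upgrade from $d'$ to $d$ in the $\dev_2$ parameter: the sum appearing in Definition \ref{def:dev2} depends only on the actual density $d_{G'}=d'$ of $G'$ and not on the nominal target $d$, so the quadruple-sum bound inherited from the previous step is unchanged; meanwhile the density condition $|d_{G'}-d|\leq \e''$ follows from $|d_{G'}-d|=|d'-d|\leq \e^{1/12}\leq 2\gamma^{-1}\e^{1/12}=\e''$, which holds since $\gamma\leq 1$.

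I do not anticipate a genuine obstacle here; the only mild nuisance is the bookkeeping around the nominal density $d$ in the $\dev_2$ notation, which is resolved by the trivial inequality $\e^{1/12}\leq 2\gamma^{-1}\e^{1/12}$. The factor $2\gamma^{-1}$ in $\e'$ is exactly the loss coming from Proposition \ref{lem:slreg}, and the $\e^{1/12}$ exponent is exactly the loss coming from the $\dev_2\Rightarrow$ regularity direction of Theorem \ref{thm:equiv}; no further loss is incurred in transporting back, so the stated $\e'=2\gamma^{-1}\e^{1/12}$ is what one expects from this three-step argument.
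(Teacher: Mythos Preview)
Your proposal is correct and matches the paper's approach exactly: the paper simply states that Proposition \ref{lem:sldev} follows from Theorem \ref{thm:equiv} (implicitly via Proposition \ref{lem:slreg}), which is precisely the sandwich argument you spell out. One small slip to fix: for $\e\in(0,1)$ one has $\e^{1/12}\geq \e$, not the reverse, so $\gamma\geq\e$ does not directly yield $\gamma\geq\e^{1/12}$; but your fallback observation that the conclusion is vacuous when $\gamma<\e^{1/12}$ (since then $\e'=2\gamma^{-1}\e^{1/12}>2$) handles that case anyway.
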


Our next fact tells us we can combine the edges or vertices of disjoint quasirandom graphs and obtain a quasirandom graph. 

\begin{fact}[Unioning Edges]\label{fact:adding}
Suppose $E_1$ and $E_2$ are disjoint subsets of $U\times V$.  Assume $(U,V;E_1)$ has $\dev_2(\e_1,d_1)$, and $(U, V;E_2)$ has $\dev_2(\e_2,d_2)$, then $(U,V;E_1\cup E_2)$ has $\dev_2(\e_1^{1/12}+\e_2^{1/12}, d_1+d_2)$.  
\end{fact}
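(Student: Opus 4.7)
The plan is to pass through the quantitative equivalence between $\dev_2$ and $\e$-regularity provided by Theorem~\ref{thm:equiv}, where the conclusion that densities of disjoint edge sets add makes the union step a one-line verification.

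Concretely, I would first apply Theorem~\ref{thm:equiv}(2) to each hypothesis to conclude that, for $i=1,2$, the bigraph $(U,V;E_i)$ (viewed as a bipartite graph on disjoint copies of $U$ and $V$) is $\e_i^{1/12}$-regular, with actual density $d_i' := |E_i|/(|U||V|)$ satisfying $d_i' = d_i \pm \e_i$. Set $\e := \e_1^{1/12} + \e_2^{1/12}$. For any $U' \subseteq U$ and $V' \subseteq V$ with $|U'| \geq \e|U|$ and $|V'| \geq \e|V|$, the $\e_i^{1/12}$-regularity of $(U,V;E_i)$ yields $|d_{E_i}(U',V') - d_{E_i}(U,V)| \leq \e_i^{1/12}$ for $i=1,2$. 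Since $E_1 \cap E_2 = \emptyset$, densities add on both sides, so by the triangle inequality $|d_{E_1 \cup E_2}(U',V') - d_{E_1 \cup E_2}(U,V)| \leq \e_1^{1/12} + \e_2^{1/12} = \e$. Hence $(U,V;E_1 \cup E_2)$ is $\e$-regular.

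To finish, I would apply Theorem~\ref{thm:equiv}(1) to obtain $\dev_2(\e, d_B)$ for the union, where $d_B = d_1' + d_2'$ is its actual density. The parameter $d_B$ can be swapped for $d_1+d_2$ without cost, because the octahedral sum in the definition of $\dev_2(\e,d)$ depends only on the actual density (and not on $d$), while $|d_B - (d_1+d_2)| \leq \e_1 + \e_2 \leq \e$, using $\e_i \in [0,1]$ so that $\e_i \leq \e_i^{1/12}$. This gives $\dev_2(\e_1^{1/12}+\e_2^{1/12},\, d_1+d_2)$, as desired. I do not foresee any serious obstacle; the only mildly delicate point is that Theorem~\ref{thm:equiv} is phrased for bipartite graphs whereas Fact~\ref{fact:adding} concerns bigraphs, but the standard correspondence preserves density and both quasirandomness conditions, so this difference is cosmetic.
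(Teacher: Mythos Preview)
Your approach is correct and is exactly the route the paper indicates: it says Fact~\ref{fact:adding} ``can be deduced from the standard version for regularity (see e.g.\ Fact 2.23 in \cite{Terry.2021b}) and Theorem~\ref{thm:equiv},'' which is precisely your convert-to-regularity, add densities, convert-back argument. The only thing to note is that your handling of the bigraph-versus-bipartite issue and the swap from $d_B$ to $d_1+d_2$ are both fine, for the reasons you give.
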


 Fact \ref{fact:adding} can be deduced from the standard version for regularity (see e.g. Fact 2.23 in \cite{Terry.2021b}) and Theorem \ref{thm:equiv}. One can also show compliments of $\dev_2$-regular bigraphs are regular.
 
 \begin{lemma}[Bigraph compliments]\label{lem:complimentbi}
Suppose $B=(X,Y; E)$ is a bigraph satisfying $\dev_{2}(\e)$.   Let $B'=(X,Y;E')$ where $E'=(X\times Y)\setminus E$.  Then $B'$ also satisfies $\dev_{2}(\e)$.  
\end{lemma}
\begin{proof}
Let $d=|E|/|X||Y$.  Note $|E'|=(1-d)|X||Y|$, and thus, for all $(x,y)\in X\times Y$, $g_{B'}(x,y)=-g_{B}(x,y)$.  Consequently, for each $u_0,u_1\in X$ and $v_0,v_1\in Y$,
$$
\prod_{(i,j)\in \{0,1\}^2}g_{B}(u_i,v_j)=(-1)^{4}\prod_{(i,j)\in \{0,1\}^2}g_{B'}(u_i,v_j)=\prod_{(i,j)\in \{0,1\}^2}g_{B'}(u_i,v_j).
$$
This shows 
\begin{align*}
&\sum_{u_0,u_1\in X}\sum_{v_0,v_1\in Y}\prod_{(i,j)\in \{0,1\}^2}g_{B}(u_i,v_j)=\sum_{u_0,u_1\in X}\sum_{v_0,v_1\in Y}\prod_{(i,j)\in \{0,1\}^2}g_{B'}(u_i,v_j),
\end{align*}
which by definition implies $B'$ has $\dev_{2}(\e)$ (since $B$ does).
\end{proof}

Similarly, complimenting a $\dev_{2,3}$-regular trigraph preserves $\dev_{2,3}$-regularity.

\begin{lemma}[Trigraph compliments]\label{lem:compliment}
Suppose $H=(X,Y,Z; F)$ is a trigraph, $G$ is a triad underlying $H$, and $(H, G)$ has $\dev_{2,3}(\e_1,\e_2)$.  

Let $H'=(X,Y,Z;F')$ where $F'=K_3(G)\setminus F$.  Then $(H',G)$ also has $\dev_{2,3}(\e_1,\e_2)$.  
\end{lemma}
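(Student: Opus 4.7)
The plan is to show that the auxiliary function $h_{H',G}$ appearing in Definition \ref{def:regtriad} for the complement trigraph $H'$ is pointwise the negation of $h_{H,G}$. Since the octahedral sum in Definition \ref{def:regtriad} takes a product over the eight tuples $(i,j,k)\in \{0,1\}^3$, the overall sign picked up is $(-1)^8 = 1$, so the relevant sum for $(H',G)$ equals that for $(H,G)$. Moreover, the component bigraphs $G[X,Y]$, $G[X,Z]$, $G[Y,Z]$ and their densities $d_{XY}, d_{XZ}, d_{YZ}$ are features of $G$ alone, hence unchanged when replacing $H$ by $H'$; so the $\dev_2(\e_2, d_{XY})$, $\dev_2(\e_2, d_{XZ})$, $\dev_2(\e_2, d_{YZ})$ conditions transfer for free.

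To verify the sign identity, I would first note that $d_{H'}(G) = 1 - d_H(G)$. Indeed, since $G$ underlies $H$ we have $F\subseteq K_3(G)$, and by definition $F' = K_3(G)\setminus F$, so $|F'\cap K_3(G)|/|K_3(G)| = 1 - d_H(G)$. I would then check the three cases in the definition of $h_{H',G}$ pointwise: on $F' = K_3(G)\setminus F$, we have
\[
h_{H',G} = 1 - d_{H'}(G) = d_H(G) = -\bigl(-d_H(G)\bigr) = -h_{H,G};
\]
on $F = K_3(G)\setminus F'$, we have
\[
h_{H',G} = -d_{H'}(G) = -\bigl(1 - d_H(G)\bigr) = -h_{H,G};
\]
and off $K_3(G)$, both functions vanish. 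Thus $h_{H',G} \equiv -h_{H,G}$, so the product over the eight-element cube $\{0,1\}^3$ is unchanged, and the $\dev_{2,3}(\e_1,\e_2)$ inequality for $(H',G)$ reduces to the same inequality for $(H,G)$.

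I do not anticipate any obstacle, as the argument is a short sign-tracking calculation that works precisely because $\dev_{2,3}$ is a degree-eight functional of the trigraph's indicator, so taking the complement affects only an even number of factors.
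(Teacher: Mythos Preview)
Your argument is correct and is essentially the same as the paper's: both observe that $d_{H'}(G)=1-d_H(G)$, hence $h_{H',G}=-h_{H,G}$ pointwise, and then use that the octahedral product has an even number of factors so the sum is unchanged. Your write-up is in fact slightly cleaner, as you explicitly note that the $\dev_2$ conditions on the component bigraphs are features of $G$ alone, and you use the correct exponent $(-1)^8$ (the paper writes $(-1)^{12}$, an apparent typo that is harmless since both equal $1$).
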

\begin{proof}
Say $G=(X,Y,Z; E_{XY},E_{YZ},E_{XZ})$.  Note $|F'|=(1-d_H(G))|K_3(G)|$, and thus, for all $(x,y,z)\in K_3(G)$, $h_{H',G}(x,y,z)=-h_{H,G}(x,y,z)$.  Consequently, for each $u_0,u_1\in X$, $v_0,v_1\in Y$, and $w_0,w_1\in Z$, 
$$
\prod_{(i,j,k)\in \{0,1\}^3}h_{H,G}(u_i,v_j,w_k)=(-1)^{12}\prod_{(i,j,k)\in \{0,1\}^3}h_{H',G}(u_i,v_j,w_k)=\prod_{(i,j,k)\in \{0,1\}^3}h_{H',G}(u_i,v_j,w_k).
$$
This shows 
\begin{align*}
&\sum_{u_0,u_1\in X}\sum_{v_0,v_1\in Y}\sum_{w_0,w_1\in Z}\prod_{(i,j,k)\in \{0,1\}^3}h_{H,G}(u_i,v_j,w_k)\\
&=\sum_{u_0,u_1\in X}\sum_{v_0,v_1\in Y}\sum_{w_0,w_1\in Z}\prod_{(i,j,k)\in \{0,1\}^3}h_{H',G}(u_i,v_j,w_k),
\end{align*}
which clearly implies $(H',G)$ has $\dev_{2,3}(\e_1,\e_2)$ (since $(H,G)$ does).
\end{proof}

The next fact follows from a completely standard counting argument.  It says, roughly speaking, that given a triple of $\e$-regular pairs, most edges are in the expected number of triangles. This is a standard fact, and we include a proof in the appendix for the sake of completeness.

\begin{lemma}\label{lem:standreg}
Let $d>0$ and assume $0<\e<\min\{d^8, 2^{-8}\}$.  Suppose $G=(U\cup V\cup W, E)$ is a tripartite graph, and assume each of $G[U,V]$, $G[U,W]$, and $G[V,W]$ is $\e$-regular with densities $d_{UV}, d_{UW}, d_{VW}\geq d$.  Let 
$$
Y=\{uv\in E\cap K_2[U,V]: |N_G(u)\cap N_G(v)|=(1\pm \e^{1/8})d_{UW}d_{UV}|W|\}.
$$
Then $|Y|\geq (1- \e^{1/8})|E\cap K_2[U,V]|$.
\end{lemma}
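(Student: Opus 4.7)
The proof is a standard triangle-counting argument via the degree formulation of $\e$-regularity, and the plan is to bound two types of ``bad'' edges $uv\in E\cap K_2[U,V]$: those where $u$ itself has an atypical $W$-degree, and those where $v$ is atypical relative to the $W$-neighborhood of $u$.

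First I would set
\[
B_U:=\{u\in U : |N_G(u)\cap W|\neq (d_{UW}\pm \e)|W|\}.
\]
By $\e$-regularity of the pair $(U,W)$ (splitting $B_U$ into its high- and low-density halves and taking whichever is larger against all of $W$ would otherwise give a sub-pair violating the definition), one obtains $|B_U|\leq 2\e|U|$. Hence the edges $uv\in E\cap K_2[U,V]$ with $u\in B_U$ number at most $|B_U|\cdot|V|\leq 2\e|U||V|$.

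Next, fix $u\in U\setminus B_U$ and set $W_u:=N_G(u)\cap W$, so $|W_u|\geq (1-\e)d_{UW}|W|\geq \e|W|$, using the hypothesis $\e<d^8\leq d_{UW}$. I would then apply $\e$-regularity of $(V,W)$ with $W_u$ as the distinguished subset of $W$: the set
\[
V_u^{\text{bad}}:=\{v\in V : |N_G(v)\cap W_u|\neq (d_{VW}\pm\e)|W_u|\}
\]
has size at most $2\e|V|$ by the same dichotomy argument as for $B_U$. For $v\in V\setminus V_u^{\text{bad}}$, since $G$ is tripartite,
\[
|N_G(u)\cap N_G(v)|=|W_u\cap N_G(v)|=(d_{VW}\pm\e)(d_{UW}\pm\e)|W|=(1\pm O(\e))d_{UW}d_{VW}|W|,
\]
placing $uv$ into $Y$ (after absorbing the $O(\e)$ into $\e^{1/8}$). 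Summing the second type of bad edges over all $u\in U\setminus B_U$ gives a further contribution of at most $|U|\cdot 2\e|V|=2\e|U||V|$.

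Altogether, at most $4\e|U||V|$ edges fail to lie in $Y$. Since $|E\cap K_2[U,V]|=d_{UV}|U||V|\geq d|U||V|$, the bad fraction is at most $4\e/d$; the hypothesis $\e<d^8$ gives $d>\e^{1/8}$ and hence $4\e/d<4\e^{7/8}$, while $\e<2^{-8}$ gives $4\e^{3/4}<1$, yielding $4\e^{7/8}\leq \e^{1/8}$. The ``main obstacle'' here is purely bookkeeping; once the two bad sets are isolated, the only nontrivial step is verifying that the arithmetic $4\e/d\leq \e^{1/8}$ works out under the hypothesis $\e<\min\{d^8,2^{-8}\}$, which is elementary.
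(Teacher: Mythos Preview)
Your proof is correct and follows essentially the same approach as the paper: isolate vertices $u\in U$ with atypical $W$-degree, then for the remaining $u$ use regularity to control $|N_G(v)\cap W_u|$ for most $v$. The only cosmetic difference is that the paper first passes to $V_u=N_G(u)\cap V$ and applies the sub-pairs lemma to $G[V_u,W_u]$ (incurring intermediate errors $\e^{1/2}$ and $\e^{1/4}$), whereas you apply the degree form of regularity of $(V,W)$ directly against the subset $W_u$, which is slightly cleaner and avoids the sub-pairs lemma entirely.
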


We use the following version for $\dev_2$ which follows from Lemma \ref{lem:standreg} and Theorem \ref{thm:equiv}.

\begin{lemma}\label{lem:standdev}
Let $d>0$ and $0<\e<\min\{d^{800}, 2^{-800}\}$. Suppose $G$ is a triad with vertex sets $U,V,W$, whose component graphs have $\dev_2(\e)$ and densities $d_{UV},d_{UW},d_{VW}\geq d$.  Let 
$$
Y=\{uv\in E_{UV}: |N_{E_{UW}}(u)\cap N_{E_{VW}}(v)|=(1\pm \e^{1/100})d_{UW}d_{VW}|W|\}.
$$
Then $|Y|\geq (1- \e^{1/100})|E_{UV}|$.
\end{lemma}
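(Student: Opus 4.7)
The plan is to reduce to Lemma \ref{lem:standreg} via the quantitative equivalence between $\dev_2$-quasirandomness and $\e$-regularity provided by Theorem \ref{thm:equiv}. First, I would replace $U,V,W$ by disjoint copies if necessary (this does not affect any of the relevant densities or neighborhood counts), so that each component bigraph $(U,V;E_{UV})$, $(U,W;E_{UW})$, $(V,W;E_{VW})$ of the triad $G$ canonically corresponds to a bipartite graph on the disjoint union of its vertex sets. Write $G^{\star}$ for the resulting tripartite graph on $U\cup V\cup W$.

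Next, by Theorem \ref{thm:equiv}(2), each of the three bipartite subgraphs $G^{\star}[U,V]$, $G^{\star}[U,W]$, $G^{\star}[V,W]$ is $\e^{1/12}$-regular, with densities $d_{UV},d_{UW},d_{VW}\geq d$ unchanged. Setting $\e'=\e^{1/12}$, the hypothesis $\e<\min\{d^{800},2^{-800}\}$ yields $\e'<\min\{d^{800/12},2^{-800/12}\}$, which comfortably satisfies $\e'<\min\{d^8,2^{-8}\}$ as required by Lemma \ref{lem:standreg}. Applying that lemma produces a set
\[
Y^{\star}=\{uv\in E\cap K_2[U,V] : |N_{G^{\star}}(u)\cap N_{G^{\star}}(v)|=(1\pm (\e')^{1/8})d_{UW}d_{VW}|W|\}
\]
with $|Y^{\star}|\geq (1-(\e')^{1/8})|E\cap K_2[U,V]|$. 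Since $(\e')^{1/8}=\e^{1/96}$, this controls both the exceptional set and the deviation with exponent $1/96$.

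Finally, I would transfer this back to the bigraph statement. Identifying $Y^{\star}$ with the corresponding set of ordered pairs in $E_{UV}$, the common neighborhoods $N_{G^{\star}}(u)\cap N_{G^{\star}}(v)$ restricted to $W$ coincide with $N_{E_{UW}}(u)\cap N_{E_{VW}}(v)$. It then remains only to weaken the exponent from $1/96$ to $1/100$: since $0<\e<2^{-800}<1$, a direct log comparison gives $\e^{1/96}<\e^{1/100}$, so the tighter bound $(1\pm \e^{1/96})$ in the neighborhood count forces the looser bound $(1\pm \e^{1/100})$ used in the definition of $Y$; hence $Y^{\star}\subseteq Y$, and $|Y|\geq |Y^{\star}|\geq(1-\e^{1/96})|E_{UV}|\geq(1-\e^{1/100})|E_{UV}|$. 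There is no genuine obstacle here; the only care required is in tracking the exponents through the $\dev_2$-to-regularity translation and verifying that the slack between $1/96$ and $1/100$ goes the right way.
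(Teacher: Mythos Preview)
Your proposal is correct and follows exactly the approach the paper indicates: convert $\dev_2(\e)$ to $\e^{1/12}$-regularity via Theorem \ref{thm:equiv}, check the resulting $\e'=\e^{1/12}$ meets the hypothesis of Lemma \ref{lem:standreg}, and then relax the exponent $1/96$ to $1/100$. The only additional care you took---passing to disjoint copies so that bigraphs become honest bipartite graphs---is a standard and harmless reduction that the paper leaves implicit.
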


We will next state a slicing lemma for regular triads.  The author could not find a quantitative version of this in the literature, so we will provide a proof in the appendix.  Said proof uses a recent result of Nagle, R\"{o}dl, and Schacht \cite{NRS} giving a quantitative relationship between $\dev_{2,3}$ and another type of quasirandomness called $\disc_{2,3}$.  We now state the desired slicing lemma (see also the notation defined at the end of Subsection \ref{ss:bigraphs}).

\begin{lemma}\label{lem:sldev23}
There is $K\geq 1$ and a polynomial $p(x,y,z)$ so that the following hold.  Suppose $0<\e_1,\e_2,d_2,\gamma<1$ satisfy $\e_2<p(\e_1,d_2,\gamma)$ and $G=(A,B,C; E_{AB},E_{BC},E_{AC})$ is a triad whose component bigraphs each have density at least $d_2$.  Let $H=(A,B,C; F)$ be a trigraph underlied by $G$, and assume $(H|G)$ satisfies $\dev_{2,3}(\e_1,\e_2)$.  

Suppose $A'\subseteq A$, $B'\subseteq B$, and $C'\subseteq C$ satisfy $|A'|\geq \gamma |A|$, $|B'|\geq \gamma |B|$, and $|C'|\geq \gamma |C|$, and 
$$
H'=H[A',B',C']\text{ and }G'=G[A',B',C'].
$$ 
Then $(H'|G')$ satisfies $\dev_{2,3}(\gamma^{-K}\e_1^{1/K},2\gamma^{-1}\e_2^{1/12})$.
\end{lemma}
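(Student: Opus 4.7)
The plan is to prove the two halves of the conclusion separately. The bigraph assertion---that each of $G'[A',B']$, $G'[A',C']$, $G'[B',C']$ satisfies $\dev_2(2\gamma^{-1}\e_2^{1/12})$---follows immediately from three applications of Proposition \ref{lem:sldev}, since each component bigraph of $G$ satisfies $\dev_2(\e_2)$ and we are restricting to subsets of relative size at least $\gamma$. By the same proposition, each restricted density $d_{X'Y'}$ lies within $2\gamma^{-1}\e_2^{1/12}$ of $d_{XY}$, so is at least $d_2/2$ provided $p(\e_1,d_2,\gamma)$ is chosen so that $\e_2$ is a sufficiently small polynomial in $d_2$ and $\gamma$.

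For the $\dev_{2,3}$ part, I would route through the discrepancy notion $\disc_{2,3}$, which is polynomially equivalent to $\dev_{2,3}$ by the result of Nagle, R\"odl, and Schacht \cite{NRS}. Recall that $(H|G)$ has $\disc_{2,3}(\delta)$ if for every sub-triad $G^* = (A,B,C; E^*_{AB}, E^*_{AC}, E^*_{BC})$ of $G$ (meaning $E^*_{XY}\subseteq E_{XY}$ for each pair), one has
$$\bigl||F \cap K_3(G^*)| - d_H(G) |K_3(G^*)|\bigr| \leq \delta |A||B||C|.$$
The key point is that $\disc_{2,3}$ behaves trivially under restriction: any sub-triad of $G'=G[A',B',C']$ is already a sub-triad of $G$, and dividing the displayed inequality by $|A'||B'||C'| \geq \gamma^3|A||B||C|$ gives the same inequality on $(H'|G')$, with $\delta$ inflated by a factor of $\gamma^{-3}$.

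Therefore the three-step plan for the $\dev_{2,3}$ part will be: (i) apply the forward direction of the NRS equivalence to convert $\dev_{2,3}(\e_1,\e_2)$ for $(H|G)$ into $\disc_{2,3}(\delta)$ for some $\delta$ polynomial in $\e_1$, $\e_2$, and the (lower-bounded) component densities; (ii) observe that $(H'|G')$ has $\disc_{2,3}(\gamma^{-3}\delta)$; and (iii) apply the reverse direction of the NRS equivalence to deduce that $(H'|G')$ has $\dev_{2,3}(\e_1',\e_2')$, where $\e_1'$ is polynomial in $\gamma^{-3}\delta$ and the component densities of $G'$ (which are at least $d_2/2$ from the first paragraph), and where $\e_2'$ can be taken to be the $\dev_2$-parameter $2\gamma^{-1}\e_2^{1/12}$ of the component bigraphs already established. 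Choosing $p(x,y,z)$ to be a sufficiently small polynomial in all three variables, and $K$ to absorb the exponents appearing in \cite{NRS}, will force $\e_1' \leq \gamma^{-K}\e_1^{1/K}$, as required.

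The main obstacle is parameter bookkeeping rather than any structural difficulty. The definition of $\dev_{2,3}(\e_1,\e_2)$ contains the rescaling $d_{XY}^4 d_{YZ}^4 d_{XZ}^4$, so every passage between $\dev_{2,3}$ and $\disc_{2,3}$ brings in powers of the component densities; combined with the $\gamma^{-3}$ loss incurred in restriction, this forces $p$ to depend nontrivially on both $d_2$ and $\gamma$ in addition to $\e_1$. Once the parameters are balanced consistently, the constant $K$ depends only on the exponents in the NRS equivalence.
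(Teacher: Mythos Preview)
Your approach is genuinely different from the paper's and conceptually cleaner, but two points need attention.

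First, a minor issue in step~(ii): restriction of $\disc_{2,3}$ is not quite trivial, because the defining inequality involves $d_H(G)$, the density relative to the \emph{original} triad, whereas $\disc_{2,3}$ for $(H'|G')$ requires $d_{H'}(G')$. These differ in general. You can bound $|d_{H'}(G')-d_H(G)|$ by applying the $\disc_{2,3}$ inequality to $G'$ itself (as a sub-triad of $G$) together with the triangle counting lemma; the resulting extra term is absorbable. Routine, but it should be said.

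More substantively, step~(iii) invokes the \emph{reverse} direction $\disc_{2,3}\Rightarrow\dev_{2,3}$ with polynomial loss. The paper only states and uses the forward direction (Theorem~\ref{thm:translate} from~\cite{NRS}, giving the bound $(2\e_1)^{1/8}$). In the original equivalence of Nagle--Poerschke--R\"odl--Schacht~\cite{Nagle.2013} the reverse direction goes through an auxiliary regularity argument and is not polynomial. If you can cite a polynomial version, your route works; otherwise this is a genuine gap.

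The paper sidesteps this entirely by working directly with the $\dev_{2,3}$ functional. Writing $A_0=A'$, $A_1=A\setminus A'$ (similarly for $B,C$), the deviation sum splits as
\[
f_{H,G}(A,B,C)=\sum_{i,j,k\in\{0,1\}} f_{H,G}(A_i,B_j,C_k),
\]
where each summand is the octahedral sum over $A_i\times B_j\times C_k$, still computed with the global density $d_H(G)$. The forward NRS direction alone (Claim~\ref{cl:dense1}) shows each restricted density $d_{ijk}$ is close to $d_H(G)$, so each $f_{H,G}(A_i,B_j,C_k)$ is close to the \emph{intrinsic} deviation sum $f_{ijk}$ (with density $d_{ijk}$); and each $f_{ijk}\geq 0$ by nonnegativity of the Gowers box norm. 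Hence every summand is bounded below by a small negative quantity, and since the total is at most $\e_1 d_{AB}^4d_{AC}^4d_{BC}^4|A|^2|B|^2|C|^2$, the $(0,0,0)$ summand must itself be small. A final adjustment from $d_H(G)$ back to $d_{000}$ finishes. What this buys is that only the easier direction of the $\disc$/$\dev$ equivalence is needed.
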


We will have frequent use of the following corollary, which allows us to refine the vertex partition of a regular decompositions.

\begin{corollary}\label{cor:sldev23}
For all $C\geq 1$, there exists $K\geq 1$ and a polynomial $q(x,y)$ so that the following hold. Assume $\e_1>0$ is sufficiently small, $\ell,t\geq 1$, and $\e_2:\mathbb{N}\rightarrow (0,1)$ satisfies $\e_2(\ell)<q(\e_1,1/\ell)$. 

Suppose $H=(V,E)$ is a $3$-graph and $\calP$ is an $\dev_{2,3}(\e_1,\e_2(\ell))$-regular $(t,\ell,\e_1,\e_2(\ell))$-decomposition for $H$. Let $\calP'$ be a $(t',\ell)$-decomposition with the following properties.
\begin{enumerate}
\item $\calP_1'\preceq \calP_1$, and each set in $\calP_1$ is the union of at most $C$ elements of $\calP_1'$, 
\item For every $P'\in \calP_2'$, $P'=P\cap (X\times Y)$ for some $P\in \calP_2$ and $X,Y\in \calP_1'$. 
\end{enumerate}
Then $\calP'$ is a $\dev_{2,3}(\e_1',\e_2'(\ell))$-regular $(t',\ell, \e_1',\e_2'(\ell))$-decomposition for $H$, where 
$$
\e_1'=4\e_1^{1/2K}C^{K}\text{ and }\e_2'(\ell)=2C\e_1^{-1/2K^2}\e_2(\ell)^{1/12}.
$$ 
\end{corollary}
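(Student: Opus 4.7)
The plan is to combine the two sub-part lemmas (Proposition \ref{lem:sldev} for bigraphs and Lemma \ref{lem:sldev23} for trigraphs) with an averaging argument that discards a small fraction of the vertices and triples of $\calP$. Let $K_0$ denote the constant and $p(x,y,z)$ the polynomial from Lemma \ref{lem:sldev23}. Choose a size-threshold $\gamma := \e_1^{1/(2K_0^2)}/C$ and a density-threshold $\mu := \e_1^{1/(2K_0)}$, set $K$ to be a large enough multiple of $K_0$ to absorb constants and exponent comparisons, and choose the polynomial $q(x,y)$ so that $q(\e_1,1/\ell) \le p(\e_1,\mu/\ell,\gamma/C)$ for all relevant $\e_1,\ell$ (this is possible because $\mu$ and $\gamma$ are fixed power-functions of $\e_1$ and $p$ is a polynomial). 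Call a sub-piece $X\in \calP_1'$ with $X\subseteq V_i \in \calP_1$ \emph{good} if $|X|\ge (\gamma/C)|V_i|$. Since each $V_i$ is split into at most $C$ sub-pieces, the union $\calB$ of non-good sub-pieces has $|\calB|\le \gamma|V|$.

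First I would verify the $(t',\ell,\e_1',\e_2'(\ell))$-decomposition property at the level of pairs. Call $(x,y)\in V^2$ \emph{good} if neither coordinate is in $\calB$ and $(x,y)$ lies in some $\dev_2(\e_2(\ell))$-regular bigraph of $\calP_2$. By the decomposition hypothesis on $\calP$ and a union bound, good pairs cover at least $(1-\e_1-2\gamma)|V|^2$. For a good pair $(x,y)\in X\times Y$ with $X\subseteq V_i$, $Y\subseteq V_j$ good, Proposition \ref{lem:sldev} applied with size-ratio $\gamma/C$ shows the unique bigraph of $\calP_2'$ containing $(x,y)$ satisfies $\dev_2(2(C/\gamma)\e_2(\ell)^{1/12})=\dev_2(\e_2'(\ell))$, as required. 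Next, for the triad condition, by Lemma \ref{lem:nontrivialtriad} at most $2\mu|V|^3$ triples lie outside $\mu$-non-trivial triads of $\calP$ and by assumption at most $\e_1|V|^3$ triples lie in non-$\dev_{2,3}(\e_1,\e_2(\ell))$-regular triads. Declaring a triple \emph{good} if no coordinate is in $\calB$ and it lies in a $\mu$-non-trivial $\dev_{2,3}$-regular triad of $\calP$, the good triples cover at least $(1-\e_1-2\mu-3\gamma)|V|^3$. For each such triple, the parent triad $G\in \triads(\calP)$ has component bigraphs of density $\ge \mu/\ell$, which supplies the density lower bound $d_2=\mu/\ell$ needed by Lemma \ref{lem:sldev23}; the choice of $q$ then guarantees the $\e_2$-hypothesis of that lemma holds, and we conclude that the restricted triad in $\triads(\calP')$ satisfies $\dev_{2,3}((C/\gamma)^{K_0}\e_1^{1/K_0},\,2(C/\gamma)\e_2(\ell)^{1/12})$ with respect to $H$.

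Finally, one checks that $(C/\gamma)^{K_0}\e_1^{1/K_0}=C^{K_0}\e_1^{1/(2K_0)}$ and $2(C/\gamma)\e_2(\ell)^{1/12}=2C\e_1^{-1/(2K_0^2)}\e_2(\ell)^{1/12}$, and that the bad fractions $\e_1$, $2\mu$, $3\gamma$ plus the slicing-induced deviation fit into the stated $\e_1'=4\e_1^{1/(2K)}C^K$ once $K$ is taken a sufficiently large multiple of $K_0$ depending on $C$ and $\e_1$ is sufficiently small. The main obstacle is precisely this parameter bookkeeping: the bad fraction from small sub-pieces contributes a term $3\gamma=3\e_1^{1/(2K_0^2)}$, which has a worse exponent than the slicing deviation $\e_1^{1/(2K_0)}$, so one must choose $K$ large enough (and $\e_1$ small enough) that both terms are dominated by $\e_1^{1/(2K)}C^K$, while the matching $\e_2'$ is forced by the same $\gamma$. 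Apart from this bookkeeping, every step is a direct invocation of Proposition \ref{lem:sldev}, Lemma \ref{lem:sldev23}, Lemma \ref{lem:nontrivialtriad}, and a union bound.
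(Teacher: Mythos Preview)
Your approach is essentially identical to the paper's: discard sub-pieces of $\calP_1'$ that are too small relative to their parent part, restrict to triads of $\calP'$ whose parent triad in $\calP$ is non-trivial and $\dev_{2,3}$-regular, and apply Lemma~\ref{lem:sldev23} to each surviving triad. You are in fact slightly more careful than the paper --- you explicitly verify the pairs-level $(t',\ell,\e_1',\e_2'(\ell))$-decomposition condition via Proposition~\ref{lem:sldev}, and you flag the exponent tension between the small-piece error $\e_1^{1/(2K_0^2)}$ and the slicing error $\e_1^{1/(2K_0)}$, a bookkeeping point the paper's own proof glosses over (one minor slip: you declare a piece good when $|X|\ge(\gamma/C)|V_i|$ but then compute the slicing output as if the ratio were $\gamma$; pick one and carry it through).
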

\begin{proof}
Fix $C\geq 1$.  Let $K$ and $p(x,y,z)$ be as in Lemma \ref{lem:sldev23} and define $q(x,y)=p(x,xy, x/C)$.  Suppose $\e_1>0$ is sufficiently small, $\ell,t\geq 1$, and $\e_2:\mathbb{N}\rightarrow (0,1)$ satisfies $\e_2(\ell)<q(\e_1,1/\ell)$.

Assume now $H=(V,E)$ is a $3$-graph and $\calP$ is an $\dev_{2,3}(\e_1,\e_2(\ell))$-regular $(t,\ell,\e_1,\e_2(\ell))$-decomposition for $H$, and let $\calP'$ be a $(t',\ell)$-decomposition as in the hypotheses.  Let $\Omega$ be the set of triads from $\calP$ which are $\e_1$-non-trivial and $\dev_{2,3}(\e_1,\e_2(\ell))$-regular with respect to $H$.  Since $\calP$ is $\dev_{2,3}(\e_1,\e_2(\ell))$-regular and by Lemma \ref{lem:nontrivialtriad}, $|\bigcup_{G\in \Omega}K_3(G)|\geq (1-3\e_1)|V|^3$.  Set $\mu=\e^{1/2K^2}_1/C$, and note that, by construction, $\e_2(\ell)<p(\e_1,\e_1/\ell, \mu)$.  Define
$$
\calX=\{X\in \calP': \text{ there is }Y\in \calP\text{ with }X\subseteq Y\text{ and }|X|\geq \mu |Y|\}.
$$
Now let 
\begin{align*}
\Omega'&=\{G'\in \triads(\calP'):\text{ $K_3(G')\subseteq K_3(G)$ for some $G\in \Omega$}\}\\
\Omega''&=\{G'\in \triads(\calP'):\text{ $K_3(G')\subseteq X\times Y\times Z\text{ for some }X,Y,Z\in \calX$}\}.
\end{align*}
Then 
\begin{align*}
|\bigcup_{G'\in \Omega''}K_3(G)|&\geq |\bigcup_{G\in \Omega}K_3(G)|-|\bigcup_{G'\in \Omega'\setminus \Omega''}K_3(G'')|\\
&\geq (1-3\e_1)|V|^3-\sum_{Y\in \calP}\sum_{\{X\in \calP'\setminus \calX: X\subseteq Y\}}|X||V|^2\\
&\geq (1-3\e_1)|V|^3-\sum_{Y\in \calP}|\{X\in \calP': X\subseteq Y\}|\mu |Y||V|^2\\
&\geq (1-3\e_1)|V|^3-C\mu |V|^3\\
&\geq (1-3\e_1-\e_1^{1/2K^2})|V|^3.
\end{align*}
It is not difficult to check that $3\e_1+\e_1^{1/2K^2}\leq \e_1'$ holds by definition of $\e_1'$.  Consequently, it suffices to show that every $G\in \Omega''$ is $\dev_{2,3}(\e_1',\e_2'(\ell))$-regular with respect to $H$.  By Lemma \ref{lem:sldev23}, each $G\in \Omega''$ is $\dev_{2,3}(\mu^{-K}\e_1^{1/K},2\mu^{-1}\e_2(\ell)^{1/12})$-regular with respect to $H$.  By definition of $\e_1',\e_2'(\ell)$, we are done.  
\end{proof}

We end this section with a notion of approximate refinements.  This notion has appeared throughout the related literature (see e.g. \cite{Moshkovitz.2019, Conlon.2012}).

\begin{definition}
Let $\e>0$ and $V$ a set. 
\begin{enumerate}
\item Given sets $X,Y\subseteq V$, we write $X\subseteq_{\e} Y$ if $|X\setminus Y|\leq \e|X|$.  
\item Given two partitions $\calP,\calQ$ of $V$, write $\calQ\preceq_{\e}\calP$ if for all $Q\in \calQ$, there is $P\in \calP$ with $Q\subseteq_{\e}P$.
\end{enumerate}
\end{definition}

We will use the fact that we can always find equitable, approximate refinements. This fact is standard, but we include a proof in the appendix for completeness.

\begin{fact}\label{fact:refinements}
Suppose $\calP,\calP'$ are partition of $V$ of size $s,t$ respectively.  There is an equipartition $\calQ$ of $V$ with size at most $\e^{-1} s t$ so that  $\calQ\preceq_{\e}\calP$ and $\calQ\preceq_{\e}\calP'$.
\end{fact}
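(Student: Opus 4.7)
The plan is to prove the fact by taking the common refinement of $\calP$ and $\calP'$, carving each refinement part into equal-size pieces, and then distributing the small residual vertices evenly among those pieces to restore the equipartition property. First, I would form the common refinement $\calR = \{P \cap P' : P \in \calP, P' \in \calP', P \cap P' \neq \emptyset\}$. Then $|\calR| \leq st$ and each $R \in \calR$ is contained in a unique $P \in \calP$ and a unique $P' \in \calP'$, so any set $Q$ with $Q \subseteq_{\e} R$ will automatically satisfy $Q \subseteq_{\e} P$ and $Q \subseteq_{\e} P'$. Thus it suffices to find an equipartition that approximately refines $\calR$.

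Next, I would pick a target piece size $k$ of order $\e|V|/st$ (choosing the constant in front small enough to absorb later losses) and carry out the following construction. For each $R \in \calR$, carve $R$ into $\lfloor |R|/k \rfloor$ disjoint ``base'' pieces of size exactly $k$, leaving a residual of fewer than $k$ vertices inside $R$. Let $G$ be the union of all residuals; then $|G| \leq |\calR|(k-1) \leq st \cdot k$. Letting $q$ denote the number of base pieces, distribute the $|G|$ residual vertices among the base pieces as evenly as possible, so that each piece receives either $\lfloor |G|/q \rfloor$ or $\lceil |G|/q \rceil$ additional vertices. The resulting partition $\calQ$ is an equipartition of $V$, since piece sizes differ by at most $1$.

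It then remains to verify the two conclusions. For the size bound, $|\calQ| = q \leq |V|/k \leq \e^{-1} st$ by the choice of $k$. For the approximate refinement, each $Q \in \calQ$ consists of $k$ vertices lying in a single $R \in \calR$ together with at most $\lceil |G|/q \rceil$ ``impure'' vertices taken from elsewhere; since $|G| \leq stk$ and $q \geq |V|/k - st$, choosing $k$ with a suitable small constant ensures that $\lceil |G|/q \rceil \leq \e k$, so the impure fraction in each $Q$ is at most $\e$, yielding $Q \subseteq_{\e} R$ as required. The only genuine annoyance in this argument is bookkeeping the constants so that the impurity ratio lands at exactly $\e$ (a naive choice gives $2\e$ or $4\e$, fixed by replacing $\e$ with $\e/C$ in the definition of $k$); an edge case is when $|V|$ is too small for $k \geq 1$ to work, in which case I would simply take $\calQ$ to be the partition of $V$ into singletons, which is an equipartition of size $|V| \leq \e^{-1}st$ that exactly refines both $\calP$ and $\calP'$.
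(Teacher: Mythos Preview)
Your proposal is correct and follows essentially the same approach as the paper: form the common refinement, carve each cell into pieces of size roughly $\e|V|/st$, collect the residuals (totaling at most $\e|V|$), and redistribute them evenly among the base pieces to restore an equipartition. The paper's proof is marginally cleaner in one spot---it bounds the impurity ratio directly as $|W_v'|/|R_v|\approx |W_0|/|V|<\e$ (since the $R_v$ equipartition $V$), which sidesteps the constant-juggling you flag---but the arguments are otherwise identical, including the same harmless looseness with ceilings for large $|V|$.
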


\subsection{Proof of the Theorem \ref{thm:main0}}\label{ss:1.9}

In this subsection, we prove Theorem \ref{thm:main0}. We begin with some notation.  Given an integer $n$ and $0<\e<2^{-700}$,  let $G_{\e}(n)=(V(n),E_{\e}(n))$ be the graph in Theorem \ref{thm:conlonfox} on $n$ vertices.  We define three auxiliary vertex sets $A(n)$, $B(n)$ and $C(n)$ as follows.  Let 
$$
A(n)=\{a_v: v\in V(n)\},\text{ and }B(n)=\{b_v: v\in V(n)\},
$$
and let $C(n)$ be any set of size $n$.  We then define the $3$-uniform hypergraph 
$$
H_{\e}(n)=(A(n)\cup B(n)\cup C(n),F_{\e}(n))
$$
 where $F_{\e}(n)=\{a_vb_{v'}z: vv'\in E_{\e}(n), z\in C(n)\}$.  In the notation of the introduction, $H_{\e}(n)=n\otimes \bip(G_{\e}(n))$.  We now verify this hypergraph has $\VC_2$-dimension at most $1$.

\begin{fact}\label{fact:hvc2}
For all $\e>0$ and $n\geq 1$, $H_{\e}(n)$ has $\VC_2$-dimension at most $1$.
\end{fact}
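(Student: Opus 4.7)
The plan is to exploit the rigid tripartite structure of $H_\e(n) = n \otimes \bip(G_\e(n))$. By construction, every edge of $H_\e(n)$ is of the form $\{a_v,b_{v'},z\}$ with $vv' \in E_\e(n)$ and $z \in C(n)$, and hence has exactly one vertex in each of the three parts $A(n), B(n), C(n)$. For a vertex $u \in V(H_\e(n))$ write $P(u) \in \{A,B,C\}$ for the part containing $u$.

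Assume for contradiction that $\VC_2(H_\e(n)) \geq 2$, witnessed by vertices $x_1,x_2,y_1,y_2$ and $z_S$ (for each $S \subseteq [2]^2$) whose ``slice patterns'' $\{(i,j) : \{x_i,y_j,z_S\} \in F_\e(n)\}$ realize all $16$ subsets of $[2]^2$. Since every edge of $H_\e(n)$ occupies each of the three parts exactly once, realization of the singleton slice pattern $\{(i,j)\}$ forces $P(x_i) \neq P(y_j)$; so $P(x_i) \neq P(y_j)$ for every $(i,j) \in [2]^2$.

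A short case analysis on the part assignment of $x_1,x_2,y_1,y_2$ finishes the argument. Under the constraint $P(x_i) \neq P(y_j)$ for all $i,j$, the possible part assignments fall (up to swapping $x$ and $y$ and relabelling parts) into two cases. Case (i): $P(x_1) = P(x_2)$ and $P(y_1) = P(y_2)$. Then only two slice patterns arise across all $z$ --- a single fixed pattern (determined by the edges of $G_\e(n)$ on the relevant vertices) when $P(z)$ is the remaining third part, and the empty pattern otherwise --- which is far short of the $16$ required. Case (ii): $P(x_1) = P(x_2) = A$, $P(y_1) = B$, $P(y_2) = C$. Then an edge $\{x_i,y_1,z\}$ requires $P(z) = C$, while an edge $\{x_i,y_2,z\}$ requires $P(z) = B$; no single $z$ can make both $(1,1)$ and $(1,2)$ lie in its slice pattern, so the pattern $\{(1,1),(1,2)\}$ cannot be realized. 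In both cases we contradict shattering, so $\VC_2(H_\e(n)) \leq 1$. The only subtle step is carrying out the case analysis cleanly; the key point is simply that $P(z)$ determines which pairs $(i,j)$ can possibly appear in the slice pattern at $z$, and this coarse restriction is incompatible with shattering. Notably, no property of $G_\e(n)$ beyond being a graph is used.
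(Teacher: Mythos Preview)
Your overall strategy is sound, but Case~(i) has a real gap. You assert that when $P(x_1)=P(x_2)=X$ and $P(y_1)=P(y_2)=Y$, the slice pattern at any $z$ in the remaining part $Z$ is a \emph{single fixed} pattern. This is only true when $Z=C(n)$: then $x_i=a_{v_i}$, $y_j=b_{w_j}$, and the pattern $\{(i,j):v_iw_j\in E_\e(n)\}$ is indeed independent of $z\in C(n)$. But the three parts are not interchangeable in $H_\e(n)$ --- the phrase ``up to relabelling parts'' is illegitimate here, since $C(n)$ plays a distinguished dummy role. If instead, say, $X=B(n)$ and $Y=C(n)$ (so $Z=A(n)$), then writing $x_i=b_{w_i}$ and $z=a_v$, the pattern at $z$ is $\{(i,j):vw_i\in E_\e(n)\}$, which genuinely varies with $v$ and can take up to four values. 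The fix is short: in every sub-case of (i) with $Z\neq C(n)$, one of the two blocks $\{x_1,x_2\}$ or $\{y_1,y_2\}$ lies in $C(n)$, and since membership in $F_\e(n)$ is insensitive to which element of $C(n)$ appears, the pattern is independent of one of $i,j$ and hence is a union of full rows or full columns --- so the singleton $\{(1,1)\}$ is still unrealizable. Your Case~(ii) argument, by contrast, only uses that $P(z)$ is forced to be the part missing from $\{P(x_i),P(y_j)\}$, and that reasoning is genuinely insensitive to which parts are which.

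For comparison, the paper organizes the case analysis differently: it fixes the vertex $z$ realizing the diagonal pattern $\{(1,1),(2,2)\}$ and cases on $P(z)\in\{A(n),B(n),C(n)\}$, in each case either contradicting $zx_1y_2,zx_2y_1\notin F_\e(n)$ directly or forcing two of $x_1,x_2,y_1,y_2$ into the same part so that some other required pattern becomes impossible. Both arguments rest on the same structural observations, but the paper avoids the relabelling pitfall by never invoking symmetry among $A(n),B(n),C(n)$.
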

\begin{proof}
Suppose towards a contradiction there exist $\{x_1,x_2,y_1,y_2\}\in A(n)\cup B(n)\cup C(n)$ and for all $S\subseteq \{1,2\}^2$ an element $z_S\in A(n)\cup B(n)\cup C(n)$ so that $z_Sx_iy_j\in F_{\e}(n)$ if and only if $(i,j)\in S$.   Then there is some $z$ so that $zx_1y_1,zx_2y_2\in F_{\e}(n)$ but $zx_2y_1,zx_1y_2\notin F_{\e}(n)$.  

Suppose $z\in C(n)$.  Since $zx_1y_1,zx_2y_2\in F_{\e}(n)$ and $z\in C(n)$, there must be some $vv', ww'\in E_{\e}(n)$, $\{x_1,y_1\}=\{a_v,b_{v'}\}$ and $\{x_2,y_2\}=\{a_w,b_{w'}\}$.   But now there exists no $z'\in A(n)\cup B(n)\cup C(n)$ so that $z'x_1y_1\in F_{\e}(n)$ but $z'x_2y_2\notin F_{\e}(n)$, a contradiction.  

Suppose now $z\in A(n)$.  Since $zx_1y_1\in F_{\e}(n)$, there is some $vv'\in E_{\e}(n)$ so that either 
\begin{enumerate}[(i)]
\item $\{z,x_1\}=\{a_v,b_{v'}\}$ and $y_1\in C(n)$ or 
\item $\{z,y_1\}=\{a_v,b_{v'}\}$ and $x_1\in C(n)$.  
\end{enumerate}
Similarly, since $zx_2y_2\in F_{\e}(n)$, there is some $ww'\in E_{\e}(n)$ so that either 
\begin{enumerate}[(a)]
\item $\{z,x_2\}=\{a_w,b_{w'}\}$ and $y_2\in C(n)$ or 
\item $\{z,y_2\}=\{a_w,b_{w'}\}$ and $x_2\in C(n)$.  
\end{enumerate}
If (i) and (a) both held, then we would have $zx_2y_1\in F_{\e}(n)$, a contradiction.  Similarly, if (ii) and (b) both held, we would have $zx_1y_2\in F_{\e}(n)$, a contradiction.  Thus one of the following hold.
\begin{enumerate}[(I)]
\item (i) and (b) hold: $\{z,x_1\}=\{a_v,b_{v'}\}$, $\{z,y_2\}=\{a_w,b_{w'}\}$, and $y_1,x_2\in C(n)$ or
\item (ii) and (a) hold: $\{z,y_1\}=\{a_v,b_{v'}\}$, $\{z,x_2\}=\{a_w,b_{w'}\}$, and $y_2,x_1\in C(n)$.
\end{enumerate}
In case (I) there exists no $z'$ with $z'x_2y_1\in F_{\e}(n)$, a contradiction, and in case (II) there exists no $z'$ with $z'x_1y_2\in F_{\e}(n)$, a contradiction.

A similar argument arrives at a contradiction if $z\in B(n)$.
\end{proof}

We now state a version Theorem \ref{thm:main0} where the regular decomposition is assumed to refine a distinguished tripartition.  

\begin{theorem}\label{thm:main3}
There are constants $C>0$, $M\geq 1$, and $\e^*>0$ so that for all $0<\e_1<\e^*$, there exists $\e_2:\mathbb{N}\rightarrow (0,1]$ so that for any $\ell\geq 1$ and any $N\geq 1$,  there exists a bipartite graph $G=(A\cup B, E)$ on at least $N$ vertices so that the following holds.

Suppose $H=(A\cup B\cup C, F)$ is $N\otimes G$ and $\calP$ is a $\dev_{2,3}(\e_1,\e_2(\ell))$-regular $(t,\ell,\e_1,\e_2(\ell))$-decomposition for $H$ in which $\calP_1$ refines $\{A,B,C\}$.  Then $t\geq W(C(\e_1^{-1/M}))$.
\end{theorem}

Theorem \ref{thm:main3} is simpler to prove for notational reasons, and suffices for our applications.  Theorem \ref{thm:main0} as stated in the introduction follows immediately from Theorem \ref{thm:main3} and Corollary \ref{cor:sldev23} (see the proof of Corollary \ref{cor:wowzer} later on).  We will now prove Theorem \ref{thm:main3}.

\vspace{2mm}

\noindent{\bf Proof of Theorem \ref{thm:main3}.} 
Our goal is to prove the following: there exist constants $C>0$, $M\geq 1$, and $\e_1^*>0$ so that for and all $0<\e_1<\e_1^*$ there exists a function $\e_2:\mathbb{N}\rightarrow (0,1)$ so that for all $\ell,N\geq 1$, there is $n\geq N$ and a bipartite graph $G=(A\cup B,E)$ on $n$ vertices so that for any $\dev_{2,3}(\e_1,\e_2(\ell))$-regular $(t,\ell,\e_1,\e_2)$-decomposition $\calP$ of $n\otimes G=(A\cup B\cup C, F)$ with $\calP_1$ refining $\{A,B,C\}$  has $t\geq W(C(\e_1^{-1/M}))$.  Suppose towards a contradiction this is false.  Then the following holds.

\begin{align}\label{hyp}
&\text{For all $C>0$, $M\geq 1$, and $\e_1^*>0$, there is $0<\e_1<\e_1^*$ so that for all $\e_2:\mathbb{N}\rightarrow (0,1]$,}\\
&\text{there are $\ell,N\geq 1$ so that for all $n\geq N$, and all bipartite graphs $G=(A\cup B,E)$}\nonumber\\
&\text{on $n$ vertices, there is a $\dev_{2,3}(\e_1,\e_2(\ell))$-regular $(t,\ell,\e_1,\e_2)$-decomposition $\calP$ of}\nonumber\\
&\text{$n\otimes G=(A\cup B\cup C,F)$ with $\calP_1$ refining $\{A,B,C\}$ and $1\leq t< W(C(\e_1^{-1/M}))$.}\nonumber
\end{align}

We assume (1) holds, and after a somewhat lengthy argument, arrive at a contradiction.  We begin by defining several parameters.  Let $K_0=K_0(2)$ be as in Proposition \ref{prop:suffvc2}, and let $C_0$ be as in Theorem \ref{thm:conlonfox}.  Let $C_1=C_0400000^{-1/7}$ and set $K=\max\{K(2),2\}$.  Now choose $\mu_*\in (0,1)$ sufficiently small so that for all $0<x<\mu^*$,
$$
W(C_1x^{-1})>x^{-140K}W(C_1x^{-1}/2)^2.
$$
Set $C=C_1/2$ and define
$$
\e_1^*=\min\{2^{-K_0}, (\mu^*)^{140K}, (2^{700}400000)^{-40K}\}.
$$
Let $\e_1$ be as guaranteed to exist for $C$, $M=280K$, and $\e_1^*$ by (\ref{hyp}).  Let $\e_2^*:\mathbb{N}\rightarrow (0,1)$ be as in Proposition \ref{prop:suffvc2} for $k=2$, $\e_1$ and $\mu=\e_1$.  Define a function $\phi:(0,1)\rightarrow \mathbb{N}$ by setting 
$$
\phi(x)=W(C(x^{-1/280K})),
$$
and let $\e_2:\mathbb{N}\rightarrow (0,1]$ be defined as follows (recall Definition \ref{def:tower}). For all $x\in \mathbb{N}$, set
$$
\e_2(x)=\min\Big\{\e_2^*(x),\frac{\e_1^4}{2\phi(\e_1)^2xTw_{\e_1^{-2}\phi(\e_1)^2}(\e_1^{-4}\phi(\e_1)^2)}, \Big(\frac{\e_1}{4x}\Big)^{1200000}\Big\}.
$$
Let $\ell$ and $N$ be as guaranteed to exist by (\ref{hyp}) for this choice of $\e_2$. Set
$$
\e=400000\e_1^{1/40K}.
$$
Clearly $\e<2^{-700}$, so we can take $G_{\e}(n)=(V(n),E_{\e}(n))$ to be a graph as in Theorem \ref{thm:conlonfox} for this $\e$ and some sufficiently large $n\geq \max\{3\e_1^{-1},N\}$.  Let $H=(A\cup B\cup C,F)$, where $A=\{a_v: v\in  V(n)\}$, $B=\{b_v: v\in V(n)\}$, $C$ is any set of size $n$, and 
$$
F=\{a_xb_yz: xy\in E_{\e}(n), z\in C\}.
$$
By construction, $H=n\otimes \bip(G_{\e}(n))$, where $ \bip(G_{\e}(n))=(A\cup B, \{a_xb_y: xy\in E_{\e}(n)\})$. Therefore our assumption (\ref{hyp}) implies there exists some $1\leq t\leq W(C(\e_1^{-1/M}))=\phi(\e_1)$ and a $\dev_{2,3}(\e_1,\e_2(\ell))$-regular $(t,\ell,\e_1,\e_2)$-decomposition $\calP$ for $H_{\e}(n)$ such that $\calP_1$ refines $\{A,B,C\}$. Fix enumerations for $\calP_1$, $\calP_2$ of the following form (we allow some of the sets below to be empty to ease notation).
$$
\calP_1=\{A_i,B_i,C_i: i\in [t]\}\text{ and }\calP_2=\{P_{A_iB_j}^{\alpha},P_{B_jC_k}^{\alpha},P_{A_iC_k}^{\alpha}: 1\leq i,j,k\leq t, 1\leq  \alpha\leq \ell\}.
$$
We will write $G_{ijk}^{\alpha\beta\gamma}$ for the triad $(A_i,B_j,C_k; P_{A_iB_j}^{\alpha}, P_{A_iC_k}^{\beta}, P_{B_jC_k}^{\gamma})$.  To ease notation, for the rest of the proof, we will write $\e_2$ as shorthand for $\e_2(\ell)$ unless  there is a possibility for confusion.

The idea of our proof is to show the edges of $\bip(G_{\e}(n))$ are extremely well approximated by unions of elements from $\calP_2$.  Because the elements of $\calP_2$ are highly regular graphs, this will allow us to produce an extremely regular partition for $G_{\e}(n)$ with too few parts, contradicting the fact that $G_{\e}(n)$ comes from Theorem \ref{thm:conlonfox}.  

The bulk of the proof is concerned with the first step, namely showing the edges of $\bip(G_{\e}(n))$ are well approximated by unions of elements from $\calP_2$.  This will require several notational definitions, followed by  a series of observations regarding the error triads of $\calP$.  To ease notation, let $W=A\cup B\cup C$ and set
$$
E=\{a_xb_y\in K_2[A,B]: xy\in E_{\e}(n)\}.
$$
Note that by construction $|W|=3n$.   Let 
\begin{align*}
\calP_1^{lg}&=\{X\in \calP_1: |X|\geq \e_1n/t\}\text{ and }\\
\calP_2^{lg}&=\{P\in \calP_2: \text{ for some }X,Y\in \calP_1^{lg}, P\subseteq (X\times Y)\text{ and } |P|\geq \e_1|X||Y|/\ell\}
\end{align*}
Note that a triad $G\in \triads(\calP)$ is $\e_1$-non-trivial if and only if its vertex sets are from $\calP_1^{lg}$ and its edges come from $\calP_2^{lg}$.  Now let
$$
\Gamma:=\{P\in \calP_2^{lg}: (X,Y; P) \text{ has }\dev_2(\e_2)\text{ where }X,Y\in \calP_1\text{ are such that } P\subseteq X\times Y\}.
$$
Now let 
\begin{align*}
\Omega&:=\{G\in \triads(\calP): \text{$G$ is $\e_1$-non-trivial and $\dev_{2,3}(\e_1,\e_2)$-regular with respect to $H$}\}\text{ and }\\
\calS&:=(A\times B\times C)\cap \Big(\bigcup_{G\in \Omega}K_3(G)\Big).
\end{align*}
Because $\calP$ is $\dev_{2,3}(\e_1,\e_2(\ell))$-regular and by Lemma \ref{lem:nontrivialtriad}, 
\begin{align}\label{al:s}
|\calS|\geq |A||B||C|-3\e_1|W|^3\geq (1-9\e_1)|A||B||C|.
\end{align}
For each $P_{A_iB_j}^{\alpha}\in \calP_2$, let $d_{A_iB_j}^{\alpha}=|P_{ij}^{\alpha}|/|A_i||B_j|$.  For $P_{B_jC_k}^{\alpha},P_{A_iC_k}^{\alpha}\in \calP_2$, define the notation $d_{B_jC_k}^{\alpha}$ and $d_{A_iC_k}^{\alpha}$ analogously.   With this notation, we have by Proposition \ref{prop:counting} that for all $G_{ijk}^{\alpha\beta\gamma}\in \Omega$, 
\begin{align}\label{al:triangles}
|K_3(G_{ijk}^{\alpha\beta\gamma})|=(d_{A_iB_j}^{\alpha}d_{B_jC_k}^{\beta}d_{A_iC_k}^{\gamma}\pm 4\e_2^{1/4})|A_i||B_j||C_k|.
\end{align}
 Now define
\begin{align*}
\Sigma&=\{P_{A_iB_j}^{\alpha}\in \Gamma: \text{ there exists $k\in [t]$ and $\beta,\gamma\in [\ell]$ so that $G_{ijk}^{\alpha\beta\gamma}\in \Omega$}\}.
\end{align*}
We now show that if $P_{A_iB_j}^{\alpha}\in \Sigma$, then the density of $\overline{E}$ relative to $P_{A_iB_j}^{\alpha}\cap (A_i\times B_j)$ is close to $0$ or $1$.  We will later use this to show $\overline{E}$ is well approximated by unions of elements from $\Sigma$.

\begin{claim}\label{cl:1}
Suppose $P_{A_iB_j}^{\alpha}\in \Sigma$.  Then there is $\rho_{ij}^{\alpha}\in \{0,1\}$ so that 
$$
|P_{A_iB_j}^{\alpha}\cap  \overline{E}|=(\rho_{ij}^{\alpha}\pm 2\e_1^{1/K})|P_{A_iB_j}^{\alpha}|.
$$
\end{claim}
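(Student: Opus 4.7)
The plan is to apply Proposition~\ref{prop:suffvc2} and transfer the resulting homogeneity of $d_H(G)$ on some witnessing triad $G\in\Omega$ to a statement about the density of $\overline{E}$ on $P_{A_iB_j}^{\alpha}$. Since $P_{A_iB_j}^{\alpha}\in\Sigma$, fix $k\in[t]$ and $\beta,\gamma\in[\ell]$ with $G:=G_{ijk}^{\alpha,\beta,\gamma}\in\Omega$, so that $G$ is $\e_1$-non-trivial and $\dev_{2,3}(\e_1,\e_2(\ell))$-regular with respect to $H$. By Fact~\ref{fact:hvc2}, $H$ has $\VC_2$-dimension at most $1$. By the choice of $\e_1$ and $\e_2$, Proposition~\ref{prop:suffvc2} (with $k=2$) applies and yields some $\rho_{ij}^{\alpha}\in\{0,1\}$ with $d_H(G)=\rho_{ij}^{\alpha}\pm\e_1^{1/K}$.

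The core step is to exploit the product structure $H=n\otimes\bip(G_\e(n))$. Since $\calP_1$ refines $\{A,B,C\}$, we have $A_i\subseteq A$, $B_j\subseteq B$, $C_k\subseteq C$, and so for any $(u,v,w)\in A_i\times B_j\times C_k$, the triple $uvw$ is an edge of $H$ if and only if $(u,v)\in\overline{E}$ (the $C$-coordinate is free). Double-counting the ordered triangles of $G$ that lie in $\overline{F}$ by their $(A_i,B_j)$-projection therefore gives
\[
|\overline{F}\cap K_3(G)|=\sum_{(u,v)\in P_{A_iB_j}^{\alpha}\cap\overline{E}}\bigl|N_{P_{A_iC_k}^{\beta}}(u)\cap N_{P_{B_jC_k}^{\gamma}}(v)\bigr|.
\]
Because $G\in\Omega$, each of its component bigraphs has $\dev_2(\e_2(\ell))$ and density at least $\e_1/\ell$; by the choice $\e_2(\ell)\le(\e_1/(4\ell))^{1200000}\ll(\e_1/\ell)^{800}$, Lemma~\ref{lem:standdev} applies to the two bigraphs meeting $C_k$ and says all but at most $\e_2(\ell)^{1/100}|P_{A_iB_j}^{\alpha}|$ pairs $(u,v)\in P_{A_iB_j}^{\alpha}$ contribute $(1\pm\e_2(\ell)^{1/100})d_{A_iC_k}^{\beta}d_{B_jC_k}^{\gamma}|C_k|$, with the exceptional pairs contributing at most $|C_k|$ each. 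Combining this with the counting-lemma estimate $(\ref{al:triangles})$ for $|K_3(G)|$ and dividing, a short calculation gives
\[
d_H(G)=\frac{|P_{A_iB_j}^{\alpha}\cap\overline{E}|}{|P_{A_iB_j}^{\alpha}|}\pm\mathrm{err},
\]
where $\mathrm{err}$ is bounded by an expression in $\e_2(\ell)^{1/100}$ and $\e_2(\ell)^{1/4}(\e_1/\ell)^{-3}$.

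Combining the two displays and observing that, by the aggressive choice of $\e_2(\ell)$, $\mathrm{err}\le\e_1^{1/K}$, one obtains $|P_{A_iB_j}^{\alpha}\cap\overline{E}|/|P_{A_iB_j}^{\alpha}|=\rho_{ij}^{\alpha}\pm 2\e_1^{1/K}$, which is the claim. The only real obstacle is the bookkeeping: one must verify that every error term arising from the counting lemma and from Lemma~\ref{lem:standdev} is dwarfed by $\e_1^{1/K}$ given the parameter hierarchy set up at the start of the proof; this is routine because $\e_2(\ell)$ was chosen to be a huge polynomial power of $\e_1/\ell$.
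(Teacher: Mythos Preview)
Your proof is correct and follows essentially the same route as the paper: pick a witnessing triad $G\in\Omega$, invoke Proposition~\ref{prop:suffvc2} (via Fact~\ref{fact:hvc2}) to force $d_H(G)$ near $0$ or $1$, then use the product structure $\overline{F}\cap K_3(G)=\{(u,v,w):(u,v)\in P_{A_iB_j}^{\alpha}\cap\overline{E},\ (u,v,w)\in K_3(G)\}$ together with Lemma~\ref{lem:standdev} and the triangle count~(\ref{al:triangles}) to transfer this to the density of $\overline{E}$ on $P_{A_iB_j}^{\alpha}$. The only cosmetic difference is that the paper splits into the two cases $d_H(G)\ge 1-\e_1^{1/K}$ and $d_H(G)\le \e_1^{1/K}$, handling the latter by passing to the complement trigraph via Lemma~\ref{lem:compliment}, whereas you derive the two-sided estimate $d_H(G)=|P_{A_iB_j}^{\alpha}\cap\overline{E}|/|P_{A_iB_j}^{\alpha}|\pm\mathrm{err}$ directly and treat both cases at once; your organization is slightly cleaner.
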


\begin{proof}
Since $P_{A_iB_j}^{\alpha}\in \Sigma$, there are $k\in [t]$ and $\beta,\gamma\in [\ell]$ such that $G:=G_{ijk}^{\alpha\beta\gamma}\in \Omega$.  By Proposition \ref{prop:suffvc2}, and since $H$ has $\VC_2$-dimension less than $2$ (see Fact \ref{fact:hvc2}), we must have $d_H(G)\in [0,\e_1^{1/K})\cup (1-\e_1^{1/K},1]$.  Suppose first $d_H(G)\geq 1-\e_1^{1/K}$.  Let 
$$
Y=\{(x,y)\in P_{A_iB_j}^{\alpha}:|\{z\in C_k: (x,y,z)\in K_3(G)\}|=(d_{A_iC_k}^{\beta}d_{B_jC_k}^{\gamma}\pm \e_2^{1/200})|C_k|\}.
$$
Note that by our choice of the parameters, $2\e_1^{-1/4}\e_2^{1/12}<\e_2^{1/20}$. Consequently, since each of $P_{A_iB_j}^{\alpha}$, $P_{A_iB_k}^{\beta}$, and $P_{B_jC_k}^{\gamma}$ are in $\Gamma$, Lemma \ref{lem:sldev} implies $G[A_i,B_j]$, $G[A_i,C_k]$, and $G[B_j,C_k]$ have (respectively) $\dev_2(\e_2^{1/20},d_{A_iB_j}^{\alpha})$, $\dev_2(\e_2^{1/20},d_{A_iC_k}^{\beta})$, and $\dev_2(\e_2^{1/20},d_{B_jC_k}^{\gamma})$.   Thus Lemma \ref{lem:standdev} implies 
$$
|Y|=(1\pm \e_2^{1/2000})|P_{A_iB_j}^{\alpha}|.
$$
Combining with the definition of $F$, we have that $|\overline{F}\cap K_3(G)|$ is equal to the following.
\begin{align}\label{F}
&\sum_{(x,y)\in P_{A_iB_j}^{\alpha}\cap Y\cap \overline{E}}|\{z\in C_k: xyz\in K_3(G)\}|+\sum_{(x,y)\in (P_{A_iB_j}^{\alpha}\cap \overline{E})\setminus Y}|\{z\in C_k: xyz\in K_3(G)\}|\nonumber\\
&=|P_{A_iB_j}^{\alpha}\cap \overline{E}\cap Y|(d_{A_iC_k}^{\beta}d_{B_jC_k}^{\gamma}\pm \e^{1/200}_2)|C_k|+\sum_{(x,y)\in (P_{A_iB_j}^{\alpha}\cap \overline{E})\setminus Y}|\{z\in C_k: xyz\in K_3(G)\}|\nonumber\\
&=|P_{A_iB_j}^{\alpha}\cap \overline{E}|(d_{A_iC_k}^{\beta}d_{jk}^{\gamma}\pm \e_2^{1/2000})|C_k|\pm |P_{A_iB_j}^{\alpha}\setminus Y||C_k|\nonumber\\
&=|P_{A_iB_j}^{\alpha}\cap \overline{E}|(d_{A_iC_k}^{\beta}d_{B_jC_k}^{\gamma}\pm \e_2^{1/2000})|C_k|\pm \e_2^{1/2000}|P_{A_iB_j}^{\alpha}||C_k|\nonumber\\
&=|P_{A_iB_j}^{\alpha}\cap \overline{E}|(d_{A_iC_k}^{\beta}d_{B_jC_k}^{\gamma}\pm \e_2^{1/2000})|C_k|\pm\e_2^{1/2000}d_{A_iB_j}^{\alpha}|A_i||B_j||C_k|.
\end{align}
On the other hand, by Proposition \ref{prop:counting},
\begin{align*}
|\overline{F}\cap K_3(G)|=&d_H(G)|K_3(G)|=d_H(G)(d_{A_iC_k}^{\beta}d_{B_jC_k}^{\gamma}d_{A_iB_j}^{\alpha}\pm 4\e_2^{1/4})|A_i||B_j||C_k|.
\end{align*}
Combining with (\ref{F}), we have
\begin{align*}
d_H(G)(d_{A_iC_k}^{\beta}d_{jk}^{\gamma}d_{A_iB_j}^{\alpha}\pm 4\e_2^{1/4})|A_i||B_j||C_k|=&|P_{A_iB_j}^{\alpha}\cap \overline{E}|(d_{A_iC_k}^{\beta}d_{B_jC_k}^{\gamma}\pm \e_2^{1/2000})|C_k|\\
&\pm 2\e_2^{1/2000}d_{A_iB_j}^{\alpha}|A_i||B_j||C_k|.
\end{align*}
Rearranging, this implies 
\begin{align*}
&|P_{A_iB_j}^{\alpha}\cap \overline{E}|(d_{A_iC_k}^{\beta}d_{B_jC_k}^{\gamma}\pm \e_2^{1/2000})\\
&=d_H(G) \Big(d_{A_iC_k}^{\beta}d_{jk}^{\gamma}d_{A_iB_j}^{\alpha}\pm 4\e_2^{1/4}\pm 2d_{A_iB_j}^{\alpha}\e_2^{1/2000}d_H(G)^{-1})|A_i||B_j|.
\end{align*}
Since $d_H(G)\geq 1-\e_1^{1/K}>1/2$, this implies 
\begin{align*}
|P_{A_iB_j}^{\alpha}\cap \overline{E}|&\geq \frac{(1-\e_1^{1/K})(d_{A_iC_k}^{\beta}d_{B_jC_k}^{\gamma}d_{A_iB_j}^{\alpha}- 4\e_2^{1/4}- 4d_{A_iB_j}^{\alpha}\e_2^{1/2000})|A_i||B_j|}{d_{A_iC_k}^{\beta}d_{B_jC_k}^{\gamma}+\e_2^{1/2000}}\nonumber\\
&\geq (1-2\e_1^{1/K})d_{A_iB_j}^{\alpha}|A_i||B_j|\\
&=(1-2\e_1^{1/K})|P_{A_iB_j}^{\alpha}|,
\end{align*}
where second inequality is by assumption on $\e_1,\e_2$, since $G_{ijk}^{\alpha\beta\gamma}$ is $\e_1$-non-trivial, and because $K\geq 2$.   This finishes the case where $d_H(G)\geq 1-\e_1^{1/K}$.  

Suppose now $d_H(G)\leq \e_1^{1/K}$. Consider the trigraph $H'=(A_i, B_j, C_k; K_3(G)\setminus \overline{F})$.  By Lemma \ref{lem:compliment}, $(H',G)$ also has $\dev_{2,3}(\e_1,\e_2)$, and by definition, its density is $1-d_H(G)$.  A symmetric argument to the above then shows
$$
|P_{A_iB_j}^{\alpha}\setminus \overline{E}|\geq (1-2\e_1^{1/K}) |P_{A_iB_j}^{\alpha}|.
$$ 
and consequently, $|P_{A_iB_j}^{\alpha} \cap \overline{E}|\leq 2\e_1^{1/K}|P_{A_iB_j}^{\alpha}|$.  This finishes the proof of Claim \ref{cl:1}.
\end{proof}

 Claim \ref{cl:1} shows that elements in $\Sigma$ behave well with respect to $\overline{E}$.  We now want to show that most pairs in $A\times B$ come from an element in $\Sigma$.  To this end, we define 
\begin{align*}
Z&=\bigcup_{P_{A_iB_j}^{\alpha}\in \Sigma}P_{A_iB_j}^{\alpha}.
\end{align*}
We claim that  
\begin{align}\label{al:z}
|Z|\geq (1-\e_1^{1/10})|A||B|.
\end{align}
To prove (\ref{al:z}), we will use the following auxiliary set. 
\begin{align*}
\Sigma'=\{P_{A_iB_j}^{\alpha}\in \Gamma: & \text{ for at least $(1-\e_1^{1/8})|C|$ many $v\in C$,}\\
&\text{  there is $k\in [t]$ and $\beta,\gamma\in [\ell]$ so that $v\in C_k$ and $G_{ijk}^{\alpha\beta\gamma}\in \Omega$}\}.
\end{align*}
Note that by definition, $\Sigma'\subseteq \Sigma$.  Observe that
$$
\e_1^{1/8}|C||\bigcup_{P\in \calP_2\setminus \Sigma'}P\cap (A\times B)|\leq |(A\times B\times C)\setminus \calS|\leq 9\e_1|A||B||C|,
$$
where the second inequality is from (\ref{al:s}).  Thus, 
$$
|\bigcup_{P\in \calP_2\setminus \Sigma'}P\cap (A\times B)|\leq  9\e^{1/8}_1|A||B|\leq \e_1^{1/10}|A||B|.
$$
Since $\Sigma'\subseteq \Sigma$, this finishes our proof of (\ref{al:z}).     We now define
$$
\calR=\{(i,j)\in [t]^2:  |(A_i\times B_j)\cap Z|\geq (1-\e_1^{1/20})|A_i||B_j|\}.
$$
By (\ref{al:z}),
\begin{align}\label{al:cr}
|(A\times B)\cap \Big(\bigcup_{(i,j)\in \calR}(A_i\times B_j)\Big)|\geq (1-\e_1^{1/20})|A||B|.
\end{align} 
We now show that if $(i,j)\in \calR$, then $\overline{E}\cap (A_i\times B_j)$ can be well approximated using elements of $\Sigma$.  Specifically, for each $(i,j)\in \calR$, define
$$
\Gamma_{ij}=\bigcup_{\{P_{A_iB_j}^{\alpha}\in \Sigma: \rho_{ij}^{\alpha}=1\}}P_{A_iB_j}^{\alpha}.
$$
An immediate corollary of Claim \ref{cl:1} is that for any $(i,j)\in \calR$, 
\begin{align}\label{al:diff}
|(\overline{E}\Delta \Gamma_{ij})\cap (A_i\times B_j))|\leq |(A_i\times B_j)\setminus Z|&+\sum_{\{P_{A_iB_j}^{\alpha}\in \Sigma: \rho_{ij}^{\alpha}=1\}}|P_{A_iB_j}^{\alpha}\setminus \overline{E}|\nonumber\\
&+\sum_{\{P_{A_iB_j}^{\alpha}\in \Sigma: \rho_{ij}^{\alpha}=0\}}|\overline{E}\cap P_{A_iB_j}^{\alpha}|\nonumber\\
&\leq (\e_1^{1/20}+4\e_1^{1/K}) |A_i||B_j|.
\end{align}
Let 
$$
\Sigma_{err}=\Big(\bigcup_{(i,j)\in [t]^2\setminus \calR}(A_i\times B_j)\Big)\cup \Big(\bigcup_{(i,j)\in \calR}(\overline{E}\Delta \Gamma_{ij})\cap (A_i\times B_j)\Big).
$$
By  (\ref{al:cr}) and (\ref{al:diff}), $|\Sigma_{err}|\leq (2\e_1^{1/20}+4\e_1^{1/K})n^2\leq 4\e_1^{1/20K}n^2$. We now show that for all $(i,j)\in \calR$, $\overline{E}$ behaves regularly on sufficiently large sub-pairs of $(A_i,B_j)$ which also avoids $\Sigma_{err}$.

\begin{claim}\label{cl:3}
Suppose $\ell\e_2^{1/12}<\eta<1$, $0<\tau<1$, $(i,j)\in \calR$, and $X\subseteq A_i$, $Y\subseteq B_j$ satisfy $|X|\geq \eta |A_i|$, $|Y|\geq \eta|B_j|$, and $|\Sigma_{err}\cap (X\times Y)|\leq \tau |X||Y|$.  Then
$$
\frac{|\overline{E}\cap (A_i\times B_j)|}{|A_i||B_j|}\approx_{2\ell\e_2^{1/12}+\tau}\frac{|\overline{E}\cap (X\times Y)}{|X||Y|}.
$$
\end{claim}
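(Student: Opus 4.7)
The plan is to use $\Gamma_{ij}$ as an intermediate object approximating $\overline{E}$, and to transfer density estimates between $A_i\times B_j$ and $X\times Y$ by applying the sub-pairs lemma to each of the $\dev_2$-quasirandom building blocks of $\Gamma_{ij}$.

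First, I would translate the $\Sigma_{err}$ bound into a bound on the symmetric difference between $\overline{E}$ and $\Gamma_{ij}$ in the region of interest. Since $(i,j)\in\calR$, the definition of $\Sigma_{err}$ gives $\Sigma_{err}\cap(A_i\times B_j)=(\overline{E}\Delta\Gamma_{ij})\cap(A_i\times B_j)$; intersecting with $X\times Y$ and using the hypothesis gives $|(\overline{E}\Delta\Gamma_{ij})\cap(X\times Y)|\leq \tau|X||Y|$. Hence the densities of $\overline{E}$ and $\Gamma_{ij}$ on $X\times Y$ agree up to $\tau$, and on $A_i\times B_j$ they agree up to the error from inequality~(\ref{al:diff}).

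Next, I would use the quasirandomness of the pieces of $\Gamma_{ij}$ to compare its density on $A_i\times B_j$ to its density on $X\times Y$. By construction $\Gamma_{ij}$ is a disjoint union of at most $\ell$ of the partition classes $P^{\alpha}_{A_iB_j}$, each lying in $\Gamma$ and hence defining a bigraph $(A_i,B_j;P^{\alpha}_{A_iB_j})$ with $\dev_2(\e_2,d^{\alpha}_{A_iB_j})$. Applying Proposition~\ref{lem:sldev} with $\gamma=\eta$, valid since the hypothesis $\ell\e_2^{1/12}<\eta$ forces $\eta\geq\e_2$, yields that each such $P^{\alpha}_{A_iB_j}$ has density $d^{\alpha}_{A_iB_j}\pm 2\eta^{-1}\e_2^{1/12}$ on $X\times Y$. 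Summing over the at-most-$\ell$ pieces shows that the density of $\Gamma_{ij}$ on $X\times Y$ differs from its density on $A_i\times B_j$ by at most $2\ell\eta^{-1}\e_2^{1/12}$, and the hypothesis $\ell\e_2^{1/12}<\eta$ keeps this loss bounded.

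Assembling the three density comparisons via the triangle inequality yields the claimed estimate, with the $\e_1$-sized slack from~(\ref{al:diff}) absorbed in the regime in which the claim will later be applied. The main (and essentially only) technical step is the sub-pairs lemma application, which controls how the individual quasirandom pieces behave when restricted to $X\times Y$; everything else is a bookkeeping exercise around the definition of $\Sigma_{err}$ and the disjointness of the classes forming $\Gamma_{ij}$.
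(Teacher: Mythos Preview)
Your overall strategy---pass through $\Gamma_{ij}$, compare densities of $\Gamma_{ij}$ on $A_i\times B_j$ versus $X\times Y$, then switch back to $\overline{E}$ via $\Sigma_{err}$---is exactly the paper's route. You are also right that the passage from $\Gamma_{ij}$ to $\overline{E}$ on the full box $A_i\times B_j$ costs an additional $\e_1$-type error from~(\ref{al:diff}); the paper's own write-up is terse on this point, and your remark that this slack is harmless in the later applications is correct.

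There is, however, a quantitative slip in your middle step. Invoking Proposition~\ref{lem:sldev} on each piece $P^{\alpha}_{A_iB_j}$ gives a density error of $2\eta^{-1}\e_2^{1/12}$ per piece, so after summing over at most $\ell$ pieces you get $2\ell\eta^{-1}\e_2^{1/12}$, not $2\ell\e_2^{1/12}$. The hypothesis $\ell\e_2^{1/12}<\eta$ only tells you this quantity is at most $2$, which is far too weak to recover the bound in the claim. The paper avoids this loss by first combining the pieces: by Fact~\ref{fact:adding} and Theorem~\ref{thm:equiv}, the union $\Gamma_{ij}$ is itself $\ell\e_2^{1/12}$-regular on $(A_i,B_j)$, so a \emph{single} application of regularity (not the sub-pairs lemma) gives the density comparison with error $\ell\e_2^{1/12}$, independent of $\eta$. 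Equivalently, you could skip the sub-pairs lemma entirely and just use that each piece is $\e_2^{1/12}$-regular (via Theorem~\ref{thm:equiv}); since $\eta>\e_2^{1/12}$, the density of each piece on $X\times Y$ is within $\e_2^{1/12}$ of $d^{\alpha}_{A_iB_j}$, and summing gives $\ell\e_2^{1/12}$. The point is that for a pure density comparison you only need regularity, not the full quasirandomness of the restriction, so Proposition~\ref{lem:sldev} is both overkill and lossy here.
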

\begin{proof}
By Theorem \ref{thm:equiv} and Fact \ref{fact:adding}, $\Gamma_{ij}$ is $\ell \e_2^{1/12}$-regular   in $(A_i,B_j)$.  Since $\eta >\ell\e_2^{1/12}$, the assumptions  $|X|\geq \eta |A_i|$ and $|Y|\geq \eta|B_j|$ imply 
$$
\frac{|\Gamma_{ij}\cap (A_i\times B_j)|}{|A_i||B_j|}\approx_{\ell\e_2^{1/12}}\frac{|\Gamma_{ij}\cap (X,Y)|}{|X||Y|}.
$$
Since $|\Sigma_{err}\cap (X\times Y)|\leq \tau|X||Y|$,
$$
\frac{|\Gamma_{ij}\cap (X\times Y)}{|X||Y|}\approx_{\tau}\frac{|\overline{E}\cap (X\times Y)|}{|X||Y|}.
$$
Thus 
$$
\frac{|\overline{E}\cap (A_i\times B_j)|}{|A_i||B_j|}\approx_{\ell\e_2^{1/12}+\tau}\frac{|\overline{E}\cap (X\times Y)|}{|X||Y|}.
$$
This finishes the proof of Claim \ref{cl:3}.
\end{proof}

We are now ready to show there are partitions of $V(n)$ violating Theorem \ref{thm:conlonfox} (recall $G_{\e}(n)$ has vertex set $V(n)$ and edge set $E_{\e}(n)$).  We being by using $\calQ_1$ to generate a partition of $V(G_{\e}(n))$.  In particular, for each $1\leq i,j\leq t$, let 
$$
X_{ij}=\{v\in V(n): a_v\in A_i, b_v\in B_j\}.
$$
Then $\calX=\{X_{ij}: 1\leq i,j\leq t\}$ is a partition of $V(n)$ into at most $t^2$ parts.  By Fact \ref{fact:refinements}, there is an equipartition $\calV=\{V_1,\ldots, V_s\}$ of $V(n)$ with $s\leq \e_1^{-2}t^2$ so that for all $1\leq u\leq s$, there is a pair $1\leq i,j\leq t$ so that  $|V_u\setminus X_{ij}|\leq \e^2_1|V_u|$.  By Theorem \ref{thm:reg}, there is an equipartition $\calU=\{U_1,\ldots, U_S\}$ refining $\calV$ which is $\e_1/s$-regular for $G$, for some 
$$
S\leq Tw_{s}(\e_1^{-1}s)\leq  Tw_{\e^{-2}_1 t^2}(\e_1^{-2}t^2).
$$ 
Our next goal is to show that $\calU$ is an $\e$-conservative refinement of $\calV$ (recall Definition \ref{def:cons}).  To do this, we pass back to $H_{\e}(n)$, where we can compute the density of $E$ using elements from $\calP_2$. To do this, we will need the following notation regarding $\calU$, $\calV$, and $\calX$.  For each $1\leq i\leq s$, let $1\leq f(i),g(i)\leq t$ be such that $|V_i\setminus X_{g(i)f(i)}|\leq \e_1^2|V_i|$.  For each $1\leq u\leq S$, let $1\leq h(u)\leq S$ be such that $U_u\subseteq V_{h(u)}$.  Define
$$
\calU'=\{U_u\in \calU: |U_u\cap X_{g(h(u))f(h(u))}|\geq \e_1|U_u|\}.
$$
One can show by a standard averaging argument that $|\calU'|\geq (1-\e_1)s$.  We now use the partitions above of $V(n)$ to generate partitions of $A$ and $B$.  For each $1\leq i\leq s$, let 
$$
A_i'=\{a_v: v\in V_i\}\text{ and }B_i'=\{b_v: v\in V_i\}.
$$
 Similarly, for each $1\leq u\leq S$, let  
 $$
 A''_u=\{a_v: v\in U_u\}\text{ and }B_u''=\{b_v: v\in U_u\}.
 $$
 We now prove a claim about the sizes of certain intersections involving these new sets.

\begin{claim}\label{cl:AB}
The following hold.  
\begin{enumerate}
\item For all $1\leq i\leq s$, $|A'_i\cap A_{g(i)}|\geq \e_1^{-1}\ell\e_2^{1/12}|A_{g(i)}|$ and $|B'_i\cap B_{f(i)}|\geq \e_1^{-1}\ell\e_2^{1/12}|B_{f(i)}|$.
\item For all $1\leq u\leq S$ such that $U_u\in \calU'$, 
$$
|A_u''\cap A_{g(h(u))}|\geq \e_1^{-1}\ell\e_2^{1/12}|A_{g(h(u))}|\text{ and }|B_u''\cap B_{f(h(u))}|\geq \e_1^{-1}\ell\e_2^{1/12}|B_{f(h(u))}|.
$$
\end{enumerate}
\end{claim}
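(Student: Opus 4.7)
The plan is to reduce both parts, via the natural bijections $v \mapsto a_v$ and $v \mapsto b_v$ from $V(n)$ onto $A$ and $B$, to the lower bounds on $|V_i \cap X_{g(i)f(i)}|$ and $|U_u \cap X_{g(h(u))f(h(u))}|$ that are already built into the definitions of $f,g$ and $\calU'$. The key point is that by the very definition of $X_{ij}$, membership $v \in X_{g(i)f(i)}$ forces $a_v \in A_{g(i)}$ and $b_v \in B_{f(i)}$, so
\[
A_i' \cap A_{g(i)} \supseteq \{a_v : v \in V_i \cap X_{g(i)f(i)}\},
\]
and analogous containments hold for the remaining three intersections appearing in (1) and (2). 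Any lower bound on the sizes of $V_i \cap X_{g(i)f(i)}$ and $U_u \cap X_{g(h(u))f(h(u))}$ thus transfers directly to the quantities of interest.

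For (1), the defining property of $g(i),f(i)$ gives $|V_i \cap X_{g(i)f(i)}| \geq (1-\e_1^2)|V_i|$. Since $\calV$ is an equipartition of $V(n)$ into $s \leq \e_1^{-2}t^2 \leq \e_1^{-2}\phi(\e_1)^2$ parts with $|V(n)|=n$, this yields
\[
|A_i' \cap A_{g(i)}|,\ |B_i' \cap B_{f(i)}| \geq \e_1^2 n/(4\phi(\e_1)^2).
\]
Dividing by $|A_{g(i)}|,|B_{f(i)}| \leq n$, the ratios in question are bounded below by $\e_1^2/(4\phi(\e_1)^2)$, and it remains to check that $\e_1^{-1}\ell\e_2(\ell)^{1/12}$ sits below this threshold. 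This is precisely what the middle term in the $\min$ defining $\e_2(\ell)$ was engineered for: its denominator carries $\phi(\e_1)^2$, $\ell$, and the huge tower $Tw_{\e_1^{-2}\phi(\e_1)^2}(\e_1^{-4}\phi(\e_1)^2)$, which dwarfs every competing quantity. For (2), the argument is structurally parallel: membership $U_u \in \calU'$ supplies $|U_u \cap X_{g(h(u))f(h(u))}| \geq \e_1|U_u|$, and $\calU$ is an equipartition of $V(n)$ into $S \leq Tw_{\e_1^{-2}\phi(\e_1)^2}(\e_1^{-4}\phi(\e_1)^2)$ parts, so the resulting ratios are of order $\e_1/S$, which once again beats $\e_1^{-1}\ell\e_2(\ell)^{1/12}$ by the same term in the definition of $\e_2$.

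There is no genuine combinatorial obstacle here; the claim is a pure parameter-tracking exercise. The real effort was front-loaded into the definitions of $\phi$ and $\e_2$ at the start of the proof of Theorem \ref{thm:main2}, which were chosen precisely so that Claim \ref{cl:AB} --- together with the further inequalities needed later when invoking Proposition \ref{lem:sldev} to promote $\dev_2(\e_2(\ell)^{1/12})$-regularity to $\dev_2(2\e_1/\ell)$-regularity on the sub-pairs $(A_u'' \cap A_{g(h(u))},\, B_{u'}'' \cap B_{f(h(u'))})$, and when ultimately contradicting Theorem \ref{thm:conlonfox} via the induced partition of $V(n)$ --- all go through at once.
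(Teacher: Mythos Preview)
Your proposal is correct and follows essentially the same route as the paper: both arguments pass through the containment $A_i'\cap A_{g(i)}\supseteq\{a_v:v\in V_i\cap X_{g(i)f(i)}\}$ (and its analogues), invoke the defining inequalities $|V_i\setminus X_{g(i)f(i)}|\leq\e_1^2|V_i|$ and $|U_u\cap X_{g(h(u))f(h(u))}|\geq\e_1|U_u|$, bound $|A_{g(i)}|$ and $|A_{g(h(u))}|$ trivially by $n$, and then appeal to the middle term in the definition of $\e_2(\ell)$ to absorb the factors of $\phi(\e_1)^2$ and the tower controlling $S$. Your closing remark that the combinatorics here is trivial and that the content was front-loaded into the choice of $\e_2$ is exactly right.
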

\begin{proof}
For (a), fix $i\in [s]$.  By definition,
$$
|A'_i\cap A_{g(i)}|\geq |V_i\cap X_{g(i)f(i)}|\geq (1-\e_1^2)|V_i|=(1-\e^2_1)|A_i'|=(1-\e_1^2)n/s\geq \eta |A_{g(i)}|,
$$
where $\eta=\e_1^2(1-\e_1)/t^2$.  Note
$$
\eta=\e_1^2(1-\e_1)/t^2\geq \e_1^2(1-\e_1)/\phi(\e_1)^2>\e_1^{-1}\ell\e_2^{1/12},
$$  
where the first inequality is because $t\leq \phi(\e_1)$ and the last inequality is by definition of the function $\e_2:\mathbb{N}\rightarrow (0,1)$.  A similar argument shows $|B'_i\cap B_{f(i)}|\geq \e_1^{-1}\ell\e^{1/12}_2|B_{f(i)}|$. This finishes the proof of (a).  For (b), fix $1\leq u\leq S$ so that $U_u\in \calU'$. Then
$$
|A_u''\cap A_{g(h(u))}|\geq |U_u\cap X_{g(h(u))f(h(u))}|\geq \e_1|U_u|=\e_1|A_u''|=\e_1n/S\geq \eta'|A_{g(h(u))}|,
$$   
where $\eta'=\e_1^{3}/Tw_{\e_1^{-2}t^{-2}}(\e_1^{-2}t^2)$.  Note
$$
\eta'=\e_1^{3}/Tw_{\e_1^{-2}t^{-2}}(\e_1^{-2}t^2)\geq \e_1^{3}/Tw_{\e_1^{-2}\phi(\e_1)^{2}}(\e_1^{-2}\phi(\e_1)^2)>\e_1^{-1}\ell\e_2^{1/12},
$$
where the first inequality is because $t\leq \phi(\e_1)$ and the last inequality is again by definition of the function $\e_2:\mathbb{N}\rightarrow (0,1)$.   The proof that $|B_u''\cap B_{f(h(u))}|\geq \e_1^{-1}\ell\e_2^{1/12}|B_{f(h(u))}|$ is similar. This concludes the proof of Claim \ref{cl:AB}. 
\end{proof}

We now define $\Theta$ be the set of pairs $(U_u,U_v)$ from $\calU'$ where the following hold.
\begin{enumerate}[(a)]
\item $|\Sigma_{err}\cap (A_u''\times B_v'')|\leq 4\e^{1/40K}_1|A_u''||B_v''|$,
\item $|\Sigma_{err}\cap (A'_{h(u)}\times B'_{h(v)})|\leq 4\e^{1/40K}_1|A'_{h(u)}||B'_{h(v)}|$.
\end{enumerate}
We show $\Theta$ is large.  For (a), since $|\Sigma_{err}|\leq 4\e_1^{1/20K}n^2$ (see the remark following the definition of $\Sigma_{err}$), and because each $|A_u''|=|B_u''|=n/S$, we have 
$$
4\e^{1/40K}_1\Big|\bigcup_{\{uv\in {[S]\choose 2}: (a) \text{ fails}\}}A''_{u}\times B''_{v}\Big|=4\e^{1/40K}_1\Big|\{uv\in {[S]\choose 2}: (a) \text{ fails}\}\Big|\frac{n^2}{S^2}\leq 4\e_1^{1/20K}n^2.
$$
Consequently, 
$$
\Big|\{uv\in {[S]\choose 2}: (a) \text{ fails}\}\Big|\leq \e^{1/40K}_1S^2\leq 3\e_1^{1/40K}{S\choose 2}.
$$
For (b), since each $|A_i'|=|B_i'|=n/s$, and since for each $i\in [s]$, there are $s/S$ many $u\in [S]$ with $i=h(u)$, we have 
$$
4\e^{1/40K}_1\Big|\bigcup_{\{uv\in {[S]\choose 2}: (b) \text{ fails}\}}A'_{h(u)}\times B'_{h(v)}\Big|=4\e^{1/40K}_1\frac{n^2}{s^2}\Big|\{uv\in {[S]\choose 2}: (b) \text{ fails}\}\Big|\frac{S^2}{s^2}\leq 4\e_1^{1/20K}n^2.
$$
Consequently, $|\{uv\in {[S]\choose 2}: (b) \text{ fails}\}|\leq \e_1^{1/40K}S^2\leq 3\e_1^{1/40K}{S\choose 2}$.  Combining these estimates with the fact that $|\calU'|\geq (1-\e_1)S$, we have 
\begin{align}\label{al:theta}
|\Theta|\geq (1-7\e_1^{1/40K}){S\choose 2}.
\end{align}
We now show that for all $(U_u,U_v)\in \Theta$, 
$$
|d_{G_{\e}(n)}(U_u,U_v)-d_{G_{\e}(n)}(V_{h(u)},V_{h(v)})|\leq 10\e_1^{1/40K}.
$$
Fix $(U_u,U_v)\in \Theta$.  Since $A_u''\times B_v''\subseteq A_{f(h(u))}\times B_{g(h(v))}$ and $\bigcup_{(i,j)\in [t]^2\setminus \calR}A_i\times B_j\subseteq \Sigma_{err}$, part (a) of the definition of $\Theta$ implies $(f(h(u)),g(h(v)))\in \calR$. Since $U_u,U_v\in \calU'$, Claim \ref{cl:AB} implies $|A_u''\cap A_{f(h(u))}|>\e_1^{-1}\ell\e_2^{1/12}|A_{f(h(u))}|$ and $|B_v''\cap B_{g(h(v))}|>\e_1^{-1}\ell\e_2^{1/12}|B_{g(h(v))}|$. Therefore, part (a) of the definition of $\Theta$ and Claim \ref{cl:3} imply
\begin{align*}
\frac{|\overline{E}\cap (A_{g(h(u))}\times B_{f(h(u))})|}{|A_{g(h(u))}||B_{f(h(u))}|}\approx_{\ell\e_2^{1/12}+4\e_1^{1/40K}} \frac{|\overline{E}\cap (A''_u\cap A_{g(h(u))})\times (B''_v\cap B_{f(h(u))})|}{|A''_u\cap A_{g(h(u))}||B''_v\cap B_{f(h(u))}|}.
\end{align*}
Further, $U_u,U_v\in \calU'$ implies $|A_u''\setminus A_{g(h(u))}|\leq \e_1|A_u''|$ and $|B_v''\setminus B_{f(h(v))}|\leq \e_1|B_v''|$.  Thus,
$$
\frac{|\overline{E}\cap (A''_u\cap A_{g(h(u))})\times (B''_v\cap B_{f(h(u))})|}{|A''_u\cap A_{g(h(u))}||B''_v\cap B_{f(h(u))}|}\approx_{3\e_1}\frac{|\overline{E}\cap (A''_u\times B''_v)|}{|A''_u||B''_v|}.
$$
Combining these, we have 
\begin{align}\label{al:approx}
\frac{|\overline{E}\cap (A_{g(h(u))}\times B_{f(h(u))})|}{|A_{g(h(u))}||B_{f(h(u))}|}\approx_{\ell\e_2^{1/12}+4\e_1^{1/40K}+3\e_1} \frac{|\overline{E}\cap (A''_u\times B''_v)|}{|A''_u||B''_v|}.
\end{align}
On the other hand,  since $U_u,U_v'\in \calU'$, Claim \ref{cl:AB} implies $|A_{h(u)}'\cap A_{f(h(u))}|>\e_1^{-1}\ell\e_2^{1/12}|A_{f(h(u))}|$ and $|B_{h(v)}'\cap B_{g(h(v))}|>\e_1^{-1}\ell\e_2^{1/12}|B_{g(h(v))}|$.  Thus part (b) of the definition of $\Theta$ and Claim \ref{cl:3} imply
$$
\frac{|\overline{E}\cap (A_{g(h(u))}\times B_{g(h(v))})|}{|A_{g(h(u))}||B_{g(h(v))}|}\approx_{\ell\e^{1/12}_2+4\e^{1/40K}_1}\frac{|\overline{E}\cap ((A_{h(u)}'\cap A_{g(h(u))})\times (B_{h(v)}'\cap B_{f(h(v))}))|}{|A_{h(u)}'\cap A_{g(h(u))}||B_{h(v)}'\cap B_{f(h(v))}|}.
$$
In a similar argument to above, we have that because $|A_{h(u)}'\setminus A_{g(h(u))}|\leq \e^2_1|A_{h(u)}'|$ and $|A_{h(u)}'\setminus B_{f(h(v))}|\leq \e^2_1|B_{h(v)}'|$, 
\begin{align}\label{al:approx2}
\frac{|\overline{E}\cap (A_{g(h(u))}\times B_{g(h(v))})|}{|A_{g(h(u))}||B_{g(h(v))}|}\approx_{\ell\e^{1/12}_2+4\e^{1/40K}_1+3\e^2_1}\frac{|\overline{E}\cap(A_{h(u)}'\times B_{h(v)}')|}{|A_{h(u)}'||B_{h(v)}'|}.
\end{align}
Combining (\ref{al:approx}) and (\ref{al:approx2}), we have 
\begin{align}\label{al:approx3}
\frac{|\overline{E}\cap (A_{h(u)}'\times B_{h(v)}')|}{|A_{h(u)}'||B'_{h(v)}|}\approx_{2\ell\e^{1/12}_2+8\e^{1/40K}_1+6\e_1} \frac{|\overline{E}\cap (A''_u\times B''_v)|}{|A''_u||B''_v|}.
\end{align}
We now observe that 
$$
\frac{|\overline{E(G_{\e}(n))}\cap (U_u\times U_v)|}{|U_u||U_v|}=\frac{|\overline{E}\cap (A''_u\times B''_v)|}{|A''_u||B''_v|}
$$
and 
$$
\frac{|\overline{E(G_{\e}(n))}\cap (V_{h(u)}\times V_{h(v)})|}{|V_{h(u)}||V_{h(v)}|}=\frac{|\overline{E}\cap (A'_{h(u)}\times B'_{h(v)})|}{|A'_{h(u)}||B'_{h(v)}|}
$$
Consequently, (\ref{al:approx3}) implies 
$$
d_{G_{\e}(n)}(U_u,U_v) \approx_{2\ell\e^{1/12}_2+8\e^{1/40K}_1+6\e_1}d_{G_{\e}(n)}(V_{h(u)},V_{h(v)}).
$$
We have now shown that for all $(U_u,U_v)\in \Theta$, 
$$
|d_{G_{\e}(n)}(U_u,U_v)-d_{G_{\e}(n)}(V_{h(u)},V_{h(v)})|\leq 4\ell\e_2+8\e^{1/40K}_1+6\e_1\leq 10\e_1^{1/40K},
$$
where the last inequality is due to our choices of parameters.  Combining with (\ref{al:theta}), we have shown $\calU$ is a $10\e_1^{1/40K}$-conservative refinement of $\calV$. By Fact \ref{fact:index} and the definition of $\e$, this implies 
$$
q_{G_{\e}(n)}(\calU)\leq q_{G_{\e}(n)}(\calV)+(101)10\e_1^{1/40K}\leq q_{G_{\e}(n)}(\calV)+\e,
$$
By our choice of $\mu^*$, and since, by assumption, $\e_1<\e_1^*<(\mu^*)^{140K}$, we have  
\begin{align}\label{al:w}
\e_1^{-2}W(C_1\e_1^{-1/280K}/2)^2<W(C_1\e_1^{-1/280K})=W(C_0(\e^{-1/7})),
\end{align}
where the last inequality is by definition of $C_1$ and $\e$.  By construction,
\begin{align*}
|\calV|=s\leq \e_1^{-2}t^2\leq \e_1^{-2}\phi(\e_1)^2&=\e_1^{-2}W(C_1(\e_1^{-1/280K}))^2<W(C_0(\e^{-1/7})),
\end{align*}
where the last inequality is by (\ref{al:w}).  This contradicts Theorem \ref{thm:conlonfox} which implies $s$ must be greater than $W(C_0(\e^{-1/7}))$ because $q_{G_{\e}(n)}(\calU)\leq q_{G_{\e}(n)}(\calV)+\e$. 
\qed

\vspace{2mm}

We now prove Theorem \ref{thm:main1}, which we restate here for convenience. 

\begin{corollary}\label{cor:wowzer}
Suppose that for all $k\geq 1$, $k\otimes U(k)\in \trip(\calH)$.  Then there is $C'>0$ so that for all sufficiently small $\e_1>0$ there is $\e_2:\mathbb{N}\rightarrow (0,1]$ so that $T_{\calH}(\e_1,\e_2)\geq W(\e_1^{-C'})$.  
\end{corollary}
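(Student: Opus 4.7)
By Observation \ref{ob:universal3}, the hypothesis that $k \otimes U(k) \in \trip(\calH)$ for every $k$ gives, for every bipartite graph $G$ and every $n \geq 1$, some $H' \in \calH$ such that $n \otimes G$ embeds as an induced sub-$3$-graph of $\trip(H')$. My plan is a contrapositive against Theorem \ref{thm:main0}: a $\dev_{2,3}$-regular decomposition of such a host $H'$ with too few vertex parts should descend, via the canonical tripling $H' \to \trip(H')$ and subsequent restriction to the embedded copy of $n \otimes G$, to a $\dev_{2,3}$-regular decomposition of $n \otimes G$ with too few vertex parts, which Theorem \ref{thm:main0} forbids.

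Fix the constants $C, K, \e^*$ of Theorem \ref{thm:main0} and set $C' = 1/(2K)$. Suppose toward contradiction that for some $\e_1 \in (0, \e^*)$ and some sufficiently fast-decaying $\e_2$ one has $T_{\calH}(\e_1, \e_2) < W(\e_1^{-C'})$; here $\e_2$ is chosen small enough that the polynomial losses in Lemma \ref{lem:sldev23} leave the eventual parameter $\e_2'(\ell)$ below the function $\e_2^{(0)}$ produced by Theorem \ref{thm:main0}. By the definition of $T_{\calH}$ there is some $L \geq 1$ such that every sufficiently large $H \in \calH$ admits a $\dev_{2,3}(\e_1, \e_2(\ell))$-regular decomposition with at most $W(\e_1^{-C'})$ vertex parts and $\ell \leq L$. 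For each $\ell \leq L$ apply Theorem \ref{thm:main0} with parameters $(\e_1, \e_2^{(0)})$ to obtain a bipartite graph $G = G_\ell$ on a large number of vertices, and then use the hypothesis to pick $H' \in \calH$ of near-minimal size such that $\trip(H')$ contains $n \otimes G$ as an induced sub-$3$-graph (taking $n$ comparable to $|V(G)|$) on a vertex set $U = \phi(A) \cup \phi(B) \cup \phi(C)$ with $\phi(A), \phi(B), \phi(C)$ contained in the three canonical copies $V_A, V_B, V_C$ of $V(H')$ inside $V(\trip(H'))$. Taking $H'$ of near-minimal size ensures that $\phi(A), \phi(B), \phi(C)$ occupy macroscopic fractions of their ambient classes.

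Now lift the assumed decomposition $\calP$ of $H'$ canonically to a $(3t, \ell)$-decomposition $\widetilde{\calP}$ of $\trip(H')$: each class $V_i \in \calP_1$ produces three copies $A_i, B_i, C_i$, and each bigraph in $\calP_2$ is pulled back along the map $v \mapsto (a_v, b_v, c_v)$, which transfers $\dev_2$- and $\dev_{2,3}$-regularity since the relevant edge and triangle counts agree on corresponding triads. Restricting $\widetilde{\calP}$ to $U$ and invoking Lemma \ref{lem:sldev23} triad by triad yields a $\dev_{2,3}(\e_1', \e_2'(\ell))$-regular decomposition of $n \otimes G$ with at most $3t < 3 W(\e_1^{-C'})$ vertex parts, where, by the choice of $\e_2$, one has $\e_1' \leq \e_1$ and $\e_2'(\ell) \leq \e_2^{(0)}(\ell)$. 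Since $3 W(\e_1^{-C'}) < W(C \e_1^{-1/K})$ for $\e_1$ small, this contradicts Theorem \ref{thm:main0}, completing the argument. The principal obstacle is purely parameter bookkeeping: $\e_2$ must be selected in advance to absorb the losses at every step, and the host $H'$ must be chosen so that the embedded copy of $n \otimes G$ occupies macroscopic proportions of $V_A, V_B, V_C$, for otherwise the sub-pairs lemma degrades the parameters beyond the window where Theorem \ref{thm:main0} can be usefully applied.
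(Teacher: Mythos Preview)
Your overall strategy matches the paper's: find a host in $\calH$ carrying the hard instance $n\otimes G$ from Theorem~\ref{thm:main0}, push a putative small regular decomposition down to $n\otimes G$, and contradict that theorem. The paper, however, avoids the detour through $\trip(H')$ entirely. It invokes Observation~\ref{ob:universal2} to produce $H\in\calH$ on the \emph{same} vertex set $U\cup V\cup W$ as $H_1=n\otimes G$ with $E(H)\cap K_3[U,V,W]=E(H_1)$, takes the assumed decomposition $\calP$ of $H$, refines $\calP_1$ by the three-block partition $\{U,V,W\}$, and applies Corollary~\ref{cor:sldev23} (not the triad-level Lemma~\ref{lem:sldev23}). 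Since crossing triads of the refinement see identical edge sets in $H$ and in $H_1$, and non-crossing triads have density zero in the tripartite $H_1$, the refined decomposition is immediately regular for $H_1$ with $3t$ vertex parts. No lift and no restriction to a proper subset of the vertex set is ever performed.

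Your lift-and-restrict route has a genuine gap. You assert that the induced embedding $\phi:V(n\otimes G)\to V(\trip(H'))$ sends $A$ into $V_A$, $B$ into $V_B$, $C$ into $V_C$, but Observation~\ref{ob:universal3} only yields \emph{some} induced embedding, and an induced embedding of one tripartite $3$-graph into another need not align the parts; your ``macroscopic fractions'' claim rests on this unjustified alignment. A second, related problem is that invoking Lemma~\ref{lem:sldev23} triad-by-triad after restricting to $U$ requires a uniform lower bound on $|A_i\cap U|/|A_i|$, and no such bound exists---individual parts can meet $U$ in an arbitrarily small proportion even when $H'$ is minimal. Both issues disappear if you imitate the paper's use of Corollary~\ref{cor:sldev23}: after lifting to $\widetilde{\calP}$, refine $\widetilde{\calP}_1$ by the two-block partition $\{U,\,V(\trip(H'))\setminus U\}$ (each part splits into at most two, so $C=2$ in that corollary), obtain a regular decomposition of $\trip(H')$ with at most $6t$ vertex parts, and then simply discard the parts lying outside $U$. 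Minimality of $H'$ gives $|V(H')|\leq 3n$, hence $|U|\geq |V(\trip(H'))|/3$, so passing to $U^3$ inflates the irregular fraction by at most a factor of $27$.
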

\begin{proof}
Let $C>0$, $K\geq 1$, and $\e^*>0$ be as in Theorem \ref{thm:main3}.  Fix $0<\e^*_1<\e^*$ and let $\e^*_2:\mathbb{N}\rightarrow (0,1)$ be as in Theorem \ref{thm:main3} for $\e^*_1$.  Let $p(x,y),D$ be as in Corollary \ref{cor:sldev23}. Let $\e_1= ( \e_1^*/12)^{2D^2}$ and let $\e_2:\mathbb{N}\rightarrow (0,1)$ be defined by setting $\e_2(x)=p(\e_1,x)(\e^*_1)^{4D^2}(\e_2^*(x))^{12}/18$.   Let $L$ be any integer so that $\psi(\e_1,\e_2,L,T_{\calH}(\e_1,\e_2),\calH)$ holds (recall the definition of $\psi$ from the introduction).  Let $n$ be sufficiently large compared to these parameters. 

Let $G$ be as in Theorem \ref{thm:main3} for $n,L$, and set $H_1=n\otimes G$.  Say $H_1=(U\cup V\cup W, E_1)$.  Since $k\otimes U(k)\in \trip(\calH)$ for all $k\geq 1$, Observation \ref{ob:universal2}  implies there exists some $H\in \calH$ so that $V(H)=U\cup V\cup W$, so that $E(H)\cap K_3[U,V,W]=E_1\cap K_3[U,V,W]$.  

By definition of $L$, there is a $\dev_{2,3}(\e_1,\e_2(\ell))$-regular $(t,\ell,\e_1,\e_2(\ell))$-decomposition $\calP$ for $H$, for some $1\leq \ell \leq L$ and $1\leq t\leq T_{\calH}(\e_1,\e_2)$.  Say $\calP_1=\{X_1,\ldots, X_t\}$.  Let $\Omega_{reg}$ be the set of $\dev_{2,3}(\e_1,\e_2(\ell))$-regular triads of $\calP$ with respect to $H$.

Define $\calP'_1=\{A_i,B_i,C_i: i\in [t]\}$, where for each $i\in [t]$, $A_i=A\cap X_i$, $B_i=B\cap X_i$, and $C_i=C\cap X_i$, and $\calP'_2=\{P\cap (X\times Y): X,Y\in \calP'_1, P\in \calP_2\}$.  Then $\calP':=(\calP_1',\calP_2')$ is a $(3t,\ell)$-decomposition of $V(H)$, satisfying the hypotheses of Corollary \ref{cor:sldev23} with constant $3$.  Let $\Omega_{reg}'$ be the set of $\dev_{2,3}(\e_1^*,\e_2^*(\ell))$-regular triads of $\calP'$ with respect to $H$.  By our choices of parameters and Corollary \ref{cor:sldev23}, $|\bigcup_{G\in \Omega_{reg}'}K_3(G)|\geq (1-\e_1^*)|V(H)|^3$.

Call a triad $G$ of $\calP'$ a \emph{crossing triad} if its vertex sets are of the form $(A_i,B_j,C_k)$. Note that if $G\in \Omega_{reg}'$ is a crossing triad, then it is $\dev_{2,3}(\e_1^*,\e_2^*(\ell))$ with respect to $H_1$, since in that case, $E(H_1)\cap K_3(G)=E(H)\cap K_3(G)$.  Since $H_1$ is tripartite, it is clear that any non-crossing triad of $\calP'$ whose component bigraphs have $\dev_{2}(\e_2^*(\ell))$ will also has $\dev_{2,3}(\e_1^*,\e_2^*(\ell))$ (because $d_{H_1}(G)=0$).  Thus, if $\Omega_{reg}''$ is the set of triads of $\calP'$ which have $\dev_{2,3}(\e_1^*,\e_2^*(\ell))$ with respect to $H_1$, then 
$$
|\bigcup_{G\in \Omega_{reg}''}K_3(G)|\geq |\bigcup_{G\in \Omega_{reg}'}K_3(G)|\geq (1-\e^*_1)|V(H)|^3=(1-\e_1^*)|V(H_1)|^3.
$$
Thus, Theorem \ref{thm:main3} implies $3t\geq W(C(\e_1^*)^{-1/K})\geq W(C((12\e_1)^{1/2D^2})^{-1/K}))$.  Clearly there is some $C'$, depending only on $K,C,D$ so that this is at least $W(\e_1^{-C'})$.  This shows $T_{\calH}(\e_1,\e_2)\geq W(\e_1^{-C'})$, as desired.
\end{proof}

We can now prove Theorem \ref{thm:strong1}.

\vspace{2mm}

\noindent{\bf Proof of Theorem \ref{thm:strong1}.}
If $n\otimes U(n)\in \trip(\calH)$ for all $n$, then by Corollary \ref{cor:wowzer},  there is $C>0$ so that for all sufficiently small $\e_1>0$ there is $\e_2:\mathbb{N}\rightarrow (0,1]$ so that  $T_{\calH}(\e_1,\e_2)\geq W(\e_1^{-C})$.  Suppose now there is $n$ so that $n\otimes U(n)\notin \trip(\calH)$.  By Theorem \ref{thm:main2intro}, we have the desired conclusions. 
\qed

\vspace{2mm}

\section{Comparison with Moshkovitz-Shapira}\label{sec:weakreg}

In this section, we discuss the results of Moshkovitz and Shapria from \cite{Moshkovitz.2019}, and how they relate to the $3$-graph used to prove Theorem \ref{thm:main0}.  We have adapted their notation slightly to fit with ours, but the changes are all cosmetic.  We begin by defining a weak version of regularity for bigraphs used in \cite{Moshkovitz.2019}.

\begin{definition}
A bipartite $G=(V_1, V_2; E)$ is a bigraph is \emph{$\langle \delta \rangle$-regular} if for all $V_1'\subseteq V_1$ and $V_2'\subseteq V_2$ with $|V_1'|\geq \delta |V_1|$ and $|V_2'|\geq \delta |V_2|$,
$$
d_G(V_1',V_2')\geq d_G(V_1,V_2)/2.
$$
\end{definition}

\begin{definition}\label{def:deltareg}
Given a graph $G=(V,E)$, a partition $\calP$ of $V$ is \emph{$\langle \delta \rangle$-regular for $G$} if there is a graph $G'=(V,E')$ so that $|E\Delta E'|\leq \delta |E|$ and so that for all $A\neq B\in \calP$, $(A,B; \overline{E'}\cap (A\times B))$ is $\langle \delta \rangle$-regular. 
\end{definition}

 Definition \ref{def:deltareg} gives rise to the following notion of a regular decomposition (recall Definition \ref{def:decomp}).

\begin{definition}
A $(t,\ell)$-decomposition $\calP$ of $V$ is called $\langle \delta \rangle$-good if all elements in $\calP_2$ are $\langle \delta \rangle$-regular.
\end{definition}

The notion of hypergraph regularity considered in \cite{Moshkovitz.2019} is defined in terms of the following auxiliary bigraphs associated to a $3$-partite $3$-graph.

\begin{definition}
Let $H=(V_1\cup V_2\cup V_3, F)$ be a tripartite $3$-graph. Given $\{i_1,i_2,i_3\}=\{1,2,3\}$ with $i_2<i_3$, define $G_H^{i_1}$ to be the bigraph with vertex sets $(V_{i_1}, (V_{i_2}\times V_{i_3}))$ and edge set 
$$
E_H^{i_1}=\{(u(v,w)): uvw\in F\}.
$$
\end{definition}

If $H=(V_1\cup V_2\cup V_3, F)$ is a tripartite $3$-graph, then any $(t,\ell)$-decomposition $\calP$ of $V$ with $\calP_1\preceq\{V_1,V_2,V_3\}$ naturally induces a vertex partition for each of the auxiliary bigraphs $G_H^1,G_H^2,G_H^3$, as we now explain.  Since $\calP_1$ refines $\{V_1,V_2,V_3\}$, there exist enumerations of the following forms (where we allow some sets to be empty for notational convenience).
\begin{align*}
\calP_1&=\{V_{ij}: 1\leq i\leq 3, 1\leq j\leq t\}\text{ and }\\
\calP_2&=\{P_{ij,i'j'}^{\alpha}: 1\leq i,i'\leq 3, 1\leq j,j'\leq t,\alpha\leq \ell\}.
\end{align*}
  Then for any choice of $\{i_1,i_2,i_3\}=\{1,2,3\}$ with $i_2<i_3$, this induces a partition of $V(G_H^{i_1})=V_{i_1}\cup (V_{i_2}\times V_{i_3})$ consisting of 
$$
\calP_1^{i_1}=\{V_{i_11},\ldots, V_{i_1t}\}\cup \{P_{i_2j,i_3j'}^{\alpha}: 1\leq j,j'\leq t, \alpha\leq \ell\}.
$$
Note $|\calP_1^{i_1}|\leq t+t^2\ell$.  We can now define the notion of hypergraph regularity used in \cite{Moshkovitz.2019}.

\begin{definition}
Suppose $H=(V_1\cup V_2\cup V_3, F)$ is a tripartite $3$-graph satisfying $|V_1|=|V_2|=|V_3|=n$.  Assume $\calP$ is a $(t,\ell)$-decomposition of $V$ with $\calP_1\preceq \{V_1,V_2,V_3\}$. We say $\calP$ is \emph{$\langle \delta \rangle$-regular for $H$} if it is $\langle \delta \rangle$-good and for each $1\leq i_1\leq 3$, $\calP_1^{i_1}$ is a $\langle \delta \rangle$-regular partition of $G_H^{i_1}$. 
\end{definition}

We now restate the theorem of Moshkovitz and Shapira.

\begin{theorem}[Theorem 4,  \cite{MoshkovitzSimple}]\label{thm:MSrestate}
For all $s\geq 1$ there exists a $3$-partite $3$-uniform hypergraph $H$ with edge density at least $2^{-s-3}$ and a partition $\calV_0$ of $V(H)$ of size at most $3^{200}$ so that if $\calP$ is a $\langle 2^{-73}\rangle$-regular partition with vertex partition $\calP_1\preceq \calV_0$, then $|\calP_1|\geq W(s)$.
\end{theorem}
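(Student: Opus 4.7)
The plan follows the Moshkovitz-Shapira strategy: construct $H$ by nesting $s$ gadgets so that every $\langle 2^{-73}\rangle$-regular partition must, at each level of nesting, refine by a tower factor. The starting ingredient is a lower-bound construction for strong graph regularity -- for example, the Conlon-Fox example recalled in Theorem \ref{thm:conlonfox} or a bipartite variant of it -- whose obstruction to coarse regular partitions can be encoded inside the auxiliary bigraph $G_H^{i_1}$ associated to a 3-partite 3-graph $H$.

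First I would build a sequence $H_1, H_2, \ldots, H_s$ of 3-partite 3-uniform hypergraphs on common classes $V_1, V_2, V_3$, where $H_{i+1}$ contains (a blow-up of) $H_i$ in the link of a distinguished subset of vertices. The base $H_1$ is designed so that at least one of its auxiliary bigraphs $G_{H_1}^{i_1}$ resists $\langle 2^{-73}\rangle$-regular partitions of bounded size. The inductive step then uses a blow-up-with-noise construction: edges of $H_{i+1}$ are inserted so that for any $\langle 2^{-73}\rangle$-regular partition $\calP$ of $H_{i+1}$ refining a prescribed outer skeleton, the partition induced on the blocks hosting each vertex of $H_i$ must be $\langle \delta_i \rangle$-regular for a copy of $H_i$, for some $\delta_i$ depending only polynomially on $|\calP_1|$. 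Iterating the implication ``$|\calP_1|$ at level $i{+}1$ is at least $2^{|\calP_1|\text{ at level }i}$'' across $s$ levels then yields the wowzer lower bound $W(s)$.

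The outer partition $\calV_0$ has absolute constant size because it records only the coarsest skeleton of the construction: the three classes together with a bounded number of marked subregions per class into which the gadgets are placed. Since the recursion proceeds by blowing up vertices \emph{within} these marked subregions rather than by creating new ones, $|\calV_0|$ stays bounded by $3^{200}$ independently of $s$. Likewise, a careful choice of blow-up weights can force the edge density to decrease by at most a factor of $1/2$ at each level, giving the global bound $|F|/|V_1||V_2||V_3| \geq 2^{-s-3}$.

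The main obstacle, and what I expect to be the technical heart of the argument, is the quantitative propagation in the inductive step: one must show that an $\langle 2^{-73}\rangle$-regular partition of the outer hypergraph restricts to a partition that is $\langle \delta_i \rangle$-regular on the inner copy of $H_i$ with $\delta_i$ only modestly larger than $2^{-73}$, so that $s$ iterations preserve enough regularity to repeatedly invoke the inductive hypothesis. Delivering this propagation requires carefully aligning the gadget's combinatorial design with the definition of $\langle \delta \rangle$-regularity through the bigraphs $G_H^{i_1}$ (density preservation on subsets of relative size at least $\delta$), so that the subsets witnessing failure of coarse regularity at the inner level are not absorbed by the $\delta|E|$-many exceptional edges allowed at the outer level. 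Controlling this error budget across $s$ levels while simultaneously keeping $|\calV_0|$ constant and the edge density at $\geq 2^{-s-3}$ is the delicate balance that the Moshkovitz-Shapira construction achieves.
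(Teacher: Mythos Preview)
The paper does not prove this theorem at all: Theorem~\ref{thm:MSrestate} is simply a restatement of a result of Moshkovitz and Shapira, cited from \cite{MoshkovitzSimple}, and no proof is given in the present paper. There is therefore no ``paper's own proof'' to compare your proposal against.

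That said, a few remarks on your sketch are in order. Your high-level picture of an $s$-fold nested construction, with each level forcing a tower-type jump in the size of the vertex partition, is indeed the shape of the Moshkovitz--Shapira argument. However, your proposal to use the Conlon--Fox graph (Theorem~\ref{thm:conlonfox}) as the base gadget is misleading in this context. The present paper makes a point of distinguishing the two constructions: the paper's own lower bound (Theorem~\ref{thm:main0}) for $\dev_{2,3}$-regularity \emph{does} use $N\otimes\bip(G')$ with $G'$ the Conlon--Fox graph, but this simpler hypergraph is explicitly shown (Theorem~\ref{thm:weakregthm}) to admit $\langle\delta\rangle$-regular partitions of only tower size, and hence cannot witness Theorem~\ref{thm:MSrestate}. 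The Moshkovitz--Shapira hypergraph is genuinely more intricate; the ``additional complexity'' (as the paper puts it) is precisely what is needed to defeat the weaker $\langle\delta\rangle$-regularity notion. So while your outline captures the recursive flavour, the choice of base object and the propagation mechanism you describe would not suffice for the stated conclusion, and the actual construction in \cite{MoshkovitzSimple} is not reproduced here.
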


We will show that in contrast to the hypergraph constructed by Moshkovitz and Shapira to prove Theorem \ref{thm:MSrestate}, the $3$-graph used in Theorem \ref{thm:main0} admits $\langle \delta \rangle$-regular decompositions with a relatively small (specifically tower) bound on the number of vertex parts.    In fact we show this is true for any $3$-partite $3$-graph whose ternary edges are built from triangles of an underlying tripartite graph (as is the case for the hypergraph in Theorem \ref{thm:main0}).  This shows the example of Moshkovitz and Shapira is accomplishing something more complicated than the example used to prove Theorem \ref{thm:main0}.

\begin{theorem}\label{thm:weakregthm}
Let $0<\delta<1/8$, $0<\rho<1$, and assume $n\geq 1$ is sufficiently large.  Suppose $V_1,V_2,V_3$ are disjoint sets of size $n$, $G=(V_1, V_2, V_3;E)$ is a triad, and $H=(V_1\cup V_2\cup V_3,F)$ is a tripartite $3$-graph where $\overline{F}=K_3(G)$ and $|\overline{F}|=\rho|V_1||V_2||V_3|$.  Suppose $\calV_0$ is a partition of $V(H)$ refining $\{V_1,V_2,V_3\}$ of size at most $3^{200}$.  Then there exists a decomposition $\calP=(\calP_1,\calP_2)$ of $V(H)$, with $\calP_1\preceq \calV_0$, which is $\langle \delta \rangle$-regular for $H$, and with $|\calP_1|\leq 3^{200}Tw_3(\delta^{-1000}\rho^{-1000})$.  
\end{theorem}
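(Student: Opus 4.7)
The plan is to combine Szemer\'edi's regularity lemma for the underlying graph $G$ with a simple edge/non-edge split of the binary partition, exploiting the fact that $H$ is entirely determined by $G$ (since $\overline{F} = K_3(G)$). Set $\epsilon = (\delta\rho)^{C}$ for a suitable constant $C$ (e.g.\ $C = 16$ suffices for the triangle-counting estimates below). Refine $\calV_0$ to an equipartition $\calU_0$ of size $O(|\calV_0|) \leq O(3^{200})$, and apply Theorem \ref{thm:reg} to $G$ starting from $\calU_0$ to obtain an $\epsilon$-regular equipartition $\calP_1 \preceq \calV_0$ with $|\calP_1| \leq Tw_{O(3^{200})}(\epsilon^{-5})$; this fits within $3^{200}\,Tw_3(\delta^{-1000}\rho^{-1000})$ with plenty of slack.

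Define $\calP_2$ (with $\ell = 2$) as follows. For each ordered pair $X, Y \in \calP_1$ with $X \subseteq V_a$, $Y \subseteq V_b$, take $X \times Y$ as a single block whenever $a = b$, $(X, Y)$ fails to be $\epsilon$-regular in $G$, or $d_G(X, Y) \notin [\epsilon^{1/C}, 1 - \epsilon^{1/C}]$; otherwise split $X \times Y$ into $P^{\text{edge}} = (X \times Y) \cap \overline{E}$ and $P^{\text{non-edge}} = (X \times Y) \setminus \overline{E}$. The decomposition $\calP$ is then $\langle \delta \rangle$-good: single-block bigraphs have density $1$; for edge or non-edge blocks with density $d \in [\epsilon^{1/C}, 1 - \epsilon^{1/C}]$, Proposition \ref{lem:slreg} gives sub-pair densities within $\epsilon$ of $d$ (resp.\ $1 - d$), and since $\epsilon \ll \epsilon^{1/C}/2$ this easily yields the required $d/2$ (resp.\ $(1-d)/2$) lower bound.

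For each $i_1 \in \{1, 2, 3\}$ we verify $\calP_1^{i_1}$ is $\langle \delta \rangle$-regular for $G_H^{i_1}$ by modifying the latter. Call a pair of $\calP_1$-parts \emph{bad} if it is not $\epsilon$-regular in $G$ or has density below $\epsilon^{1/C}$, and delete from $G_H^{i_1}$ every edge $(u, (v, w))$ whose underlying triangle $uvw \in F$ uses at least one bad pair among $\{u, v\}, \{u, w\}, \{v, w\}$. A union bound shows the number of deleted edges is at most $O(\epsilon^{1/C}) n^3 \leq \delta \rho n^3 = \delta |F|$, within the modification allowance. For each remaining cross-pair $(V_{i_1, j}, B) \in \calP_1^{i_1}$: if the underlying triple of parts is bad, or $B$ is a non-edge block, the restricted density is $0$ and $\langle \delta \rangle$-regularity is trivial; otherwise the triple is \emph{good} (all three pairs $\epsilon$-regular of density $\geq \epsilon^{1/C}$), and Lemma \ref{lem:standreg} together with Proposition \ref{lem:slreg} gives full-pair density $\approx d_{12} d_{13}$ (edge blocks) or $\approx d_{12} d_{13} d_{23}$ (whole blocks with $d_{23} > 1 - \epsilon^{1/C}$), while any sub-pair of legal size has density at least $(1 - \epsilon^{1/C}/\delta)$ times this, which exceeds half the full density since $\epsilon^{1/C} \leq \delta/2$.

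The main technical obstacle is verifying $\langle \delta \rangle$-regularity of cross-pairs when $B$ is a whole block arising from a very high-density pair $(V_{i_2, j'}, V_{i_3, j''})$: an arbitrary sub-pair $Q \subseteq B$ could a priori concentrate on the thin $\leq \epsilon^{1/C}$-fraction of $B$ consisting of non-edges of $E_{23}$, which contribute nothing to the triangle count. This is resolved by the elementary bound $|Q \cap E_{23}| \geq |Q| - (1 - d_{23})|B| \geq (1 - \epsilon^{1/C}/\delta)|Q|$, forcing most of $Q$ onto actual edges of $G$; Proposition \ref{lem:slreg} and Lemma \ref{lem:standreg} then guarantee that each such surviving edge $vw$ contributes the expected $(1 \pm o(1)) d_{12} d_{13} |V_{i_1, j}'|$ triangles on average, producing the required sub-pair density bound.
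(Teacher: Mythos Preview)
Your approach is correct and essentially matches the paper's: apply Szemer\'edi regularity to the underlying triad $G$, split good pairs into edge/non-edge blocks, delete the few $G_H^{i_1}$-edges lying over bad triples, and verify $\langle\delta\rangle$-regularity of the remaining cross-pairs via the triangle-counting Lemma~\ref{lem:standreg}. One difference is self-inflicted: the paper does \emph{not} treat high-density pairs as whole blocks but splits them as well, and since $\overline{F}=K_3(G)$ the resulting non-edge block $P^{0}$ carries no $G_H^{i_1}$-edges whatsoever, so your ``main technical obstacle'' simply does not arise in the paper's setup. (A minor technical point: one cannot in general refine an arbitrary $\calV_0$ to an equipartition of comparable size; the paper instead applies Theorem~\ref{thm:reg} starting from the genuine equipartition $\{V_1,V_2,V_3\}$ and only afterwards takes the common refinement with $\calV_0$, adding a ``small-part'' clause $\Theta_3$ to its bad set to absorb any tiny cells this creates.)
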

\begin{proof}
Fix $0<\delta<1/8$, $0<\rho<1$, and $n\geq 1$ sufficiently large.  Suppose $V_1,V_2,V_3$, $G=(V_1,V_2,V_3;E)$, and $H=(V_1\cup V_2\cup V_3; F)$ satisfy the hypotheses of Theorem \ref{thm:weakregthm}.    Let $\calV_0=\{X_1,\ldots, X_s\}$ be a partition of $V(H)$ refining $\{V_1,V_2,V_3\}$ with $|\calV_0|:=C\leq 3^{200}$ parts.  

Let $\e=\rho^{100}\delta^{100}/C$. By Theorem \ref{thm:reg}, there exists an $\e^2$-regular partition $\calQ$ for $G$ with $\calQ\preceq\{V_1,V_2,V_3$ and so that  $|\calQ|:=t\leq Tw_3(\e^{-10})=Tw_3(\rho^{-1000}\delta^{-1000})$ parts.   Fix an enumeration
$$
\calQ=\{V_{ij}: 1\leq i\leq 3, 1\leq j\leq t\},
$$
 where for each $i\in [3]$, $V_i=V_{i1}\cup \ldots \cup V_{it}$. Let $\calP_1$ be the common refinement of $\calQ$ and $\calV_0$ and fix an enumeration
 $$
 \calP_1=\{V_{iju}: 1\leq i\leq 3,1\leq j\leq t, 1\leq u\leq C\},
 $$
 where, for each $i\in [3]$, $j\in [t]$, and $u\in [C]$, $V_{iju}=V_{ij}\cap X_u$.  For each $1\leq i_1,i_2\leq 3$ and $1\leq j_1,j_2\leq t$, let 
 $$
 d^E_{i_1j_1u_1,i_2j_2u_1}:=\frac{|E\cap (V_{i_1j_1u_1}\times V_{i_2j_2u_2})|}{|V_{i_1j_1u_1}||V_{i_2j_2u_2}|}.
 $$
 We now define three sets of ``bad pairs" from $\calP_1$. 
 \begin{align*}
\Theta_1&=\{(V_{i_1j_1u_1},V_{i_2j_2u_2})\in \calP_1^2: (V_{i_1j_1},V_{i_2j_2})\text{ is not  $\e^2$-regular with respect to $G$}\},\\
\Theta_2&=\{(V_{i_1j_1u_1},V_{i_2j_2u_2})\in \calP_1^2: d^E_{i_1j_1,i_2j_2}< \delta \rho\},\text{ and }\\
\Theta_3&= \{(V_{i_1j_1u_1},V_{i_2j_2u_2})\in \calP_1^2: \text{ either $|V_{i_1j_1u_1}|\leq \e^{1/2}|V_{i_1j_1}|/C$ or $|V_{i_2j_2u_2}|\leq \e^{1/2}|V_{i_2j_2}|/C$}\}.
\end{align*}
Since $\calQ$ is $\e^2$-regular, $ |\bigcup_{(X,Y)\in \Theta_1}X\times Y|\leq \e^2|V(H)|^2\leq 9\e^2n^2$.  By definition of $\Theta_3$, $|\bigcup_{(X,Y)\in \Theta_3}X\times Y|\leq \sqrt{\e}n^2$.    Consequently, if we define $\Theta=\Theta_1\cup \Theta_2\cup \Theta_3$ and set 
$$
E_{err}=\bigcup_{(X,Y)\in \Theta}E\cap (X\times Y),
$$    
then 
\begin{align*}
|E_{err}|\leq 9\e^2n^2+\sum_{(X,Y)\in \Theta_2}\delta\rho|X||Y|+\sqrt{\e}n^2&= 9\e^2n^2+\delta\rho\sum_{(X,Y)\in \Theta_2}|X||Y|+\sqrt{\e}n^2\\
&\leq (9\e^2+3\delta \rho+\sqrt{\e})n^2.
\end{align*}
 Note that by Lemma \ref{lem:slreg}, if $(X,Y)\in \calP_1^2\setminus \Theta$, then $(X,Y)$ is $2C\e^{3/2}$-regular in $G$.   
 
 We next define the $\calP_2$ part of the our desired decomposition.  In particular, given $(V_{i_1j_1u_1},V_{i_2j_2u_2})\in \calP_1^2$, we will define a partition $V_{i_1j_1u_1}\times V_{i_2j_2u_2}=P_{i_1j_1u_1,i_2j_2u_2}^1\cup P_{i_1j_1u_1,i_2j_2u_2}^0$.  First, for all $(V_{i_1j_1u_1},V_{i_2j_2u_2})\in \calP_1^2$ satisfying either $(V_{i_1j_1u_1},V_{i_2j_2u_2})\in \Theta$ or $V_{i_1j_1u_1}=V_{i_2j_2u_2}$, define 
 \begin{align*}
P_{i_1j_1u_1,i_2j_2u_2}^1&=(V_{i_1j_1u_1}\times V_{i_2j_2u_2})\text{ and }\\
P_{i_1j_1u_1,i_2j_2u_2}^2&=\emptyset.
\end{align*}
Now for each distinct pair $(V_{i_1j_1u_1},V_{i_2j_2u_2})\in  \calP_1^2\setminus \Theta$, set 
\begin{align*}
P_{i_1j_1u_1,i_2j_2u_2}^1&=E\cap (V_{i_1j_1u_1}\times V_{i_2j_2u_2})\text{ and }\\
P_{i_1j_1u_1,i_2j_2u_2}^0&=(V_{i_1j_1u_1}\times V_{i_2j_2u_2})\setminus E.
\end{align*}
Now define 
$$
\calP_2=\{P_{i_1j_1u_1,i_2j_2u_2}^{\alpha}: \alpha\in \{0,1\}, V_{i_1j_1u_1}, V_{i_2j_2u_2}\in \calP_1\}.
$$
Then $\calP:=(\calP_1,\calP_2)$ a $(3t, 2)$-decomposition of $V_1\cup V_2\cup V_3$.  It is easy to see that by construction, $\calP$ is a $\langle \delta \rangle$-good $(3t,2)$-decomposition of $V$ (since $\e\ll 1/2, \delta,\rho$).   We now show $\calP$ is $\langle \delta \rangle$-regular for $H$.  We will just prove the induced partition on $V(G_H^1)$ is $\langle \delta \rangle$-regular for $G_H^1$, as the proofs for $G_H^2$ and $G_H^3$ are then identical.

Recall $G_H^1$ has vertex sets $(V_1, (V_2\times V_3))$ and edge set $E_H^1=\{(u,(vw)): uvw\in K_3^{(2)}(G)\}$.  Note $|E(G_H^1)|=|\overline{F}|=\rho |V_1||V_2||V_3|$, and the partition $\calP$ induces $V(G_H^1)$ is  
$$
\calP_1^1=\{V_{1ju}:1\leq j\leq t, 1\leq u\leq C\}\cup \{P^{\alpha}_{2j_2u_2,3j_3u_3}: 1\leq j_2,j_3\leq t, 1\leq u_2,u_3\leq C,\alpha\in \{0,1\}\}.
$$
We show this partition is $\langle \delta \rangle$-regular for $G_H^1$.  First, let $\Sigma_1$ be the set of pairs $(V_{1j_1u_1},P_{2j_2u_2,3j_3u_3}^{\alpha})$ from $\calP_1^1$ with the property that $\{V_{1j_1u_2},V_{2j_2u_3},V_{3j_3u_3}\}^2\cap \Theta=\emptyset$.   We show all pairs from $\Sigma_1$ are $\langle \delta \rangle$-regular.  To this end, fix $(V_{1j_1u_1},P_{2j_2u_2,3j_3u_3}^{\alpha})\in \Sigma_1$.  If $\alpha=0$, then by definition,
$$
E_H^1\cap (V_{1j_1u_1}\times P_{2j_2u_2,3j_3u_3}^{\alpha})=\emptyset,
$$
so $\langle \delta \rangle$-regularity holds trivially.  So assume $\alpha=1$. In this case, by definition of $\Sigma_1$ definition of $\Theta$, and Proposition \ref{prop:counting},
\begin{align*}
|E_H^1\cap (V_{1j_1u_1}\times P_{2j_2u_2,3j_3u_3}^{\alpha})|&=|K_3(G[V_{1j_1u_1},V_{2j_2u_2},V_{3j_3u_3}])|\\
&=(d^E_{1j_1,2j_2}d^E_{1j_1,3j_3}d^E_{2j_2,3j_3}\pm 4\e^{3/8})|V_{1j_1u_1}||V_{2j_2u_2}||V_{3j_3u_3}|\\
&=(d^E_{1j_1,2j_2}d^E_{1j_1,3j_3}\pm \e^{3/8}(d^E_{2j_2,3j_3})^{-1})|V_{1j_1u_1}||P_{2j_2u_2,3j_3u_3}^{\alpha}|\\
&\leq (d^E_{1j_1,2j_2}d^E_{1j_1,3j_3}\pm \e^{1/4}(\delta \rho)^{-1})|V_{1j_1u_1}||P_{2j_2u_2,3j_3u_3}^{\alpha}|\\
&\leq (d^E_{1j_1,2j_2}d^E_{1j_1,3j_3}+(\delta\rho)^{24})|V_{1j_1u_1}||P_{2j_2u_2,3j_3u_3}^{\alpha}|,
\end{align*}
where the second to last inequality uses the definition of $\Theta$, and the last uses the definition of $\e$. Therefore 
\begin{align}\label{al:h1}
\frac{|E(G_H^1)\cap (V_{1j_1u_1}\times P_{2j_2u_2,3j_3u_3}^{\alpha})|}{|V_{1j_1u_1}||P_{2j_2u_2,3j_3u_3}^{\alpha}|}\leq d^E_{1j_1,2j_2}d^E_{1j_1,3j_3}+\delta\rho^{24}.
\end{align}
  Suppose $V'\subseteq V_{1j_1u_1}$ and $P'\subseteq P_{2j_2u_2,3j_3u_3}^{\alpha}$ have size at least $\delta |V_{1j_1u_1}|$ and $\delta |P_{2j_2u_2,3j_3u_3}^{\alpha}|$ respectively.  By Lemma \ref{lem:slreg}, $G[V',V_{2j_2u_2}]$ and $G[V', V_{3j_3u_3}]$ are $2C\e^{3/2}$-regular.  By definition of $\e$ and $\Theta$, $2C\e^{3/2}<\min\{2^{-8},d^8\}$ where $d=\min\{d^E_{1j_1,2j_2},d^E_{1j_1,3j_3},d^E_{2j_2,3j_3}\}$.  Therefore, by Lemma \ref{lem:standreg}, if we set 
$$
Y=\{(x,y)\in P_{2j_2u_2,3j_3u_3}^{\alpha}: |N_G(x)\cap N_G(y)\cap V_{1j_1'}|=d^E_{1j_1,2j_2}d^E_{1j_1,3j_3}(1\pm (2\e)^{1/8})|V'|\},
$$
then we have $|Y|\geq (1-(2\e)^{1/8})|P_{2j_2u_3,3j_3u_3}^{\alpha}|$.  Thus, 
$$
|P'\cap Y|\geq |P'|-(2\e)^{1/8}|P_{2j_2u_2,3j_3u_3}^{\alpha}|\geq (1-\delta^{-1}(2\e)^{1/8})|P|.
$$
Consequently,
\begin{align*}
|E_H^1\cap (V'\times P')|&\geq \sum_{xy\in P'\cap Y}d^E_{1j_1,2j_2}d^E_{1j_1,3j_3}(1- (2\e)^{1/8})|V'|\\
&\geq d^E_{1j_1,2j_2}d^E_{1j_1,3j_3}(1- (2\e)^{1/8})(1-\delta^{-1}(2\e)^{1/8})|P'||V'|\\
&\geq (d^E_{1j_1,2j_2}d^E_{1j_1,3j_3}-2\delta^{-1}(2\e)^{1/8})|P'||V'|.
\end{align*}
This shows the density of edges in $G_H^1$ on the sub-pair $(V',P')$ is at least
\begin{align*}
d^E_{1j_1,2j_2}d^E_{1j_1,3j_3}-2\delta^{-1}(2\e)^{1/8}&\geq  (d^E_{1j_1,2j_2}d^E_{1j_1,3j_3}+\delta\rho^{24})-(\delta\rho^{24}+2\delta^{-1}(2\e)^{1/8})\\
&\geq (d^E_{1j_1,2j_2}d^E_{1j_1,3j_3}+\delta\rho^{24})/2,
\end{align*}
where the last inequality is by definition of $\e$ and since $\delta<1/8$.   By (\ref{al:h1}) this is at least half the density of $G_H^1$ on the original pair, $(V_{1j_1u_1}, P_{2j_2u_2,3j_3u_3}^{\alpha})$.  This concludes the verification that every pair from $\Sigma_1$ is $\langle \delta \rangle$-regular.  

Let $(G_H^1)'=(V(G_H^1), (E_H^1)')$ be the graph obtained from $G_H^1$ by deleting all edges appearing in pairs outside $\Sigma_1$.  Clearly every pair in $\calP^1_1$ is now $\langle \delta \rangle$-regular with respect to $(G_H^1)'$. So we just need to check that $|E_H^1\Delta (E_H^1)'|\leq \delta |E(G_H^1)|$. We have that $|E_H^1\Delta (E_H^1)'|$ is at most
\begin{align*}
\sum_{(V_{1j_1u_1},V_{2j_2u_2})\in \Theta}\sum_{j_3\in [t]}\sum_{\alpha\in \{0,1\}}|V_{1j_1u_1}||P_{2j_2u_2,3j_3u_3}^{\alpha}|&+\sum_{(V_{1j_1u_1},V_{3j_3u_3})\in \Theta}\sum_{ j_2\in [t]}\sum_{\alpha\in \{0,1\}}|V_{1j_1u_1}||P_{2j_2u_2,3j_3u_3}^{\alpha}|\\
&+\sum_{(V_{2j_2u_2},V_{3j_3u_3})\in \Theta}\sum_{j_1\in [t]}\sum_{\alpha\in \{0,1\}}|V_{1j_1u_1}||P_{2j_2u_2,3j_3u_3}^{\alpha}|.
\end{align*}
This is at most
\begin{align*}
\sum_{(V_{1j_1u_1},V_{2j_2u_2})\in \Theta}|V_{1j_1u_1}||V_{2j_2u_2}||V_3|&+\sum_{(V_{1j_1u_1},V_{3j_3u_3})\in \Theta}|V_{1j_1u_1}||V_{3j_3u_3}||V_2|\\
&+\sum_{(V_{2j_2u_2},V_{3j_3u_3})\in \Theta}|V_1||V_{2j_2u_2}||V_{3j_3u_3}|,
\end{align*}
which we can bound from above by 
$$
3n|\Sigma_{err}|\leq 3 (9\e^2+3\delta \rho)n^3<\delta (\rho n^3)=\delta |E(G_H^1)|.
$$
This finishes the proof.
\end{proof}

\appendix

\section{ }

This appendix contains the proofs of several ingredients needed for this paper, as well as for Part 2 \cite{Terry.2024b}. 

\subsection{A proof of Theorem \ref{thm:reg2intro} with explicit bounds}\label{app:bounds}

In this section we provide an exposition of Gowers' proof from \cite{Gowers.2007} of Theorem \ref{thm:reg2intro} with explicit bounds.  We begin with several definitions and lemmas from \cite{Gowers.2007}.

\begin{definition}
Suppose $X$ is a set, $f:X\rightarrow [-1,1]$ is a function, and $\calP=\{X_1\cup \ldots \cup X_r\}$ is a partition of $X$. For each $i\in [r]$, let $\gamma_i$ and $d_i$ be such that $|X_i|=\gamma_i|X|$ and $d_i=\frac{1}{|X_i|}\sum_{x\in X_i}f(x)$.  Define the \emph{mean-square density of $f$ with respect to $\calP$} to be
$$
msd_f(\calP):=\sum_{i=1}^r\gamma_id_i^2.
$$ 
\end{definition}

\begin{definition}
Let $X_1,X_2,X_3$ be sets and $f:X_1\times X_2\times X_3\rightarrow [-1,1]$ a function.  For each $1\leq i<j\leq 3$, suppse $\calP_{ij}=\{P_{ij}^1,\ldots, P_{ij}^{m_{ij}}\}$ is a partition of $X_i\times X_j$. Let $\calP$ be the decomposition of $X_1\cup X_2\cup X_3$ consisting of $\calP_1=\{X_1,X_2,X_3\}$ and $\calP_2=\calP_{12}\cup \calP_{23}\cup \calP_{13}$.  The \emph{mean-square density of $f$ relative to $\calP$}, denoted $msd_f(\calP)$, is the mean square density  of $f$ relative to the partition $\{K_3(G): G\in Triads(\calP)\}$ of $X_1\times X_2\times X_3$.
\end{definition}

For a trigraph $H=(X_1, X_2, X_3; E(H))$, define $msd_H(\calP)$ to be $msd_f(\calP)$, where $f$ is the indicator function of $E(H)$.

\begin{lemma}[See Lemmas 8.4 and 8.9 in \cite{Gowers.2007}]\label{lem:mainlem}
Suppose 
$$
G=(X_1, X_2, X_3;P_{12},P_{23}, P_{13})
$$
is a triad, and $H=(X_1,X_2, X_3; E)$ is a trigraph underlied by $G$. For each $1\leq i<j\leq 3$, let $\delta_{ij}$ be such that $|P_{ij}|=\delta_{ij}|X_i||X_j|$ and assume $(V_i,V_j; P_{ij})$ satisfies $\dev_2(\e_2)$. Define $\delta=\delta_{12}\delta_{13}\delta_{23}$, and let $d$ be such that $|E|=d|K_3(G)|$.  Suppose $(H,G)$ fails $\dev_{2,3}(\e_1,\e_2)$ and $2^{21}\e_2^{1/4}\leq \delta^7$.  

Then for each $1\leq i<j\leq 3$, there is an integer $m_{ij}\leq 3^{\delta^{-4}}$ and a partition $\calP(P_{ij})=\{P_{ij}^1, \ldots, P_{ij}^{m_{ij}}\}$ of $P_{ij}$ so that the following holds.  For any decomposition $\calP$ of $X_1\cup X_2\cup X_3$ satisfying $\calP_1\preceq\{X_1,X_2,X_3\}$ and $\calP_2\preceq\calP(P_{12})\cup \calP(P_{13})\cup \calP(P_{23})$, we have
$$
msd_H(\calP)\geq d^2+2^{-10}\e_1^2.
$$
\end{lemma}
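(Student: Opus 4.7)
The plan is a standard energy-increment argument in the spirit of Gowers. Write $h := h_{H,G}$, so that the failure of $\dev_{2,3}(\e_1,\e_2)$ unpacks to
\[
T := \sum_{u_0,u_1 \in X_1} \sum_{v_0,v_1 \in X_2} \sum_{w_0,w_1 \in X_3} \prod_{(i,j,k)\in\{0,1\}^3} h(u_i,v_j,w_k) > \e_1\, \delta^4\, |X_1|^2|X_2|^2|X_3|^2,
\]
with $h$ supported on $K_3(G)$ and of mean zero there (by the definition of $d$). My goal is to extract from this large octahedral sum a sub-triad $(A_{12},A_{13},A_{23})$ with $A_{ij}\subseteq P_{ij}$ on which $H$ has density deviating from $d$ by at least $\Omega(\e_1)$, and then invoke the convexity of mean-square density under refinement to close the argument.

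To produce $A_{12},A_{13},A_{23}$, I would apply the Gowers-Cauchy-Schwarz inequality to $T$, converting the octahedral sum into a triangular trilinear form. Setting $F_{w_0,w_1}(u,v) := h(u,v,w_0)h(u,v,w_1)$, a direct expansion gives $T = \sum_{w_0,w_1}\|F_{w_0,w_1}\|_\Box^4$; an averaging step picks out good $w_0,w_1$, an inverse box-norm argument then produces rectangles in $X_1\times X_2$ on which $F_{w_0,w_1}$ has large average, and a dual step across the three coordinate directions produces the sets $A_{ij}\subseteq P_{ij}$ (one can intersect freely with the $P_{ij}$ since $h$ vanishes off $K_3(G)$). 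The hypothesis $2^{21}\e_2^{1/4}\leq \delta^7$ enters here through Proposition~\ref{prop:counting}: it guarantees that the triangle count of any sub-triad of $G$ agrees with its expected value to within a factor close to $1$, which is precisely what is needed to convert a trilinear correlation into a density deviation of size $\Omega(\e_1)$ on a sub-triad covering a nontrivial fraction of the triangles of $G$.

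Given the sets $A_{ij}$, I set $\calP(P_{ij}) := \{A_{ij}, P_{ij}\setminus A_{ij}\}$; the bound $m_{ij}\leq 3^{\delta^{-4}}$ leaves ample room for a more delicate multi-level extraction (iterating the argument a number of times polynomial in $\delta^{-1}$ to absorb sparsity losses, if the one-shot quantization is too lossy). Any decomposition $\calP$ whose pair-partition refines $\calP(P_{12})\cup\calP(P_{13})\cup\calP(P_{23})$ satisfies $msd_H(\calP) \geq msd_H(\calP^*)$, where $\calP^*$ is the coarsest such decomposition; on $\calP^*$, the cell $K_3(G')\cap K_3(G)$ with $G'$ the triad on $(A_{12},A_{13},A_{23})$ contributes at least its relative size times $(c\e_1)^2$ to the excess over $d^2$, yielding $msd_H(\calP)\geq d^2 + 2^{-10}\e_1^2$ after tracking constants.

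The main technical obstacle is the Cauchy-Schwarz bookkeeping: extracting the sets $A_{ij}$ so that the resulting density deviation on the sub-triad scales \emph{linearly} in $\e_1$ (so that squaring it gives the claimed $\e_1^2$ increment, rather than a smaller fractional power), while losing at most polynomial factors in $\delta^{-1}$ along the way. The assumption $2^{21}\e_2^{1/4}\leq \delta^7$ is calibrated precisely so that $\dev_2(\e_2)$-quasirandomness of each component $P_{ij}$ is strong enough relative to the sparsity parameter $\delta$ to permit this conversion via the counting lemma, and the freedom of allowing $m_{ij}$ as large as $3^{\delta^{-4}}$ is what makes the bookkeeping tractable.
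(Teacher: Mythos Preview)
The paper does not prove this lemma; it is stated with attribution to Gowers (Lemmas 8.4 and 8.9 of \cite{Gowers.2007}) and then used as a black box in the appendix proof of Theorem~\ref{thm:reg2intro}. So there is no ``paper's own proof'' to compare against beyond the citation.

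Your outline is the correct strategy and is essentially Gowers': interpret failure of $\dev_{2,3}$ as a large octahedral (box) norm of $h=h_{H,G}$, decouple via Cauchy--Schwarz to obtain correlation of $h$ with a product of three functions supported on the $P_{ij}$, and then convert that correlation into a mean-square-density increment via convexity. The role you assign to the hypothesis $2^{21}\e_2^{1/4}\leq\delta^7$ (controlling triangle counts in sub-triads via Proposition~\ref{prop:counting}) is also right.

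One point of divergence worth flagging: your proposal to take $\calP(P_{ij})=\{A_{ij},P_{ij}\setminus A_{ij}\}$ is coarser than what Gowers actually does. In Gowers' Lemma~8.4 the partition of each $P_{ij}$ arises not from a single ``good rectangle'' but from level sets of auxiliary functions (essentially, from the possible behaviours of $h$ averaged over the third coordinate), and it is this finer dissection that accounts for the $3^{\delta^{-4}}$ bound and, more importantly, for the \emph{linear-in-$\e_1$} density deviation needed to land the $\e_1^2$ increment with the stated constant $2^{-10}$. A single Cauchy--Schwarz extraction of one sub-triad typically only yields a deviation of order $\e_1^{1/8}$ (three squarings), which after squaring would give an $\e_1^{1/4}$ increment rather than $\e_1^2$. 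So your parenthetical ``iterating the argument \ldots\ if the one-shot quantization is too lossy'' is not optional: to match the stated bound you must follow Gowers more closely and build the partition from the full family of level sets, not from a single pair $A_{ij},P_{ij}\setminus A_{ij}$.
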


We now state a define a type of multi-colored regular partition.

\begin{definition}\label{def:mcreg}
Suppose $G=(V; E_1,\ldots, E_k)$ where $X,Y,Z$ are sets and $V\times V=E_1\cup \ldots \cup E_k$ is a partition.  We say a partition $\calP$ of $V$ \emph{has $\dev_2(\e)$  with respect to $G$} if there is a set $\Sigma\subseteq \calP^2$ so that $|\bigcup_{(X,Y)\in \Sigma}X\times Y|\geq (1-\e)|V|^2$ and for all $(X,Y)\in \Sigma$, the bigraph $(V,V; E_i)$ has $\dev_2(\e)$ for each $1\leq i\leq k$.
\end{definition}

We will use Theorem \ref{thm:mcreg} below, which is a multi-colored version of Szemer\'{e}di's regularity lemma.  For original references on this theorem, see  \cite{Komlos.1996, Frankl.2002}. We also refer the reader to the discussion following Theorem B.1 \cite{Terry.2021b}, where it is observed that the bound $M$ in Theorem \ref{thm:mcreg} in does not depend on $r$, the number of colors. 

\begin{theorem}[Multi-colored regularity lemma]\label{thm:mcreg}
For all $r\geq 1$, $\e>0$, and $m\geq 1$, there is $M=M(\e,m)\leq Tw_m(\e^{-5})$ such that the following holds.  Suppose $G=(V;E_1,\ldots, E_r)$ where $|V|\geq M$ and $V=E_1\cup \ldots\cup E_r$ is a partition of $V\times V$. Then for any partition $\calP=\{ V_1, \ldots, V_m\}$ of $V$, there is is a partition, $\calP'$ refining $\calP$ so that $|\calP'|\leq M$ and $\calP'$ has $\dev_2(\e)$ with respect to $G$.  
\end{theorem}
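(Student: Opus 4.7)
The strategy is the classical energy-increment proof of Szemer\'edi's regularity lemma, adapted to handle $r$ colors simultaneously while keeping the bound independent of $r$. For any partition $\calQ$ of $V$ and any color $c$, let $q_{G_c}(\calQ) = \sum_{(X,Y) \in \calQ^2} (|X||Y|/|V|^2)\, d_{E_c}(X,Y)^2$ be the usual defect-form Szemer\'edi energy for the bigraph $(V,V;E_c)$, and define the joint energy
\[
q(\calQ) := \sum_{c=1}^r q_{G_c}(\calQ).
\]
The key observation is that $q(\calQ) \leq 1$ regardless of $r$: since $\{E_c\}$ partitions $V\times V$, for every pair $(X,Y)$ we have $\sum_c d_{E_c}(X,Y) = 1$ with $d_{E_c}(X,Y)\geq 0$, hence $\sum_c d_{E_c}(X,Y)^2 \leq 1$, and summing over pairs gives $q(\calQ)\leq 1$. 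Standard Cauchy--Schwarz shows $q$ is monotone non-decreasing under refinement.

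The engine of the proof is the following energy-increment step: if $\calQ$ is a partition of $V$ with $|\calQ|\leq k$ that fails to be $\e$-regular for $G$ in the sense of Definition \ref{def:mcreg} (interpreted via classical Szemer\'edi $\e$-regularity in each color), then there exists a refinement $\calQ'$ of $\calQ$ with $|\calQ'|\leq k\cdot 2^{k}$ and $q(\calQ') \geq q(\calQ) + \e^{5}$. To prove this I would, for every bad pair $(X,Y)$, select a single color $c(X,Y)$ witnessing the irregularity together with sub-pairs $(X',Y')$ witnessing the density deviation in color $c(X,Y)$; then refine every part of $\calQ$ by intersecting it with all witness sub-parts in which it participates. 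Because the energy $q$ is the \emph{sum} of the per-color energies $q_{G_c}$ rather than their average, each bad pair's defect-form gain enters $q$ in full (contributing to whichever $q_{G_c}$ its witness color picked), and summing over bad pairs — whose total mass is at least $\e|V|^2$ — delivers a gain of at least $\e^5$ in $q$, \emph{with no $1/r$ factor}.

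The theorem then follows by iteration. Beginning from $\calP_0 := \calP$, which has $m$ parts, I would repeatedly apply the increment step as long as the current partition fails $\e$-regularity. Since $q\in[0,1]$, the process terminates after at most $N\leq \e^{-5}$ iterations with a partition $\calP^*$ refining $\calP$ that is $\e$-regular for $G$. Each iteration multiplies the part count by at most $2^{|\calP_i|}$, yielding $|\calP^*|\leq Tw_m(N) \leq Tw_m(\e^{-5})$, which gives the claimed $M$. Finally, by Theorem \ref{thm:equiv}(1), classical $\e$-regularity of a sub-bigraph implies $\dev_2(\e)$ of that sub-bigraph, so $\calP^*$ satisfies the $\dev_2(\e)$ conclusion of Definition \ref{def:mcreg} simultaneously for all $r$ colors.

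The only place where the proof departs from the single-color classical argument is in the energy-increment step, and that is precisely where the $r$-independence is won: the partition structure $\bigsqcup_c E_c = V\times V$ keeps $q = \sum_c q_{G_c}$ bounded by $1$, while the witness mechanism assigns each bad pair to a single color so that individual gains add up without being averaged. Any remaining details (removing pairs of negligible mass, ensuring $|V|\geq M$ suffices to avoid degeneracy, and converting the $\e^5$ exponent to the one appearing in the tower bound) are routine and identical to the standard graph-theoretic proof.
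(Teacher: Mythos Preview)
The paper does not supply its own proof of Theorem~\ref{thm:mcreg}; it cites \cite{Komlos.1996, Frankl.2002} for the result and refers to the discussion after Theorem~B.1 of \cite{Terry.2021b} for the observation that the bound $M$ is independent of $r$. Your sketch is the standard energy-increment argument and is correct; in particular, your explanation of the $r$-independence---that the joint energy $q=\sum_c q_{G_c}$ stays bounded by $1$ because $\sum_c d_{E_c}(X,Y)=1$ forces $\sum_c d_{E_c}(X,Y)^2\le 1$, while each bad pair contributes its full defect to a single summand $q_{G_{c(X,Y)}}$---is exactly the mechanism the paper is pointing to via that reference.
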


We now give a proof of the regularity lemma, Theorem \ref{thm:reg2intro}. We point out this is merely a more rigorous account of the argument given in \cite{Gowers.2007}.

\vspace{2mm}

\noindent{\bf Proof of Theorem \ref{thm:reg2intro}.}
Let $\e_1>0$ and assume $\e_2:\mathbb{N}\rightarrow (0,1]$ satisfies $\e_2(x)\leq (\frac{1}{2^{21}x})^{12}$.  Let $p(x,y)=x^{7}(100)^{12}y^{12}$ and let $f(x)=Tw(p(\e_2(x)^{-1},\e_1^{-1}))$. 

The proof strategy is as follows.  We will define a sequence of decompositions $\calP(0),\calP(1),\ldots$, so that if $(\ell_i,t_i)$ is the complexity of $\calP(i)$, and $(\ell_{i+1},t_{i+1})$ the complexity of $\calP(i+1)$, then $t_{i+1}\leq \ell_{i+1}\leq f(\ell_i)$.  We will then argue that after some bounded number steps $M$ in the sequence, the resulting decomposition $\calP(M)$ will have the desired regularity properties.

We being by defining $\calP(0)$ to be the decomposition of $V$ consisting of $\calP_1(0)=\{V\}$ and $\calP_2(0)=\{V\times V\}$.  Set $\ell_0=t_0=1$, and note $\calP(0)$ is a $(\ell_0,t_0)$-decomposition.

Suppose now $i\geq 0$, and assume we have defined a $(\ell_i,t_i)$-decomposition $\calP(i)$ of $V$, consisting of  $\calP_1(i)$ and $\calP_2(i)$, where $t_i\leq \ell_i$.

By Theorem \ref{thm:mcreg}, there exists $\calP_1(i+1)$, a refinement of $\calP_1(i)$, so that $\calP_1(i+1)$ satisfies $\dev_2(\e_2(\ell_i))$ with respect to the tuple $(V,(P)_{P\in \calP_2(i)})$, in the sense of Definition \ref{def:mcreg}. Let $t_{i+1}=|\calP_1(i+1)|$.  By Theorem \ref{thm:mcreg}, we may assume $t_{i+1}\leq Tw_{t_i}(\e_2(\ell_i)^{-5})$.  Fix an enumeration $\calP_1(i+1)=\{V_1,\ldots, V_{t_{i+1}}\}$.  Let 
$$
\calQ_2(i)=\{P\cap (V_a\times V_b): P\in \calP_2(i), 1\leq a,b\leq t_{i+1}\},
$$
and fix an enumeration of the form 
$$
\calQ_2(i)=\{P_{ab}^{\alpha}: 1\leq a,b\leq t_{i+1},1\leq \alpha\leq \ell_i\},
$$
where for each $1\leq a,b\leq t_{i+1}$, $V_a\times V_b=\bigcup_{\alpha\leq \ell_i}P_{ab}^{\alpha}$.  Given $1\leq a,b\leq t_{i+1}$ and $1\leq \alpha\leq \ell_i$, let $d_{ab}^{\alpha}$ be such that $|P_{ij}^{\alpha}|=d_{ab}^{\alpha}|V_a||V_b|$.  

Note $\calQ(i):=(\calP_1(i+1),\calQ_2(i))$ is a decomposition of $V$ of complexity $(t_{i+1},\ell_{i})$.  Given a triad $G_{abc}^{\alpha\beta\gamma}:=(V_a,V_b,V_c; P_{ab}^{\alpha}, P_{ac}^{\beta}, P_{bc}^{\gamma})$ from $Triads(\calQ(i))$, set 
$$
d_{abc}^{\alpha\beta\gamma}=|E\cap K_3(G_{abc}^{\alpha\beta\gamma})|/|K_3(G_{abc}^{\alpha\beta\gamma})|.
$$
Let $\Omega$ be the set of triads $(V_a,V_b,V_c; P_{ab}^{\alpha}, P_{ac}^{\beta}, P_{bc}^{\gamma})$ from $Triads(\calQ(i))$ satisfying
$$
\min\{|V_a|,|V_b|,|V_c|\}\geq \e_1 n/48t_{i+1}
$$
 and 
$$
P_{ab}^{\alpha}\geq \e_1|V_a||V_b|/48\ell_i, \text{ }P_{ac}^{\beta}\geq \e_1|V_a||V_c|/48\ell_i,\text{ and }P_{bc}^{\gamma}\geq \e_1|V_b||V_c|/48\ell_i.
$$
By Lemma \ref{lem:nontrivialtriad}, $|\bigcup_{G\in \Omega}K_3(G)|\geq (1-\e_1/48)|V|^3$.  Further, by Proposition \ref{prop:counting}, we have that for each $G\in \Omega$, 
$$
|K_3(G)|=(1\pm \e_2(\ell_i))d^{\alpha}_{ab}d^{\beta}_{ac}d^{\gamma}_{bc}|V_i||V_j||V_s|.
$$

Now define $\Omega_{good}$ to be the set of triads in $\Omega$ which satisfy $\dev_{2,3}(\e_1, \e_2(\ell_i))$ with respect to $H$, and let $\Omega_{\textrm{bad}}=\Omega\setminus \Omega_{good}$.  If $|\bigcup \Omega_{good}|\geq (1-\e_1)|V|^3$, then let $M=i$ and end the construction.  Otherwise, we assume $|\bigcup \Omega_{good}|< (1-\e_1)|V|^3$.  This implies 
$$
|\bigcup \Omega_{\textrm{bad}}|\geq |\bigcup \Omega|-(1-\e_1)|V|^3\geq (1-\e_1/48)|V|^3-(1-\e_1)|V|^3\geq \e_1|V|^3/2.
$$
For each triad $G_{abc}^{\alpha\beta\gamma}\in \Omega_{\textrm{bad}}$, apply Lemma \ref{lem:mainlem} to obtain integers $m_{abc,\alpha\beta\gamma}(ab)$, $m_{abc,\alpha\beta\gamma}(ac)$, and $m_{abc,\alpha\beta\gamma}(ac)$ satisfying
$$
\max\{m_{abc,\alpha\beta\gamma}(ab),m_{abc,\alpha\beta\gamma}(ac),m_{abc,\alpha\beta\gamma}(ac)\}\leq 3^{(d_{ab}^{\alpha}d_{ac}^{\beta}d_{bc}^{\gamma})^{-4}},
$$
and a partition $\calP_{abc,\alpha\beta\gamma}(P_{ab}^{\alpha})$ of $P_{ab}^{\alpha}$ of size $m_{abc,\alpha\beta\gamma}(ab)$,  a partition $\calP_{abc,\alpha\beta\gamma}(P_{ac}^{\beta})$ of $P_{ac}^{\beta}$ of size $m_{abc,\alpha\beta\gamma}(ac)$, and a partition $\calP_{abc,\alpha\beta\gamma}(P_{bc}^{\gamma})$ of $P_{bc}^{\gamma}$ of size $m_{abc,\alpha\beta\gamma}(bc)$ so that for any decomposition $\calR$ of $V_a\cup V_b\cup V_c$ with $\calR_1$ refining $\{V_a,V_b,V_c\}$ and with $\calR_2$ refining  $\calP_{abc,\alpha\beta\gamma}(P_{ab}^{\alpha})\cup \calP_{abc,\alpha\beta\gamma}(P_{ac}^{\beta})\cup \calP_{abc,\alpha\beta\gamma}(P_{bc}^{\gamma})$, we have
$$
msd_{f|_{K_3(G_{abc}^{\alpha\beta\gamma})}}(\calR)\geq (d_{abc}^{\alpha\beta\gamma})^2+2^{-10}\e_1^2.
$$

Define $\calP_2(i+1)$ be the common refinement of all partitions of the form $\calP_{abc,\alpha\beta\gamma}(P_{ab}^{\alpha})\cup \calP_{abc,\alpha\beta\gamma}(P_{ac}^{\beta})\cup \calP_{abc,\alpha\beta\gamma}(P_{bc}^{\gamma})$ for $G_{abc}^{\alpha\beta\gamma}\in \Omega_{\textrm{bad}}$. By above, $msd_H(\calP(i+1))$ is at least
\begin{align*}
&\sum_{G_{abc}^{\alpha\beta\gamma}\in \Omega_{good}}(d_{abc}^{\alpha\beta\gamma})^2|K_3(G_{abc}^{\alpha\beta\gamma})|n^{-3}+\sum_{G_{abc}^{\alpha\beta\gamma}\in \Omega_{\textrm{bad}}}((d_{abc}^{\alpha\beta\gamma})^2+ 2^{-10}\e_1)|K_3(G_{abc}^{\alpha\beta\gamma})|n^{-3}\\
&=msd_H(\calP(i))+\sum_{G\in \Omega_{\textrm{bad}}} 2^{-10}\e_1|K_3(G)|n^{-3}\\
&= msd_H(\calP(i))+2^{-10}\e_1|\bigcup_{G\in \Omega_{\textrm{bad}}}K_3(G)|n^{-3}\\
&\geq msd_H(\calP(i))+2^{-10}\e_1^2.
\end{align*}
Note for each $1\leq a,b\leq t_{i+1}$, $\calP_2(i+1)$ partitions $V_a\times V_b$ into at most the following number of parts.
\begin{align}\label{111}
\sum_{\alpha=1}^{\ell_i}\prod_{\{(c,\beta,\gamma)\in [t_{i+1}]\times [\ell_i]\times [\ell_i]: G_{abc}^{\alpha\beta\gamma}\in \Omega_{\textrm{bad}}\}} 3^{(d_{ab}^{\alpha}d_{ac}^{\beta}d_{bc}^{\gamma})^{-4}}.
\end{align}
Recall that for each $G_{abc}^{\alpha\beta\gamma}\in \Omega_{\textrm{bad}}$, since $G_{abc}^{\alpha\beta\gamma}\in \Omega$, we have 
$$
d_{ab}^{\alpha}d_{ac}^{\beta}d_{bc}^{\gamma}\geq (\e_1/48\ell_i)^3.
$$
Combining with (\ref{111}), we have that for each $1\leq a,b\leq t_{i+1}$, $\calP_2(i+1)$ partitions $V_a\times V_b$ into at most the following number of parts.
\begin{align*}
\ell_{i+1}:= \ell_i(3^{(\e_1^{-1}48\ell_i)^{12}})^{t_{i+1}\ell_i^2}=\ell_i3^{\e_1^{-12}t_{i+1}(48)^{12}\ell_i^{14}}&=\ell_i3^{\e_1^{-12}Tw_{t_i}(\e_2(\ell_i)^{-5})(48)^{12}\ell_i^{14}}\\
&\leq 3^{\e_1^{-12}Tw_{t_i}(\e_2(\ell_i)^{-5})(48)^{12}\ell_i^{15}}\\
&\leq Tw_{t_i}(\e_1^{-12}\e_2(\ell_i)^{-5}(100)^{12}\ell_i^{15})\\
&\leq Tw(t_i\e_1^{-12}\e_2(\ell_i)^{-5}(100)^{12}\ell_i^{15})\\
&\leq Tw(\e_1^{-12}\e_2(\ell_i)^{-5}(100)^{12}\ell_i^{16})\\
&\leq Tw(\e_1^{-12}\e_2(\ell_i)^{-7}(100)^{12})\\
&=f(\ell_i),
\end{align*}
where the last two inequalities use that $t_i\leq \ell_i$ and our assumption on $\e_2(x)$.  This concludes step $i+1$ of our construction.  

We note that because the mean square density goes up each step by $2^{-10}\e^2_1$, this can proceed at most some $M\leq 2^{10}\e_1^{-2}$ many times.  The resulting decomposition $\calP(M)$ will satisfy $\dev_{2,3}(\e_1,\e_2(\ell_M))$, and will have complexity $(\ell_M,t_M)$ satisfying
$$
t_M\leq \ell_M\leq f^{(M)}(1).
$$
\qed

\subsection{Basic facts from Section \ref{ss:CF}}

This section contains proofs of routine facts from Section \ref{ss:CF}.  We begin with the proof of Fact \ref{fact:index}, which says that conservative refinements have similar index values.

\vspace{2mm}

\noindent{\bf Proof of Fact \ref{fact:index}.} Say $|V|=n$.  Fix a new enumeration $\calU=\{U_{ij}: 1\leq i\leq m, 1\leq j\leq t\}$ so that $Z_i=U_{i1}\cup \ldots \cup U_{it}$.  Note $s=tm$.  Given $1\leq i,j\leq t$, let $\Sigma_{ij}$ be the set of pairs $u,v\in [t]$ so that  
$$
d_G(U_{iu},U_{jv})\approx_{\e}d_G(Z_{i},Z_{j}).
$$
By assumption, $\sum_{ij\in {[m]\choose 2}}|\Sigma_{ij}|\geq (1-\e){s\choose 2}$.  Note this implies $\sum_{i\in [m]}{[t]\choose 2}=m{t\choose 2}<2\e {s\choose 2}$.  Since $\calU$ is an equipartition, $n$ is large, and $m{t\choose 2}<2\e {s\choose 2}$, we have
\begin{align*}
q_G(\calU)&=\frac{1}{n^2}\sum_{ij\in {[m]\choose 2}} \sum_{u,v\in [t]} d_G^2(U_{iu}, U_{jv})|U_{iu}||U_{jv}|+\sum_{i\in [m]}\sum_{uv\in {[t]\choose 2}}d_G^2(U_{iu}, U_{iv})|U_{iu}||U_{iv}|\\
&\leq \frac{1+ \e^9}{s^2}\Big(\sum_{ij\in {[m]\choose 2}} \sum_{u,v\in [t]} d_G^2(U_{iu}, U_{jv})+\sum_{i\in [m]}\sum_{uv\in {[t]\choose 2}}d_G^2(U_{iu}, U_{iv})\Big)\\
&\leq \frac{1+ \e^9}{s^2}\Big(\sum_{ij\in {[m]\choose 2}} \sum_{u,v\in [t]} d_G^2(U_{iu}, U_{jv})+ 2\e{s\choose 2}\Big)\\
&\leq \Big(\frac{1+\e^9}{s^2}\sum_{ij\in {[m]\choose 2}} \sum_{u,v\in [t]} d_G^2(U_{iu}, U_{jv})\Big)+ 3\e.
\end{align*}
Now observe
\begin{align*}
\sum_{ij\in {[m]\choose 2}} \sum_{u,v\in [t]} d_G^2(U_{iu}, U_{jv})&\leq \Big(\sum_{ij\in {[m]\choose 2}} \sum_{uv\in \Sigma_{ij}} d_G^2(U_{iu}, U_{jv})\Big)+\Big(\sum_{ij\in {[m]\choose 2}}|[t]^2\setminus \Sigma_{ij}|\Big) \\
&\leq \Big(\sum_{ij\in {[m]\choose 2}} \sum_{uv\in \Sigma_{ij}} d_G^2(U_{iu}, U_{jv})\Big)+ 2\e{s\choose 2},
\end{align*}
where the second equality is by assumption.  We then have
\begin{align*}
\sum_{ij\in {[m]\choose 2}} \sum_{uv\in \Sigma_{ij}} d_G^2(U_{iu}, U_{jv})\leq \sum_{ij\in {[m]\choose 2}}|\Sigma_{ij}|(d_G^2(Z_i,Z_j)+ \e)&\leq \sum_{ij\in {[m]\choose 2}}t^2(d_G^2(Z_i,Z_j)+ \e)\\
&= t^2\sum_{ij\in {[m]\choose 2}}(d_G^2(Z_i,Z_j)+ \e)\\
&=t^2\sum_{ij\in {[m]\choose 2}}d_G^2(Z_i,Z_j)+\e t^2{m\choose 2}
\end{align*}

Combining all this we have
\begin{align*}
q_G(\calU)&\leq 3\e+\Big(\frac{1+\e^9}{s^2}\Big)2\e{s\choose 2}+\Big(\frac{1+\e^9}{s^2}\Big)t^2\sum_{ij\in {[m]\choose 2}}d_G^2(Z_i,Z_j)+\Big(\frac{1+\e^9}{s^2}\Big)\e t^2{m\choose 2}\\
&\leq 100\e+\Big(\frac{1+\e^9}{s^2}\Big)t^2\sum_{ij\in {[m]\choose 2}}d_G^2(Z_i,Z_j).
\end{align*}
Now observe, since $\calZ$ is an equipartition, $n$ is large, and $m=s/t$,
\begin{align*}
\frac{t^2}{s^2}\sum_{ij\in {[m]\choose 2}}d_G^2(Z_i,Z_j)\leq (1+\e^9)\frac{1}{n^2}\sum_{ij\in {[m]\choose 2}}d_G^2(Z_i,Z_j)|Z_i||Z_j|=(1+\e^9)q_G(\calZ).
\end{align*}
Combining we have $q_G(\calU)\leq 100\e+(1+\e^9)^2q_G(\calZ)\leq 101\e+q_G(\calZ)$, as desired.
\qed

\vspace{2mm}

We now prove Lemma \ref{lem:standreg}, which says that in a quasirandom tripartite graph, most edges are in the expected number of triangles. 

\vspace{2mm}

\noindent{\bf Proof of Lemma \ref{lem:standreg}.} Let $d_{UV}$ be such that $|E\cap K_2[U,V]|=d_{UV}|U||V|$, and define $d_{UW},d_{VW}$ analogously.  By assumption, $d_{UV},d_{UW},d_{VW|}\in (d-\e,d+\e)$.   Set
\begin{align*}
U'=\{u\in U: |N_G(u)\cap V|=(1\pm \e^{1/2})d_{UV}|V|\text{ and }|N_G(u)\cap W|=(1\pm \e^{1/2})d_{UW}|W|\}.
\end{align*}
By standard arguments, $|U'|\geq (1-2\e^{1/2})|U|$.  Now fix $u\in U'$ and set $V_u=N_G(u)\cap V$ and $W_u=N_G(u)\cap W$.  By definition of $U'$, $|V_u|=(1\pm \e^{1/2})d_{UV}|V|$ and $|W_u|=(1\pm \e^{1/2})d_{UW}|W|$.  By Lemma \ref{lem:sldev}, $G[V_u,W_u]$ has density within $\e$ of $d_{VW}$ and is $2d_{VW}^{-1}\e$-regular.  By assumption, $2d_{VW}^{-1}\e\leq \e^{1/2}$, so $G[V_u,W_u]$ is $\e^{1/2}$-regular. Let 
$$
V_u'=\{v\in V_u: |d_G(v)\cap W_u|=(1\pm 2\e^{1/4})d_{VW}|W_u|\}.
$$
Since $G[V_u,W_u]$ has density within $\e$ of $d_{VW}$ and is $\e^{1/2}$-regular, standard arguments again show that $|V_u'|\geq (1-2\e^{1/4})|V_u|$.  Note that for all $v\in V_u'$, 
\begin{align*}
|N_G(u)\cap N_G(v)|&=(1\pm 2\e^{1/4})d_{VW}|W_u|\\
&=(1\pm 2\e^{1/4})d_{VW}(1\pm \e^{1/2})d_{UW}|W|\\
&=(1\pm 2\e^{1/4})(1\pm \e^{1/2})(1\pm \e)^2d_{VW}d_{UW}|W|\\
&=(1\pm \e^{1/8})d_{VW}d_{UW}|W|,
\end{align*}
where the last inequality uses the assumptions on $\e$ and $d$. Thus for all $u\in U'$ and $v\in V_u$, $uv\in Y$.  By our size estimates above this, shows 
$$
|Y|\geq (1-2\e^{1/2})|U|(1- 2\e^{1/4})d_{UV}|V|\geq(1-2\e^{1/2})(1- 2\e^{1/4})(1-\e)d_{UV}|U||V|\geq (1-\e^{1/8})|E\cap K_2[U,V]|,
$$
where the last inequality is by assumptions on $\e$ and $d_{UV}$.
\qed

\vspace{2mm}

We now prove Fact \ref{fact:refinements}, which says we can find approximate refinements.

\vspace{2mm}

\noindent{\bf Proof of Fact \ref{fact:refinements}.} Let $\calP=\{X_1,\ldots, X_s\}$,  $\calP'=\{Y_1,\ldots, Y_t\}$, and let $n=|V|$.  For each $1\leq i\leq s$, $1\leq j\leq t$, let $Z_{ij}=X_i\cap Y_j$.  For each $Z_{ij}$, let $Z_{ij}=Z_{ij}^0\cup Z_{ij}^1\cup \ldots \cup Z_{ij}^{m_{ij}}$ be any partition with the property that for each $1\leq u\leq m_{ij}$, $|Z_{ij}^u|=\lceil \e n/ts \rceil $ and $|Z_{ij}^0|<\e n/ts$.  Let
$$
W_0=\bigcup_{1\leq i\leq s}\bigcup_{1\leq j\leq t}Z_{ij}^0.
$$ 
Note $|W_0|\leq \sum_{1\leq i\leq s}\sum_{1\leq j\leq t}|Z_{ij}^0|< ts(\e n/ts)=\e n$.  Let $W_1,\ldots, W_m$ be an enumeration of the set $\{Z_{ij}^u: 1\leq i\leq s, 1\leq j\leq t, 1\leq u\leq m_{ij}\}$.  Now let $W_0=W_1'\cup \ldots \cup W_m'$ be any equipartition, and for each $1\leq v\leq m$, define $R_v=W_v\cup W_v'$. Clearly $V=R_1\cup \ldots \cup R_m$ is an equipartition of $V$ and $m\leq \e^{-1} st$.  By construction, for all $1\leq v\leq m$, there some $Z_{ij}^u$ so that $|R_v\setminus Z_{ij}^u|\leq |W_v'|\leq \e n /m\leq \e |R_v|$.   Thus $\calQ\preceq_{\e}\calP$ and $\calQ\preceq_{\e}\calP'$, as desired.
\qed

\vspace{2mm}

\subsection{Slicing Lemma for $\dev_{2,3}$} In this subsection, we prove Lemma \ref{lem:sldev23}, which is a slicing lemma for $\dev_{2,3}$-regularity.  We begin with setting notation for the copies of $K_{2,2}$ in a bigraph.  Given a bigraph $G=(A,B; E)$, let 
$$
K_{2,2}^G=\{(a_0,a_1,b_0,b_1)\in A^2\times B^2: \text{ for each }(i,j)\in \{0,1\}^2, (a_i,b_j)\in E\}.
$$
The following fact is well known.

\begin{fact}\label{fact:c4count}
Suppose $G=(A,B; E)$ is a bigraph and $E=d|A||B|$.  Then $|K_{2,2}^G|\geq d^4|A|^2|B|^2$.
\end{fact}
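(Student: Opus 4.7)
The plan is to establish this via two applications of the Cauchy--Schwarz inequality (equivalently, convexity of $x \mapsto x^2$), which is the standard approach for counting $K_{2,2}$'s in bipartite graphs.

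First I would rewrite the count by grouping on the $A$-side. For each pair $(a_0, a_1) \in A^2$, the contribution to $|K_{2,2}^G|$ is the number of pairs $(b_0, b_1) \in B^2$ such that both $b_0$ and $b_1$ lie in $N_E(a_0) \cap N_E(a_1)$. This is exactly $|N_E(a_0) \cap N_E(a_1)|^2$, so
\[
|K_{2,2}^G| = \sum_{(a_0, a_1) \in A^2} |N_E(a_0) \cap N_E(a_1)|^2.
\]
By Cauchy--Schwarz applied to the $|A|^2$ terms in this sum,
\[
|K_{2,2}^G| \geq \frac{1}{|A|^2}\Bigl(\sum_{(a_0, a_1) \in A^2} |N_E(a_0) \cap N_E(a_1)|\Bigr)^2.
\]

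Next I would swap the order of summation in the inner sum: switching to a sum over $b \in B$ of the number of pairs $(a_0, a_1)$ with $a_0, a_1 \in N_E(b)$ gives $\sum_{b \in B} |N_E(b)|^2$. A second application of Cauchy--Schwarz (this time over $B$) yields
\[
\sum_{b \in B} |N_E(b)|^2 \geq \frac{1}{|B|}\Bigl(\sum_{b \in B} |N_E(b)|\Bigr)^2 = \frac{|E|^2}{|B|}.
\]
Combining the two inequalities,
\[
|K_{2,2}^G| \geq \frac{1}{|A|^2} \cdot \frac{|E|^4}{|B|^2} = \frac{(d|A||B|)^4}{|A|^2 |B|^2} = d^4|A|^2|B|^2,
\]
which is exactly the desired bound. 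There is no real obstacle here; this is a routine double application of Cauchy--Schwarz, and the only care needed is to make sure the first application is genuinely over the $|A|^2$ ordered pairs (not just the ${|A| \choose 2}$ unordered ones), so that the constant comes out cleanly.
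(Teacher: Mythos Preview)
Your proof is correct and is exactly the standard double Cauchy--Schwarz argument; the paper does not give a proof of this fact, simply declaring it well known, so your argument is precisely what the reader would supply.
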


Similarly, if $H=(A,B,C;F)$ is a trigraph, we define
$$
K_{2,2,2}^H=\{(a_0,a_1,b_0,b_1,c_0,c_1)\in A^2\times B^2\times C^2: \text{ for each }(i,j,k)\in \{0,1\}^3, (a_i,b_j,c_k)\in F\}.
$$
We will use the following analogue of Fact \ref{fact:c4count}, which follows from Fact \ref{fact:c4count} and several applications of Cauchy-Schwarz.
\begin{fact}\label{fact:k222count}
Let $H=(A,B,C;F)$ be a trigraph with $E=d|A||B|$.  Then $|K_{2,2,2}^G|\geq d^8|A|^2|B|^2|C|^2$.
\end{fact}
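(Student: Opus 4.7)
\noindent\textbf{Proof proposal for Fact \ref{fact:k222count}.}
The plan is to reduce to the bigraph statement of Fact \ref{fact:c4count} by slicing $H$ along $C$, then use Jensen's inequality to handle the averaging over $C$, and finish with a single application of Cauchy--Schwarz to upgrade from an ``anchored'' count to the genuine $K_{2,2,2}$ count. (I am assuming the hypothesis as stated should read $|F|=d|A||B||C|$, since otherwise the exponent on $d$ on the right-hand side would be incompatible with the dimensions.)

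First, for each $c\in C$ I would set $E_c=\{(a,b)\in A\times B:(a,b,c)\in F\}$ and $d_c=|E_c|/(|A||B|)$. By definition $\sum_{c\in C}d_c=d|C|$, and Fact \ref{fact:c4count} applied to the bigraph $G_c=(A,B;E_c)$ gives $|K_{2,2}^{G_c}|\geq d_c^{4}|A|^2|B|^2$ for each $c$. Summing and applying the power-mean (Jensen) inequality to $x\mapsto x^{4}$ yields
\[
\sum_{c\in C}|K_{2,2}^{G_c}|\ \geq\ |A|^2|B|^2\sum_{c\in C}d_c^{4}\ \geq\ |A|^2|B|^2\cdot |C|\cdot\Bigl(\frac{1}{|C|}\sum_{c\in C}d_c\Bigr)^{4}\ =\ d^{4}|A|^2|B|^2|C|.
\]

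Next, I would rewrite the left-hand sum combinatorially: it counts tuples $(a_0,a_1,b_0,b_1,c)$ with $(a_i,b_j,c)\in F$ for all $(i,j)\in\{0,1\}^2$. Defining
\[
p(a_0,a_1,b_0,b_1)\ :=\ \bigl|\{c\in C:(a_i,b_j,c)\in F\text{ for all }(i,j)\in\{0,1\}^2\}\bigr|,
\]
the inequality above becomes $\sum p\geq d^{4}|A|^2|B|^2|C|$. One application of the Cauchy--Schwarz inequality then gives
\[
\sum p^{2}\ \geq\ \frac{\bigl(\sum p\bigr)^{2}}{|A|^2|B|^2}\ \geq\ d^{8}|A|^2|B|^2|C|^2,
\]
and $\sum p^{2}$ is visibly $|K_{2,2,2}^{H}|$ (expanding the square and pairing two choices of $c$ against the common edge set). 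This completes the proof.

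There is no real obstacle here: each step is a standard application of a well-known inequality, and the bookkeeping on exponents of $d$ matches exactly ($4$ from Fact \ref{fact:c4count}, squared to $8$ by Cauchy--Schwarz, with the Jensen step being ``lossless'' in the exponent). The only mild subtlety is noticing that slicing along $C$ is preferable to a direct triple application of Cauchy--Schwarz, because then we can invoke Fact \ref{fact:c4count} as a black box and only have to do a single Cauchy--Schwarz at the top level.
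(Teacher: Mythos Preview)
Your proof is correct and matches what the paper indicates: the paper does not give a detailed argument but simply states that the fact ``follows from Fact \ref{fact:c4count} and several applications of Cauchy--Schwarz,'' and your slice-then-average approach is exactly such an argument (your Jensen step for $x\mapsto x^4$ is itself two nested Cauchy--Schwarz inequalities). Your identification of the typo in the hypothesis is also correct.
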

%
One important ingredient in our proof of the slicing lemma, is the following quantitative proof that $\dev_{2,3}$-quasirandomenss implies so-called ``disc" quasirandomenss, due to Nagle, R\"{o}dl, and Schacht.  To state this precisely, we need the following definition.

\begin{definition}
Let $\e_1,\e_2>0$. Suppose $H=(V_1,V_2,V_3; F)$ is a trigraph and $G=(V_1,V_2,V_3;E)$ is a triad underlying $H$.
 We say that \emph{$(H,G)$ has $\disc_{2,3}(\e_1,\e_2)$} if for each $1\leq i,j\leq 3$, $G[V_i,V_j]$ is $\e_2$-regular, and for every subgraph $G'\subseteq G$,
\[||E(H)\cap K_3(G')|-d_G(H)|K_3(G')||\leq \e_1 d_{12}d_{13}d_{23}|V_1||V_2||V_3|,\]
where for each $1\leq i<j\leq 3$, $d_{ij}=d_G(V_i,V_j)$
\end{definition}

We now state a  theorem of Nagle, R\"{o}dl, and Schacht, which gives an improved quantitative proof that $\dev_{2,3}$-quasirandomness implies $\disc_{2,3}$-quasirandomness.  

\begin{theorem}[Proposition 1.4 in \cite{NRS}]\label{thm:translateapp}
There is a polynomial $p(x)$ so that the following holds.  For all $0<\e_1,d_1,d_2<1$ and all $0<\e_2<p(d_2)$,  the following holds.  Suppose $H=(A,B,C;F)$ is a trigraph,  $G$ is a triad whose component bigraphs are all $\e_2$-regular with density at least $d_2$.  Assume $G$ underlies $H$, $d_H(G)=d_1$, and  
$$
\sum_{u_0,u_1\in A}\sum_{w_0,w_1\in B}\sum_{z_0,z_1\in C}\prod_{(i,j,k)\in \{0,1\}^3}h_{H,G}(u_i,w_j,z_k)\leq \e_1d_2^{12}|A|^2|B|^2|C|^2,
$$
where $h_{H,G}(x,y,z)=1-d_1$ if $(x,y,z)\in E\cap K_3(G)$, $h_{H,G}(x,y,z)=-d_1$ if $(x,y,z)\in K_3(G)\setminus E$, and $h_{H,G}(x,y,z)=0$ if $(x,y,z)\notin K_3(G)$.  Then $(H,G)$ has $\disc_{2,3}((2\e_1)^{1/8},\e_2)$.
\end{theorem}

\vspace{2mm}

The last lemma we need is another corollary of the counting lemma (see e.g. \cite{Gowers.20063gk}).

\begin{lemma}\label{lem:k222count}
There exists a constant $S\geq 1$ so that the following holds. Suppose $0<\e<1$ and $A,B,C$ are sets, and  $G$ is a triad with vertex sets $A,B,C$ whose component bigraphs each have $\dev_2(\e)$ and densities $d_{AB},d_{AC},d_{BC}$, respectively.  Then 
$$
||K_{2,2,2}^G|-d_{AB}^2d_{AC}^2d_{BC}^2|A|^2|B|^2|C|^2|\leq \e^{1/S}|A|^2|B|^2|C|^2.
$$
\end{lemma}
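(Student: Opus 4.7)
The plan is to apply the counting lemma, Proposition \ref{prop:counting}, to an auxiliary triad $\widetilde G$ obtained by squaring the vertex sets of $G$.  Explicitly, $\widetilde G$ has vertex sets $A^2, B^2, C^2$, with component bigraph $\widetilde E_{AB} \subseteq A^2 \times B^2$ consisting of those pairs $((a_0, a_1), (b_0, b_1))$ for which $(a_i, b_j) \in E_{AB}$ for every $(i, j) \in \{0, 1\}^2$, and analogously for $\widetilde E_{AC}, \widetilde E_{BC}$.  The key observation is that an ordered triangle in $\widetilde G$ corresponds exactly to an element of $K_{2,2,2}^G$, so $|K_3(\widetilde G)| = |K_{2,2,2}^G|$, and it suffices to analyze $\widetilde G$ via the counting lemma.

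First, I would estimate the densities of the bigraphs $\widetilde E_{XY}$.  Writing $\chi_{AB}(a, b) = 1_{(a,b) \in E_{AB}} = d_{AB} + g_{AB}(a, b)$, the indicator of $\widetilde E_{AB}$ factors as $\prod_{i, j \in \{0,1\}} \chi_{AB}(a_i, b_j)$; expanding and summing produces a main term proportional to the appropriate power of $d_{AB}$ plus a bounded number of error terms, each containing at least one factor of $g_{AB}$.  These error terms are controlled either directly by the $\dev_2(\e)$ condition on $E_{AB}$ (for the term with all $g$'s) or by iterated Cauchy--Schwarz combined with $\dev_2(\e)$ (for the mixed terms), and analogous estimates hold for $\widetilde E_{AC}$ and $\widetilde E_{BC}$.

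The main technical step is verifying that each $\widetilde E_{XY}$ itself satisfies $\dev_2(\e^{1/O(1)})$ as a bigraph on the squared vertex sets.  This amounts to bounding
$$
\sum \prod_{(i, j) \in \{0,1\}^2} \widetilde g_{AB}\big((\alpha_{i,0}, \alpha_{i,1}), (\beta_{j,0}, \beta_{j,1})\big),
$$
where $\widetilde g_{AB}$ is the centered indicator of $\widetilde E_{AB}$.  Substituting $1_{\widetilde E_{AB}} = \prod_{p, q} \chi_{AB}$, decomposing each $\chi_{AB} = d_{AB} + g_{AB}$, and expanding yields a bounded number of multilinear expressions in $g_{AB}$ over configurations of at most eight vertices in each of $A$ and $B$.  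Each such expression can then be reduced by iterated Cauchy--Schwarz to the $\dev_2(\e)$ bound on $E_{AB}$, with a total exponent loss governed by an absolute constant.

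Once the component bigraphs of $\widetilde G$ are shown to have the appropriate densities and to satisfy $\dev_2(\e^{1/O(1)})$, I would apply Proposition \ref{prop:counting} to $\widetilde G$ to obtain $|K_3(\widetilde G)|$ up to an error of $O(\e^{1/O(1)}) |A|^2 |B|^2 |C|^2$; substituting the density estimates then gives the claimed bound, with $S$ absorbing the cumulative exponent loss.  The main obstacle is verifying the inherited $\dev_2$-condition on the auxiliary bigraphs, which is a standard but lengthy piece of multilinear bookkeeping typical in the quasirandomness literature, and which dictates the size of the absolute constant $S$.
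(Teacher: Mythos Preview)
Your approach is correct, but it takes a different and more elaborate route than the paper. The paper does not give a proof at all: it simply remarks that the lemma is ``another corollary of the counting lemma'' and cites Lemma~3.4 of \cite{Gowers.20063gk}. That lemma is the general graph counting lemma, which directly controls the count of \emph{any} fixed subgraph (here the tripartite octahedron $K_{2,2,2}$ on six vertices, with two vertices in each of $A,B,C$) in a system of $\dev_2(\e)$ bigraphs; the stated bound drops out immediately with $S$ the exponent loss coming from that lemma.

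Your route instead reduces to the \emph{triangle} case (Proposition~\ref{prop:counting}) via a tensor-power construction on the vertex sets. This works, and has the virtue of using only the triangle-counting statement actually quoted in the paper. The cost is the ``main technical step'' you identify: showing that the squared bigraphs $\widetilde E_{XY}$ inherit $\dev_2(\e^{1/O(1)})$. As you note, this amounts to controlling a $K_{4,4}$-type count in the original bigraph by iterated Cauchy--Schwarz against the $\dev_2(\e)$ hypothesis. That is exactly the kind of argument that proves the general counting lemma in the first place, so in effect you are reproving a special case of Gowers' Lemma~3.4 en route. The paper's one-line citation is shorter; your argument is more self-contained.

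One small remark: carrying your density computation through, the main term for $|K_3(\widetilde G)|$ comes out as $d_{AB}^4 d_{AC}^4 d_{BC}^4 |A|^2|B|^2|C|^2$ (twelve edges in the octahedron, four between each pair of parts), not the squared product appearing in the displayed statement. This is consistent with your method and with the general counting lemma; the exponent $2$ in the statement appears to be a typo in the paper, and does not affect the validity of either argument.
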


\vspace{2mm}

\noindent{\bf Proof of Lemma \ref{lem:sldev23}}
Let $S$ be as in Lemma \ref{lem:k222count} and set $K=200000000S$.  Define $p(x,y,z)=(xyz/100)^{20000000K}$.  Assume $0<\e_1,\e_2,d_2,\gamma<1$ satisfy $\e_2<p(\e_1,d_2,\gamma)$, and $G=(A,B,C;E)$, and $H=(A,B,C; F)$ satisfy the hypotheses of Lemma \ref{lem:sldev23}.

Assume $A'\subseteq A$, $B'\subseteq B$, and $C'\subseteq C$ satisfy $|A'|\geq \gamma |A|$, $|B'|\geq \gamma |B|$ and $|C'|\geq \gamma |C|$. Let $d_1=d_H(G)$. To ease notation set
\begin{align*}
A_0=A',\text{ }B_0=B',\text{ and }C_0=C'\\
A_1=A\setminus A_0,\text{ }B\setminus B_0,\text{ and }C\setminus C_0.
\end{align*}
We now fix some more notation. Given $i,j,k\in \{0,1\}$, define $d_{A_iB_j},d_{A_iC_k},d_{B_iC_k}$ be the densities of $G[A_i,B_j]$, $G[A_i,C_k]$ and $G[B_j,C_k]$ respectively, and let $G_{ijk}$ be the triad whose component bigraphs are $G[A_i,B_j]$, $G[A_i,B_j]$ and $G[B_j,C_k]$, and set $d_{ijk}=d_H(G_{ijk})$.   We now define two auxiliary parameters, 
$$
\mu=\e_1^{1/100}d_2^{6}\gamma^3\text{ and }\e_2':=2\mu^{-1}\e_2^{1/12}.
$$
We next prove a claim which says that $d_{ijk}$ is close to $d_1$ when $A_i,B_j,C_k$ are not too small.

\begin{claim}\label{cl:dense1}
Assume $i,j,k\in \{0,1\}$ are such that $|A_i|\geq \mu |A|$, $|B_j|\geq \mu |B|$ and $|C_k|\geq \mu |C|$.  Then $|d_{ijk}-d_1|\leq 2\mu^{-3}(2\e_1)^{1/8}$.
\end{claim}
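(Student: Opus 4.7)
The plan is to pass from the $\dev_{2,3}$ hypothesis on $(H|G)$ to a more user-friendly $\disc_{2,3}$ statement via Theorem~\ref{thm:translate}, then apply discrepancy to an appropriate edge-subtriad of $G$ to control $|F\cap K_3(G_{ijk})|$, while separately controlling $|K_3(G_{ijk})|$ via the counting lemma.

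First, I would observe that the $\dev_{2,3}(\e_1,\e_2)$ bound on $(H|G)$ gives
\[
\sum_{u_0,u_1}\sum_{v_0,v_1}\sum_{w_0,w_1}\prod h_{H,G}(u_i,v_j,w_k) \leq \e_1 d_{AB}^4 d_{AC}^4 d_{BC}^4 |A|^2|B|^2|C|^2 \leq \frac{\e_1}{d_2^{12}}\cdot d_2^{12}|A|^2|B|^2|C|^2,
\]
so Theorem~\ref{thm:translate} yields that $(H|G)$ has $\disc_{2,3}(\eta,\e_2)$ with $\eta$ on the order of $(2\e_1/d_2^{12})^{1/8}$. Next, I would define $G'\subseteq G$ to be the edge-subtriad obtained by restricting each component bigraph of $G$ to its edges lying in $A_i\times B_j$, $A_i\times C_k$, and $B_j\times C_k$, respectively. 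A short direct check shows $K_3(G')=K_3(G_{ijk})$, so the $\disc_{2,3}$ condition applied to $G'$ yields
\[
\bigl|\,|F\cap K_3(G_{ijk})|-d_1\,|K_3(G_{ijk})|\,\bigr|\ \leq\ \eta\,d_{AB}d_{AC}d_{BC}\,|A||B||C|.
\]

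To convert this additive error into a bound on $d_{ijk}$, I need a lower bound on $|K_3(G_{ijk})|$ of the expected order $d_{AB}d_{AC}d_{BC}|A_i||B_j||C_k|$. This will follow from the counting lemma (Proposition~\ref{prop:counting}) once the sub-bigraphs $G[A_i,B_j]$, $G[A_i,C_k]$, $G[B_j,C_k]$ are known to be $\dev_2$-quasirandom; that in turn is guaranteed by the sub-pairs lemma (Proposition~\ref{lem:sldev}), since $|A_i|\geq \mu|A|$ etc.\ and $\mu\gg \e_2$ by the defining choice $\e_2 < p(\e_1,d_2,\gamma)$. Those sub-bigraphs then have $\dev_2(\e_2',d_\bullet)$ with $\e_2' = 2\mu^{-1}\e_2^{1/12}$, and the polynomial $p$ is chosen small enough that the counting-lemma error $4(\e_2')^{1/4}$ is far below $d_2^3$, giving $|K_3(G_{ijk})|\geq \tfrac{1}{2}d_{AB}d_{AC}d_{BC}\mu^3|A||B||C|$.

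Dividing the two estimates yields $|d_{ijk}-d_1|\leq 2\eta/\mu^3$, and the specific choice $\mu=\e_1^{1/100}d_2^6\gamma^3$ absorbs the residual $d_2$ factors inside $\eta$, delivering the claimed bound $2\mu^{-3}(2\e_1)^{1/8}$. The main obstacle is bookkeeping rather than conceptual: the polynomial $p(\e_1,d_2,\gamma)$ has to simultaneously (i) make the sub-pairs lemma applicable with a usable output $\e_2'$, (ii) render the counting-lemma error negligible next to $d_{AB}d_{AC}d_{BC}$, and (iii) feed into Theorem~\ref{thm:translate} in the correct normalized form. All three constraints are polynomial in $\e_1,d_2,\gamma$ individually, so a single $p$ of sufficiently high degree handles them simultaneously.
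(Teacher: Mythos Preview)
Your proposal is correct and follows essentially the same route as the paper: apply Theorem~\ref{thm:translate} to pass from $\dev_{2,3}$ to $\disc_{2,3}$, use the discrepancy conclusion on the subtriad $G'$ with $K_3(G')=K_3(G_{ijk})$, and divide by the triangle count obtained from Propositions~\ref{lem:sldev} and~\ref{prop:counting}. You are in fact slightly more careful than the paper about the normalization in Theorem~\ref{thm:translate} (tracking the $d_2^{-12}$ factor), though your final sentence that the choice of $\mu$ ``absorbs'' this to yield exactly $2\mu^{-3}(2\e_1)^{1/8}$ is a bit optimistic---a stray $d_2^{-3/2}$ remains, but this is harmless for the downstream use and the paper glosses over the same point.
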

\begin{proof}
By Lemma \ref{lem:sldev}, each of $G[A_i,B_j]$, $G[A_i,B_j]$ and $G[B_j,C_k]$ satisfy $\dev_2(\e_2')$ and have densities $d_{A_iB_j}=d_{AB}\pm \e_2'$, $d_{A_iC_k}=d_{AC}\pm \e_2'$, and $d_{B_jC_k}=d_{BC}\pm \e_2'$.  Combining with Proposition \ref{prop:counting}, we have
\begin{align}\label{g'}
|K_3(G_{ijk})|=(d_{A_iB_j}d_{B_jC_k}d_{A_iC_k}\pm (\e_2')^{1/4})|A_i||B_j||C_k|=(d_{AB}d_{BC}d_{AC}\pm 8(\e_2')^{1/4})|A_i||B_j||C_k|.
\end{align}
By Theorem \ref{thm:translateapp}, 
\begin{align*}
||F\cap K_3(G_{ijk})|-d_1|K_3(G_{ijk})||&=|d_{ijk}-d_1||K_3(G_{ijk})|\\
&\leq (2\e_1)^{1/8}d_{AB}d_{BC}d_{AC}|A||B||C|\\
&\leq (2\e_1)^{1/8}d_{AB}d_{BC}d_{AC}\mu^{-3}|A_i||B_j||C_k|.
\end{align*}
Combining with (\ref{g'}) this implies
$$
|d_{ijk}-d_3|\leq \frac{(2\e_1)^{1/8}d_{AB}d_{BC}d_{AC}\mu^{-3}|A_i||B_j||C_k|}{(d_{AB}d_{BC}d_{AC}- 8(\e_2')^{1/4})|A_i||B_j||C_k|}\leq 2\mu^{-3}(2\e_1)^{1/8},
$$
where the last inequality is by our assumptions on our parameters.
\end{proof}

We now set up some more notation.  First, define $h_{H,G}(x,y,z)=1-d_1$ if $(x,y,z)\in F\cap K_3(G)$, $h_{H,G}(x,y,z)=-d_1$ if $(x,y,z)\in K_3(G)\setminus F$, and $h_{H,G}(x,y,z)=0$ if $(x,y,z)\notin K_3(G)$.  Then, given $X\subseteq A$, $Y\subseteq B$, and $Z\subseteq C$, set 
$$
f_{H,G}(X,Y,Z)= \sum_{u_0,u_1\in X}\sum_{w_0,w_1\in Y}\sum_{z_0,z_1\in Z}\prod_{(a,b,c)\in \{0,1\}^3}h_{H,G}(u_a,w_b,z_c).
$$

For each $(i,j,k)\in \{0,1\}^3$, define $h_{ijk}(x,y,z)=1-d_{ijk}$ if $(x,y,z)\in F\cap K_3(G_{ijk})$, $h_{ijk}(x,y,z)=-d_{ijk}$ if $(x,y,z)\in K_3(G_{ijk})\setminus F$, and $h_{ijk}(x,y,z)=0$ if $(x,y,z)\notin K_3(G_{ijk})$, and set 
$$
f_{ijk}= \sum_{u_0,u_1\in A_i}\sum_{w_0,w_1\in B_j}\sum_{z_0,z_1\in C_k}\prod_{(a,b,c)\in \{0,1\}^3}h_{ijk}(u_a,w_b,z_c).
$$
By Fact \ref{fact:k222count}, each $f_{ijk}\geq 0$.  Given $0\leq \alpha\leq 8$ and $i,j,k\in \{0,1\}$ define 
$$
c_{\alpha}^{ijk}=\Big(\frac{d_1}{d_{ijk}}\Big)^{\alpha}\Big(\frac{1-d_1}{1-d_{ijk}}\Big)^{8-\alpha}
$$
 and set 
$$
c^{ijk}=\max\{|1-c_{\alpha}^{ijk}|: 0\leq \alpha\leq 8\}.
$$
For each $0\leq \alpha\leq 8$ and $X\subseteq A$, $Y\subseteq B$, and $Z\subseteq C$, define
\begin{align*}
I_{\alpha}[X,Y,Z]=\{(x_0,x_1,y_0,y_1,z_0,z_1)\in X^2\times &Y^2\times Z^2: \{x_0,x_1\}\times \{y_0,y_1\}\times \{z_0,z_1\}\subseteq K_3(G)\\
&\text{ and }|\{(i,j,k)\in \{0,1\}^3: (x_i,y_j,z_k)\in F\}|=\alpha\}.
\end{align*} 

We now prove that for all $i,j,k\in \{0,1\}$, $f_{H,G}(A_i,B_j,C_k)$ is not too small. 

\begin{claim}\label{cl:dense2}
For all $i,j,k\in \{0,1\}$,  $f_{H,G}(A_i,B_j,C_k)\geq -300 \mu^{-6}(2\e_1)^{1/50}d_{AB}^2d_{AC}^2d_{BC}^2|A|^2|B|^2|C|^2$.
\end{claim}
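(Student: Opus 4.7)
\textbf{The plan is} to reduce $f_{H,G}(A_i,B_j,C_k)$ to an expression comparable with $f_{ijk}$, and then exploit the fact that $f_{ijk}\geq 0$ by Fact~\ref{fact:k222count}. The key observation is that on $A_i\times B_j\times C_k$, the functions $h_{H,G}$ and $h_{ijk}$ vanish on the same set (namely the complement of $K_3(G_{ijk})$) and on $K_3(G_{ijk})$ differ only by the constant $d_{ijk}-d_1$. In particular, for every $\vec{x}\in I_\alpha[A_i,B_j,C_k]$ one has
$$\prod_{(a,b,c)\in\{0,1\}^3}h_{H,G}(u_a,w_b,z_c)=c_\alpha^{ijk}\prod_{(a,b,c)\in\{0,1\}^3}h_{ijk}(u_a,w_b,z_c).$$
Summing this identity over $\alpha$ and over all $\vec{x}\in K_{2,2,2}^{G_{ijk}}$ yields
$$f_{H,G}(A_i,B_j,C_k)-f_{ijk}=\sum_{\alpha=0}^{8}\bigl(c_\alpha^{ijk}-1\bigr)|I_\alpha^{ijk}|(1-d_{ijk})^\alpha(-d_{ijk})^{8-\alpha},$$
whose absolute value is at most $c^{ijk}|K_{2,2,2}^{G_{ijk}}|$ (since $|(1-d_{ijk})^\alpha(-d_{ijk})^{8-\alpha}|\leq 1$ and $\sum_\alpha|I_\alpha^{ijk}|=|K_{2,2,2}^{G_{ijk}}|$). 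Combined with $f_{ijk}\geq 0$, this reduces Claim~\ref{cl:dense2} to controlling the product $c^{ijk}|K_{2,2,2}^{G_{ijk}}|$.

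I will then split into two cases. If $\min\{|A_i|/|A|,|B_j|/|B|,|C_k|/|C|\}\geq \mu$, then Claim~\ref{cl:dense1} gives $|d_{ijk}-d_1|\leq 2\mu^{-3}(2\e_1)^{1/8}$, from which $c^{ijk}$ can be bounded (with some care when $d_{ijk}$ is close to $0$ or $1$: in that regime only extreme values of $\alpha$ give nonvanishing $|I_\alpha^{ijk}|$, and for those particular $\alpha$'s the ratios $c_\alpha^{ijk}$ remain moderate). Moreover, by Lemma~\ref{lem:sldev} each component bigraph $G[A_i,B_j]$ (and similarly for the other two pairs) has $\dev_2(\e_2')$ with density $d_{AB}\pm\e_2'$, so Lemma~\ref{lem:k222count} yields $|K_{2,2,2}^{G_{ijk}}|\leq 2d_{AB}^2d_{AC}^2d_{BC}^2|A_i|^2|B_j|^2|C_k|^2$. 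Combining these estimates with the parameter choices $\mu=\e_1^{1/100}d_2^6\gamma^3$ and $\e_2<p(\e_1,d_2,\gamma)$ comfortably produces the claimed lower bound in this case.

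In the complementary case, at least one of $|A_i|,|B_j|,|C_k|$ is less than $\mu$ times the corresponding total, so the domain is small: $|A_i|^2|B_j|^2|C_k|^2\leq \mu^2|A|^2|B|^2|C|^2$. Since each summand in the defining sum for $f_{H,G}(A_i,B_j,C_k)$ is a product of values in $[-1,1]$, the trivial volume bound gives $|f_{H,G}(A_i,B_j,C_k)|\leq \mu^2|A|^2|B|^2|C|^2$; using $d_{AB},d_{AC},d_{BC}\geq d_2$ and the definition of $\mu$ one checks that $\mu^2\leq\e_1^{1/50}d_{AB}^2d_{AC}^2d_{BC}^2$, which fits well inside the stated bound. \textbf{The main obstacle} is keeping the constants straight in Case~1, especially handling the regime where $d_{ijk}$ is close to $0$ or $1$: this is where the stated exponent $1/50$ (weaker than the naive $1/8$) and the factor $\mu^{-6}$ are absorbed, after splitting the sum over $\alpha$ according to whether $c_\alpha^{ijk}$ is controlled by Claim~\ref{cl:dense1} or whether instead $|I_\alpha^{ijk}|$ is itself a priori tiny.
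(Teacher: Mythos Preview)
Your proposal follows essentially the same approach as the paper: the same two-case split on whether $\min\{|A_i|/|A|,|B_j|/|B|,|C_k|/|C|\}\geq\mu$, the same trivial volume bound in the small case, and in the large case the same comparison of $f_{H,G}(A_i,B_j,C_k)$ with $f_{ijk}$ via the identity $\prod h_{H,G}=c_\alpha^{ijk}\prod h_{ijk}$ on $I_\alpha$, yielding $|f_{H,G}(A_i,B_j,C_k)-f_{ijk}|\leq c^{ijk}|K_{2,2,2}^{G_{ijk}}|$, followed by $f_{ijk}\geq 0$, Claim~\ref{cl:dense1} for $c^{ijk}$, and Lemmas~\ref{lem:sldev} and~\ref{lem:k222count} for $|K_{2,2,2}^{G_{ijk}}|$.

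One remark: the paper simply asserts $c^{ijk}\leq 1-(1-2\mu^{-3}(2\e_1)^{1/8})^8<128\mu^{-3}(2\e_1)^{1/8}$ directly from Claim~\ref{cl:dense1}, without the additional case analysis you sketch for $d_{ijk}$ near $0$ or $1$. Your extra caution there is not part of the paper's argument (and in fact the paper's bound on $c^{ijk}$ is written as if $d_{ijk}$ is bounded away from the endpoints); so your proof is, if anything, more careful than the original on this point, but the overall strategy is identical.
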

\begin{proof}
If one of $|A_i|<\mu|A|$, $|B_j|<\mu|B|$, or $|C_k|<\mu|C|$ holds, then
$$
 |f_{H,G}(A_i,B_j,C_k)|\leq |A_i|^2|B_j|^2|C_k|^2\leq \mu^2 |A|^2|B|^2|C|^2\leq 300 \mu^{-6}(2\e_1)^{1/50}d_{AB}^2d_{AC}^2d_{BC}^2|A|^2|B|^2|C|^2,
 $$
where the last inequality is by definition of $\mu$.   Thus in this case we are done. 

Assume now $|A_i|\geq \mu|A|$, $|B_j|\geq \mu|B|$, and $|C_k|\geq \mu|C|$. By definition,
\begin{align*}
f_{H,G}(A_i,B_j,C_k)&=\sum_{u_0,u_1\in A_i}\sum_{w_0,w_1\in B_j}\sum_{z_0,z_1\in C_k}\prod_{(a,b,c)\in \{0,1\}^3}h_{H,G}(u_a,w_b,z_c)\\
&= \sum_{\alpha=0}^{8}c_{\alpha}^{ijk}\Big(\sum_{\{(u_0,u_1,w_0,w_1,z_0,z_1)\in I_{\alpha}[A_i,B_j,C_k]\}}h_{ijk}(u_a,w_b,z_c)\Big).
\end{align*}
Thus, by definition of $f_{ijk}$ and $c_{ijk}$, we have
\begin{align*}
\Big|f_{ijk}-f_{H,G}(A_i,B_j,C_k)\Big|&=\Big| \sum_{\alpha=0}^{8}(1-c_{\alpha}^{ijk})\Big(\sum_{\{(u_0,u_1,w_0,w_1,z_0,z_1)\in I_{\alpha}[A_i,B_j,C_k]\}}h_{ijk}(u_a,w_b,z_c)\Big)\Big|\\
&\leq c^{ijk}|K_{2,2,2}(G_{ijk})|.
\end{align*}
Since $f_{ijk}\geq 0$, this implies $f_{H,G}(A_i,B_j,C_k)\geq -c^{ijk}|K_{2,2,2}(G_{ijk})|$.  By  Claim \ref{cl:dense1}, 
$$
|c^{ijk}|\leq 1-(1-(2\mu^{-3}(2\e_1)^{1/8}))^8<128\mu^{-3}(2\e_1)^{1/8}.
$$
Combining the above with Lemmas \ref{lem:k222count} and \ref{lem:sldev}, we have the following.
\begin{align*}
f_{H,G}(A_i,B_j,C_k)&\geq -128\mu^{-3}(2\e_1)^{1/8}|K_{2,2,2}(G_{ijk})|\\
&\geq  -128\mu^{-3}(2\e_1)^{1/8}(d^2_{A_iB_j}d^2_{A_iC_k}d^2_{B_jC_k}-(\e_2')^{1/4S})|A_i|^2|B_j|^2|C_k|^2\\
&\geq -128\mu^{-3}(2\e_1)^{1/8}(d_{AB}^2d_{AC}^2d_{BC}^2-(\e_2')^{1/8S})|A_i|^2|B_j|^2|C_k|\\
&\geq -128\mu^{-6}(2\e_1)^{1/8}(d_{AB}^2d_{AC}^2d_{BC}^2-(\e_2')^{1/8S})|A|^2|B|^2|C|,
\end{align*}
where the last inequality is by the lower bound assumptions on $|A_i|,|B_j|,|C_k|$.  By our choice of parameters, we obtain the desired inequality.
\end{proof}

We now finish the proof. By assumption, 
\begin{align*}
0\leq f_{H,G}(A,B,C)=\sum_{(i,j,k)\in \{0,1\}^3}f_{H,G}(A_i,B_j,C_k)\leq \e_1d_{AB}^4d_{AC}^4d_{BC}^4|A|^2|B|^2|C|^2,
\end{align*}
Combining with Claim \ref{cl:dense2}, we have that 
\begin{align*}
f_{H,G}(A_0,B_0,C_0)&=f_{H,G}(A,B,C)- \sum_{(i,j,k)\in \{0,1\}^3\setminus \{(0,0,0)\}}f_{H,G}(A_i,B_j,C_k)\\
&\leq \e_1d_{AB}^4d_{AC}^4d_{BC}^4|A|^2|B|^2|C|^2-\sum_{(i,j,k)\in \{0,1\}^3\setminus \{(0,0,0)\}}f_{H,G}(A_i,B_j,C_k)\\
&\leq \e_1d_{AB}^4d_{AC}^4d_{BC}^4|A|^2|B|^2|C|^2+8\cdot 300 \mu^{-3}(2\e_1)^{1/50}d_{AB}^2d_{AC}^2d_{BC}^2|A||B||C|\\
&\leq  9\cdot 300 \mu^{-3}(2\e_1)^{1/50}d_{AB}^4d_{AC}^4d_{BC}^4|A|^2|B|^2|C|^2\\
&\leq 9\cdot 300 \mu^{-3}(2\e_1)^{1/50}\gamma^{-6}d_{A_0B_0}^4d_{A_0C_0}^4d_{B_0C_0}^4|A_0|^2|B_0|^2|C_0|^2,
\end{align*}
where the last inequality is by the lower bound assumption on $A_0,B_0,C_0$ and Lemma \ref{lem:sldev}.  Our choices of parameters then finish the proof.
\qed

\vspace{2mm}

\subsection{Proof of a result used in Part 2}

In this section, we next sketch proofs of a two results used in Part 2 \cite{Terry.2024b} of this series.  The results, Theorems 3.8 and 4.6 there, characterize when a hereditary property is ``far" from omitting a given configuration. We refer the reader to \cite{Terry.2024b} for any undefined terminology.   The theorems we wish to prove are the following.

\begin{theorem}\label{thm:blowupthm}
Suppose $\calH$ is a hereditary graph property and $\calF$ is a finite collection of finite graphs. The following are equivalent.
\begin{enumerate}
\item $\calH$ is close to some $\calH'$ such that $\calH'\cap \calF=\emptyset$. 
\item There is some $n$ so that for all $H\in \calF$, $\calH$ contains no $n$-blowup of $H$.
\end{enumerate}
\end{theorem}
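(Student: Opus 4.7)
The plan is to prove the equivalence by setting up each direction with a standard regularity/removal-lemma argument. For the direction (2) $\Rightarrow$ (1), I would define $\calH'$ to be the hereditary property of all finite graphs omitting every $H\in\calF$ as an induced subgraph, so that $\calH'\cap\calF=\emptyset$ trivially. The task is then to show every sufficiently large $G\in\calH$ is $\e$-close to some graph in $\calH'$. For this I would use the induced graph removal lemma of Alon--Fischer--Krivelevich--Szegedy: it suffices to show that for every $H\in\calF$, large $G\in\calH$ contains at most $o(|V(G)|^{|V(H)|})$ induced copies of $H$. The contrapositive is the key counting step: if $G$ has $\delta|V(G)|^{|V(H)|}$ induced copies of $H$, then applying Szemer\'edi regularity and a supersaturation/embedding lemma produces an $n$-blowup of $H$ inside $G$ (for $n$ depending only on $\delta$ and $|V(H)|$), contradicting assumption (2). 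Running this for each $H\in\calF$ in turn, with total error at most $|\calF|\e$, produces the required approximation.

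For the direction (1) $\Rightarrow$ (2), I would argue by contradiction. If (2) fails then for every $n$ there is some $H_n\in\calF$ whose $n$-blowup lies in $\calH$; since $\calF$ is finite, pigeonhole gives a fixed $H^*\in\calF$ admitting $n$-blowups in $\calH$ for arbitrarily large $n$. Fix such an $n$-blowup $G\in\calH$ on vertex set $V(G)=\bigcup_{v\in V(H^*)}V_v$ with $|V_v|=n$. By (1), for any chosen $\e>0$, if $n$ is large enough there is $G'\in\calH'$ on the same vertex set with $|E(G)\triangle E(G')|\leq \e|V(G)|^2$. A random transversal $(v'_u)_{u\in V(H^*)}\in\prod_u V_u$ induces a copy of $H^*$ in $G$; the probability that some pair $(v'_u,v'_{u'})$ lies in the symmetric difference is at most $\binom{|V(H^*)|}{2}\cdot \e/d(u,u')$-type bound, so for $\e\ll |V(H^*)|^{-4}$ most transversals still induce $H^*$ in $G'$. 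In particular $G'$ contains $H^*$ as an induced subgraph, and since $\calH'$ is hereditary this forces $H^*\in\calH'$, contradicting $\calH'\cap\calF=\emptyset$.

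Two points warrant attention. First, a subtlety in the forward direction is that closeness must hold uniformly for all large $G\in\calH$, so the induced removal step must be applied with the same $\e/|\calF|$ for each $H\in\calF$ independently of $G$; this is fine because the removal constants depend only on $\e$ and $H$. Second, the embedding of an $n$-blowup from many induced copies is the substantive input and is the main obstacle in the argument --- it uses the regularity lemma plus a counting lemma to find, inside a cluster graph reflecting many induced copies of $H$, a structure from which one reads off large homogeneous blocks corresponding to each vertex of $H$. Granting this (standard) blowup lemma for induced copies, the rest of the proof is bookkeeping. Both directions together then establish the equivalence.
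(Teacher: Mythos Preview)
Your approach is essentially the paper's: for (1)$\Rightarrow$(2) the paper also argues (in contrapositive form) that a large blowup of $H$ is $\delta$-far from induced-$H$-free, which is exactly your random-transversal count; for (2)$\Rightarrow$(1) the paper likewise applies graph regularity, uses the counting lemma to show that any ``reduced'' embedding of some $H\in\calF$ into the partition would yield an $n$-blowup in $G$, deduces few induced copies of every $H\in\calF$, and then invokes the induced removal lemma.

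One point to correct: your phrase ``running this for each $H\in\calF$ in turn, with total error at most $|\calF|\e$'' reads as applying removal once per $H$ and summing the errors. That step fails as written --- editing $G$ to destroy induced copies of $H_1$ may create induced copies of $H_2$. The fix (which the paper uses) is to first establish that $G$ has at most $c\,|V(G)|^{v(H)}$ induced copies of \emph{every} $H\in\calF$, and then apply the family version of the induced removal lemma (Theorem~\ref{thm:indrem}) once to the whole collection $\calF$, obtaining a single $G'$ that is simultaneously induced-$H$-free for all $H\in\calF$. With that adjustment your sketch matches the paper's proof.
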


\begin{theorem}\label{thm:blowupthm3graphs}
Suppose $\calH$ is a hereditary $3$-graph property and $\calF$ is a finite collection of finite graphs. The following are equivalent.
\begin{enumerate}
\item $\calH$ is close to some $\calH'$ such that $\calH'\cap \calF=\emptyset$. 
\item There is some $n$ so that for all $H\in \calF$, $\calH$ contains no $n$-blowup of $H$.
\end{enumerate}
\end{theorem}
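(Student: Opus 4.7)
The plan is to handle each direction separately.

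For $(1)\Rightarrow(2)$ I would argue by contrapositive. If (2) fails, then by pigeonhole on the finite family $\calF$ there is a single $H\in \calF$ on $k$ vertices with $H[n]\in \calH$ for arbitrarily large $n$. In $H[n]$ there are $n^k$ canonical induced copies of $H$, obtained by choosing one vertex from each blowup class, and any single triple of $H[n]$ lies in at most $n^{k-3}$ of these. Modifying at most $\epsilon(kn)^3$ triples with $\epsilon:=1/(2k^3)$ therefore destroys fewer than $n^k/2$ canonical copies, leaving at least one induced copy of $H$ intact. So no 3-graph within edit distance $\epsilon$ of $H[n]$ is $\calF$-free, and $\calH$ cannot be close to any hereditary $\calH'$ with $\calH'\cap \calF=\emptyset$.

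For $(2)\Rightarrow(1)$ I would take $\calH':=\Forb(\calF)$, the hereditary class of 3-graphs induced-omitting every member of $\calF$. Fixing $\epsilon>0$ and a sufficiently large $G\in \calH$, the goal is to modify $G$ on at most $\epsilon|V(G)|^3$ triples to produce some $G'\in \calH'$. The main step is to show that for every $H\in \calF$, the number of induced copies of $H$ in $G$ is at most $o(|V(G)|^{|V(H)|})$. Granted this, the induced hypergraph removal lemma --- a standard consequence of Theorem \ref{thm:reg2intro} and Corollary \ref{cor:countingcor} --- furnishes modifications destroying all induced copies of each $H\in \calF$ at cost at most $\epsilon|V(G)|^3/|\calF|$ per $H$, and summing over the finite family $\calF$ yields $G'$.

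The small-count claim I would prove contrapositively. Suppose $G$ has at least $\eta|V(G)|^k$ induced copies of some $H\in \calF$ on $k$ vertices. Apply Theorem \ref{thm:reg2intro} with parameters $\epsilon_1,\epsilon_2$ chosen so that Corollary \ref{cor:countingcor} applies both to the target $H$ and to the larger target $H[n_0]$ on $kn_0$ vertices. By the counting lemma applied per injective assignment $\pi:[k]\to \calP_1$ and choice of triads from $\calP_2$, the total count of induced $H$'s in $G$ is a weighted sum of products of bigraph densities, triad densities, and part sizes; since this exceeds $\eta|V(G)|^k$, pigeonhole yields a single $\pi$ with nearly-linear-sized parts $V_{\pi(1)},\ldots,V_{\pi(k)}$ and a choice of $\mu$-non-trivial $\dev_{2,3}$-regular triads whose $H$-patterned densities are bounded below by some $d_1>0$ depending only on $\eta,t,\ell$. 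Subdividing each $V_{\pi(i)}$ into $n_0$ equal pieces and invoking Corollary \ref{cor:sldev23} to preserve regularity under the refinement, I then apply Corollary \ref{cor:countingcor} with target $H[n_0]$, assigning its $kn_0$ vertices to distinct sub-parts, producing an induced $n_0$-blowup of $H$ inside $G$ and contradicting~(2).

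The main obstacle is the blowup-extraction step. Corollary \ref{cor:countingcor} embeds a target only when each target vertex lies in a distinct part of the decomposition, whereas an $n_0$-blowup naively packs $n_0$ target vertices into each of just $k$ original parts. The workaround described above --- subdivide each $V_{\pi(i)}$ into $n_0$ sub-parts and refine $\calP_2$ via Corollary \ref{cor:sldev23} --- produces a $(kn_0,\ell)$-decomposition that is still $\dev_{2,3}$-regular with mildly weaker parameters, and by Lemma \ref{lem:sldev} all relevant bigraph (and hence triad) densities between sub-parts drawn from distinct original classes are preserved up to $\epsilon_2^{1/12}$-error, so the lower bounds needed by Corollary \ref{cor:countingcor} still hold. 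The delicate quantitative point is the appropriate hierarchy of parameter choices, together with $n_0$ coming from (2), which must all be fixed before invoking Theorem \ref{thm:reg2intro}.
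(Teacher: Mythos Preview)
Your $(1)\Rightarrow(2)$ argument matches the paper's. For $(2)\Rightarrow(1)$, both you and the paper reduce to showing that any large $G\in\calH$ has few induced copies of each $H\in\calF$ and then invoke induced removal for families (Theorem~\ref{thm:indrem3graph}); note that your phrasing ``per $H$, and summing'' is not quite right, since modifications for one $H$ can create copies of another---apply the family version once. Where you diverge is in proving ``few copies''. The paper argues directly: after regularity, a duplication trick (Claim~\ref{cl:blw3}) shows that any valid $H$-pattern in the reduced structure would yield an embedded $H[n]$, so no regular triad has intermediate density and every induced copy of $H$ must touch the small error set $\Gamma$. You argue by contrapositive: many copies yield by pigeonhole a single well-placed regular pattern, which you then subdivide into $n_0$ sub-parts via Corollary~\ref{cor:sldev23} and feed to Corollary~\ref{cor:countingcor} with target $H[n_0]$. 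The paper's duplication is cleaner because it avoids the slicing lemma: it builds an auxiliary $3$-graph on $n$ disjoint copies of each part, placing the complement of $E(H')$ on triads over non-edges of $H$, so one application of the counting lemma to the complete target suffices.

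One point in your sketch does not close as written. You say $d_1$ depends on $\eta,t,\ell$, but Corollary~\ref{cor:countingcor} requires $\e_1<d_1^D$ with $\e_1$ fixed \emph{before} regularity outputs $t,\ell$; taken literally this is circular. The repair is that in the equitable case the count inside one regular assignment is $(\prod d'_{abc}\pm\e_1^{1/L})\ell^{-\binom{k}{2}}(n/t)^k$, so averaging over all $t^k\ell^{\binom{k}{2}}$ assignments shows some assignment has $\prod d'_{abc}\ge\eta/2$, whence each factor $d'_{abc}\ge\eta/2$ and you may take $d_1=\eta/2$ independent of $t,\ell$. The paper sidesteps this entirely because its density threshold defining $E_0,E_1,E_2$ is a power of $\e_1$, fixed in advance.
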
 

We provide sketches of the proofs of both these results in this paper.  We do this here rather than in Part 2 of the series, as the proof sketch of Theorem \ref{thm:blowupthm3graphs} requires the hypergraph regularity of Section \ref{sec:regularity}, which is otherwise not needed in Part 2. 

We begin with the graphs case.  The main ingredients for Theorem \ref{thm:blowupthm} are the following induced removal lemma of \cite{Alon.2000} as well as the standard graph counting lemma (see \cite{regularitysurvey}).

\begin{theorem}\label{thm:indrem}
For all $\eta>0$ and $N$, there are $c>0$, $C>0$, and $n_0$ such that the following holds.  Suppose $\calF$ is a family of graphs, each with at most $N$ vertices, $H$ is a graph on $n\geq n_0$ vertices, and for every $G\in \calF$, $H$ contains at most $c n^{v(G)}$ induced copies of $G$. Then $H$ is $\eta$-close to some $H'$ which contains no induced copies of $G$.
\end{theorem}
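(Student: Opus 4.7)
The plan is to apply the strong regularity lemma of Alon, Fischer, Krivelevich, and Szegedy. Given $\eta>0$ and $N$, I would first choose a threshold $\delta = \delta(\eta, N) < \eta/4$, and then choose regularity parameters $\epsilon_1 \ll \delta$ and a function $\epsilon_2:\N\to(0,1]$ decaying fast enough that the standard graph counting lemma applies to induced copies of any graph on at most $N$ vertices when all relevant pair densities and their complements exceed $\delta/2$. Applying strong regularity to $H$ yields a coarse partition $\calU=\{U_1,\ldots,U_s\}$ and a refining partition $\calV=\{V_1,\ldots,V_t\}$ such that all but an $\epsilon_1$ fraction of $\calV$-pairs are $\epsilon_2(t)$-regular and, for all but an $\epsilon_1$ fraction of $\calV$-pairs $(V_i,V_j)$ lying inside $(U_a,U_b)$, we have $d_H(V_i,V_j)\approx_{\epsilon_1}d_H(U_a,U_b)$.

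I would then construct $H'$ by the following cleaning procedure. For each $\calV$-pair $(V_i,V_j)$ lying in a $\calU$-pair $(U_a,U_b)$: if $d_H(U_a,U_b)\le\delta$, delete all edges between $V_i$ and $V_j$; if $d_H(U_a,U_b)\ge 1-\delta$, add all missing edges between $V_i$ and $V_j$; otherwise leave the edges as in $H$. Additionally, on each irregular or non-conservative $\calV$-pair, replace the $H$-edges by a default (say, delete them). The total number of modifications is at most $O(\epsilon_1)n^2$ from irregular or non-conservative pairs plus $2\delta n^2$ from rounding low- and high-density $\calU$-pairs, which is below $\eta n^2$ by the choice of parameters; hence $H'$ is $\eta$-close to $H$.

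Finally, suppose for contradiction that $H'$ contains an induced copy of some $G\in\calF$ on vertices $v_1,\ldots,v_k$ (with $k\le N$), where $v_p\in V_{i_p}$. By the construction of $H'$, each $\calV$-pair $(V_{i_p},V_{i_q})$ must be regular and conservative, and its containing $\calU$-pair must have density in $(\delta,1-\delta)$; hence $d_H(V_{i_p},V_{i_q})$ lies in $(\delta-\epsilon_1,1-\delta+\epsilon_1)\subseteq(\delta/2,1-\delta/2)$. The standard graph counting lemma then gives at least $\tfrac{1}{2}(\delta/2)^{\binom{k}{2}}\prod_p|V_{i_p}|$ induced copies of $G$ in $H$ with the $p$-th vertex in $V_{i_p}$. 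Since $t\le T(\eta,N)$ is bounded, $|V_{i_p}|=\Omega(n/T)$, and this count exceeds $cn^{v(G)}$ whenever $c<(\delta/2)^{\binom{N}{2}}/(2T^N)$, contradicting the hypothesis. The main obstacle is parameter bookkeeping across the regularity lemma, the cleaning procedure, and the counting lemma; using ordinary rather than strong regularity would be insufficient, since the cleaning threshold is defined at the coarser $\calU$-scale while the counting demands $\epsilon_2(t)$-regularity at the much finer $\calV$-scale, and these two scales must be linked through the conservative refinement property.
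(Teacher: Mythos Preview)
The paper does not give its own proof of this statement: Theorem~\ref{thm:indrem} is quoted as the induced removal lemma of Alon--Fischer--Krivelevich--Szegedy and is used as a black box in the appendix (it is one of the ``main ingredients'' for Theorem~\ref{thm:blowupthm}). Your sketch is exactly the classical AFKS argument via the strong regularity lemma, so there is no discrepancy to report; you have reconstructed the proof that the paper is citing rather than proving.

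One remark on your sketch itself: your handling of irregular and non-conservative $\calV$-pairs is slightly too casual. If you simply delete all edges on such pairs, then a non-edge of an induced copy of $G$ in $H'$ could sit on an irregular pair, and you would not be able to invoke the counting lemma on that pair. You also need to address copies with two vertices in the same part $V_i$. The standard AFKS fix is to make the edge set inside each $V_i$ (and, depending on the version, on exceptional pairs) complete rather than empty, so that any induced copy surviving in $H'$ is forced to spread across distinct parts and across regular, conservative, intermediate-density pairs; alternatively one argues separately that copies touching exceptional pairs are negligible. This is the ``parameter bookkeeping'' you flag, but it is a genuine step and not purely cosmetic.
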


\begin{theorem}\label{thm:counting}
For every $t\geq 2$ and $\gamma>0$ there exists $\e>0$ such that the following holds. Let $G=(V,E)$ be a graph, and suppose $V_1, \ldots ,V_t \subseteq V$ are such that for each $1\leq i\neq j\leq t$, $(V_i,V_j)$ is an $\e$-regular pair with density $d_{ij}$.  Then the number of $(v_1,\ldots, v_t)\in \prod_{i=1}^tV_i$ such that $v_iv_j\in E$ for all $1\leq i\neq j\leq t$ is at least $(\prod_{1\leq i<j\leq t}d_{ij}- \gamma)\prod_{i=1}^t|V_i|$.
\end{theorem}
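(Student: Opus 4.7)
The plan is to prove Theorem \ref{thm:counting} by induction on $t\geq 2$, following the standard argument for the graph counting lemma. For the base case $t=2$, the conclusion is immediate: the number of pairs $(v_1,v_2)\in V_1\times V_2$ with $v_1v_2\in E$ equals $d_{12}|V_1||V_2|$, which is at least $(d_{12}-\gamma)|V_1||V_2|$.

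For the inductive step, suppose the result holds for $t-1$ with parameter $\e_{t-1}(\gamma')$ for each $\gamma'>0$. Fix $\gamma>0$. I may assume $d_{ij}\geq \gamma$ for every pair $ij$, since if some $d_{ij}<\gamma$, then $\prod_{i<j}d_{ij}<\gamma$ and the right-hand side of the claimed inequality is non-positive, making the statement vacuous. Choose $\e$ sufficiently small compared to $\gamma$ and $\e_{t-1}(\gamma/4)$, with dependencies made precise below. For each $v_1\in V_1$, set $V_i(v_1):=N_G(v_1)\cap V_i$ for $2\leq i\leq t$. Call $v_1$ \emph{typical} if $|V_i(v_1)|\geq (d_{1i}-\e)|V_i|$ for every $i\geq 2$. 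By the $\e$-regularity of each $(V_1,V_i)$, at most $\e|V_1|$ vertices fail the bound for a fixed $i$, so at most $(t-1)\e|V_1|$ vertices are non-typical.

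For a typical $v_1$, since $d_{1i}\geq \gamma\gg \e$, we have $|V_i(v_1)|\geq \gamma|V_i|/2$. The sub-pairs lemma (Proposition \ref{lem:slreg}) then implies $(V_i(v_1),V_j(v_1))$ is $\e'$-regular with density in $(d_{ij}-\e,d_{ij}+\e)$ for each $2\leq i<j\leq t$, where $\e'=4\gamma^{-1}\e$. Taking $\e$ small enough ensures $\e'\leq \e_{t-1}(\gamma/4)$. Applying the inductive hypothesis to the sets $(V_i(v_1))_{i=2}^t$ yields at least $\bigl(\prod_{2\leq i<j\leq t}(d_{ij}-\e)-\gamma/4\bigr)\prod_{i=2}^t|V_i(v_1)|$ cliques extending $v_1$. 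Substituting $|V_i(v_1)|\geq (d_{1i}-\e)|V_i|$ and summing over the at least $(1-(t-1)\e)|V_1|$ typical vertices produces a lower bound of
\[
(1-(t-1)\e)\Bigl(\prod_{2\leq i<j\leq t}(d_{ij}-\e)-\gamma/4\Bigr)\prod_{i=2}^t(d_{1i}-\e)\cdot\prod_{i=1}^t|V_i|.
\]
Expanding and using $d_{ij}\leq 1$, this exceeds $\bigl(\prod_{1\leq i<j\leq t}d_{ij}-\gamma\bigr)\prod_{i=1}^t|V_i|$ once $\e$ is chosen smaller than $\gamma/(Ct^2)$ for a suitable absolute constant $C$.

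The main obstacle is the parameter bookkeeping: $\e$ must simultaneously be small enough that (a) the sub-pair $(V_i(v_1),V_j(v_1))$ inherits regularity at the finer level $\e_{t-1}(\gamma/4)$ required by induction, and (b) the cumulative $O(\e)$ slack coming from the degree bound, the density shift on sub-pairs, and the fraction of non-typical vertices, all fit inside the single allowed error $\gamma$. The preliminary reduction to $d_{ij}\geq \gamma$ is essential for (a), since it guarantees $V_i(v_1)$ retains a constant fraction of $V_i$ for every typical $v_1$, preventing degenerate applications of the slicing lemma where the error parameter $4\gamma^{-1}\e$ would blow up.
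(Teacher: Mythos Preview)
Your proof is correct and follows the standard inductive argument for the graph counting lemma. The paper does not actually prove Theorem~\ref{thm:counting}; it simply states it and cites a regularity survey (\cite{regularitysurvey}) for a proof, so there is nothing further to compare against.
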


We now give a sketch of the proof of Theorem \ref{thm:blowupthm}.

\vspace{2mm}

\noindent{\bf Proof of Theorem \ref{thm:blowupthm}.} Given a sufficiently large $n$, it is a counting exercise to see there is some $\delta>0$ (depending on the number of vertices in $H$) so that if $G$ is an $n$-blow up of $H$, then one must change at least $\delta |V(G)|^2$ pairs from $G$ to remove all induced copies of $H$.  This yields the implication ``not (1)" implies ``not (2)" from Theorem \ref{thm:blowupthm}.  

For the other direction, one assumes (1) and shows (2) as follows.  Assume $\calH$ contains no $n$-blow up of $H$.   Fix a sufficiently large graph $G$ from $\calH$. Apply Theorem \ref{thm:reg} to obtain an $\e$-regular equipartition $\calP=\{V_1,\ldots, V_t\}$ for $G$.  Let $\Sigma_{reg}$ be the set of regular pairs from $\calP$, and let $\Sigma_{err}$ the set of irregular pairs.  We now define three subcollections of the regular pairs. 
\begin{align*}
E_0&=\{(X,Y)\in \Sigma_{reg}: d_G(X,Y)\leq \e\},\\
E_1&=\{(X,Y)\in \Sigma_{reg}: d_G(X,Y)\geq 1-\e\},\text{ and }\\
E_2&=\{(X,Y)\in \Sigma_{reg}: \e<d_G(X,Y)<1-\e\}.
\end{align*}

Our next claim shows that there are restrictions on certain kinds of embeddings involving elements of $\calF$ and the pairs in $E_0,E_1$.

\begin{claim}\label{cl:blw} For all $H\in \calF$, there exists no injective map $f:V(H)\rightarrow \calP$ so that the following holds.
\begin{itemize}
\item $uv\in E(H)$ implies $f(u)f(v)\in E_1\cup E_2$,
\item $uv\notin E(H)$ implies $f(u)f(v)\in E_0\cup E_2$.
\end{itemize}
\end{claim}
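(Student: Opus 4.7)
The plan is to argue by contradiction: assuming such an injective $f : V(H) \to \calP$ exists for some $H \in \calF$, I will exhibit an induced $n$-blowup of $H$ inside $G$, contradicting the standing hypothesis that $\calH$ omits every $n$-blowup of every $H \in \calF$.

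Let $h = |V(H)|$ and write $W_u = f(u)$. Unpacking the hypothesis on $f$ using the definitions of $E_0, E_1, E_2$, every pair $(W_u, W_v)$ with $u \neq v$ is $\e$-regular, with $d_G(W_u, W_v) > \e$ when $uv \in E(H)$ and $d_G(W_u, W_v) < 1 - \e$ when $uv \notin E(H)$. First I would slice each $W_u$ into $n$ equal parts $W_u^1, \ldots, W_u^n$; by Proposition \ref{lem:slreg}, provided the original $\e$ was chosen small enough compared to $1/n$, every between-blob pair $(W_u^a, W_v^b)$ with $u \neq v$ remains $\e'$-regular with density $d_G(W_u, W_v) \pm \e$, where $\e' = 2n\e$.

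Next I would apply an induced counting lemma to the $hn$ sets $\{W_u^a\}$ with target graph $H^{(n)}$, the $n$-blowup of $H$. Such a lemma is a standard consequence of Theorem \ref{thm:counting}, obtained by inclusion-exclusion over subsets of the edge set (equivalently, by applying Theorem \ref{thm:counting} to either the bipartite graph or its bipartite complement for each pair, and summing with signs). The one-sided density bounds above guarantee that the product of densities for edges of $H^{(n)}$ and co-densities for non-edges of $H^{(n)}$ is bounded below by a positive quantity depending only on $\e, h, n$, so for $\e'$ small enough this produces at least one tuple $(x_u^a) \in \prod W_u^a$ realizing $H^{(n)}$ as an induced subgraph of $G$, yielding the desired contradiction.

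The main subtlety lies in the pairs $(W_u^a, W_u^b)$ within a single $W_u$, which do not automatically carry any regularity information; this is handled by choosing $\calP$ from the outset to be a refinement of an initial regular partition that also regularizes each $W_u$ internally (a routine double application of Theorem \ref{thm:reg}), so that the induced counting lemma applies uniformly to all $\binom{hn}{2}$ pairs. All remaining steps are standard manipulations of the regularity toolbox already introduced in the excerpt.
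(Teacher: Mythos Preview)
Your overall strategy—suppose such an $f$ exists and use counting to exhibit an induced $n$-blowup of $H$ inside $G$—is the paper's strategy too, but the executions differ. The paper does not slice the parts inside $G$. Instead it builds an auxiliary graph $G'$ whose vertex set consists of $n$ disjoint copies $V_{u1},\dots,V_{un}$ of each part $f(u)$, with $G'[V_{ui},V_{vj}]$ defined to be a copy of $G[f(u),f(v)]$; it applies the counting lemma in $G'$ and then pulls the resulting tuple back to $G$. This avoids any appeal to the sub-pairs lemma and, more importantly, sidesteps the within-blob issue entirely: in the notion of $n$-blowup used in this series (defined in Part~2), the within-blob edges are unconstrained, so whatever adjacencies arise in $G$ among the pulled-back vertices inside a single $f(u)$ are acceptable.

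Your proposed repair for the within-blob pairs has a genuine gap. A further regularization making each $(W_u^a, W_u^b)$ $\e'$-regular gives you \emph{no} control over the density $d_G(W_u^a, W_u^b)$. If that density is close to $1$, the induced counting lemma with target $H^{(n)}$ fails on those pairs: the complement density is too small to contribute, and your claimed lower bound on the product of edge-densities and co-densities is not actually bounded away from zero. Regularity is not the obstruction here; density is, and a second pass of Theorem~\ref{thm:reg} does nothing about that. The correct fix in your framework is to drop all constraints on the within-blob pairs and apply the (partial) induced counting lemma only to the cross-blob pairs—which is exactly what the paper's auxiliary-graph device encodes.
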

\begin{proof}
Suppose towards a contradiction there is some $H\in \calF$ for which such a map $f$ exists. We define an auxiliary graph $G'$ by duplicating vertices and edges from $G$ as follows.  We begin by defining $G'$ to have vertex set $V'=\bigcup_{u\in V(H)}\bigcup_{i=1}^nV_{ui}$, where each $V_{ui}$ is a copy of $V(G)$.  We then define $G'$ to have edge set $E'$, where for each $u,v\in V(H)$ and $1\leq i,j\leq n$, $E'\cap K_2[V_{ui},V_{vj}]$ is a copy of $G[f(u),f(v)]$.  By Theorem \ref{thm:counting}, $G'$ contains an induced $n$-blowup of $H$.  By construction of $G'$, this implies $G$ also contains an induced $n$-blowup of $H$, a contradiction. This conclude the proof of Claim \ref{cl:blw}.
\end{proof}

Note that Claim \ref{cl:blw} implies $E_2=\emptyset$.  Using this and Claim \ref{cl:blw}, we can show $G$ has few copies of  $H$, for all $H\in \calF$. Indeed, set 
$$
\Gamma=\Big(\bigcup_{(X,Y)\in \Sigma_{err}}(X\times Y)\Big)\cup \Big(\bigcup_{(X,Y)\in E_1}(X\times Y)\setminus \overline{E(G)}\Big)\cup \Big(\bigcup_{(X,Y)\in E_0}(X\times Y)\cap \overline{E(G)}\Big).
$$
It is not difficult to check $|\Gamma|\leq 3\e_1|V(G)|^2$.  Suppose now $H\in \calF$ and $f:V(H)\rightarrow V(G)$ is such that $vv'\in E(H)$ if and only if $f(v)f(v')\in E(G)$.  By Claim \ref{cl:blw}, one of the following hold.
\begin{enumerate}
\item there is $uv\in {V(H)\choose 2}$ so that for some $i\in [t]$, $|\{u,v\}\cap V_i|>1$,
\item There is some $uv\in {V(H)\choose 2}$ so that $f(u)f(v)$ is in an irregular pair
\item there is some $uv\in {V(H)\choose 2}$ so that $f(u)f(v)\in \Gamma$.
\end{enumerate}
Since $\calP$ is a regular equipartition and $t$ is large, and since $\Gamma$ is small, it is a standard counting argument to show that the number of such $f$ is at most $\delta |V(G)|^{v(H)}$. Thus there are few induced copies of $H$ in $G$ for all $H\in \calF$.  By Theorem \ref{thm:indrem}, $G$ is close to some $G'$ which is induced $H$-free for all $H\in \calF$.

\qed

We now turn to $3$-graphs.  We will use the following analogue of Theorem \ref{thm:indrem} due to Kohayakawa, Nagle, and R\"{o}dl \cite{KNR} in addition to the $3$-graph analogue Theorem \ref{thm:counting} (namely Proposition \ref{prop:countinggowers}). 

\begin{theorem}[Kohayakawa, Nagle, and R\"{o}dl \cite{KNR}]\label{thm:indrem3graph}
For all $\eta>0$ and $N$, there are $c>0$, $C>0$, and $n_0$ such that the following holds.  Suppose $\calF$ is a family of $3$-graphs, each with at most $N$ vertices, $H$ is a $3$-graph on $n\geq n_0$ vertices, and for every $G\in \calF$, $H$ contains at most $c n^{v(G)}$ induced copies of $G$. Then $H$ is $\eta$-close to some $H'$ which contains no induced copies of $G$.
\end{theorem}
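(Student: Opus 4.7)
The plan is to extend the Alon-Fischer-Newman induced removal paradigm for graphs \cite{Alon.2000} to the $3$-graph setting using the strong regularity lemma (Theorem \ref{thm:reg2}) and the counting lemma (Corollary \ref{cor:countingcor}).  Fix $\eta > 0$ and $N \geq 1$.  Choose $\mu$ much smaller than $\eta$, $\e_1$ much smaller than $\mu$, and $\e_2 : \mathbb{N} \to (0,1]$ decaying fast enough that Corollary \ref{cor:countingcor} applies for all $t \leq N$ with $d_1 = \mu$ and $d_2 = \e_1$.  For sufficiently large $H$, apply Theorem \ref{thm:reg2} to obtain an equitable $\dev_{2,3}(\e_1, \e_2(\ell))$-regular $(t, \ell, \e_1, \e_2(\ell))$-decomposition $\calP$ of $H$.

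Classify each $\e_1$-non-trivial regular triad $T \in \triads(\calP)$ as \emph{dense} if $d_H(T) \geq 1 - \mu$, \emph{sparse} if $d_H(T) \leq \mu$, and \emph{intermediate} otherwise.  A \emph{template} for $G \in \calF$ with $|V(G)| = m$ is a map $\phi : V(G) \to \calP_1$ together with, for each pair of vertices of $G$ mapped to distinct parts, a choice of bigraph from $\calP_2$, inducing a triad in $\calP$ for each triple of vertices of $G$ sent to three distinct parts.  The template is \emph{$G$-compatible} if each triple corresponding to an edge of $G$ lands in a dense or intermediate triad, and each non-edge triple lands in a sparse or intermediate triad.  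If some $G \in \calF$ admits a $G$-compatible template using only dense or sparse triads (no intermediates), then Corollary \ref{cor:countingcor} (applied after duplicating each part $V_i$ into $|\phi^{-1}(V_i)|$ copies to handle non-injective $\phi$) yields $\Omega(n^m)$ induced copies of $G$ in $H$, contradicting the hypothesis for $c$ chosen small enough.

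Otherwise, define $H'$ by removing all triples in bad (non-regular or trivial) triads, declaring all triples in dense triads to be edges and all triples in sparse triads to be non-edges, and rounding each intermediate triad to either all-edges or no-edges in a choice that prevents any $G$-compatible template (for $G \in \calF$) from becoming purely dense/sparse.  The existence of such a rounding follows from a finite combinatorial argument on the bounded-size space of potential templates.  By construction, $H'$ is induced-$G$-free for every $G \in \calF$, and $|E(H) \triangle E(H')|$ is bounded by $O(\e_1) n^3$ from bad triads (via Lemma \ref{lem:nontrivialtriad}), $\mu n^3$ from dense/sparse rounding, plus the cumulative measure of intermediate triples.

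The main obstacle is bounding the intermediate measure by $\eta n^3 / 2$, which a single application of Theorem \ref{thm:reg2} does not guarantee.  The fix is to iterate: within each intermediate triad, re-apply the strong regularity lemma to the induced $3$-graph structure (that is, $H$ restricted to the ordered triangles of $T$), refining both the vertex partition of the parts of $T$ and the binary partition of pairs within $T$.  An energy-increment argument with carefully chosen parameters shows that after finitely many iterations (depending on $\eta$ and $N$), the total intermediate volume falls below $\eta n^3 / 2$.  Making this bootstrap quantitative---ensuring each re-application of regularity produces triads whose bigraphs retain sufficient $\dev_2$-regularity at the finer level for Corollary \ref{cor:countingcor} to remain applicable---is the delicate technical core, and is precisely the step that produces the wowzer-type dependence of $n_0$ and $1/c$ on $\eta, N$ in the KNR proof.
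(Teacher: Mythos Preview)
The paper does not prove Theorem~\ref{thm:indrem3graph}; it is quoted from \cite{KNR} and invoked as a black box in the proof of Theorem~\ref{thm:blowupthm3graphs}. There is no argument in the paper to compare your sketch against.

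That said, your sketch has a genuine gap in its treatment of intermediate triads. You restrict the ``many copies'' conclusion to $G$-compatible templates that use only dense or sparse triads, but Corollary~\ref{cor:countingcor} applies equally well to templates routing through intermediate triads: an intermediate $T$ has $\mu\le d_H(T)\le 1-\mu$, so it meets the hypothesis $d_{ijk}\ge d_1$ when it is the target of an edge of $G$ and $d_{ijk}\le 1-d_1$ when it is the target of a non-edge (taking $d_1=\mu$). The correct dichotomy is therefore: either some $G\in\calF$ admits a compatible template among the regular non-trivial triads with intermediates freely allowed, in which case counting already contradicts the few-copies hypothesis; or no such template exists, in which case one cleans up by emptying the sparse triads and completing the dense ones while leaving every intermediate triad \emph{unchanged}. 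Any induced copy of some $G\in\calF$ surviving in $H'$ would then exhibit a compatible template (its edges must land in triads of density at least $\mu$, its non-edges in triads of density at most $1-\mu$), a contradiction. The cleanup cost is $O(\e_1+\mu)n^3$ with zero contribution from the intermediate triads, so your ``main obstacle'' is illusory and no rounding of intermediates is required. Your proposed fix would not converge in any case: an intermediate triad is already $\dev_{2,3}(\e_1,\e_2)$-regular, and by $\disc_{2,3}$-regularity (Theorem~\ref{thm:translate}) every sub-triad obtained by refining its vertex parts or its bigraphs inherits density within $O(\e_1^{1/8})$ of the original, hence remains intermediate---there is no energy increment to harvest from a triad that is already quasirandom.
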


\noindent{\bf Proof of Theorem \ref{thm:blowupthm3graphs}.} Given a sufficiently large $n$, it is a counting exercise to see there is some $\delta>0$ (depending on the number of vertices in $H$) so that if $G$ is an $n$-blow up of $H$, then one must change at least $\delta |V(G)|^3$ pairs from $G$ to remove all induced copies of $H$.  This yields the implication ``not (1)" implies ``not (2)" follows from Theorem \ref{thm:blowupthm3graphs}.  

For the other direction, we assume (1) and show (2).  Assume $\calH$ contains no $n$-blow up of $H$.   Choose $\e_1$ sufficiently small and $\e_2:\mathbb{N}\rightarrow (0,1)$ tending to $0$ sufficiently fast as $n$ tends to infinity.  Given a sufficiently large $3$-graph $H'$ from $\calH$, apply Theorem \ref{thm:reg2} to obtain an equitable, $\dev_{2,3}(\e_1,\e_2(\ell))$-regular $(t,\ell,\e_1,\e_2(\ell))$-decomposition $\calP$ for $H'$.  Say $\calP_1=\{V_1,\ldots, V_t\}$ and $\calP_2=\{P_{ij}^{\alpha}: 1\leq i,j\leq t, 1\leq \alpha\leq \ell\}$.  Let $G_{ijk}^{\alpha\beta\gamma}$ denote the triad $(V_i,V_j,V_k; P_{ij}^{\alpha},P_{ik}^{\beta},P_{jk}^{\gamma})$.  Let $\Sigma_{reg}$ be the set of regular triads from $\calP$, and let $\Sigma_{err}$ be the set of irregular triads from $\calP$.  We now define three distinguished subsets of the regular triads. 
\begin{align*}
E_0&=\{G\in \Sigma_{reg}: d_{H'}(G)\leq \e\},\\
E_1&=\{G\in \Sigma_{reg}: d_{H'}(G)\geq 1-\e\},\text{ and }\\
E_2&=\{G\in \Sigma_{reg}: \e<d_{H'}(G)< 1-\e\}.
\end{align*}
Our next claim shows that there are restrictions on certain embeddings from elements in $\calF$ using $E_1,E_0$.

\begin{claim}\label{cl:blw3}
For all $H\in \calF$, there exists no pair of maps $f:V(H)\rightarrow [t]$ and $g:{V(H)\choose 2}\rightarrow [\ell]$ so that the following holds.
\begin{itemize}
\item $uvw\in E(H)$ implies $G_{f(u)f(v)f(w)}^{g(uv),g(uw),g(vw)}\in E_1\cup E_2$,
\item $uvw\notin E(H)$ implies $G_{f(u)f(v)f(w)}^{g(uv),g(uw),g(vw)}\in E_0\cup E_2$.
\end{itemize}
\end{claim}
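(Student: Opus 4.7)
The proof of Claim \ref{cl:blw3} parallels that of Claim \ref{cl:blw} in the graph case, but invokes the hypergraph counting lemma (Corollary \ref{cor:countingcor}) in place of Theorem \ref{thm:counting}. My plan is to assume for contradiction that such maps $f$ and $g$ exist for some $H\in\calF$, and then to build an auxiliary $3$-graph $H^*$ from pieces of $H'$ in which the hypergraph counting lemma produces an induced $n$-blowup of $H$; by construction, this blowup transfers back to $H'$, contradicting the standing assumption that $\calH$ contains no $n$-blowup of $H$.

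The construction of $H^*$ goes as follows. For each pair $(v,i)\in V(H)\times[n]$, I would take a disjoint copy $V_{v,i}$ of $V_{f(v)}$ with a fixed bijection $\phi_{v,i}:V_{v,i}\to V_{f(v)}$, and set $V(H^*)=\bigsqcup_{(v,i)}V_{v,i}$. For each pair of distinct indices $(v,i)\neq(w,j)$ with $v\neq w$, the bigraph on $V_{v,i}\times V_{w,j}$ is the pullback under $\phi_{v,i}\times\phi_{w,j}$ of $(V_{f(v)},V_{f(w)};P_{f(v)f(w)}^{g(vw)})$. For each triple of distinct indices $(v_1,i_1),(v_2,i_2),(v_3,i_3)$ with $v_1,v_2,v_3$ pairwise distinct, the ternary edges of $H^*$ on the associated triad are the pullback of $\overline{E(H')}$ restricted to $K_3(G_{f(v_1)f(v_2)f(v_3)}^{g(v_1v_2),g(v_1v_3),g(v_2v_3)})$. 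This setup ensures, by the two bulleted hypotheses on $f,g$, that each triad coming from an edge $uvw$ of $H$ is a copy of a triad in $E_1\cup E_2$ (density $\geq\e$), and each triad coming from a non-edge among pairwise distinct vertices is a copy of a triad in $E_0\cup E_2$ (density $\leq 1-\e$).

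With this construction in hand, I would apply Corollary \ref{cor:countingcor} with $t=kn$ (where $k=|V(H)|$), the target $F$ equal to the $n$-blowup of $H$, $d_1=\e$, and $d_2$ a small constant lower bound on bigraph densities. The three hypotheses of the corollary follow from: the fact that each bigraph in the construction is a copy of a $\dev_2(\e_2(\ell))$-regular bigraph from $\calP_2$; the fact that each non-trivial triad is a copy of a $\dev_{2,3}(\e_1,\e_2(\ell))$-regular triad from $\calP$; and the density dichotomy noted above. The corollary then supplies the desired induced $n$-blowup of $H$ inside $H^*$, which under the bijections $\phi_{v,i}$ lifts to an induced $n$-blowup of $H$ inside $H'\in\calH$, yielding the contradiction.

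The main subtlety will be handling the case where $f$ is not injective, so that $(v,i)\neq(w,j)$ with $v=w$ forces us to define bigraphs between two disjoint copies of the same $V_{f(v)}$, and to control the densities of the resulting triads. I would resolve this by first pre-refining $\calP$: using Corollary \ref{cor:sldev23}, I would split each part $V_{f(v)}$ in the image of $f$ into $n$ equal pieces, obtaining a $\dev_{2,3}(\e_1',\e_2'(\ell))$-regular decomposition with only mildly weakened parameters (still good enough for Corollary \ref{cor:countingcor}, provided $\e_1$ was chosen small enough and $\e_2$ tends to zero fast enough, which is the standing hypothesis). The maps $f,g$ then lift canonically to a decomposition in which the lifted $f$ is injective, reducing to the simpler case where the ``diagonal'' bigraphs never need to be defined, and the rest of the argument proceeds as above.
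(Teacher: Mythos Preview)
Your approach is essentially the paper's: build an auxiliary $3$-graph on $kn$ disjoint copies of the relevant parts, verify the hypotheses of Corollary \ref{cor:countingcor}, and extract an induced $n$-blowup of $H$ that transfers back to $H'$. The one difference is the edge rule for the auxiliary graph. You pull back $\overline{E(H')}$ directly on every cross-triad and apply the corollary with $F$ equal to the $n$-blowup of $H$ (edge-triads need density $\geq\e$, non-edge triads need density $\leq 1-\e$, both of which hold). The paper instead replaces $\overline{E(H')}$ by its complement $K_3(G)\setminus\overline{E(H')}$ on the non-edge triads, so that \emph{every} cross-triad of $H_0$ has density $\geq\e$; a tuple with all cross-triples in $E(H_0)$ then corresponds to an induced blowup of $H$ in $H'$. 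Both routes are valid; yours is the more direct reading of Corollary \ref{cor:countingcor}.

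One point of confusion in your write-up: the subtlety you flag is not about $f$ failing to be injective. Whether $f(v)=f(w)$ for distinct $v,w\in V(H)$ is harmless, since you take disjoint copies regardless, and the bigraph $P_{f(v)f(w)}^{g(vw)}$ is $\dev_2(\e_2)$-quasirandom by virtue of the triad $G_{f(u)f(v)f(w)}^{g(uv),g(uw),g(vw)}$ lying in $\Sigma_{reg}$. The actual missing data are the bigraphs between $V_{v,i}$ and $V_{v,j}$ for the \emph{same} $v\in V(H)$ and $i\neq j$, which Corollary \ref{cor:countingcor} requires but $g$ does not supply. Your pre-refining via Corollary \ref{cor:sldev23} does not fill this gap: splitting $V_{f(v)}$ into $n$ pieces still gives no canonical bigraph label between the pieces. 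The easy fix (left implicit in the paper as well) is to take the complete bigraph on each such pair; every triad with a repeated $v$-class then has $H^*$-density $0$, which is trivially $\dev_{2,3}$-regular and satisfies the non-edge condition $d\leq 1-\e$.
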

\begin{proof}
Suppose towards a contradiction there is some $H\in \calF$ for which such a pair of maps $f$ and $g$ exist. We will next define auxiliary bigraphs and an auxiliary $3$-graph by duplicating elements from $\calP$ and $H'$.  We begin by defining an auxiliary vertex set $\bigcup_{u\in V(H)}\bigcup_{i=1}^nW_{ui}$, where for each $u\in V(H)$ and $i\in [n]$, $W_{ui}$ is a copy of $V_{f(u)}$.  For each $u,v\in V(H)$ and $1\leq i,j\leq n$, define a bigraph $G_{ui,vj}=(W_{ui},W_{vj}; E_{ui,vj})$, where $E_{ui,vj}$ is a copy of $P_{f(u)f(v)}^{g(uv)}$.  Finally, define an auxiliary $3$-graph $H_0$ with vertex set $\bigcup_{u\in V(H)}\bigcup_{i=1}^nW_{ui}$ and edge set $E(H_0)$ defined as follows.  For each $uvw\in E(H)$ and $1\leq i,j,k\leq n$, let $\overline{E(H_0)}\cap (W_{ui}\times W_{vj}\times W_{wk})$ be a copy of $\overline{E(H')}\cap K_3(G_{f(u)f(v)f(w)}^{g(uv),g(uw),g(vw)})$, and for each $uvw\notin E(H)$ and $1\leq i,j,k\leq n$, let $\overline{E(H_0)}\cap (W_{ui}\times W_{vj}\times W_{wk})$ be a copy of $K_3(G_{f(u)f(v)f(w)}^{g(uv),g(uw),g(vw)})\setminus \overline{E(H')}$.  By Corollary \ref{cor:countingcor}, $H_0$ contains an induced $n$-blowup of $H$.  By definition of $H_0$, this implies $H'$ also contains an $n$-blowup of $H$, a contradiction.
\end{proof}

Note that Claim \ref{cl:blw3} implies $E_2=\emptyset$.  We can use this fact and Claim \ref{cl:blw3} to show $G$ has few copies of  $H$, for all $H\in \calF$. Let 
$$
\Gamma=\bigcup_{G\in \Sigma_{err}}K_3(G)\cup \bigcup_{G\in E_1}(K_3(G)\setminus E(G))\cup \bigcup_{G\in E_0}K_3(G)\cap E(G)
$$
It is not difficult to check $|\Gamma|\leq 3\e_1|V(H')|^3$.  Suppose now $H\in \calF$ and $f:V(H)\rightarrow V(H')$ is such that $vv'v''\in E(H)$ if and only if $f(v)f(v')f(v'')\in E(G)$.  By Claim \ref{cl:blw3}, one of the following hold.
\begin{enumerate}
\item there is $uvw\in {V(H)\choose 3}$ so that for some $i\in [t]$, $|\{u,v,w\}\cap V_i|>1$,
\item There is some $uvw\in {V(H)\choose 3}$ so that $f(u)f(v)f(w)$ is in an irregular triad,
\item there is some $uvw\in {V(H)\choose 3}$ so that $f(u)f(v)f(w)\in \Gamma$.
\end{enumerate}
Since $\calP_1$ is an equitable $\dev_{2,3}(\e_1,\e_2(\ell))$-regular $(t,\ell,\e_1,\e_2(\ell))$-decomposition $t$ is large, and since $\Gamma$ is small, we have that the number of such maps is at most $\delta |V(H')|^{v(H)}$. Thus there are few induced copies of $H$ in $H'$.  By Theorem \ref{thm:indrem3graph}, $H'$ is close to some $H''$ which is induced $H$-free for all $H\in \calF$.
\qed

\bibliography{growth.bib}

\providecommand{\bysame}{\leavevmode\hbox to3em{\hrulefill}\thinspace}
\providecommand{\MR}{\relax\ifhmode\unskip\space\fi MR }
\providecommand{\MRhref}[2]{%
  \href{http://www.ams.org/mathscinet-getitem?mr=#1}{#2}
}
\providecommand{\href}[2]{#2}
\begin{thebibliography}{10}

\bibitem{Aldaim.2023}
Al~Baraa~Abd Aldaim, Gabriel Conant, and Caroline Terry, \emph{{Higher arity
  stability and the functional order property}}, arXiv:2305.13111 (2023).

\bibitem{Alon.2000}
Noga Alon, Eldar Fischer, Michael Krivelevich, and Mario Szegedy,
  \emph{{Efficient Testing of Large Graphs}}, Combinatorica \textbf{20} (2000),
  no.~4, 451--476.

\bibitem{Alon.2018is}
Noga Alon, Jacob Fox, and Yufei Zhao, \emph{{Efficient arithmetic regularity
  and removal lemmas for induced bipartite patterns}}, Discrete Analysis
  \textbf{2019} (2019), no.~3, 14pp.

\bibitem{Alon.20058t}
Noga Alon, Janos Pach, R~Pinchasi, and R~Radoicic, \emph{{Crossing patterns of
  semi-algebraic sets}}, Journal of Combinatorial Theory \textbf{111} (2005),
  no.~2, 310--326.

\bibitem{Chernikov.2019b}
Artem Chernikov and Nadja Hempel, \emph{{On n-dependent groups and fields II}},
  Forum of Mathematics, Sigma \textbf{E38} (2021), no.~9, 1--51.

\bibitem{Chernikov.2020}
Artem Chernikov and Henry Towsner, \emph{{Hypergraph regularity and higher
  arity VC-dimension}}, arXiv:2010.00726 (2020).

\bibitem{Chung.1991}
Fan Chung, \emph{{Regularity lemmas for hypergraphs and quasi-randomness}},
  Random Structures and Algorithms \textbf{2} (1991), no.~2, 241--252.

\bibitem{Conlon.2012}
David Conlon and Jacob Fox, \emph{Bounds for graph regularity and removal
  lemmas}, Geom. Funct. Anal. \textbf{22} (2012), no.~5, 1191--1256.
  \MR{2989432}

\bibitem{Fox.2014}
Jacob Fox and L\'{a}szl\'{o}~Mikl\'{o}s Lov\'{a}sz, \emph{{A tight bound for
  Szemer\'{e}di's regularity lemma}}, Combinatorica \textbf{37} (2017), no.~5,
  911--951.

\bibitem{FPS2}
Jacob Fox, J\'{a}nos Pach, and Andrew Suk, \emph{A polynomial regularity lemma
  for semialgebraic hypergraphs and its applications in geometry and property
  testing}, SIAM J. Comput. \textbf{45} (2016), no.~6, 2199--2223. \MR{3585030}

\bibitem{Frankl.2002}
Peter Frankl and Vojt\v{e}ch R\"{o}dl, \emph{{Extremal problems on set
  systems}}, Random Structures and Algorithms \textbf{20} (2002), no.~2,
  131--164.

\bibitem{GSW2}
Lior Gishboliner, Asaf Shapira, and Yuval Widgerson, \emph{{Regularity for
  hypergraphs with bounded $VC_2$-dimension}}, preprint (2025).

\bibitem{GSW}
Lior Gishboliner, Asaf Shapira, and Yuval Wigderson, \emph{{Is it easy to
  regularize a hypergraph with easy links?}}, arXiv:2506.15582 (2025).

\bibitem{Gowers.1997}
Timothy Gowers, \emph{{Lower bounds of tower type for Szemer\'{e}di's
  uniformity lemma}}, Geometric and Functional Analysis \textbf{7} (1997),
  no.~2, 322--337.

\bibitem{Gowers.20063gk}
\bysame, \emph{{Quasirandomness, counting and regularity for 3-uniform
  hypergraphs}}, Combinatorics, Probability and Computing \textbf{15} (2006),
  no.~1-2, 143--184.

\bibitem{Gowers.2007}
\bysame, \emph{{Hypergraph regularity and the multidimensional Szemer\'{e}di
  theorem}}, Annals of Mathematics. Second Series \textbf{166} (2007), no.~3,
  897--946.

\bibitem{Haviland.1989}
Julie Haviland and Andrew Thomason, \emph{{Pseudo-random hypergraphs}},
  Discrete Mathematics \textbf{75} (1989), no.~1-3, 255 -- 278, Graph theory
  and combinatorics (Cambridge, 1988).

\bibitem{Hempel.2016}
Nadja Hempel, \emph{{On n-dependent groups and fields}}, Mathematical Logic
  Quarterly \textbf{62} (2016), no.~3, 215--224.

\bibitem{KaSh}
Subrahmanyam Kalyanasundaram and Asaf Shapira, \emph{A {W}owzer-type lower
  bound for the strong regularity lemma}, Proc. Lond. Math. Soc. (3)
  \textbf{106} (2013), no.~3, 621--649. \MR{3048552}

\bibitem{KNR}
Yoshiharu Kohayakawa, Brendan Nagle, and Vojt{\v{e}}ch R{\"o}dl,
  \emph{Hereditary properties of triple systems}, Combin. Probab. Comput.
  \textbf{12} (2003), no.~2, 155--189. \MR{1967402}

\bibitem{Kohayakawa.2010}
Yoshiharu Kohayakawa, Brendan Nagle, Vojt\v{e}ch R\"{o}dl, and Mathias Schacht,
  \emph{{Weak hypergraph regularity and linear hypergraphs}}, Journal of
  Combinatorial Theory, Series B \textbf{100} (2010), no.~2, 151--160.

\bibitem{regularitysurvey}
J.~Koml{\'o}s and M.~Simonovits, \emph{Szemer\'edi's regularity lemma and its
  applications in graph theory}, Combinatorics, {P}aul {E}rd{\H o}s is eighty,
  {V}ol.\ 2 ({K}eszthely, 1993), Bolyai Soc. Math. Stud., vol.~2, J\'anos
  Bolyai Math. Soc., Budapest, 1996, pp.~295--352. \MR{1395865}

\bibitem{Komlos.1996}
Janos Komlos and Miklos Simonovits, \emph{{Szemeredi's regularity lemma and its
  applications in graph theory}}, Combinatorics, Paul Erdos is eighty
  \textbf{2} (1996), 295--352.

\bibitem{Lovasz.2010}
L\'{a}szl\'{o} Lov\'{a}sz and Bal\'{a}zs Szegedy, \emph{{Regularity partitions
  and the topology of graphons}}, An Irregular Mind (2010).

\bibitem{Malliaris.2014}
M.~Malliaris and S.~Shelah, \emph{{Regularity lemmas for stable graphs}},
  Transactions of the American Mathematical Society \textbf{366} (2014), no.~3,
  1551--1585.

\bibitem{Moshkovitz.2013}
Guy Moshkovitz and Asaf Shapira, \emph{{A Short Proof of Gowers' Lower Bound
  for the Regularity Lemma}}, Combinatorica \textbf{36} (2016), 187--194.

\bibitem{Moshkovitz.2019}
\bysame, \emph{{A Tight Bound for Hypergaph Regularity}}, Geometric and
  Functional Analysis \textbf{29} (2019), 1531--1578.

\bibitem{MoshkovitzSimple}
\bysame, \emph{{A tight bound for Szemer\'{e}di's regularity lemma I}},
  arXiv:1804.05511 (2019).

\bibitem{Nagle.2013}
Brendan Nagle, Annika Poerschke, Vojt\v{e}ch R\"{o}dl, and Matthias Schacht,
  \emph{{Hypergraph regularity and quasi-randomness}}, SIAM J Discrete
  Mathematics (2013), 227--235.

\bibitem{Nagle.2006}
Brendan Nagle, Vojt\v{e}ch R\"{o}dl, and Mathias Schacht, \emph{{The counting
  lemma for regular k-uniform hypergraphs}}, Random Structures and Algorithms
  \textbf{28} (2006), no.~2, 113--179.

\bibitem{NRS}
Brenden Nalge, Voj\v{e}ch R\"{o}dl, and Mathias Schacht, \emph{{Equivalent
  regular partitions of $3$-uniform hypergraphs}}, Random Structures Algorithms
  \textbf{65}, 703--718.

\bibitem{Rodl.2009}
Vojt\v{e}ch R\"{o}dl and Mathias Schacht, \emph{{Generalizations of the removal
  lemma}}, Combinatorica \textbf{29} (2009), no.~4, 467--501.

\bibitem{Rodl.2004}
Vojt\v{e}ch R\"{o}dl and Jozef Skokan, \emph{Regularity lemma for k-uniform
  hypergraphs}, Random Structures Algorithms \textbf{25} (2004), no.~1, 1--42.
  \MR{2069663}

\bibitem{Rodl.2005a}
\bysame, \emph{Counting subgraphs in quasi-random 4-uniform hypergraphs},
  Random Structures Algorithms \textbf{26} (2005), no.~1-2, 160--203.
  \MR{2116581}

\bibitem{Rodl.2006}
\bysame, \emph{Applications of the regularity lemma for uniform hypergraphs},
  Random Structures Algorithms \textbf{28} (2006), no.~2, 180--194.
  \MR{2198496}

\bibitem{ShelahSD}
Saharon Shelah, \emph{Strongly dependent theories}, Israel J. Math.
  \textbf{204} (2014), no.~1, 1--83. \MR{3273451}

\bibitem{Shelah.2007}
\bysame, \emph{{Definable groups for dependent and 2-dependent theories}},
  Sarajevo J. Math. \textbf{25} (2017), no.~13, 3--25.

\bibitem{Szemeredi.1975}
Endre Szemer\'{e}di, \emph{{On sets of integers containing no k elements in
  arithmetic progression}}, Acta Arithmetica \textbf{27} (1975), 199--245.

\bibitem{Szemeredi.1978}
\bysame, \emph{Regular partitions of graphs}, Probl\`emes combinatoires et
  th\'{e}orie des graphes ({C}olloq. {I}nternat. {CNRS}, {U}niv. {O}rsay,
  {O}rsay, 1976), Colloq. Internat. CNRS, vol. 260, CNRS, Paris, 1978,
  pp.~399--401. \MR{540024}

\bibitem{Terry.2018}
C.~Terry, \emph{{$VC_{\ell}$-dimension and the jump to the fastest speed of a
  hereditary L-property}}, Proc. Amer. Math. Soc. \textbf{146} (2018),
  3111--3126.

\bibitem{Terry.2022}
\bysame, \emph{{An improved bound for regular decompositions of 3-uniform
  hypergraphs of bounded $VC_2$-dimension}}, Model Theory \textbf{2} (2023).

\bibitem{Terry.2024a}
\bysame, \emph{{Growth of regular partitions 1: Improved bounds for small
  slicewise VC-dimension}}, arXiv:2404.01274 (2024).

\bibitem{Terry.2024b}
\bysame, \emph{{Growth of regular partitions 2: Weak regularity}},
  arXiv:2404.01293 (2024).

\bibitem{Terry.2024d}
\bysame, \emph{{Growth of regular partitions 4: Strong regularity and the pairs
  partition}}, arXiv:2404.01293 (2024).

\bibitem{Terry.2021a}
C.~Terry and J.~Wolf, \emph{{Higher-order generalisations of stability and
  arithmetic regularity}}, arXiv:2111.01739 (2021).

\bibitem{Terry.2021b}
\bysame, \emph{{Irregular triads in 3-uniform hypergraphs}}, to appear, Mem.
  Am. Math. Soc., arXiv:2111.01737 (2021).

\end{thebibliography}
\bibliographystyle{amsplain}

\end{document}